\def\red{\color{red}}
\def\rr{{\mathbb R}}
\def\rn{{\mathbb{R}^n}}
\def\zz{{\mathbb Z}}
\def\cc{{\mathbb C}}
\def\nn{{\mathbb N}}
\def\ca{{\mathcal A}}
\def\cf{{\mathcal F}}
\def\cm{{\mathcal M}}
\def\cq{{\mathcal Q}}
\def\dsup{\displaystyle\sup}
\def\cs{{\mathcal S}}
\def\fz{\infty }
\def\gz{{\gamma}}
\def\lz{\lambda}
\def\lf{\left}
\def\r{\right}
\def\la{\langle}
\def\ra{\rangle}
\def\hs{\hspace{0.25cm}}
\def\ls{\lesssim}
\def\noz{\nonumber}
\def\wh{\widehat}
\def\gfz{\genfrac{}{}{0pt}{}}
\def\dist{\mathop\mathrm{\,dist\,}}
\def\BMO{\mathop\mathrm{\,BMO\,}}
\def\loc{{\mathop\mathrm{\,loc\,}}}
\def\supp{\mathop\mathrm{\,supp\,}}
\def\XXint#1#2#3{{\setbox0=\hbox{$#1{#2#3}{\int}$ }
\vcenter{\hbox{$#2#3$ }}\kern-.6\wd0}}
\DeclareMathOperator{\esssup}{ess\,sup}
\DeclareMathOperator{\essinf}{ess\,inf}
\def\la{{\langle}}
\def\ra{{\rangle}}
\def\({\left(}
\def \){ \right)}
\def\lz{{\lambda}}
\def\BB{{\mathbb B}}
\newtheorem{theorem}{Theorem}[section]
\newtheorem{lemma}[theorem]{Lemma}
\newtheorem{assumption}[theorem]{Assumption}
\newtheorem{proposition}[theorem]{Proposition}
\theoremstyle{definition}
\newtheorem{remark}[theorem]{Remark}
\newtheorem{definition}[theorem]{Definition}
\renewcommand{\appendix}{\par
   \setcounter{section}{0}%
   \setcounter{subsection}{0}%
   \setcounter{subsubsection}{0}%
   \gdef\thesection{\@Alph\c@section}%
   \gdef\thesubsection{\@Alph\c@section.\@arabic\c@subsection}%
   \gdef\theHsection{\@Alph\c@section.}%
   \gdef\theHsubsection{\@Alph\c@section.\@arabic\c@subsection}%
   \csname appendixmore\endcsname
 }
\numberwithin{equation}{section}
\begin{document}

\title{\bf\Large Weak Hardy-Type Spaces Associated with Ball Quasi-Banach Function Spaces
II: Littlewood--Paley Characterizations and Real Interpolation\footnotetext{\hspace{-0.35cm} 2010 {\it
Mathematics Subject Classification}. Primary 42B30;
Secondary 42B25, 42B20, 42B35, 46E30.
\endgraf {\it Key words and phrases.} ball quasi-Banach function space, weak Hardy space, weak tent space,
Orlicz-slice space, maximal function, Littlewood--Paley function, real interpolation.
\endgraf This project is supported
by the National Natural Science Foundation of China (Grant Nos.
11726621, 11726622, 11761131002, 11571039, 11671185 and 11871100).}}
\author{Songbai Wang, Dachun Yang\,\footnote{Corresponding author,
E-mail: \texttt{dcyang@bnu.edu.cn}/{\red June 28, 2019}/Final version.},
\  Wen Yuan and Yangyang Zhang}
\date{}
\maketitle

\vspace{-0.7cm}

\begin{center}

\begin{minipage}{13cm}
{\small {\bf Abstract}\quad Let $X$ be a ball quasi-Banach function space on ${\mathbb R}^n$.
In this article, assuming that the powered Hardy--Littlewood maximal operator
satisfies some Fefferman--Stein vector-valued maximal inequality on $X$ as well as it is
bounded on both the weak ball quasi-Banach function space $WX$ and the associated space,
the authors establish various Littlewood--Paley
function characterizations of $WH_X({\mathbb R}^n)$ under some weak assumptions on the Littlewood--Paley
functions. The authors also prove that the real interpolation intermediate space
$(H_{X}({\mathbb R}^n),L^\infty({\mathbb R}^n))_{\theta,\infty}$, between the Hardy
space associated with $X$, $H_{X}({\mathbb R}^n)$, and the Lebesgue space
$L^\infty({\mathbb R}^n)$, is $WH_{X^{{1}/{(1-\theta)}}}({\mathbb R}^n)$, where $\theta\in (0, 1)$.
All these results are of wide applications. Particularly,
when $X:=M_q^p({\mathbb R}^n)$ (the Morrey space), $X:=L^{\vec{p}}({\mathbb R}^n)$ (the mixed-norm Lebesgue space)
and $X:=(E_\Phi^q)_t({\mathbb R}^n)$ (the Orlicz-slice space), all these results are even new;
when $X:=L_\omega^\Phi({\mathbb R}^n)$ (the weighted Orlicz space),
the result on the real interpolation is new
and, when  $X:=L^{p(\cdot)}({\mathbb R}^n)$ (the variable Lebesgue space) and
$X:=L_\omega^\Phi({\mathbb R}^n)$,
the Littlewood--Paley function characterizations of $WH_X({\mathbb R}^n)$ obtained in this
article improves the existing results via weakening the assumptions on the Littlewood--Paley
functions.
}
\end{minipage}
\end{center}

\vspace{0.2cm}

\tableofcontents

\vspace{0.2cm}

\section{Introduction\label{s1}}

This is the second part of the research of the authors on weak Hardy-type spaces associated with
ball quasi-Banach function spaces. The first part was presented in \cite{zwyy}.

It is well known that the classical Hardy space $H^p(\rn)$ with $p\in (0,1]$ was originally introduced
by Stein and Weiss \cite{SW} and further exploited by Fefferman and Stein \cite{FS0},
which plays a key role in harmonic analysis and partial differential equations. The articles
\cite{FS0,SW} provided many new ideas for the real-variable theory of function spaces,
which reveal the intrinsic connections among some important notions in
harmonic analysis, such as harmonic functions, maximal functions and square functions. In
recent decades, various variants of classical Hardy spaces have been introduced
and their real-variable theories have been well developed; these variants include weighted Hardy
spaces (see \cite{ST}), (weighted) Herz--Hardy spaces (see, for instance,
\cite{CL,GC1,GCH,LY1,LY2}),  (weighted) Hardy--Morrey spaces (see, for instance
\cite{JW,Sa,H}), Hardy--Orlicz spaces (see, for instance, \cite{IV,Se,V,NS1,YY}),
Lorentz Hardy spaces (see, for instance, \cite{AST}), Musielak--Orlicz Hardy spaces
(see, for instance, \cite{K,YLK}) and variable Hardy spaces (see, for instance, \cite{CUW,NS,YZN}).
Observe that these elementary spaces on which the aforementioned Hardy spaces were built,
such as (weighted) Lebesgue spaces, (weighted) Herz spaces, (weighted) Morrey spaces,
mixed-norm Lebesgue spaces,
Orlicz spaces, Lorentz spaces, Musielak--Orlicz spaces and variable Lebesgue spaces,
are all included in a generalized framework called ball quasi-Banach function spaces
which were introduced, very recently, by Sawano et al. \cite{SHYY}.
Moreover, Sawano et al. \cite{SHYY} and Wang et al. \cite{wyy19} established a unified real-variable theory
for Hardy spaces associated with ball quasi-Banach function spaces on ${\mathbb R}^n$
and gave some applications of these Hardy-type spaces to the boundedness of
Calder\'on--Zygmund operators and pseudo-differential operators.

Recall that ball quasi-Banach function spaces are a generalization of
quasi-Banach function spaces. Compared with quasi-Banach function spaces,
ball quasi-Banach function spaces contain more function spaces.
For instance, Morrey spaces, mixed-norm Lebesgue spaces, weighted Lebesgue
spaces and Orlicz-slice spaces are all ball quasi-Banach function spaces,
which are not quasi-Banach function spaces and hence the class
of quasi-Banach function spaces is a proper subclass of ball quasi-Banach function spaces
(see \cite{SHYY,zwyy} for more details). Let $X$ be a ball quasi-Banach function space
(see \cite{SHYY} or Definition \ref{Debqfs} below). Sawano et al. \cite{SHYY} introduced
the Hardy space associated with $X$, $H_X(\rn)$, via the grand maximal function
(see \cite{SHYY} or Definition \ref{DeHX} below). Assuming that the Hardy--Littlewood maximal function is bounded on
the $p$-convexification of $X$, Sawano et al. \cite{SHYY} established
several different  maximal function characterizations of $H_X(\rn)$
and, assuming the Fefferman--Stein vector-valued maximal inequality on $X$ and the boundedness on the associated
space of the powered Hardy--Littlewood maximal operator, Sawano et al. \cite{SHYY}
then obtained the atomic characterization of $H_X(\rn)$, which when $X:=L^p(\rn)$
with $p\in (0,1]$ was originally obtained by Coifman \cite{C} and Latter \cite{La}.

Recall that, to find the biggest function space $\ca$ such that Calder\'on--Zygmund operators
are bounded from $\ca$ to $WL^1(\rn)$, Fefferman and Soria \cite{FSo} originally introduced the weak Hardy space
$WH^1(\rn)$ and did obtain the boundedness of the convolutional Calder\'on--Zygmund operator
with a kernel satisfying the Dini condition
from $WH^1(\rn)$ to $WL^1(\rn)$ by using the $\infty$-atomic characterization of $WH^1(\rn)$.
It is well known that the classic Hardy spaces  $H^p(\rn)$, with $p\in(0,1]$,
are good substitutes of Lebesgue spaces $L^p(\rn)$  when studying the boundedness
of some Calder\'on--Zygmund operators. For instance, if $\delta\in(0,1]$ and $T$ is
a convolutional $\delta$-type Calder\'on--Zygmund operator, then $T$ is bounded on
$H^p(\rn)$ for any given $p\in(n/(n+\delta),1]$ (see \cite{AM1986}).  However, this is not
true when $p=n/(n+\delta)$ which is called the \emph{critical case} or the \emph{endpoint case}.
Liu \cite{L} introduced the weak Hardy space $WH^p(\rn)$ with any given $p\in(0,1]$ and
proved that the aforementioned operator $T$ is bounded from $H^{n/(n+\delta)}(\rn)$ to $WH^{n/(n+\delta)}(\rn)$
via first establishing the $\infty$-atomic characterization of the weak Hardy space $WH^p(\rn)$.
Thus, the classical weak Hardy space $WH^p(\rn)$ plays an irreplaceable role in the study
of the boundedness of operators in the critical case.
Another main motivation to develop a real-variable
theory of $WH^p(\rn)$ is that the weak Hardy space $WH^p(\rn)$ naturally appears as
the intermediate spaces of the real interpolation between the Hardy space $H^p(\rn)$
and the Lebesgue space $L^\infty(\rn)$ (see \cite{FRS}).
Recently, He \cite{He} and Grafakos and He \cite{GH} further studied the vector-valued weak Hardy space
$H^{p,\infty}(\rn,\ell^2)$ with $p\in(0,\infty)$. In 2016, Liang et al. \cite{LYJ} (see also \cite{YLK})
considered the weak Musielak--Orlicz type Hardy space $WH^\varphi(\rn)$, which covers
both the weak Hardy space $WH^p(\rn)$ and the weighted weak Hardy space
$WH^p_\omega(\rn)$ from \cite{QY},  and obtained various equivalent characterizations of
$WH^\varphi(\rn)$ in terms of maximal functions, atoms, molecules and Littlewood--Paley functions,
as well as the boundedness of Calder\'on--Zygmund operators in the critical case. Meanwhile,
Yan et al. \cite{YYYZ} developed a real-variable theory of variable weak Hardy spaces
$WH^{p(\cdot)}(\rn)$ with $p(\cdot)\in C^{\log}(\rn)$.

Let $X$ be a ball quasi-Banach function space on ${\mathbb R}^n$ introduced by Sawano et al.
in \cite{SHYY}.
In \cite{zwyy}, we introduced the weak Hardy-type
space $WH_X({\mathbb R}^n)$, associated with $X$.
Assuming that the powered Hardy--Littlewood maximal operator
satisfies some Fefferman--Stein vector-valued maximal inequality on $X$ as well as it is
bounded on both the weak ball quasi-Banach function space $WX$ and the associated space,
we also established several real-variable characterizations of $WH_X({\mathbb R}^n)$, respectively, in terms of
various maximal functions, atoms and molecules in \cite{zwyy}.
In this article, applying those characterizations of
$WH_X({\mathbb R}^n)$ obtained in \cite{zwyy},
we establish various Littlewood--Paley
function characterizations of $WH_X(\rn)$.
Particularly, our assumptions on the Littlewood--Paley
functions are much weaker than the corresponding assumptions in \cite{SHYY,LYJ,YYYZ}
(see Remark \ref{wa} below).
We point out that, in the proof of the Lusin area function characterization
of the weak Musielak--Orlicz type Hardy space $WH^\varphi(\rn)$ in
\cite{LYJ}, there exists a gap in lines 14 and 16 of \cite[p.\,662]{LYJ}.
The same gap also exists in \cite[(6.5)]{YYYZ}, \cite[(4.24)]{hyy} and the proof
of \cite[Theorem 4.1]{hlyy}. Using some ideas from Sawano et al. \cite{SHYY} and Liu \cite{L},
in the proof of the Lusin area function characterization of $WH_X(\rn)$ of this article,
we seal this gap under some even weaker assumptions on the Lusin area function.
We also prove that the real interpolation intermediate space
$(H_{X}({\mathbb R}^n),L^\infty({\mathbb R}^n))_{\theta,\infty}$ is
$WH_{X^{{1}/{(1-\theta)}}}({\mathbb R}^n)$, where $\theta\in (0, 1)$.
All these results are of wide applications.
when  $X:=L^{p(\cdot)}({\mathbb R}^n)$ (the variable Lebesgue space) and
$X:=L_\omega^\Phi({\mathbb R}^n)$ (the weighted Orlicz space),
the Littlewood--Paley function characterizations of $WH_X({\mathbb R}^n)$ obtained in this
article improve the existing results in \cite{LYJ,YYYZ} via weakening
the assumptions on the Littlewood--Paley functions.
When $X:=L^{p(\cdot)}({\mathbb R}^n)$  and
$$p_-:=\mathop{\mathrm{ess\,sup}}_{x\in\rn}\,p(x)\in(0,1],$$
the $g_\lambda^\ast$-function characterization of $WH_X({\mathbb R}^n)$ obtained in this
article improves the existing results via widening
the range of $\lambda\in(1+\frac{2}{\min\{2,p_-\}},\infty)$ in \cite{YYYZ} to
$$\lz\in\left(\max\lf\{\frac{2}{\min\{1,p_-\}},
1-\frac2{\max\{1,p_+\}}+\frac{2}{\min\{1,p_-\}}\r\},\infty\r),$$
where $p_+:={\esssup}_{x\in\rn}\,p(x)$.
When $X:=L_\omega^\Phi({\mathbb R}^n)$,
Liang et al. \cite{LYJ} obtained all main theorems obtained in Sections \ref{s3}
of this article for the weighted weak Orlicz--Hardy space $WH_\omega^\Phi(\rn)$ as the special case,
however, the result for the real interpolation in Section \ref{s4}
(see Theorem \ref{th5} below),
$(H_\omega^\Phi(\rn),L^\infty(\rn))_{\theta,\infty}=WH_\omega^{\Phi_{1/(1-\theta)}}(\rn)
$ with $\theta\in(0,1)$, is new.
Moreover, when $X:=M_q^p({\mathbb R}^n)$ (the Morrey space), $X:=L^{\vec{p}}(\rn)$ (the mixed-norm Lebesgue space)
and $X:=(E_\Phi^q)_t({\mathbb R}^n)$ (the Orlicz-slice space), all these results
obtained in this article are new.

To be precise, this article is organized as follows.

In Section \ref{s2}, we recall some notions concerning the ball (quasi)-Banach function space $X$ and
the weak ball (quasi)-Banach function space $WX$. Then we state the assumptions of the Fefferman--Stein vector-valued
maximal inequality on $X$ (see Assumption \ref{a2.15} below)
and the boundedness on the $p$-convexification of $WX$ for the Hardy--Littlewood maximal operator
(see Assumption \ref{a2.17} below).

Section \ref{s3} contains some square function characterizations
of $WH_X(\rn)$, including its characterizations via the Lusin area function, the Littlewood--Paley
$g$-function and the Littlewood--Paley $g_\lambda^\ast$-function, respectively,
in Theorems \ref{Tharea}, \ref{Thgf} and \ref{Thgx} below.
We first prove Theorem \ref{Tharea}, the Lusin area function characterization of $WH_X(\rn)$.
In lines 14 and 16 of \cite[p.\,662]{LYJ},
when they used Calder\'on reproducing
formula to decompose a distribution $f$ into a sequence of atoms, Liang et al. did not prove
that the sum of the sequence of atoms converges to $f$ in the sense of distribution.
To seal this gap, we employ a different method from \cite{LYJ}.
Via borrowing some ideas from \cite{SHYY}, we first introduce the weak tent space
associated to $X$ and establish its atomic characterization (see Theorem \ref{That} below),
which is a key tool to decompose a distribution $f$ into a sequence of molecules.
Using the some ideas from Liu \cite{L}, we prove that the sum of the sequence of molecules
indeed converges in the sense of distribution [see \eqref{kkk} below].
Then, applying some ideas used in the proof of \cite[Theorem 3.21]{SHYY},
we prove that, under some even weaker assumptions on the Lusin area function, the distribution
which the sum of the aforementioned sequence of molecules converges to equals $f$
in the sense of distribution, and hence seal the aforementioned gap existed in the proof of \cite[Theorem 4.5]{LYJ}.
To obtain the Littlewood--Paley $g$-function characterization of $WH_X(\rn)$, we
first establish a discrete Calder\'on reproducing formula
under some weak assumptions (see Lemma \ref{Le49}(i) below), and then we apply
an approach initiated
by Ullrich \cite{U} and further developed by Liang et al. \cite{LSUYY}, which provides one way to control
the $WX$ norm of the Lusin area function by the corresponding norm of
the Littlewood--Paley $g$-function, via a key technical lemma
(see Lemma \ref{Le67} below) and an auxiliary Peetre type square function $g_{a,\ast}(f)$.
We point out that, applying the discrete Calder\'on reproducing formula established
in this article, we obtain the same estimate \cite[(2.66)]{U}
under weaker assumptions on the Schwartz function $\phi$ (see Lemma \ref{Le67} below),
which is the key tool to obtain the Littlewood--Paley $g$-function characterization of $WH_X(\rn)$.
To obtain the Littlewood--Paley $g_\lambda^\ast$-function characterization
of $WH_X(\rn)$,
via borrowing some ideas from \cite{AJ},
we use the atomic characterization of the weak tent space (see Theorem \ref{That} below), which is the
key tool to obtain a very useful estimate on
the change of angles in weak tent spaces (see Theorem \ref{Thca} below).
Using Theorem \ref{Thca} and the Lusin area function characterization of $WH_X(\rn)$, we then establish
the Littlewood--Paley $g_\lambda^\ast$-function characterization
of $WH_X(\rn)$.

The main purpose of Section \ref{s4} is to give the real interpolation
intermediate space between the
Hardy space $H_{X}(\rn)$, associated with $X$, and the Lebesgue space $L^\infty(\rn)$ for any given ball quasi-Banach
function space $X$.
We first show a real interpolation theorem between $X$ and $L^\infty(\rn)$ (see Theorem \ref{de1} below),
which plays a key role in proving the real interpolation theorem between
the Hardy space associated with $X$, $H_{X}(\rn)$, and the Lebesgue space $L^\infty(\rn)$
(see Theorem \ref{inte} below).
Then, by borrowing some ideas from \cite{C1977} and \cite{zyy},
we decompose any distribution $f\in WH_{X^{{1}/{(1-\theta)}}}(\rn)$ into ``good" and ``bad" parts
in Theorem \ref{de} below and prove that the following real interpolation intermediate space
$(H_{X}(\rn),L^\infty(\rn))_{\theta,\infty}$ is just $WH_{X^{{1}/{(1-\theta)}}}(\rn)
$, where $\theta\in (0, 1)$.

In Section \ref{s5}, we apply the above results, respectively, to the Morrey space,
the mixed-norm Lebesgue space, the variable Lebesgue space, the weighted Orlicz space
and the Orlicz-slice space. Recall that, in these five examples, only variable Lebesgue spaces
are quasi-Banach function spaces and the others are only ball quasi-Banach function spaces.

Finally, we make some conventions on notation. Let $\nn:=\{1,2,\ldots\}$, $\zz_+:=\nn\cup\{0\}$
and $\zz_+^n:=(\zz_+)^n$.
We always denote by $C$ a \emph{positive constant} which is independent of the main parameters,
but it may vary from line to line. We also use $C_{(\alpha,\beta,\ldots)}$ to denote a positive constant depending
on the indicated parameters $\alpha,\beta,\ldots.$ The \emph{symbol} $f\lesssim g$ means that $f\le Cg$.
If $f\lesssim g$ and $g\lesssim f$, we then write $f\sim g$. We also use the following
convention: If $f\le Cg$ and $g=h$ or $g\le h$, we then write $f\ls g\sim h$
or $f\ls g\ls h$, \emph{rather than} $f\ls g=h$
or $f\ls g\le h$. The \emph{symbol} $\lfloor s\rfloor$ for any $s\in\mathbb{R}$
denotes the maximal integer not greater
than $s$. We use $\vec0_n$ to denote the \emph{origin} of $\rn$ and let
$\mathbb{R}^{n+1}_+:=\rn\times(0,\infty)$.
If $E$ is a subset of $\rn$, we denote by $\mathbf{1}_E$ its
\emph{characteristic function} and by $E^\complement$ the set $\rn\setminus E$.
For any cube $Q:=Q(x_Q,l_Q)\subset\rn$,
with center $x_Q\in\rn$ and side length $l_Q\in(0,\infty)$,
and $\alpha\in(0,\infty)$, let $\alpha Q:=Q(x_Q,\alpha l_Q)$.
Denote by $\cq$ the set of all cubes having their edges parallel to the coordinate axes. For any
$\theta:=(\theta_1,\ldots,\theta_n)\in\zz_+^n$, let $|\theta|:=\theta_1+\cdots+\theta_n$. Furthermore,
for any cube $Q$ in $\rn$ and $j\in\zz_+$, let $S_j(Q):=(2^{j+1}Q)\setminus(2^jQ)$ with $j\in\nn$
and $S_0(Q):=2Q$. Finally, for any $q\in[1,\infty]$, we denote by $q'$ its \emph{conjugate exponent},
namely, $1/q+1/q'=1$.

\section{Preliminaries\label{s2}}

In this section, we recall some notions and preliminary facts on ball quasi-Banach function spaces defined in \cite{SHYY}.

Denote by the \emph{symbol} $\mathscr M(\rn)$ the set of
all measurable functions on $\rn$.
For any $x\in\rn$ and $r\in(0,\infty)$, let $B(x,r):=\{y\in\rn:\ |x-y|<r\}$ and
\begin{equation}\label{Eqball}
\BB:=\lf\{B(x,r):\ x\in\rn\quad\text{and}\quad r\in(0,\infty)\r\}.
\end{equation}

\begin{definition}\label{Debqfs}
A quasi-Banach space $X\subset\mathscr M(\rn)$ is called a \emph{ball quasi-Banach function space} if it satisfies
\begin{itemize}
\item[(i)] $\|f\|_X=0$ implies that $f=0$ almost everywhere;
\item[(ii)] $|g|\le |f|$ almost everywhere implies that $\|g\|_X\le\|f\|_X$;
\item[(iii)] $0\le f_m\uparrow f$ almost everywhere implies that $\|f_m\|_X\uparrow\|f\|_X$;
\item[(iv)] $B\in\BB$ implies that $\mathbf{1}_B\in X$, where $\BB$ is as in \eqref{Eqball}.
\end{itemize}

Moreover, a ball quasi-Banach function space $X$ is called a
\emph{ball Banach function space} if the norm of $X$
satisfies the triangle inequality: for any $f,\ g\in X$,
\begin{equation*}
\|f+g\|_X\le \|f\|_X+\|g\|_X
\end{equation*}
and, for any $B\in \BB$, there exists a positive constant $C_{(B)}$, depending on $B$, such that, for any $f\in X$,
\begin{equation*}
\int_B|f(x)|\,dx\le C_{(B)}\|f\|_X.
\end{equation*}
For any ball Banach function space $X$, the \emph{associate space} (\emph{K\"othe dual}) $X'$ is defined by setting
\begin{equation*}
X':=\lf\{f\in\mathscr M(\rn):\ \|f\|_{X'}:=\sup\lf\{\|fg\|_{L^1(\rn)}:\ g\in X,\ \|g\|_X=1\r\}<\infty\r\},
\end{equation*}
where $\|\cdot\|_{X'}$ is called the \emph{associate norm} of $\|\cdot\|_X$
(see, for instance, \cite[Chapter 1, Definitions 2.1 and 2.3]{BS}).
\end{definition}

\begin{remark}
\begin{itemize}
\item[(i)] By \cite[Proposition 2.3]{SHYY}, we know that, if $X$ is a ball Banach function space,
then its associate space $X'$ is also a ball Banach function space.
\item[(ii)] Recall that a quasi-Banach space $X\subset\mathscr M(\rn)$ is called a \emph{quasi-Banach function space} if it
is a ball quasi-Banach function space and it satisfies Definition \ref{Debqfs}(iv) with ball
replaced by any measurable set of finite measure.
\end{itemize}
\end{remark}

We still need to recall the notions of the convexity and the concavity of ball quasi-Banach spaces,
which come from, for instance, \cite[Definition 1.d.3]{LT}.
\begin{definition}\label{Debf}
Let $X$ be a ball quasi-Banach function space and $p\in(0,\infty)$.
\begin{itemize}
\item[(i)] The $p$-\emph{convexification} $X^p$ of $X$ is defined by setting $X^p:=\{f\in\mathscr M(\rn):\ |f|^p\in X\}$
equipped with the quasi-norm $\|f\|_{X^p}:=\||f|^p\|_X^{1/p}$.
\item[(ii)] The space $X$ is said to be $p$-\emph{concave} if there exists a positive constant $C$ such that,
for any sequence $\{f_j\}_{j\in\nn}$ of $X^{1/p}$,
$$
\sum_{j\in\nn}\|f_{j}\|_{X^{1/p}}\le C\lf\|\sum_{j\in\nn}|f_j|\r\|_{X^{1/p}}.
$$
Particularly, $X$ is said to be \emph{strictly $p$-concave} when $C=1$.
\end{itemize}
\end{definition}

Now we recall the notion of weak ball quasi-Banach function spaces as follows.

\begin{definition}\label{2.8}
Let $X$ be a ball quasi-Banach function space. The \emph{weak ball quasi-Banach function
space} $WX$ is defined to be the set of all measurable functions $f$ such that
\begin{equation*}
\|f\|_{WX}:=\sup_{\alpha\in(0,\infty)}
\lf\{\alpha\lf\|\mathbf{1}_{\{x\in\rn:\ |f(x)|>\alpha\}}\r\|_X\r\}<\infty.
\end{equation*}
\end{definition}

\begin{remark}\label{Rews}
\begin{itemize}
\item[(i)] Let $X$ be a ball quasi-Banach function space. For any $f\in X$ and $\alpha\in(0,\infty)$, we have
$\mathbf{1}_{\{x\in\rn:\ |f(x)|>\alpha\}}(x)\le|f(x)|/\alpha$ for any $x\in\rn$, which, together with Definition \ref{Debqfs}(ii),
further implies that $\sup_{\alpha\in(0,\infty)}\lf\{\alpha\|\mathbf{1}_{\{x\in\rn:
\ |f(x)|>\alpha\}}\|_X\r\}\le\|f\|_X$. This shows
that $X\subset WX$.

\item[(ii)] $WX$ is also a ball quasi-Banach function space (see \cite[Lemma 2.13]{zwyy}).
\end{itemize}
\end{remark}

\begin{remark}\label{Re213}
Let $X$ be a ball quasi-Banach function space. Then, by the Aoki--Rolewicz theorem
(see, for instance, \cite[Exercise 1.4.6]{G1}), one finds a positive constant $\nu\in(0,1)$ such that,
for any $N\in\nn$ and $\{f_j\}_{j=1}^N\subset\mathscr M(\rn)$,
\begin{align*}
\lf\|\sum_{j=1}^N|f_j|\r\|_{WX(\rn)}^\nu\le4\sum_{j=1}^N\lf\||f_j|\r\|_{WX}^\nu.
\end{align*}
\end{remark}

Denote by the \emph{symbol $L_{\loc}^1(\rn)$} the set of all locally integrable functions on $\rn$.
The \emph{Hardy--Littlewood maximal operator} $\cm$
is defined by setting, for any $f\in L_{\loc}^1(\rn)$ and $x\in\rn$,
\begin{equation}\label{mm}
\cm(f)(x):=\sup_{B\in\BB,B\ni x}\frac1{|B|}\int_B|f(y)|\,dy,
\end{equation}
where the supremum is taken over all balls $B\in\BB$ containing $x$.

To establish Littlewood--Paley characterizations of weak Hardy spaces associated with ball quasi-Banach
function spaces,
the approach used in this article heavily depends on the following both assumptions
on the Fefferman--Stein vector-valued inequality, respectively, on $X^{1/p}$ and $(WX)^{1/p}$.

\begin{assumption}\label{a2.15}
Let $X$ be a ball quasi-Banach function space and there exists a $p_-\in(0,\infty)$ such that,
for any given $p\in(0,p_-)$ and $s\in(1,\infty)$, there exists a positive constant $C$ such that,
for any $\{f_j\}_{j=1}^\infty\subset\mathscr M(\rn)$,
\begin{equation*}
\lf\|\lf\{\sum_{j\in\nn}\lf[\cm(f_j)\r]^s\r\}^{1/s}\r\|_{X^{1/p}}
\le C\lf\|\lf\{\sum_{j\in\nn}|f_j|^s\r\}^{1/s}\r\|_{X^{1/p}},
\end{equation*}
where $\cm$ is as in $\eqref{mm}$.
\end{assumption}

\begin{remark}\label{Refs}
Let $X$ and $p_-$ be the same as in Assumption \ref{a2.15}. Let
\begin{equation}\label{Eqpll}
\underline{p}:=\min\{p_-,\ 1\}.
\end{equation}
Then, for any given $r\in(0,\underline{p})$ and for any sequence $\{B_j\}_{j\in\nn}\subset\BB$
and $\beta\in[1,\infty)$, by Definition \ref{Debqfs}(ii),
the fact that $\mathbf{1}_{\beta B_j}\le[\beta^n\cm(\mathbf{1}_{B_j})]^{1/r}$
almost everywhere on $\rn$ for any $j\in\nn$, Definition \ref{Debf}(i) and
Assumption \ref{a2.15}, we have
\begin{align}\label{EqHLMS}
\lf\|\sum_{j\in\nn}\mathbf{1}_{\beta B_j}\r\|_{X}&\le\lf\|\sum_{j\in\nn}
\lf[\beta^n\cm(\mathbf{1}_{B_j})\r]^\frac1r\r\|_{X}
=\beta^\frac{n}{r}\lf\|\lf\{\sum_{j\in\nn}\lf[\cm(\mathbf{1}_{B_j})\r]^\frac1r\r\}^r\r\|_{X^{1/r}}^{1/r}\\\noz
&\le C\beta^\frac{n}{r}\lf\|\lf[\sum_{j\in\nn}\mathbf{1}_{B_j}\r]^r\r\|_{X^{1/r}}^{1/r}
=C\beta^\frac{n}{r}\lf\|\sum_{j\in\nn}\mathbf{1}_{B_j}\r\|_{X},
\end{align}
where the positive constant $C$ is independent of $\{B_j\}_{j\in\nn}$ and $\beta$.
\end{remark}

\begin{assumption}\label{a2.17}
Let $X$ be a ball quasi-Banach function space satisfying Assumption \ref{a2.15} for some $p_-\in(0,\infty)$.
Assume that, for any given $r\in(1,\infty)$ and $p\in(0,p_-)$,
there exists a positive constant $C$ such that, for any $\{f_j\}_{j\in\nn}\subset\mathscr M(\rn)$,
\begin{equation*}
\lf\|\lf\{\sum_{j\in\nn}\lf[\cm(f_j)\r]^r\r\}^{1/r}\r\|_{(WX)^{1/p}}
\le C\lf\|\lf\{\sum_{j\in\nn}|f_j|^r\r\}^{1/r}\r\|_{(WX)^{1/p}},
\end{equation*}
where $\cm$ is as in $\eqref{mm}$.
\end{assumption}

In what follows, we denote by $\cs(\rn)$ the \emph{space of all Schwartz functions},
equipped with the well-known topology determined by a countable family of
seminorms, and by $\cs'(\rn)$ its \emph{topological dual space},
equipped with the weak-$\ast$ topology. For any $N\in\nn$, let
\begin{equation*}
\cf_N(\rn):=\lf\{\varphi\in\cs(\rn):\sum_{\beta\in\zz_+^n,|\beta|\le N}
\sup_{x\in\rn}\lf[\lf(1+|x|\r)^{N+n}\lf|\partial_x^\beta\varphi(x)\r|\r]\le1\r\},
\end{equation*}
here and hereafter, for any $\beta:=(\beta_1,\ldots,\beta_n)\in\zz_+^n$
and $x\in\rn$,
$|\beta|:=\beta_1+\cdots+\beta_n$ and
$\partial_x^\beta:=(\frac{\partial}{\partial x_1})^{\beta_1}
\cdots(\frac{\partial}{\partial x_n})^{\beta_n}$.
For any given $f\in\cs'(\rn)$, the \emph{radial grand maximal function} $M_N^0(f)$
of $f$ is defined by setting, for any $x\in\rn$,
\begin{equation}\label{EqMN0}
M_N^0(f)(x):=\sup\lf\{|f\ast\varphi_t(x)|:\ t\in(0,\infty)\ \text{and}\ \varphi\in\cf_N(\rn)\r\},
\end{equation}
where, for any $t\in(0,\infty)$ and $\xi\in\mathbb R^n,\varphi_t(\xi):=t^{-n}\varphi(\xi/t)$.

\begin{definition}\label{DewSH}
Let $X$ be a ball quasi-Banach function space.
Then the \emph{weak Hardy-type space}
$WH_X(\rn)$ associated with $X$ is defined by setting
$$
WH_X(\rn):=\lf\{f\in\cs'(\rn):\ \|f\|_{WH_X(\rn)}:=\lf\|M_N^0(f)\r\|_{WX}<\infty\r\},
$$
where $M_N^0(f)$ is as in \eqref{EqMN0} with $N\in\nn$ sufficiently large.
\end{definition}

\begin{remark}
\begin{itemize}
\item[(i)] When $X:=L^p(\rn)$ with $p\in(0,1]$, the weak Hardy-type space $WH_X(\rn)$
coincides with the classical weak Hardy space $WH^p(\rn)$ (see, for instance, \cite[p.\,114]{L}).
\item[(ii)] By \cite[Theorem 3.2(ii)]{zwyy}, we know that, if
the Hardy-Littlewood maximal operator $\cm$ in \eqref{mm} is bounded on $(WX)^{1/r}$ and
$N\in[\lfloor\frac nr\rfloor+1,\infty)\cap\nn$,
then $WH_X(\rn)$ in Definition \ref{DewSH} is independent of
the choice of $N$.
\end{itemize}
\end{remark}

The following lemma is just \cite[Lemma 4.9]{zwyy}.

\begin{lemma}\label{Le45}
Let $r\in(0,\infty)$, $q\in(r,\infty]$ and $X$ be a ball quasi-Banach function space.
Assume that $X^{1/r}$ is a ball Banach function space and there exists a positive constant $C$
such that, for any $f\in(X^{1/r})'$,
$\|\cm^{((q/r)')}(f)\|_{(X^{1/r})'}\le C\lf\|f\r\|_{(X^{1/r})'}$, where $\cm$ is as in $\eqref{mm}$.
Then there exists a positive constant $C$
such that, for any sequence $\{B_j\}_{j\in\nn}$ of balls,
numbers $\{\lambda_j\}_{j\in\nn}\subset\cc$ and measurable functions
$\{a_j\}_{j\in\nn}$ satisfying that, for any $j\in\nn$, $\supp(a_j)\subset B_j$ and
$\|a_j\|_{L^q(\rn)}\le|B_j|^{1/q}$,
$$
\lf\|\lf(\sum_{j\in\nn}|\lambda_ja_j|^r\r)^\frac1r\r\|_{X}
\le C\lf\|\lf(\sum_{j\in\nn}|\lambda_j\mathbf{1}_{B_j}|^r\r)^\frac1r\r\|_{X}.
$$
\end{lemma}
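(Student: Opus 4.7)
\noindent\textbf{Proof proposal for Lemma \ref{Le45}.} The plan is to reduce the statement to an estimate in the Banach space $X^{1/r}$ and then run a standard duality plus Hardy--Littlewood maximal function argument. First I would use the definition of the $r$-convexification to rewrite
$$
\lf\|\lf(\sum_{j\in\nn}|\lambda_ja_j|^r\r)^{1/r}\r\|_{X}^{r}
=\lf\|\sum_{j\in\nn}|\lambda_j|^r|a_j|^r\r\|_{X^{1/r}},
$$
and analogously on the right-hand side, so that it suffices to prove
$$
\lf\|\sum_{j\in\nn}|\lambda_j|^r|a_j|^r\r\|_{X^{1/r}}\ls
\lf\|\sum_{j\in\nn}|\lambda_j|^r\mathbf{1}_{B_j}\r\|_{X^{1/r}}.
$$
Since $X^{1/r}$ is a ball Banach function space, I would then invoke the duality for ball Banach function spaces (see, e.g., \cite[Proposition 2.3]{SHYY}) to write
$$
\lf\|\sum_{j\in\nn}|\lambda_j|^r|a_j|^r\r\|_{X^{1/r}}=\sup_{g}\int_{\rn}\sum_{j\in\nn}|\lambda_j|^r|a_j(x)|^r|g(x)|\,dx,
$$
where the supremum runs over all $g\in(X^{1/r})'$ with $\|g\|_{(X^{1/r})'}\le1$.

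The next step is to estimate each term $\int_{B_j}|a_j|^r|g|\,dx$ by combining the size condition on $a_j$ with a maximal function bound on $g$. Since $q>r$, H\"older's inequality with exponents $q/r$ and $(q/r)'$ (where $(q/r)'=1$ when $q=\infty$), together with $\supp(a_j)\subset B_j$ and $\|a_j\|_{L^q(\rn)}\le|B_j|^{1/q}$, yields
$$
\int_{B_j}|a_j|^r|g|\,dx\le|B_j|^{r/q}\lf(\int_{B_j}|g|^{(q/r)'}\,dx\r)^{1/(q/r)'}.
$$
Using the pointwise definition of the Hardy--Littlewood maximal operator, for any $x\in B_j$,
$$
\frac{1}{|B_j|}\int_{B_j}|g|^{(q/r)'}\,dy\le\cm\lf(|g|^{(q/r)'}\r)(x)=\lf[\cm^{((q/r)')}(g)(x)\r]^{(q/r)'}.
$$
Since $r/q+1/(q/r)'=1$, this gives the pointwise bound
$$
|B_j|^{r/q}\lf(\int_{B_j}|g|^{(q/r)'}\r)^{1/(q/r)'}\le|B_j|\cm^{((q/r)')}(g)(x)\quad\text{for all }x\in B_j,
$$
hence, after multiplying by $\mathbf{1}_{B_j}(x)$, averaging over $B_j$, and using $|B_j|\inf_{B_j}\le\int_{B_j}$,
$$
|B_j|^{r/q}\lf(\int_{B_j}|g|^{(q/r)'}\r)^{1/(q/r)'}\le\int_{\rn}\mathbf{1}_{B_j}(x)\,\cm^{((q/r)')}(g)(x)\,dx.
$$

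Summing over $j$ and applying the H\"older-type inequality for the ball Banach function space $X^{1/r}$ and its associate space $(X^{1/r})'$ give
$$
\int_{\rn}\sum_{j\in\nn}|\lambda_j|^r|a_j|^r|g|\,dx\le\lf\|\sum_{j\in\nn}|\lambda_j|^r\mathbf{1}_{B_j}\r\|_{X^{1/r}}\lf\|\cm^{((q/r)')}(g)\r\|_{(X^{1/r})'}.
$$
Finally, invoking the assumed boundedness of $\cm^{((q/r)')}$ on $(X^{1/r})'$ and taking the supremum over admissible $g$ yields the desired bound, after which extracting an $r$-th root returns to the $X$-norm formulation.

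I expect no deep obstacle; the main care is to handle the limiting case $q=\infty$ (so that $(q/r)'=1$ and the assumption reduces to boundedness of $\cm$ itself on $(X^{1/r})'$) and to ensure the interchange of summation and integration is justified by monotone convergence. A subtler point is verifying the duality step for $X^{1/r}$ when $r$ is small, but this is exactly what the hypothesis that $X^{1/r}$ is a ball Banach function space is designed to supply.
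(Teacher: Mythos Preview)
Your proposal is correct. The paper does not supply its own proof of this lemma, citing instead \cite[Lemma~4.9]{zwyy}; the duality-plus-maximal-function argument you outline is the standard proof of this type of estimate and matches what appears in that reference (and, ultimately, in \cite{SHYY}).
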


Now we recall the notion of atoms associated with $X$, which origins from \cite[Definition 3.5]{SHYY}.

\begin{definition}\label{Deatom}
Let $X$ be a ball quasi-Banach function space, $q\in(1,\infty]$ and $d\in\zz_+$.
Then a measurable function $a$ on $\rn$ is called an $(X,\,q,\,d)$-\emph{atom}
if there exists a ball $B\in\BB$ such that
\begin{itemize}
\item[(i)] $\supp a:=\{x\in\rn:\ a(x)\neq0\}\subset B$;
\item[(ii)] $\|a\|_{L^q(\rn)}\le\frac{|B|^{1/q}}{\|\mathbf{1}_B\|_X}$;
\item[(iii)] $\int_{\rn}a(x)x^\alpha\,dx=0$ for any
$\alpha:=(\alpha_1,\ldots,\alpha_n)\in\zz_+^n$ with $|\alpha|\le d$,
here and hereafter, for any $x:=(x_1,\ldots,x_n)\in\rn$,
$x^\alpha:=x_1^{\alpha_1}\cdots x_n^{\alpha_n}$.
\end{itemize}
\end{definition}

The following atomic characterization of $WH_X(\rn)$ is just \cite[Theorems 4.2 and 4.8]{zwyy}.

\begin{lemma}\label{Thad}
Let $X$ be a ball quasi-Banach function space satisfying that, for some
given $r\in(0,1)$ and for any $\{f_j\}_{j\in\nn}\subset\mathscr M(\rn)$,
\begin{equation*}
\lf\|\lf\{\sum_{j\in\nn}\lf[\cm(f_j)\r]^{1/r}\r\}^{r}\r\|_{X^{1/r}}
\le C\lf\|\lf\{\sum_{j\in\nn}|f_j|^{1/r}\r\}^{r}\r\|_{X^{1/r}},
\end{equation*}
where $\cm$ is as in \eqref{mm} and the positive constant $C$ is independent of $\{f_j\}_{j\in\nn}$.
Assume that there exist both $\vartheta_0\in(1,\infty)$
and $p\in(0,\infty)$ such that $X$ is $\vartheta_0$-concave and
$\cm$ in \eqref{mm} is bounded on
$X^{1/(\vartheta_0 p)}$, and that there exists a $\gamma_0\in(0,\infty)$
such that $\cm$ is bounded on $(WX)^{1/\gamma_0}$.
Let $d\geq \lfloor n(1/p-1)\rfloor$ be a fixed nonnegative integer and $f\in WH_X(\rn)$.
Then there exists a sequence $\{a_{i,j}\}_{i\in\zz,j\in\nn}$ of
$(X,\,\infty,\,d)$-atoms supported, respectively, in balls $\{B_{i,j}\}_{i\in\zz,j\in\nn}$
satisfying that,
for any $i\in\zz$,
$\sum_{j\in\nn}\mathbf{1}_{cB_{i,j}}\le A$ with $c\in(0,1]$ and $A$ being
a positive constant independent of $f$ and $i$,
such that $f=\sum_{i\in\zz}\sum_{j\in\nn}\lambda_{i,j}a_{i,j}$ in $\cs'(\rn)$,
where $\lambda_{i,j}:=\widetilde A2^i\|\mathbf{1}_{B_{i,j}}\|_{X}$ for any $i\in\zz$ and $j\in\nn$,
with $\widetilde A$ being a positive constant independent of $i$, $j$ and $f$, and
$$
\sup_{i\in\zz}\lf\|\sum_{j\in\nn}
\frac{\lambda_{i,j}\mathbf{1}_{B_{i,j}}}{\|\mathbf{1}_{B_{i,j}}\|_{X}}\r\|_{X}\lesssim\|f\|_{WH_X(\rn)},
$$
where the implicit positive constant is independent of $f$.
\end{lemma}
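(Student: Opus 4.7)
The plan is to perform a Calder\'on--Zygmund decomposition of $f$ at every dyadic threshold $2^i$ simultaneously, so that the resulting atoms at level $i$ carry both the cancellation and the $L^\infty$ bound needed for an atomic decomposition, while the uniformity in $i$ yields the weak-type control in $X$. For each $i\in\zz$, set
\[
\Omega_i:=\lf\{x\in\rn:\ M_N^0(f)(x)>2^i\r\},
\]
which is open since $M_N^0(f)$ is lower semicontinuous and satisfies $2^i\|\mathbf{1}_{\Omega_i}\|_X\le\|f\|_{WH_X(\rn)}$. First I would cover each $\Omega_i$ by a Whitney family of balls $\{B_{i,j}\}_{j\in\nn}$ with the properties that $cB_{i,j}\subset\Omega_i$ for some absolute $c\in(0,1]$, that $B_{i,j}$ stays away from $\Omega_i^\complement$ at distance comparable to its radius, and that the dilated balls have bounded overlap $\sum_{j}\mathbf{1}_{cB_{i,j}}\le A\mathbf{1}_{\Omega_i}$.

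Using a smooth partition of unity $\{\varphi_{i,j}\}_{j\in\nn}$ subordinate to an enlargement of $\{B_{i,j}\}_{j}$, I would then define the ``good'' part at level $i$ by replacing $f$ on each ball with the projection of $f\varphi_{i,j}$ onto polynomials of degree at most $d$; since $\varphi_{i,j}$ is a test function this projection makes sense for $f\in\cs'(\rn)$. Setting $b_{i,j}:=(f-P_{i,j})\varphi_{i,j}$ produces a distribution supported in $B_{i,j}$ with vanishing moments up to order $d$, and the fact that $M_N^0(f)\le 2^i$ outside $\Omega_i$ combined with a local grand-maximal estimate forces $b_{i,j}$ to be in fact an $L^\infty$-function with $\|b_{i,j}\|_{L^\infty(\rn)}\lesssim 2^i$; here the hypotheses that $X$ is $\vartheta_0$-concave and that $\cm$ is bounded on $X^{1/(\vartheta_0 p)}$ are used to absorb the polynomial projection error. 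Telescoping $f=\sum_{i\in\zz}(g_{i+1}-g_i)$ and decomposing $g_{i+1}-g_i=\sum_{j\in\nn}\lambda_{i,j}a_{i,j}$ with $\lambda_{i,j}:=\widetilde A 2^i\|\mathbf{1}_{B_{i,j}}\|_X$ for a suitably large $\widetilde A$, the $L^\infty$ bound on $b_{i,j}$ together with the normalization renders each $a_{i,j}$ into a genuine $(X,\infty,d)$-atom supported in $B_{i,j}$.

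For the convergence $f=\sum_{i\in\zz}\sum_{j\in\nn}\lambda_{i,j}a_{i,j}$ in $\cs'(\rn)$, I would separately show $g_i\to f$ as $i\to\infty$ and $g_i\to 0$ as $i\to-\infty$; the first uses that $|\Omega_i|\to0$ combined with the uniform maximal control of $f$ outside $\Omega_i$, while the second follows from $\|g_i\|_{L^\infty(\rn)}\lesssim 2^i$ once one pairs against a Schwartz function and exploits the vanishing-moment cancellation of the atoms to obtain decay in $j$. For the crucial norm bound, note that $\lambda_{i,j}/\|\mathbf{1}_{B_{i,j}}\|_X=\widetilde A 2^i$, so combining the bounded overlap $\sum_j\mathbf{1}_{cB_{i,j}}\le A\mathbf{1}_{\Omega_i}$ with the dilation estimate \eqref{EqHLMS} from Remark \ref{Refs} (which is precisely where Assumption \ref{a2.15} is invoked) gives
\[
\lf\|\sum_{j\in\nn}\frac{\lambda_{i,j}\mathbf{1}_{B_{i,j}}}{\|\mathbf{1}_{B_{i,j}}\|_X}\r\|_X
=\widetilde A 2^i\lf\|\sum_{j\in\nn}\mathbf{1}_{B_{i,j}}\r\|_X
\lesssim 2^i\lf\|\sum_{j\in\nn}\mathbf{1}_{cB_{i,j}}\r\|_X
\lesssim 2^i\lf\|\mathbf{1}_{\Omega_i}\r\|_X
\le\|f\|_{WH_X(\rn)},
\]
uniformly in $i\in\zz$, which is the required supremum estimate.

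The principal obstacle is constructing the bad parts $b_{i,j}$ when $f$ is merely a tempered distribution: without pointwise values for $f$, the needed $L^\infty$ estimate on $b_{i,j}$ has to be squeezed out of the grand maximal function through a careful local argument on each Whitney ball, and the hypothesis that $\cm$ is bounded on $(WX)^{1/\gamma_0}$ enters here to ensure that $N$ can be chosen large enough for the equivalent maximal function characterization of $WH_X(\rn)$ to apply. A secondary technical difficulty is the $\cs'(\rn)$-convergence of the double sum as $i\to-\infty$, since the Whitney families can proliferate at negative scales; controlling this requires testing against a Schwartz function $\psi$, replacing $\psi$ by its Taylor polynomial of order $d$ on each $B_{i,j}$, and using the vanishing moments of $a_{i,j}$ together with summability of the resulting geometric series in the radii of $B_{i,j}$.
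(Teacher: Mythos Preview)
The paper does not prove this lemma; it simply quotes \cite[Theorems 4.2 and 4.8]{zwyy}. From the argument reproduced later in Lemma~\ref{de}, one sees that the construction in \cite{zwyy} is \emph{not} the polynomial-projection Calder\'on--Zygmund decomposition you describe but rather the Calder\'on reproducing formula route: one writes $f=\int_0^\infty f\ast\phi_t\ast\psi_t\,\frac{dt}{t}$ with $\psi$ having compact support and vanishing moments, slices the upper half-space into pieces $\widetilde Q_{i,j}$ lying over Whitney cubes of $\Omega_i$ and between the ``tents'' $\widetilde\Omega_i\setminus\widetilde\Omega_{i+1}$, and defines $b_{i,j}$ as the corresponding piece of the reproducing integral. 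This has the advantage that each $b_{i,j}$ is automatically a bounded function with compact support and the required vanishing moments (inherited from $\psi$), so no distributional polynomial projection is needed and the $L^\infty$ bound $\|b_{i,j}\|_{L^\infty}\lesssim 2^i$ comes directly from $|F(y,t)|\lesssim 2^{i+1}$ on $(\widetilde\Omega_{i+1})^\complement$. Your Stein-style approach can be made to work, but it is considerably more delicate in this generality.

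Two points in your sketch are genuinely off. First, the $\vartheta_0$-concavity and the boundedness of $\cm$ on $X^{1/(\vartheta_0 p)}$ have nothing to do with ``absorbing the polynomial projection error''; the size of $P_{i,j}$ is controlled purely by the grand maximal function via the usual local argument. Those hypotheses are used in \cite{zwyy} to establish convergence of $\sum_{i,j}\lambda_{i,j}a_{i,j}$ in $\cs'(\rn)$: concavity turns a norm of a sum into a sum of norms, which is what lets one show the tails in $i$ are Cauchy. Second, your convergence argument for $i\to\infty$ is not right as stated. You invoke ``$|\Omega_i|\to0$'', but in a general ball quasi-Banach function space (e.g.\ a Morrey space) the implication $\|\mathbf{1}_{\Omega_i}\|_X\to0\Rightarrow|\Omega_i|\to0$ fails; and ``uniform maximal control of $f$ outside $\Omega_i$'' is irrelevant since $b_i$ lives \emph{inside} $\Omega_i$. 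What one actually needs is an estimate of the form $|\langle b_i,\psi\rangle|\lesssim\sum_j 2^i r_{i,j}^{n+d+1}(1+|x_{i,j}|)^{-M}$ coming from the vanishing moments, and then a mechanism---this is precisely where the concavity and the maximal bound on $X^{1/(\vartheta_0 p)}$ enter---to sum this over $j$ and show it tends to $0$ as $i\to\infty$.
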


\begin{lemma}\label{Thar}
Let $X$ be a ball quasi-Banach function space satisfying
Assumption \ref{a2.15} for some $p_-\in(0,\infty)$.
Assume that, for any given $r\in(0,\underline{p})$
with $\underline{p}$ as in \eqref{Eqpll},
$X^{1/r}$ is a ball Banach function space.
Assume that there exist $r_0\in(0,\underline{p})$ and $p_0\in(r_0,\infty)$ such that
\begin{equation*}
\lf\|\cm^{((p_0/r_0)')}(f)\r\|_{(X^{1/r_0})'}\le C\lf\|f\r\|_{(X^{1/r_0})'},
\end{equation*}
where $\cm$ is as in \eqref{mm} and the positive constant $C$ is independent of $f$.
Let $d\in\zz_+$ with $d\geq \lfloor n(1/\underline{p}-1)\rfloor$,
$c\in(0,1]$, $q\in(\max\{1,p_0\},\infty]$
and $A,\ \widetilde A\in(0,\infty)$ and let
$\{a_{i,j}\}_{i\in\zz,j\in\nn}$ be a sequence of $(X,\,q,\,d)$-atoms supported, respectively, in balls
$\{B_{i,j}\}_{i\in\zz,j\in\nn}$
satisfying that $\sum_{j\in\nn}\mathbf{1}_{cB_{i,j}}\le A$ for any $i\in\zz$,
$\lambda_{i,j}:=\widetilde A2^i\|\mathbf{1}_{B_{i,j}}\|_{X}$ for any $i\in\zz$ and $j\in\nn$,
the series
$
f:=\sum_{i\in\zz}\sum_{j\in\nn}\lambda_{i,j}a_{i,j}
$
converges in $\cs'(\rn)$
and
$$
\sup_{i\in\zz}\lf\|\sum_{j\in\nn}
\frac{\lambda_{i,j}\mathbf{1}_{B_{i,j}}}{\|\mathbf{1}_{B_{i,j}}\|_{X}}\r\|_{X}<\infty.
$$
Then $f\in WH_X(\rn)$ and
$$
\|f\|_{WH_X(\rn)}\lesssim\sup_{i\in\zz}\lf\|\sum_{j\in\nn}
\frac{\lambda_{i,j}\mathbf{1}_{B_{i,j}}}{\|\mathbf{1}_{B_{i,j}}\|_{X}}\r\|_{X},
$$
where the implicit positive constant is independent of $f$.
\end{lemma}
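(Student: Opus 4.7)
The plan is to estimate $\|M_N^0(f)\|_{WX}$ directly: by Definition \ref{2.8}, it suffices to show that, for every $\alpha\in(0,\infty)$, $\alpha\lf\|\mathbf{1}_{\{x\in\rn:\ M_N^0(f)(x)>\alpha\}}\r\|_X\ls G$, where
$G:=\sup_{i\in\zz}\lf\|\sum_{j\in\nn}\lambda_{i,j}\mathbf{1}_{B_{i,j}}/\|\mathbf{1}_{B_{i,j}}\|_X\r\|_X=\widetilde A\sup_{i\in\zz}2^i\lf\|\sum_{j\in\nn}\mathbf{1}_{B_{i,j}}\r\|_X$. The strategy mirrors the classical Calder\'on--Zygmund-type splitting of Fefferman--Soria and Liu for $WH^p(\rn)$, replacing $L^p$-estimates by their counterparts in $X$ through Lemma \ref{Le45}, Remark \ref{Refs}, and Assumption \ref{a2.15}.

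Given $\alpha\in(0,\infty)$, choose $i_0\in\zz$ with $2^{i_0}\le\alpha<2^{i_0+1}$ and split $f=f^{(1)}+f^{(2)}$, where $f^{(1)}:=\sum_{i\le i_0}\sum_{j\in\nn}\lambda_{i,j}a_{i,j}$ and $f^{(2)}:=\sum_{i>i_0}\sum_{j\in\nn}\lambda_{i,j}a_{i,j}$, both convergent in $\cs'(\rn)$ (a consequence of the hypothesized convergence of the full series, after truncating and passing to the limit). By the Aoki--Rolewicz inequality of Remark \ref{Re213} (and its analogue for $X$), it suffices to bound $\lf\|\mathbf{1}_{\{M_N^0(f^{(k)})>c\alpha\}}\r\|_X$ separately for $k\in\{1,2\}$. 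For the high-coefficient piece $f^{(2)}$, set $E:=\bigcup_{i>i_0}\bigcup_{j\in\nn}2\sqrt n\,B_{i,j}$. On $E$ I simply use $\mathbf{1}_{\{M_N^0(f^{(2)})>c\alpha\}\cap E}\le\mathbf{1}_E$ and estimate the $X$-norm of $\mathbf{1}_E$ via the Aoki--Rolewicz trick combined with Remark \ref{Refs}:
\begin{equation*}
\lf\|\mathbf{1}_E\r\|_X^\nu\ls\sum_{i>i_0}\lf\|\sum_{j\in\nn}\mathbf{1}_{B_{i,j}}\r\|_X^\nu\ls G^\nu\sum_{i>i_0}(\widetilde A2^i)^{-\nu}\ls(G/\alpha)^\nu.
\end{equation*}
Off $E$, the vanishing-moment condition $d\ge\lfloor n(1/\underline p-1)\rfloor$ yields the classical pointwise decay $M_N^0(a_{i,j})(x)\ls\lf[\cm(\mathbf{1}_{B_{i,j}})(x)\r]^{(n+d+1)/n}/\|\mathbf{1}_{B_{i,j}}\|_X$ for $x\notin 2\sqrt n\,B_{i,j}$; multiplied by $\lambda_{i,j}$ this cancels the factor $\|\mathbf{1}_{B_{i,j}}\|_X$. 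Summing over $j$ and $i>i_0$ and controlling the resulting $X$-norm of $\sum_{j\in\nn}[\cm(\mathbf{1}_{B_{i,j}})]^{(n+d+1)/n}$ through Assumption \ref{a2.15} (as in Remark \ref{Refs}), then using the geometric factor $\sum_{i>i_0}2^{-i}\ls1/\alpha$, yields the desired bound by $G/\alpha$.

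For the low-coefficient piece $f^{(1)}$, since $2^i\le\alpha$ for $i\le i_0$, an $L^q$-type estimate is available: Lemma \ref{Le45} (with $r=r_0\in(0,\underline p)$ and the hypothesis on $\cm^{((p_0/r_0)')}$ on $(X^{1/r_0})'$) yields, for each fixed $i\le i_0$, the vector-valued inequality
\begin{equation*}
\lf\|\lf(\sum_{j\in\nn}|\lambda_{i,j}a_{i,j}|^{r_0}\r)^{1/r_0}\r\|_{X}\ls\widetilde A2^i\lf\|\lf(\sum_{j\in\nn}\mathbf{1}_{B_{i,j}}\r)^{1/r_0}\r\|_{X},
\end{equation*}
which, when coupled with the standard pointwise bound $M_N^0(g)\ls[\cm(|g|^{r_0})]^{1/r_0}$ (valid for $N$ sufficiently large), a Chebyshev-type estimate in $X^{1/r_0}$, and the geometric sum $\sum_{i\le i_0}2^i\ls\alpha$, gives $\alpha\|\mathbf{1}_{\{M_N^0(f^{(1)})>c\alpha\}}\|_X\ls G$. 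Combining the bounds for $f^{(1)}$ and $f^{(2)}$ completes the proof. The principal technical obstacles are, first, ensuring that the decay exponent $(n+d+1)/n$ strictly exceeds $1/\underline p$, so that the Fefferman--Stein vector-valued inequality may be applied with a usable power to bound $\sum_{j\in\nn}[\cm(\mathbf{1}_{B_{i,j}})]^{(n+d+1)/n}$ in $X$-norm (which is precisely what the condition $d\ge\lfloor n(1/\underline p-1)\rfloor$ guarantees); and second, the quasi-norm nature of $X$ and $WX$, where every triangle inequality must be replaced by an Aoki--Rolewicz estimate with an exponent $\nu\in(0,1)$, with all geometric sums in $i$ still converging thanks to the exponential-in-$i$ normalization $\lambda_{i,j}=\widetilde A2^i\|\mathbf{1}_{B_{i,j}}\|_X$.
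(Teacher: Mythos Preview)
The paper does not prove this lemma itself; it is cited from \cite[Theorem 4.8]{zwyy}. However, the argument pattern appears verbatim in the paper's proof of the necessity of Theorem \ref{Tharea} (with $S$ in place of $M_N^0$), so that proof serves as the reference here. Your overall architecture --- split at the level $i_0$ determined by $\alpha$, handle $f^{(2)}$ by an on-$E$/off-$E$ dichotomy, and treat $f^{(1)}$ via Lemma \ref{Le45} --- matches exactly.

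There is, however, a genuine error in your treatment of $f^{(1)}$. The ``standard pointwise bound'' $M_N^0(g)\ls[\cm(|g|^{r_0})]^{1/r_0}$ is \emph{false} when $r_0<1$, and here $r_0<\underline p\le 1$. Indeed, for $r_0\in(0,1)$ the function $t\mapsto t^{r_0}$ is concave, so Jensen's inequality gives $[\cm(|g|^{r_0})]^{1/r_0}\le\cm(g)$, the reverse of what you need; taking $g=\mathbf 1_{B(\vec 0_n,1)}$ already shows the two sides have different decay at infinity. The correct route (the one used for the estimate of $\mathrm I_{1,1}$ in the proof of Theorem \ref{Tharea}) is to first invoke sublinearity, $M_N^0(f^{(1)})\le\sum_{i\le i_0}\sum_j\lambda_{i,j}M_N^0(a_{i,j})$, then split each term into $M_N^0(a_{i,j})\mathbf 1_{4B_{i,j}}$ and $M_N^0(a_{i,j})\mathbf 1_{(4B_{i,j})^\complement}$. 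On $4B_{i,j}$ one uses that $a_{i,j}\in L^q$ with $q>1$, so $\|M_N^0(a_{i,j})\|_{L^q(\rn)}\ls\|a_{i,j}\|_{L^q(\rn)}\le|B_{i,j}|^{1/q}/\|\mathbf 1_{B_{i,j}}\|_X$, and only \emph{then} applies Lemma \ref{Le45} to the functions $[\|\mathbf 1_{B_{i,j}}\|_X M_N^0(a_{i,j})]^{\widetilde q}\mathbf 1_{4B_{i,j}}$ with a suitable $\widetilde q$; a preliminary H\"older inequality in the $i$-sum (with exponent $\widetilde q>1$ and a weight $2^{-ia\widetilde q}$) is what makes the geometric series over $i\le i_0$ converge. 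Off $4B_{i,j}$ the vanishing-moment decay handles things exactly as you outline for $f^{(2)}$.

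A smaller point: in your off-$E$ estimate for $f^{(2)}$, the phrase ``geometric factor $\sum_{i>i_0}2^{-i}\ls 1/\alpha$'' is misleading. The terms carry a factor $2^i$, not $2^{-i}$; what makes the sum converge is a Chebyshev estimate with exponent $r_3\in(n/[\underline p(n+d+1)],1)$, after which one sums $\sum_{i>i_0}2^{i(r_3-1)}\ls\alpha^{r_3-1}$ (see the estimate of $\mathrm I_3$ in the paper).
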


We also need the following molecular construction of $WH_{X}(\rn)$ from \cite[Theorem 5.3]{zwyy}.

\begin{definition}\label{Demol}
Let $X$ be a ball quasi-Banach function space, $\epsilon\in(0,\infty)$, $q\in[1,\infty]$ and $d\in\zz_+$.
A measurable function $m$ is called an $(X,\,q,\,d,\,\epsilon)$-\emph{molecule} associated with some ball $B\subset\rn$ if
\begin{enumerate}
\item[(i)] for any $j\in\nn$, $\|m\|_{L^q(S_j(B))}\le2^{-j\epsilon}|S_j(B)|^\frac1q\|\mathbf{1}_B\|_{X}^{-1}$,
where $S_0:=B$ and, for any $j\in\nn$, $S_j(B):=(2^jB)\setminus(2^{j-1}B)$;
\item[(ii)] $\int_\rn m(x)x^\beta\,dx=0$ for any $\beta\in\zz_+^n$ with $|\beta|\le d$.
\end{enumerate}
\end{definition}

\begin{lemma}\label{Thmolcha}
Let $X$ be a ball quasi-Banach function space satisfying
Assumption \ref{a2.15} for some $p_-\in(0,\infty)$.
Assume that, for any given $r\in(0,\underline{p})$
with $\underline{p}$ as in \eqref{Eqpll},
$X^{1/r}$ is a ball Banach function space and
assume that there exists a $p_+\in[p_-,\infty)$ such that, for any given $r\in(0,\underline{p})$
and $p\in(p_+,\infty)$, and for any $f\in(X^{1/r})'$,
\begin{equation*}
\lf\|\cm^{((p/r)')}(f)\r\|_{(X^{1/r})'}\le C\lf\|f\r\|_{(X^{1/r})'},
\end{equation*}
where $\cm$ is as in \eqref{mm} and the positive constant $C$ is independent of $f$.
Let $d\in\zz_+$ with $d\geq \lfloor n(1/\underline{p}-1)\rfloor$.
Let $q\in(\max\{p_+,1\},\infty]$, $\epsilon\in(n+d+1,\infty)$,
$A,\ \widetilde A\in(0,\infty)$ and $c\in(0,1]$, and let
$\{m_{i,j}\}_{i\in\zz,j\in\nn}$ be a
sequence of $(X,\,q,\,d,\,\epsilon)$-molecules
associated, respectively, with balls $\{B_{i,j}\}_{i\in\zz,j\in\nn}$
satisfying that $\sum_{j\in\nn}\mathbf{1}_{cB_{i,j}}\le A$ for any $i\in\zz$, $\{\lambda_{i,j}\}_{i\in\zz,j\in\nn}:=\{\widetilde
A2^i\|\mathbf{1}_{B_{i,j}}\|_{X}\}_{i\in\zz,j\in\nn}$,
$$
\sup_{i\in\zz}\lf\|\sum_{j\in\nn}
\frac{\lambda_{i,j}\mathbf{1}_{B_{i,j}}}{\|\mathbf{1}_{B_{i,j}}\|_{X}}\r\|_{X}<\infty
$$
and the series
$f:=\sum_{i\in \zz}\sum_{j\in\nn}\lambda_{i,j}m_{i,j}$
converges in $\cs'(\rn)$.
Then $f\in WH_X(\rn)$ and
$$
\|f\|_{WH_X(\rn)}\lesssim\sup_{i\in\zz}\lf\|\sum_{j\in\nn}
\frac{\lambda_{i,j}\mathbf{1}_{B_{i,j}}}{\|\mathbf{1}_{B_{i,j}}\|_{X}}\r\|_{X},
$$
where the implicit positive constant is independent of $f$.
\end{lemma}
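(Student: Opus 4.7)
The plan is to convert the molecular decomposition of $f$ into an atomic one and invoke Lemma~\ref{Thar}. For each $(i,j)\in\zz\times\nn$, I decompose the $(X,q,d,\epsilon)$-molecule $m_{i,j}$ via the standard Taibleson--Weiss construction: set $m_{i,j}^{(k)}:=m_{i,j}\mathbf{1}_{S_k(B_{i,j})}$ for each $k\in\zz_+$, subtract polynomial corrections $P_{i,j}^{(k)}$ of degree $\le d$ supported in $2^k B_{i,j}$ so that each adjusted piece carries the vanishing moments of order $\le d$, and telescope them using $\int_\rn m_{i,j}(x)x^\beta\,dx=0$ for $|\beta|\le d$. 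This yields
\begin{equation*}
m_{i,j}=\sum_{k=0}^\infty\mu_{i,j,k}a_{i,j,k},
\end{equation*}
where each $a_{i,j,k}$ is an $(X,q,d)$-atom supported in $2^k B_{i,j}$ and $|\mu_{i,j,k}|\le C\,2^{-k\epsilon}\,\|\mathbf{1}_{2^k B_{i,j}}\|_X/\|\mathbf{1}_{B_{i,j}}\|_X$. The decay $2^{-k\epsilon}$ is inherited from the molecular size bound in Definition~\ref{Demol}(i), while the ratio of indicator norms accounts for the change of atomic normalization from $B_{i,j}$ to $2^k B_{i,j}$.

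Substituting and regrouping by $k$, one writes $f=\sum_{k=0}^\infty f_k$ in $\cs'(\rn)$, where, for each $k\in\zz_+$,
\begin{equation*}
f_k:=C\,2^{-k\epsilon}\sum_{i\in\zz}\sum_{j\in\nn}\widetilde A\,2^i\,\|\mathbf{1}_{2^k B_{i,j}}\|_X\,\widetilde a_{i,j,k}
\end{equation*}
with $\widetilde a_{i,j,k}$ an $(X,q,d)$-atom associated with $2^k B_{i,j}$. Applying Lemma~\ref{Thar} to $f_k$ with $B_{i,j}$ replaced by $2^k B_{i,j}$ and the bounded overlap parameter $c$ replaced by $c/2^k\in(0,1]$ (the hypothesis is preserved since $(c/2^k)\cdot 2^k B_{i,j}=cB_{i,j}$, so $\sum_{j\in\nn}\mathbf{1}_{(c/2^k)\cdot 2^k B_{i,j}}=\sum_{j\in\nn}\mathbf{1}_{cB_{i,j}}\le A$), one obtains $\|f_k\|_{WH_X(\rn)}\lesssim 2^{-k\epsilon}\sup_{i\in\zz}\|\sum_{j\in\nn}\widetilde A\,2^i\mathbf{1}_{2^k B_{i,j}}\|_X$. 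Since $\epsilon>n+d+1>n/\underline p$, one may pick $r\in(0,\underline p)$ satisfying Assumption~\ref{a2.15} with $\epsilon>n/r$; then Remark~\ref{Refs} with $\beta=2^k$ yields $\|\sum_{j\in\nn}\mathbf{1}_{2^k B_{i,j}}\|_X\lesssim 2^{kn/r}\|\sum_{j\in\nn}\mathbf{1}_{B_{i,j}}\|_X$, and consequently
\begin{equation*}
\|f_k\|_{WH_X(\rn)}\lesssim 2^{-k(\epsilon-n/r)}\sup_{i\in\zz}\lf\|\sum_{j\in\nn}\frac{\lambda_{i,j}\mathbf{1}_{B_{i,j}}}{\|\mathbf{1}_{B_{i,j}}\|_X}\r\|_X.
\end{equation*}
The sublinearity of $M_N^0$ combined with the Aoki--Rolewicz inequality for $WX$ (Remark~\ref{Re213}) then bounds $\|f\|_{WH_X(\rn)}^\nu$ by the convergent geometric series $\sum_{k=0}^\infty 2^{-k\nu(\epsilon-n/r)}$ times the supremum raised to the $\nu$, giving the desired estimate.

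The main obstacle is executing the atomic decomposition of a molecule in the first step: designing the polynomial corrections $P_{i,j}^{(k)}$ so that each $a_{i,j,k}$ satisfies both the vanishing moment condition of order $\le d$ and the $L^q$-size condition of an $(X,q,d)$-atom, with the telescoping reproducing $m_{i,j}$ exactly in $\cs'(\rn)$. A secondary technical point is that the implicit constant in Lemma~\ref{Thar} may carry a polynomial dependence on $1/c$; tracking this dependence (or, equivalently, revisiting the proof of Lemma~\ref{Thar} in the molecular setting rather than invoking it as a black box) ensures that the extra factors introduced by the substitution $c\mapsto c/2^k$ are dominated by the decay $2^{-k\epsilon}$, leaving the geometric series convergent after a mild adjustment of $r$.
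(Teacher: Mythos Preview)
The paper does not prove this lemma here; it is quoted as \cite[Theorem~5.3]{zwyy}. Your reduction to the atomic case via a Taibleson--Weiss decomposition of each molecule is a standard and legitimate route, and the decomposition $m_{i,j}=\sum_{k\ge0}\mu_{i,j,k}a_{i,j,k}$ with $|\mu_{i,j,k}|\lesssim 2^{-k\epsilon}\|\mathbf{1}_{2^kB_{i,j}}\|_X/\|\mathbf{1}_{B_{i,j}}\|_X$ is correct.

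However, the black-box invocation of Lemma~\ref{Thar} with $c$ replaced by $c/2^k$ does not work as written. Tracing the proof of Lemma~\ref{Thar} (or the analogous estimates in the proof of Theorem~\ref{Tharea}), the implicit constant contains a factor $(1/c)^{n/r}$ coming from an internal application of Remark~\ref{Refs} that passes from a fixed dilate of the support ball down to $cB_{i,j}$. With $c\mapsto c/2^k$ this contributes $(2^k)^{n/r}$, and your \emph{external} use of Remark~\ref{Refs} to pass from $\sum_j\mathbf{1}_{2^kB_{i,j}}$ to $\sum_j\mathbf{1}_{B_{i,j}}$ adds a second factor $(2^k)^{n/r}$. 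The net decay is $2^{-k(\epsilon-2n/r)}$, and $\epsilon>n+d+1$ only guarantees $\epsilon>n/\underline{p}$, not $\epsilon>2n/\underline{p}$; for instance, $n=1$, $\underline{p}=1/2$, $d=1$, $\epsilon\in(3,4)$ gives a divergent series. So the ``secondary technical point'' you flag is in fact decisive.

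The repair is exactly the one you hint at: open the box. The internal estimate in Lemma~\ref{Thar} actually lands on $\bigl\|\sum_j\mathbf{1}_{c'B'_{i,j}}\bigr\|_X=\bigl\|\sum_j\mathbf{1}_{cB_{i,j}}\bigr\|_X$ \emph{before} the final monotonicity step $\le\bigl\|\sum_j\mathbf{1}_{B'_{i,j}}\bigr\|_X$; stopping there eliminates your external application of Remark~\ref{Refs} and leaves a single factor $(2^k)^{n/r}$, for which $\epsilon>n/\underline{p}$ suffices. Equivalently (and this is what the proof in \cite{zwyy} does), one bypasses the molecules-to-atoms reduction altogether and runs the argument of Lemma~\ref{Thar} directly with molecules: the vanishing moments and the annular $L^q$ bounds of Definition~\ref{Demol} yield the same off-ball pointwise decay $M_N^0(m_{i,j})(x)\lesssim\|\mathbf{1}_{B_{i,j}}\|_X^{-1}\bigl[\cm(\mathbf{1}_{B_{i,j}})(x)\bigr]^{(n+d+1)/n}$ on $(2B_{i,j})^\complement$, plus an easy on-ball $L^q$ estimate summed over the annuli $S_k(B_{i,j})$, so the three pieces $\mathrm{I}_1,\mathrm{I}_2,\mathrm{I}_3$ go through with the original $c$ untouched.
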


\section{Littlewood--Paley function characterizations\label{s3}}

In this section, applying the atomic and the molecular characterizations
of $WH_X(\rn)$ (see \cite[Theorems 4.2 and 4.8]{zwyy}),
we establish various Littlewood--Paley function characterizations of $WH_X(\rn)$.

\begin{definition}\label{cone}
For any $\alpha\in(0,\infty)$ and $x\in\rn$, let
$\Gamma_\alpha(x):=\{(y,t)\in\mathbb{R}_+^{n+1}:\ |x-y|<\alpha t\}$, which is
called the \emph{cone} of aperture $\alpha$
with vertex $x\in\rn$. When $\alpha:=1$, we denote $\Gamma_\alpha(x)$ simply by $\Gamma(x)$.
\end{definition}

In what follows, the symbol $\vec 0_n$ denotes the \emph{origin} of $\rn$ and,
for any $\phi\in\cs(\rn)$, $\widehat\phi$ denotes its \emph{Fourier transform}
which is defined by setting, for any $\xi\in\rn$,
$$
\widehat\phi(\xi):=\int_\rn e^{-2\pi ix\xi}\phi(x)\,dx.
$$
For any $f\in\cs'(\rn)$, $\widehat f$ is defined by setting, for any $\varphi\in\mathcal{S}(\rn)$,
$\la\widehat f,\varphi\ra:=\la f,\widehat\varphi\ra$; also, for any $f\in\mathcal{S}(\rn)$ [or $\mathcal{S}'(\rn)$],
$f^{\vee}$ denotes its \emph{inverse Fourier transform} which is defined by setting,
for any $\xi\in\rn$, $f^{\vee}(\xi):=\widehat{f}(-\xi)$.

Let $\phi\in\mathcal{S}(\rn)$ satisfy $\widehat{\phi}(\vec 0_n)=0$
and, for any $x\in\rn\setminus\{\vec 0_n\}$,
there exists a $t\in (0,\fz)$ such that $\widehat\phi(tx)\not=0$.
From this, we easily deduce that
\begin{equation}\label{Eq62}
\int_\rn\phi(x)\,dx=0.
\end{equation}
Recall that, for any $f\in\cs'(\rn)$,
the \emph{Lusin area function $S(f)$}
and the \emph{Littlewood--Paley $g_\lambda^\ast$-function $g_\lambda^\ast(f)$}
of $f$ with any given $\lambda\in(0,\infty)$ are defined,
respectively, by setting, for any $x\in\rn$,
\begin{equation}\label{eq61}
S(f)(x):=\lf[\int_{\Gamma(x)}|f\ast\phi_t(y)|^2\,\frac{dy\,dt}{t^{n+1}}\r]^{1/2}
\end{equation}
and
\begin{equation}\label{eq62}
g_\lambda^\ast(f)(x):=\lf[\int_0^\infty\int_\rn\lf(\frac{t}{t+|x-y|}\r)^{\lambda n}|f\ast\phi_t(y)|^2\,\frac{dy\,dt}{t^{n+1}}\r]^{1/2},
\end{equation}
where, for any $x\in\rn$,
$\Gamma(x)$ is as in Definition \ref{cone} and, for any $t\in(0,\infty)$ and $x\in\rn$,
$\phi_t(x):=t^{-n}\phi(x/t)$.

Let $\phi\in\mathcal{S}(\rn)$ satisfy $\widehat{\phi}(\vec 0_n)=0$
and, for any $x\in\rn\setminus\{\vec 0_n\}$,
there exists a $j\in \zz$ such that $\widehat\phi(2^{j}x)\not=0$.
Recall that, for any $f\in\cs'(\rn)$, the \emph{Littlewood--Paley $g$-function $g(f)$}
is defined by setting, for any $x\in\rn$,
\begin{equation}\label{eq63}
g(f)(x):=\lf[\int_0^\infty|f\ast\phi_t(x)|^2\,\frac{dt}{t}\r]^{1/2}.
\end{equation}

\begin{remark}\label{wa}
\begin{itemize}
\item[(i)] Observe that $\phi$ appearing in the definition of the Littlewood--Paley $g$-function
satisfies some stronger assumptions than the corresponding assumptions on $\phi$ appearing in the
definitions of both the Lusin area function and the Littlewood--Paley $g_\lambda^\ast$-function.
This is because, when we try to establish the Littlewood--Paley $g$-function characterization of
$WH_X(\rn)$ (see Theorem \ref{Tharea} below), we need to use the discrete
Calder\'on reproducing formula (see Lemma \ref{Le49}(i) below)
which requires that $\phi$ satisfies these stronger assumptions, while, when we establish the Lusin area
function and the Littlewood--Paley $g_\lambda^\ast$-function characterizations of $WH_X(\rn)$
(see, respectively, Theorems \ref{Tharea} and \ref{Thgx}), we only need to use the continuous Calder\'on
reproducing formula (see Lemma \ref{Le47} below) which requires that $\phi$ satisfies only these slightly
weak assumptions. However, in both cases, we did not assume that $\phi$ is radial and
has compact support and hence, compared with the assumptions required in \cite{LYJ,YYYZ},
our assumptions in both cases here are quite weaker.

\item[(ii)] In all these Littlewood--Paley function characterizations of $WH_X(\rn)$,
we only need $\widehat{\phi}(\vec 0_n)=0$, namely, $\phi$ has zero order vanishing moment.
The zero order vanishing moment of $\phi$ is used to guarantee the boundedness of all
Littlewood--Paley functions on $L^q(\rn)$ for any $q\in(1,\infty)$, which is also necessary.
Since we need to use this boundedness of all the Littlewood--Paley functions on $L^q(\rn)$ for
any $q\in(1,\infty)$, in this sense, the assumption on the vanishing moment of $\phi$
appearing in Theorems \ref{Tharea}, \ref{Thgf} and \ref{Thgx} is necessary. Compared with
all the known results on the Littlewood--Paley function characterizations on function
spaces (see, for instance, \cite{LYJ,YYYZ}), this assumption on the vanishing moment of $\phi$ is also minimal.
\end{itemize}
\end{remark}

\subsection{Characterization by the Lusin area function}\label{s3.1}

In this subsection, borrowing some ideas from the proof of \cite[Lemma 4.1]{SHYY},
by first introducing the weak tent space associated with the ball quasi-Banach function space
$X$
(see Definition \ref{Dewt} below),
we then characterize the weak Hardy space $WH_X(\rn)$ by the Lusin-area function.

Now we introduce the notion of weak tent spaces associated with $X$.

Let $\alpha\in(0,\infty)$. For any measurable function
$F:\ \rr_+^{n+1}:=\rn\times(0,\infty)\to\cc$ and
$x\in\rn$, define
\begin{equation}\label{aa}
\ca^{(\alpha)}(F)(x):=\lf[\int_{\Gamma_\alpha(x)}|F(y,t)|^2\,\frac{dy\,dt}{t^{n+1}}\r]^\frac12,
\end{equation}
where $\Gamma_\alpha(x)$ is as in Definition \ref{cone}.
Recall that a measurable function $F$ is said to belong to the \emph{tent space}
$T_2^{p,\alpha}(\rr_+^{n+1})$, with $p\in(0,\infty)$, if $\|F\|_{T_2^{p,\alpha}(\rr_+^{n+1})}:=\|\ca^{(\alpha)}(F)\|_{L^p(\rn)}<\infty$.
For any given ball quasi-Banach function space $X$,
the $X$-\emph{tent space}, $T_X^\alpha(\rr_+^{n+1})$, with aperture $\alpha$,
is defined to be the set of all measurable
functions $F$ such that $\ca^{(\alpha)}(F)\in X$
and naturally equipped with the quasi-norm
$\|F\|_{T_X^{\alpha}(\rr_+^{n+1})}:=\|\ca^{(\alpha)}(F)\|_{X}$.

For any ball $B(x,r)\subset\rn$ with $x\in\rn$ and $r\in(0,\infty)$, let
$$
T_\alpha(B):=\lf\{(y,t)\in\rr^{n+1}_+:\ 0<t<\frac{r}{\alpha},\ |y-x|<r-\alpha t\r\}.
$$
When $\alpha=1$, we denote $T_\alpha(B)$ simply by $T(B)$.

\begin{definition}\label{1q1}
Let $X$ be a ball quasi-Banach function space, $p\in(1,\infty)$ and $\alpha\in(0,\infty)$.
A measurable function $a:\ \rr_+^{n+1}\to\cc$ is called a
\emph{$(T_X,p)$-atom}, associated with a ball, if there
exists a ball $B\subset\rn$ such that
\begin{itemize}
\item[(i)] $\supp(a):=\{(x,t)\in\rr_+^{n+1}:\ a(x,t)\neq0\}\subset T(B)$,
\item[(ii)] $\|a\|_{T_2^{p,1}(\rr_+^{n+1})}\le|B|^{1/p}/\|\mathbf{1}_B\|_{X}$.
\end{itemize}
Moreover, if $a$ is a $(T_X,p)$-atom for any $p\in(1,\infty)$,
then $a$ is called a \emph{$(T_X,\infty)$-atom}.
\end{definition}

The following lemma is a direct corollary of
the definition of $(T_X, p)$-atoms, and we omit the details
here.

\begin{lemma}\label{Lezj}
Let $X$ be a ball quasi-Banach function space and $p\in(1,\infty)$. Then,
for any $(T_X,p)$-atom
$a$ supported in $T(B)$, $\ca^{(1)}(a)$
is supported in $B$ and $\|\ca^{(1)}(a)\|_{L^p(\rn)}
\le | B|^{1/p}\|\mathbf{1}_{ B}\|_X^{-1}$.
\end{lemma}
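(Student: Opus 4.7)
The plan is to verify both conclusions by unwinding the definitions, with the only non-trivial ingredient being a short triangle-inequality argument to locate the support of $\ca^{(1)}(a)$.

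For the support claim, I would fix $x \in \rn \setminus B$ and show that $\Gamma(x) \cap T(B) = \emptyset$, which immediately forces $\ca^{(1)}(a)(x) = 0$ since $a$ vanishes off $T(B)$ by Definition \ref{1q1}(i). Write $B = B(x_B, r)$. If $(y,t)$ belonged to both $\Gamma(x)$ and $T(B)$, then $|x - y| < t$ and $|y - x_B| < r - t$, so the triangle inequality yields
\begin{equation*}
|x - x_B| \le |x - y| + |y - x_B| < t + (r - t) = r,
\end{equation*}
contradicting $x \notin B$. Hence $\ca^{(1)}(a)$ is supported in $B$.

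For the norm estimate, I would simply observe that, by the definition of $\|\cdot\|_{T_2^{p,1}(\rr_+^{n+1})}$ given just above Definition \ref{1q1} together with Definition \ref{1q1}(ii),
\begin{equation*}
\lf\|\ca^{(1)}(a)\r\|_{L^p(\rn)} = \|a\|_{T_2^{p,1}(\rr_+^{n+1})} \le \frac{|B|^{1/p}}{\|\mathbf{1}_B\|_X}.
\end{equation*}
There is no real obstacle here; both statements are essentially tautologies once the definitions are written out, with the geometric support lemma being the only substantive (but still elementary) step.
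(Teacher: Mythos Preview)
Your proposal is correct and matches the paper's approach: the paper itself states that the lemma ``is a direct corollary of the definition of $(T_X, p)$-atoms'' and omits the details, and your argument is precisely the standard unwinding of those definitions (the triangle-inequality support computation plus the tautological norm identity $\|\ca^{(1)}(a)\|_{L^p(\rn)} = \|a\|_{T_2^{p,1}(\rr_+^{n+1})}$).
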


\begin{definition}\label{Dewt}
Let $\alpha\in(0,\infty)$ and $X$ be a ball quasi-Banach space. The $X$-\emph{weak tent space} $WT_X^{\alpha}(\rr_+^{n+1})$
is defined to be the set of all measurable functions $f$ on $\rr_+^{n+1}$ such that
$$
\|f\|_{WT_X^{\alpha}(\rr_+^{n+1})}:=\|\ca^{(\alpha)}(f)\|_{WX}<\infty.
$$
\end{definition}

To establish the atomic characterization of
$WT_X^1(\rr_+^{n+1})$, we first introduce some notation.
Let $f\in WT_X^1(\rr_+^{n+1})$. For any $i\in\zz$,
let
\begin{equation}\label{ly1}
O_i:=\{x\in\rn:\ \ca^{(1)}(f)(x)>2^i\}\quad\text{and}\quad F_i:=\rn\backslash O_i.
\end{equation}
Moreover, for any given $\gamma\in(0,1)$ and $i\in\zz$, define
\begin{equation}\label{eqq1}
(O_i)^*_\gamma:=\{x\in\rn:\ \cm(\mathbf1_{O_i})(x)>1-\gamma\}\quad\text{and}
\quad(F_i)_\gamma^*:=\rn\backslash (O_i)_\gamma^*,
\end{equation}
where $\cm$ is as in \eqref{mm}.
From Definition \ref{Debqfs}(i), it is easy to see that
$\ca^{(1)}(f)(x)<\infty$ for almost every $x\in\rn$,
which implies that $f\in L_{\loc}^2(\rr_+^{n+1})$.
Therefore, the set of all Lebesgue points of $f$ is almost
equal to $\rr_+^{n+1}$ except for a set of Lebesgue measure zero.
By this and the proof of \cite[Lemma 4.7]{SHYY}, we have the following conclusion.

\begin{lemma}\label{lem47}
Let $f\in WT^1_X(\rr^{n+1}_+)$ and $\gz\in(0,1)$.
Then
\begin{equation*}
\supp(f)\subset\bigcup_{i\in{\mathbb Z}}\widehat{(O_i)_\gz^\ast}\cup E,
\end{equation*}
where, for any $i\in\zz$, $(O_i)_\gz^\ast$ is as in \eqref{eqq1} and $E\subset\rr^{n+1}_+$
satisfies $\int_E\frac{dy\,dt}{t}=0$.
\end{lemma}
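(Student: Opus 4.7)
The plan is to argue by contradiction: assuming $(y_0,t_0)\in\supp(f)$ but $(y_0,t_0)\notin\bigcup_{i\in\zz}\widehat{(O_i)_\gamma^*}$, I will force $f$ to vanish almost everywhere on an open neighborhood of $(y_0,t_0)$, which contradicts the essential-support assumption. The exceptional set $E$ then just absorbs the Lebesgue null set of points at which Lebesgue differentiation or the Fubini almost-everywhere identity below fails; since $1/t$ is locally bounded on $\urn$, any Lebesgue null subset $E$ of $\urn$ automatically satisfies $\int_E\,dy\,dt/t=0$.

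First, from $f\in WT_X^1(\urn)$ we get $\ca^{(1)}(f)\in WX$, hence $\ca^{(1)}(f)<\infty$ a.e.\ on $\rn$ and in particular $f\in L^2_{\loc}(\urn)$, as the setup of the lemma already notes. Reading $\widehat{(O_i)_\gamma^*}$ as the standard tent $\{(y,t)\in\urn:\,B(y,t)\subset(O_i)_\gamma^*\}$, the hypothesis $(y_0,t_0)\notin\widehat{(O_i)_\gamma^*}$ for every $i\in\zz$ produces, for each $i$, some $x_i\in B(y_0,t_0)\cap(F_i)_\gamma^*$. Each $(F_i)_\gamma^*$ is closed by lower semicontinuity of $\cm(\mathbf{1}_{O_i})$, and the monotonicity $O_i\supset O_{i+1}$ forces $(F_i)_\gamma^*\subset (F_{i+1})_\gamma^*$. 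Applying the Cantor intersection theorem to the decreasing (as $i\to-\infty$) sequence $(F_i)_\gamma^*\cap\overline{B(y_0,t_0)}$ of nonempty closed subsets of the compact set $\overline{B(y_0,t_0)}$ then supplies a point $x_0\in\overline{B(y_0,t_0)}\cap\bigcap_{i\in\zz}(F_i)_\gamma^*$.

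Next, $x_0\in\bigcap_i(F_i)_\gamma^*$ means $\cm(\mathbf{1}_{O_i})(x_0)\leq 1-\gamma$ for every $i$, and monotone convergence (with $O_i\uparrow O:=\{x\in\rn:\,\ca^{(1)}(f)(x)>0\}$ as $i\to-\infty$) upgrades this to $\cm(\mathbf{1}_O)(x_0)\leq 1-\gamma$. Setting $S:=\rn\setminus O=\{\ca^{(1)}(f)=0\}$, every ball containing $x_0$ therefore has $S$-density at least $\gamma$. The decisive trick is to apply this with the enlarged ball $B_\delta:=B(y_0+\delta v,\,t_0+\delta)$ for small $\delta>0$, where $v$ is a unit vector from $y_0$ toward $x_0$ (arbitrary if $x_0=y_0$): one verifies $B_\delta\supset B(y_0,t_0)$ and $x_0\in B_\delta$, so
$$
|B(y_0,t_0)\cap S|\geq|B_\delta\cap S|-|B_\delta\setminus B(y_0,t_0)|\geq\gamma|B_\delta|-\lf(|B_\delta|-|B(y_0,t_0)|\r),
$$
which equals $c_n[t_0^n-(1-\gamma)(t_0+\delta)^n]$ and is strictly positive for all sufficiently small $\delta>0$ because $\gamma\in(0,1)$.

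Finally, $\ca^{(1)}(f)\equiv 0$ on $S$ combined with Fubini yields
$$
0=\int_S\lf[\ca^{(1)}(f)(x)\r]^2\,dx=\int_{\urn}|f(y,t)|^2\,\frac{|B(y,t)\cap S|}{t^{n+1}}\,dy\,dt,
$$
so $f=0$ a.e.\ on the open set $U:=\{(y,t)\in\urn:\,|B(y,t)\cap S|>0\}$ (open because $(y,t)\mapsto|B(y,t)\cap S|$ is lower semicontinuous), which by the preceding step contains $(y_0,t_0)$. This contradicts $(y_0,t_0)\in\supp(f)$, completing the argument apart from the Lebesgue null set $E$ on which the Fubini almost-everywhere identity fails. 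The principal obstacle is the boundary case $|x_0-y_0|=t_0$, in which $x_0\notin B(y_0,t_0)$ and the $S$-density of $x_0$ could a priori be located entirely outside $B(y_0,t_0)$; the enlarged-ball trick is precisely what extracts strictly positive $S$-measure inside $B(y_0,t_0)$ in this boundary case.
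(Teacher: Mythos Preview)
Your proof is correct, and the overall strategy—show that if $(y_0,t_0)$ lies outside every tent $\widehat{(O_i)_\gamma^*}$ then $B(y_0,t_0)$ meets $S=\{\ca^{(1)}(f)=0\}$ in positive measure, whence $f$ vanishes a.e.\ on an open set containing $(y_0,t_0)$—is essentially what the paper (via the proof of \cite[Lemma~4.7]{SHYY}) has in mind; the paper's phrasing in terms of Lebesgue points is just an equivalent way to package the null exceptional set.

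That said, your Cantor-intersection step and enlarged-ball trick are an unnecessary detour. Since the maximal operator $\cm$ in \eqref{mm} is \emph{uncentered} (the supremum runs over all balls containing the point), you may apply the density bound directly with the ball $B(y_0,t_0)$ itself, which already contains each $x_i$:
\[
\frac{|B(y_0,t_0)\cap O_i|}{|B(y_0,t_0)|}\leq\cm(\mathbf{1}_{O_i})(x_i)\leq1-\gamma,
\qquad\text{so}\qquad
|B(y_0,t_0)\cap F_i|\geq\gamma\,|B(y_0,t_0)|
\]
for every $i\in\zz$, and then $|B(y_0,t_0)\cap S|\geq\gamma\,|B(y_0,t_0)|>0$ by monotone convergence as $i\to-\infty$ (using $F_i\downarrow S$). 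There is no boundary case to handle and no need to produce a single limit point $x_0$. In fact you can then bypass the Fubini computation too: pick any $x\in B(y_0,t_0)\cap S$; the open cone $\Gamma(x)$ contains $(y_0,t_0)$ and $\ca^{(1)}(f)(x)=0$ forces $f=0$ a.e.\ on $\Gamma(x)$, giving the contradiction directly. Your argument works, but the compactness and boundary analysis obscure what is a two-line observation once the uncentered nature of $\cm$ is used.
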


For a closed set $F\subset\rn$, denote by $\mathcal R_\alpha(F)$
the union of all cones with aperture $\alpha$ and vertices in $F$, namely,
$$
\mathcal R_\alpha(F):=\bigcup_{x\in F}\Gamma_\alpha(x).
$$
For an open set $O\subset \rn$, define the tent $\widehat O$ over $O$ by
$$
\widehat O:=\{(x,t)\in\rr_+^{n+1}:\ B(x,t)\subset O\}.
$$
It is easy to see that $\widehat O=[\mathcal R(O^\complement)]^\complement$.

\begin{theorem}\label{That}
Let $f:\ \rr_+^{n+1}\to\cc$ be a measurable function.
Let $X$ be a ball quasi-Banach function space
satisfying Assumption \ref{a2.15} for some $p_-\in(0,\infty)$.
Assume that, for any given $r\in(0,\underline{p})$, $X^{1/r}$ is a ball
Banach function space and assume that
there exist $r_0\in(0,\underline{p})$, $p_0\in(r_0,\infty)$ and a positive constant $C$ such that,
for any $f\in(X^{1/r_0})'$,
\begin{equation}\label{Eqdm}
\lf\|\cm^{((p_0/r_0)')}(f)\r\|_{(X^{1/r_0})'}\le C\lf\|f\r\|_{(X^{1/r_0})'},
\end{equation}
where $\cm$ is as in \eqref{mm}.
Then $f\in WT_X^1(\rr_+^{n+1})$ if and only if there exists a
sequence $\{a_{i,j}\}_{i\in\zz,j\in\nn}$ of $(T_X,\infty)$-atoms
associated, respectively, to balls $\{B_{i,j}\}_{i\in\zz,j\in\nn}$ such that
\begin{itemize}
\item[{\rm(i)}] $f:=\sum_{i\in\zz}\sum_{j\in\nn}\lambda_{i,j}a_{i,j}$
almost everywhere on $\rr_+^{n+1}$, where $\lambda_{i,j}:=2^i\|\mathbf{1}_{B_{i,j}}\|_X$;
\item[{\rm(ii)}]
$$
\sup_{i\in\zz}2^i\lf\|\sum_{j\in\nn}\mathbf1_{B_{i,j}}\r\|_X<\infty;
$$
\item[{\rm(iii)}] there exist $c\in(0,1]$ and $M_0\in\nn$ such that,
for any $i\in\zz$, $\sum_{j\in\nn}\mathbf1_{cB_{i,j}}\le M_0$,
where $c$ and $M_0$ are independent of $i\in\zz$, $j\in\nn$ and $f$.
\end{itemize}
Moreover,
$$
\lf\|f\r\|_{WT_X^1(\rr_+^{n+1})}\sim\inf\lf[\sup_{i\in\zz}2^i
\lf\|\sum_{j\in\nn}\mathbf1_{B_{i,j}}\r\|_X\r],
$$
where the infimum is taken over all decompositions of $f$ as above
and the positive equivalence constants are independent of $f$.
\end{theorem}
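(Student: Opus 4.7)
The plan is to establish the two directions separately, adapting the classical Coifman--Meyer--Stein tent space decomposition to the weak setting and following the template of \cite[Lemma 4.1]{SHYY}.

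\smallskip

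\textbf{Necessity} (atomic decomposition). Given $f\in WT_X^1(\mathbb{R}_+^{n+1})$, I would first take $\gamma\in(1-2^{-n},1)$ close to $1$, define the level sets $O_i$ and their maximal enlargements $(O_i)^\ast_\gamma$ as in \eqref{ly1} and \eqref{eqq1}, and invoke Lemma \ref{lem47} to see that $\supp(f)\subset\bigcup_{i\in\zz}\widehat{(O_i)^\ast_\gamma}$ up to a null set for $\frac{dy\,dt}{t}$. Apply a Whitney decomposition to each open set $(O_i)^\ast_\gamma$ to obtain a family of balls $\{B_{i,j}\}_{j\in\nn}$ with the controlled overlap $\sum_{j\in\nn}\mathbf{1}_{cB_{i,j}}\le M_0$ for some $c\in(0,1]$ and $M_0\in\nn$ depending only on $n$. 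Then partition $\widehat{(O_i)^\ast_\gamma}\setminus\widehat{(O_{i+1})^\ast_\gamma}$ into pairwise disjoint sets $\{T_{i,j}\}_{j\in\nn}$ with $T_{i,j}\subset T(B_{i,j})$, and set $\lambda_{i,j}:=2^i\|\mathbf{1}_{B_{i,j}}\|_X$ and $a_{i,j}:=\lambda_{i,j}^{-1}f\mathbf{1}_{T_{i,j}}$, so that (i) holds.

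\smallskip

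\textbf{Verifying the atom property.} The inclusion $\supp(a_{i,j})\subset T(B_{i,j})$ is immediate. For the $L^p$ estimate with any $p\in(1,\infty)$, the key geometric observation is that if $(y,t)\in\widehat{(O_i)^\ast_\gamma}\setminus\widehat{(O_{i+1})^\ast_\gamma}$, then $B(y,t)\not\subset(O_{i+1})^\ast_\gamma$, whence a standard Hardy--Littlewood maximal argument gives $|B(y,t)\cap F_{i+1}|\gtrsim|B(y,t)|$. Combining this with Fubini's theorem yields
\begin{align*}
\int_{T_{i,j}}|f(y,t)|^2\,\frac{dy\,dt}{t}
\lesssim\int_{F_{i+1}\cap B_{i,j}}\bigl[\ca^{(1)}(f\mathbf{1}_{T_{i,j}})(x)\bigr]^2\,dx
\le 2^{2(i+1)}|B_{i,j}|,
\end{align*}
which, together with $\ca^{(1)}(f\mathbf{1}_{T_{i,j}})$ being supported in $B_{i,j}$ and a Hölder interpolation against the pointwise bound on $F_{i+1}$, produces $\|a_{i,j}\|_{T_2^{p,1}(\rr_+^{n+1})}\lesssim|B_{i,j}|^{1/p}/\|\mathbf{1}_{B_{i,j}}\|_X$ uniformly in $p$; by rescaling the constant we obtain a genuine $(T_X,\infty)$-atom. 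Finally, for (ii), since $(O_i)^\ast_\gamma\supset\bigcup_j B_{i,j}$ and the balls have bounded overlap, Remark \ref{Refs} (which follows from Assumption \ref{a2.15}) together with the weak-type embedding $\|\mathbf{1}_{O_i}\|_X\le 2^{-i}\|f\|_{WT_X^1(\rr_+^{n+1})}$ gives
\begin{equation*}
\sup_{i\in\zz}2^i\Bigl\|\sum_{j\in\nn}\mathbf{1}_{B_{i,j}}\Bigr\|_X
\lesssim\sup_{i\in\zz}2^i\lf\|\mathbf{1}_{(O_i)^\ast_\gamma}\r\|_X
\lesssim\sup_{i\in\zz}2^i\lf\|\mathbf{1}_{O_i}\r\|_X
\lesssim\|f\|_{WT_X^1(\rr_+^{n+1})}.
\end{equation*}

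\smallskip

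\textbf{Sufficiency.} Conversely, suppose $f=\sum_{i,j}\lambda_{i,j}a_{i,j}$ satisfies (i)--(iii). For any $\alpha\in(0,\infty)$, choose $i_0\in\zz$ with $2^{i_0}\le\alpha<2^{i_0+1}$ and split $f=f_1+f_2$ with $f_1:=\sum_{i<i_0}\sum_j\lambda_{i,j}a_{i,j}$ and $f_2:=\sum_{i\ge i_0}\sum_j\lambda_{i,j}a_{i,j}$. For $f_2$, the containment $\supp\ca^{(1)}(\lambda_{i,j}a_{i,j})\subset B_{i,j}$ from Lemma \ref{Lezj} gives
\begin{equation*}
\lf\|\mathbf{1}_{\{\ca^{(1)}(f_2)>\alpha/2\}}\r\|_X
\le\Bigl\|\mathbf{1}_{\bigcup_{i\ge i_0}\bigcup_{j}B_{i,j}}\Bigr\|_X
\lesssim\alpha^{-1}\sup_{i\in\zz}2^i\Bigl\|\sum_{j\in\nn}\mathbf{1}_{B_{i,j}}\Bigr\|_X,
\end{equation*}
via the quasi-triangle inequality (Remark \ref{Re213}) and condition (ii). For $f_1$, I would use the $(T_X,p)$-atom property for $p\in(1,\infty)$ chosen according to \eqref{Eqdm}, apply Lemma \ref{Le45} to $\{\ca^{(1)}(\lambda_{i,j}a_{i,j})\}$, and combine with a Chebyshev-type estimate in $X$ to bound $\|\alpha\mathbf{1}_{\{\ca^{(1)}(f_1)>\alpha/2\}}\|_X$. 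The main obstacle throughout is the careful handling of the $L^p$ bounds in the atom verification while still extracting a uniform $(T_X,\infty)$-atom constant; this requires interpolating the geometric $L^2$ bound against the trivial pointwise bound $\ca^{(1)}(f\mathbf{1}_{T_{i,j}})\lesssim\ca^{(1)}(f)\mathbf{1}_{F_{i+1}^c\cap B_{i,j}}+\text{boundary corrections}$, and using the bounded-overlap property to convert $\|\sum_j\mathbf{1}_{B_{i,j}}\|_X$ into $\|\mathbf{1}_{(O_i)^\ast_\gamma}\|_X$, where Assumption \ref{a2.15} is essential.
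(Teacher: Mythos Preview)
Your overall architecture matches the paper's proof: both directions proceed exactly as you outline, with the necessity built on the level sets $O_i$, their maximal enlargements $(O_i)^\ast_\gamma$, a Whitney decomposition, and Lemma~\ref{lem47}, and the sufficiency built on the split $f=f_1+f_2$ at level $i_0$ together with Lemma~\ref{Le45} for the $f_1$ piece and support containment plus bounded overlap for the $f_2$ piece. Your estimates for (ii), (iii), and the sufficiency are essentially those of the paper.

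The one place where your proposal diverges, and where there is a genuine gap, is the verification that each $a_{i,j}$ is a $(T_X,\infty)$-atom, i.e.\ a $(T_X,p)$-atom for \emph{every} $p\in(1,\infty)$. Your route is to first obtain the $L^2$ bound
\[
\|\ca^{(1)}(a_{i,j})\|_{L^2(\rn)}\lesssim |B_{i,j}|^{1/2}\|\mathbf 1_{B_{i,j}}\|_X^{-1}
\]
via the Coifman--Meyer--Stein lemma and the pointwise control $\ca^{(1)}(f)\le 2^{i+1}$ on $F_{i+1}$, and then ``H\"older interpolate'' to reach all $p$. This works cleanly for $1<p\le 2$ (just H\"older against $|B_{i,j}|$), but for $p>2$ it does not: the pointwise bound you invoke holds only on $F_{i+1}\cap B_{i,j}$, whereas $\ca^{(1)}(a_{i,j})$ is supported on all of $B_{i,j}$, and on $O_{i+1}\cap B_{i,j}$ you have no $L^\infty$ control and no smallness of measure to compensate. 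The ``boundary corrections'' you allude to are exactly the missing piece, and they are not minor.

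The paper resolves this by avoiding any direct estimate of $\|\ca^{(1)}(a_{i,j})\|_{L^p}$ and instead testing $a_{i,j}$ against an arbitrary $h\in T_2^{p'}(\rr_+^{n+1})$. Lemma~\ref{LeCMS} transfers the pairing $\langle a_{i,j},h\rangle$ to an integral over $(O_{i+1})^\complement$ only, where the pointwise bound \emph{is} available; H\"older there and the duality $(T_2^p)'=T_2^{p'}$ then give the $(T_X,p)$-atom estimate for each $p\in(1,\infty)$ directly. This duality step is the key idea you are missing; once you insert it, the rest of your sketch goes through as in the paper.
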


To show Theorem \ref{That}, we need the
following lemma which was essentially proved by Coifman et al. in \cite[Lemma 2]{CMS}.

\begin{lemma}\label{LeCMS}
Let $\alpha\in(0,\infty)$. Then there exist positive constants
$\gamma\in(0,1)$ and $C_{(\alpha,\gamma)}$ such that, for any closed subset
$F$ of $\rn$ and any  non-negative measurable function $H$ on $\rr_+^{n+1}$,
$$
\int_{\mathcal R_\alpha(F_\gamma^*)}H(y,t)t^n\,dy\,dt\le C_{(\alpha,\gamma)}
\int_F\lf\{\int_{\Gamma(x)}H(y,t)\,dy\,dt\r\}\,dx,
$$
where $F_\gamma^*:=\{x\in\rn:\ \cm(\mathbf1_{F^\complement})(x)>1-\gamma\}^\complement$
with $\cm$ as in \eqref{mm}.
\end{lemma}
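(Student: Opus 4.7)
The plan is to reduce the claim to a pointwise lower bound on the $F$-density of balls with centers in $\mathcal R_\alpha(F_\gamma^*)$, and then to apply Fubini's theorem. First, I would rewrite the right-hand side by exchanging the order of integration: since $x\in F$ and $(y,t)\in\Gamma(x)$ is equivalent to $x\in F\cap B(y,t)$, Fubini's theorem gives
$$\int_F\lf\{\int_{\Gamma(x)}H(y,t)\,dy\,dt\r\}\,dx=\int_{\rr_+^{n+1}}H(y,t)\lf|F\cap B(y,t)\r|\,dy\,dt.$$
Therefore it suffices to produce a positive constant $c=c(\alpha,\gamma)$ such that, for every $(y,t)\in\mathcal R_\alpha(F_\gamma^*)$, one has $|F\cap B(y,t)|\ge c\,t^n$. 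Once this is established, multiplying by $H(y,t)$ and integrating over $\mathcal R_\alpha(F_\gamma^*)\subset\rr_+^{n+1}$ yields the desired inequality with $C_{(\alpha,\gamma)}:=c^{-1}$.

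For the pointwise density bound, suppose $(y,t)\in\mathcal R_\alpha(F_\gamma^*)$; then, by definition, there exists $z\in F_\gamma^*$ with $|z-y|<\alpha t$. Set $\beta:=\max\{1,\alpha\}$, so that the ball $B(y,\beta t)$ contains $z$. Since $z\in F_\gamma^*$ and $\cm$ in \eqref{mm} is taken as a supremum over balls containing the point, we have $\cm(\mathbf1_{F^\complement})(z)\le 1-\gamma$, and applying this inequality with $B(y,\beta t)$ gives
$$\lf|B(y,\beta t)\cap F^\complement\r|\le(1-\gamma)\lf|B(y,\beta t)\r|=(1-\gamma)v_n\beta^n t^n,$$
where $v_n$ denotes the volume of the Euclidean unit ball. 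Using $B(y,t)\subset B(y,\beta t)$, it follows that
$$\lf|B(y,t)\cap F\r|\ge\lf|B(y,t)\r|-\lf|B(y,\beta t)\cap F^\complement\r|\ge\lf[1-(1-\gamma)\beta^n\r]v_n t^n.$$
Choosing $\gamma\in(0,1)$ with $\gamma>1-\beta^{-n}$ ensures $c:=[1-(1-\gamma)\beta^n]v_n>0$, and delivers the required lower bound.

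The only mild subtlety is the selection of $\gamma$: it must be tuned in terms of $\alpha$ via $\gamma>1-\min\{1,\alpha^{-n}\}$, which still leaves a full subinterval of $(0,1)$ of admissible values. Combining the Fubini identity with the pointwise bound yields
$$\int_{\mathcal R_\alpha(F_\gamma^*)}H(y,t)t^n\,dy\,dt\le c^{-1}\int_{\rr_+^{n+1}}H(y,t)\lf|F\cap B(y,t)\r|\,dy\,dt=C_{(\alpha,\gamma)}\int_F\lf\{\int_{\Gamma(x)}H(y,t)\,dy\,dt\r\}\,dx,$$
which is exactly the asserted inequality. I do not anticipate a serious obstacle; the conceptual core, already contained in \cite{CMS}, is that the mean-density condition $\cm(\mathbf1_{F^\complement})(z)\le 1-\gamma$ forces a definite proportion of the small ball $B(y,t)$ to lie in $F$, while $\gamma$ compensates for the enlargement factor $\beta=\max\{1,\alpha\}$ needed to pass from a ball containing $z$ down to $B(y,t)$ itself.
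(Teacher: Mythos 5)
Your proof is correct and follows essentially the same route the paper intends: it is the standard Coifman--Meyer--Stein argument (Tonelli/Fubini to rewrite the right-hand side as $\int_{\rr_+^{n+1}}H(y,t)|F\cap B(y,t)|\,dy\,dt$, plus the pointwise density bound $|F\cap B(y,t)|\ge c\,t^n$ for $(y,t)\in\mathcal R_\alpha(F_\gamma^*)$ with $\gamma$ chosen close to $1$ depending on $\alpha$), which the paper invokes by citing \cite[Lemma 2]{CMS} and omits. Your use of the uncentered maximal-function definition of $F_\gamma^*$ is exactly what the paper's accompanying remark requires in order to dispense with the finiteness assumption on $|F^\complement|$, so there is no gap.
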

\begin{remark}
In Coifman et al. \cite[Lemma 2]{CMS}, the set $F_\gamma^*$ is defined to be the
\emph{set of all points of the global $\gamma$-density with
respect to $F$} (see, \cite[p.\, 310]{CMS}) and
then Coifman et al. \cite[Lemma 2]{CMS} proved
$$F_\gamma^*=\{x\in\rn:\
\cm(\mathbf1_{F^\complement})(x)>1-\gamma\}^\complement,$$
under the additional assumption that
the measure of $F^\complement$ is finite. Then, using the fact that $F_\gamma^*=\{x\in\rn:\ \cm(\mathbf1_{F^\complement})(x)>1-\gamma\}^\complement$, they obtain the estimate of Lemma
\ref{LeCMS}.
If we just let $F_\gamma^*:=\{x\in\rn:\
\cm(\mathbf1_{F^\complement})(x)>1-\gamma\}^\complement$,
then, similarly to the proof of \cite[Lemma 2]{CMS},
we can indeed prove Lemma \ref{LeCMS} without the additional assumption
$|F^\complement|<\infty$; we omit the details here.
\end{remark}

\begin{proof}[Proof of Theorem \ref{That}]
We first show the necessity. Let $f\in WT_X^1(\rr_+^{n+1})$.
By \cite[Lemma 2.14]{SHYY}, we know that $1\notin X$.
For any $i\in\zz$ and
$f\in WT_X^1(\rr_+^{n+1})$, from this and the fact that
$\|{\mathbf 1}_{O_i}\|_X=\|{\mathbf 1}_{O_i}\|_{WX}$,
it follows that $O_i$ is a proper subset of $\rn$. For any given $\gamma\in(0,1)$
and any $i\in\zz$,
since $(O_i)_\gamma^*$ is open, from the well-known Whitney decomposition theorem
(see, for instance \cite[p.\,463]{G1}), we deduce that
there exists a family of cubes, $\{Q_{i,j}\}_{j\in\nn}$, with disjoint interiors
such that
$(O_i)_\gamma^*:=\bigcup_{j\in\nn}Q_{i,j}$,
$$
\sqrt{n}l_{Q_{i,j}}\le\mathrm{dist}\lf(Q_{i,j},[(O_i)_\gamma^*]
^\complement\r):=\inf_{\{x\in Q_{i,j},y\in[(O_i)_\gamma^*]
^\complement\}}|x-y|\le4\sqrt{n}l_{Q_{i,j}},
$$
where $l_{Q_{i,j}}$ denotes the side length of the cube $Q_{i,j}$,
and, for any given $j\in\nn$, there exist at most $12^n$
different cubes $\{Q_{i,k}\}_k$ that touch $Q_{i,j}$.

For any $i\in\zz$ and $j\in\nn$, let $B_{i,j}$
be the ball with the same center as $Q_{i,j}$ and with
radius $\frac{11\sqrt{n}}{2}l(Q_{i,j})$.
Moreover, let
$$
A_{i,j}:=\widehat{B_{i,j}}\cap[Q_{i,j}\times
(0,\infty)]\cap\widehat{(O_i)_\gamma^*}\cap[\widehat{(O_{i+1})_\gamma^*}]^\complement,
$$
$$
a_{i,j}:=2^{-i}\|{\mathbf 1}_{B_{i,j}}\|_X^{-1}f{\mathbf 1}_{A_{i,j}}\quad\text{and}
\quad\lambda_{i,j}:=2^i\|{\mathbf 1}_{B_{i,j}}\|_X.
$$
By Lemma \ref{lem47}, we find that
$$
\supp(f)\subset\bigcup_{i\in\zz}\widehat{(O_i)_\gamma^*}\cup E,
$$
where $E\subset\rr_+^{n+1}$ satisfies $\int_E\frac{dy\,dt}{t}=0$.
Then, from this and the fact that $\{[Q_{i,j}\times(0,\infty)]\cap\widehat{(O_i)_\gamma^*}
\cap[\widehat{(O_{i+1})_\gamma^*}]^\complement\}\subset \widehat{B_{i,j}}$, it follows that
\begin{equation}\label{Eqft}
f=\sum_{i\in\zz}\sum_{j\in\nn}\lambda_{i,j}a_{i,j}
\end{equation}
almost everywhere on $\rr_+^{n+1}$.

Next we show that, for any $i\in\zz$ and $j\in\nn$, $a_{i,j}$ is
a harmless constant multiple of a $(T_X,\infty)$-atom supported in $\widehat{B_{i,j}}$.
Let $p\in(1,\infty)$ and $h\in T_2^{p'}(\rr_+^{n+1})$
with $\|h\|_{T_2^{p'}(\rr_+^{n+1})}\le1$.
Since $A_{i,j}\subset[\widehat{(O_{i+1})_\gamma^*}]^\complement=\mathcal R_1([(O_{i+1})_\gamma^*)]^\complement)$, from
Lemma \ref{LeCMS}, we deduce that there exists a $\gamma\in(0,1)$
sufficiently close to $1$ such that
\begin{align*}
\lf|\langle a_{i,j},h\rangle\r|:&=\lf|\int_{\rr_+^{n+1}}
a_{i,j}(y,t){\mathbf 1}_{A_{i,j}}(y,t)h(y,t)\,\frac{dy\,dt}{t}\r|
\le\int_{\mathcal R_1([(O_{i+1})_\gamma^*)]^\complement)}
\lf|a_{i,j}(y,t)h(y,t)\r|\,\frac{dy\,dt}{t}\\
&\lesssim\int_{(O_{i+1})^\complement}\int_{\Gamma(x)}
\lf|a_{i,j}(y,t)h(y,t)\r|\,\frac{dy\,dt}{t^{n+1}}\,dx.
\end{align*}
Then, by the H\"older inequality, Lemma \ref{Lezj}
and the fact that $\ca^{(1)}(f)(x)\le2^{i+1}$ for any
$x\in(O_{i+1})^\complement$, we further obtain
\begin{align*}
\lf|\langle a_{i,j},h\rangle\r|&\lesssim
\int_{(O_{i+1})^\complement}\ca^{(1)}(a_{i,j})(x)\ca^{(1)}(h)(x)\,dx\\
&\lesssim2^{-i}\|\mathbf 1_{B_{i,j}}\|_X^{-1}\lf\{\int_{B_{i,j}\cap(O_{i+1})
^\complement}[\ca^{(1)}(f)(x)]^p\,dx\r\}^\frac1p\|h\|_{T_2^{p'}(\rr_+^{n+1})}\\
&\lesssim|B_{i,j}|^\frac1p\|\mathbf 1_{B_{i,j}}\|_X^{-1}.
\end{align*}
From this and $(T_2^p(\rr_+^{n+1}))'=T_2^{p'}(\rr_+^{n+1})$ (see \cite[Theorem 2]{CMS}),
where $(T_2^p(\rr_+^{n+1}))'$ denotes the \emph{dual
space} of $T_2^p(\rr_+^{n+1})$, it follows that
$$
\|a_{i,j}\|_{T_2^p(\rr_+^{n+1})}\lesssim|B_{i,j}|^{1/p}\|\mathbf1_{B_{i,j}}\|_X^{-1}.
$$
Thus, $a_{i,j}$ is a constant multiple of a $(T_X,p)$-atom
supported in $\widehat{B_{i,j}}$
for any given $p\in(1,\infty)$,
and hence a constant multiple of a $(T_X,\infty)$-atom,
which, together with \eqref{Eqft}, implies (i) of the necessity.

To show (ii), let $\theta\in(0,p_-)$. For any $i\in\zz$, we know that
\begin{align*}
\sum_{j\in\nn}\mathbf1_{Q_{i,j}}\lesssim\mathbf1_{(O_i)_\gamma^*}
\sim\mathbf1_{\{x\in\rn:\ \cm(\mathbf1_{O_i})(x)>1-\gamma\}}
\sim\mathbf1_{\{x\in\rn:\ \cm^{(\theta)}(\mathbf1_{O_i})(x)>\sqrt[\theta]{1-\gamma}\}},
\end{align*}
which, combined with \eqref{EqHLMS}, further implies that
\begin{align}\label{lyy}
\lf\|\sum_{j\in\nn}\mathbf1_{B_{i,j}}\r\|_X\lesssim\lf\|\mathbf1_{\{x\in\rn:\ \cm^{(\theta)}(\mathbf1_{O_i})(x)>\sqrt[\theta]{1-\gamma}\}}\r\|_X
\lesssim\lf\|\cm^{(\theta)}(\mathbf1_{O_i})\r\|_X
\sim\lf\|\cm(\mathbf1_{O_i})\r\|_{X^{1/\theta}}^{1/\theta}\lesssim\lf\|\mathbf1_{O_i}\r\|_X.
\end{align}
Thus, by the definition of $WX$, we obtain
$$
\sup_{i\in\zz}2^i\lf\|\sum_{j\in\nn}
\mathbf1_{B_{i,j}}\r\|_X\lesssim\sup_{i\in\zz}2^i\lf\|\mathbf1_{O_i}\r\|_X
\lesssim\lf\|\ca^{(1)}(f)\r\|_{WX}\sim\lf\|f\r\|_{WT_X^1(\rr_+^{n+1})},
$$
which completes the proof of (ii) of the necessity.

The conclusion of (iii) of the necessity
is a direct consequence of the property of $\{Q_{i,j}\}_{i\in\zz,j\in\nn}$,
which completes the proof of the necessity.

Now, we show the sufficiency. Assume that
there exist $\{\lambda_{i,j}\}_{i\in\zz,j\in\nn}\subset[0,\infty)$ and
a sequence $\{a_{i,j}\}_{i\in\zz,j\in\nn}$
of $(T_X,\infty)$-atoms associated, respectively, to balls $\{B_{i,j}\}_{i\in\zz,j\in\nn}$
such that (i), (ii) and (iii) of Theorem \ref{That} hold true.
To prove $f\in WT_X^1(\rr_+^{n+1})$, by the
definition of $WT_X^1(\rr_+^{n+1})$, it suffices to show that
\begin{equation}\label{Eqwt}
\sup_{\alpha\in(0,\infty)}\lf\{
\alpha\lf\|\mathbf1_{\{x\in\rn:\ \ca^{(1)}(f)(x)>\alpha\}}\r\|_X\r\}\lesssim \sup_{i\in\zz}2^i\lf\|\sum_{j\in\nn}\mathbf1_{B_{i,j}}\r\|_X.
\end{equation}
For any given $\alpha\in(0,\infty)$, let $i_0\in\zz$
be such that $2^{i_0}\le\alpha<2^{i_0+1}$. Then we write
$$
f=\sum_{i=-\infty}^{i_0-1}\sum_{j\in\nn}\lambda_{i,j}a_{i,j}
+\sum_{i=i_0}^{\infty}\sum_{j\in\nn}\cdots=:f_1+f_2.
$$
By Definition \ref{Debqfs}(ii), we obtain
\begin{align*}
\lf\|\mathbf{1}_{\{x\in\rn:\ \ca^{(1)}(f)(x)>\alpha\}}\r\|_X&\lesssim
\lf\|\mathbf{1}_{\{x\in\rn:\ \ca^{(1)}(f_1)(x)>\alpha/2\}}\r\|_X+
\lf\|\mathbf{1}_{\{x\in\rn:\ \ca^{(1)}(f_2)(x)>\alpha/2\}}\r\|_X\\\noz
&=:\mathrm{I_1}+\mathrm{I_2}.
\end{align*}

We first estimate $\mathrm{I_{1}}$. Let $\widetilde q\in(\max\{1/p_0,1\},1/{r_0}]$
and $a\in(0,1-1/{\widetilde q})$. Then, from the H\"older inequality, we deduce that
\begin{align*}
&\sum_{i=-\infty}^{i_0-1}\sum_{j\in\nn}\lambda_{i,j}\ca^{(1)}(a_{i,j})
\le\frac{2^{i_0a}}{(2^{a\widetilde q'}-1)^{1/\widetilde q'}}
\lf\{\sum_{i=-\infty}^{i_0-1}2^{-ia\widetilde q}\lf[\sum_{j\in\nn}
\lambda_{i,j}\ca^{(1)}(a_{i,j})\r]^{\widetilde q}\r\}^{1/\widetilde q},
\end{align*}
where $\widetilde q':={\widetilde q}/{(\widetilde q-1)}$. By this, Definition
 \ref{Debf}(i), $\widetilde qr_0\in(0,1]$, Lemma \ref{Le65}
and the assumption that $X^{1/r_0}$ is a ball Banach function space,
we conclude that
\begin{align*}
\mathrm{I_{1}}
&\lesssim\lf\|\mathbf{1}_{\{x\in\rn:\ 2^{i_0a}
\{\sum_{i=-\infty}^{i_0-1}2^{-ia\widetilde q}[\sum_{j\in\nn}
\lambda_{i,j}\ca^{(1)}(a_{i,j})(x)]^
{\widetilde q}\}^{1/\widetilde q}>2^{i_0-2}\}}\r\|_{X}\\
&\lesssim2^{-i_0\widetilde q(1-a)}\lf\|\sum_{i=-\infty}^
{i_0-1}2^{-ia\widetilde q}\lf[\sum_{j\in\nn}
\lambda_{i,j}\ca^{(1)}(a_{i,j})\r]^{\widetilde q}\r\|_{X}\\
&\lesssim2^{-i_0\widetilde q(1-a)}
\lf\|\sum_{i=-\infty}^{i_0-1}2^{(1-a)i\widetilde qr_0}\sum_{j\in\nn}\lf[
\lf\|\mathbf{1}_{B_{i,j}}\r\|_{X}\ca^{(1)}(a_{i,j})\r]
^{\widetilde qr_0}\r\|_{X^{1/r_0}}^\frac{1}{r_0}\\
&\lesssim2^{-i_0\widetilde q(1-a)}\lf[\sum_{i=-\infty}^{i_0-1}2^{(1-a)i\widetilde qr_0}
\lf\|\lf\{\sum_{j\in\nn}\lf[
\lf\|\mathbf{1}_{B_{i,j}}\r\|_{X}\ca^{(1)}(a_{i,j})\r]^{\widetilde qr_0}\r\}^\frac1{r_0}\r\|_{X}^{r_0}\r]^\frac{1}{r_0}.
\end{align*}

Let $q:=p_0\widetilde q\in(1,\infty)$. From Lemma \ref{Lezj} and the fact that
$\{a_{i,j}\}_{i\in\zz,j\in\nn}$ is a sequence
of $(T_X,\infty)$-atoms associated, respectively, to balls $\{B_{i,j}\}_{i\in\zz,j\in\nn}$,
we deduce that, for any $i\in\zz$ and $j\in\nn$, $\supp(\ca^{(1)}(a_{i,j}))\subset B_{i,j}$ and $$\lf\|\ca^{(1)}(a_{i,j})\r\|_{L^q(\rn)}\le|B_{i,j}|^{1/q}
\|\mathbf1_{B_{i,j}}\|_X^{-1}.$$
Then, by this, we conclude that, for any $i\in\zz$ and $j\in\nn$,
$$
\lf\|\lf[\lf\|\mathbf1_{B_{i,j}}\r\|_X\ca^{(1)}(a_{i,j})\r]^{\widetilde q}\r\|_{L^{p_0}(\rn)}
\lesssim\lf\|\mathbf1_{B_{i,j}}\r\|_X^{\widetilde q}
\lf\|\ca^{(1)}(a_{i,j})\r\|_{L^q(\rn)}^{\widetilde q}\lesssim|B_{i,j}|^{1/p_0},
$$
which, combined with Lemma \ref{Le45}, \eqref{EqHLMS} and $(1-a)\widetilde q>1$, further implies that
\begin{align*}
\mathrm{I_{1}}&\lesssim2^{-i_0\widetilde q(1-a)}
\lf[\sum_{i=-\infty}^{i_0-1}2^{(1-a)i\widetilde qr_0}\lf\|\lf(\sum_{j\in\nn}\mathbf{1}_{B_{i,j}}\r)^\frac1{r_0}\r\|_{X}^{r_0}\r]^{\frac1{r_0}}
\lesssim2^{-i_0\widetilde q(1-a)}
\lf[\sum_{i=-\infty}^{i_0-1}2^{(1-a)i\widetilde qr_0}\lf\|\lf(\sum_{j\in\nn}\mathbf{1}_{cB_{i,j}}\r)^\frac1{r_0}\r\|_{X}^{r_0}\r]^{\frac1{r_0}}\\
&\lesssim2^{-i_0\widetilde q(1-a)}
\lf[\sum_{i=-\infty}^{i_0-1}2^{[(1-a)\widetilde
q-1]ir_0}\r]^{\frac1{r_0}}\sup_{i\in\zz}2^i\lf\|\sum_{j\in\nn}
\mathbf{1}_{B_{i,j}}\r\|_{X}\lesssim\alpha^{-1}
\sup_{i\in\zz}2^i\lf\|\sum_{j\in\nn}\mathbf{1}_{B_{i,j}}\r\|_{X}.
\end{align*}
This shows that
\begin{equation*}
\alpha\mathrm{I_{1}}\lesssim\sup_{i\in\zz}2^i\lf\|\sum_{j\in\nn}\mathbf{1}_{B_{i,j}}\r\|_{X}.
\end{equation*}

Next we deal with $\mathrm{I_2}$. Let $r_2\in(0,\underline p)$.
Then, by \eqref{EqHLMS}, Definition \ref{Debf}(i),
the assumption that $X^{1/r_2}$ is a ball Banach
function space and $\sum_{j\in\nn}\mathbf{1}_{cB_{i,j}}\le A$,
we conclude that
\begin{align*}
\mathrm{I_2}&\lesssim
\lf\|\sum_{i=i_0}^\infty\sum_{j\in\nn}\mathbf{1}_{B_{i,j}}\r\|_{X}
\lesssim\lf\|\sum_{i=i_0}^\infty\sum_{j\in\nn}\mathbf{1}_{cB_{i,j}}\r\|_{X}
\sim\lf\|\lf\{\sum_{i=i_0}^\infty\sum_{j\in\nn}
\mathbf{1}_{cB_{i,j}}\r\}^{r_2}\r\|_{X^\frac1{r_2}}^\frac1{r_2}\\
&\lesssim\lf[\sum_{i=i_0}^\infty\lf\|\sum_{j\in\nn}
\mathbf{1}_{cB_{i,j}}\r\|_{X^\frac1{r_2}}\r]^\frac1{r_2}
\lesssim\lf[\sum_{i=i_0}^\infty\lf\|\sum_{j\in\nn}
\mathbf{1}_{cB_{i,j}}\r\|_{X}^{r_2}\r]^\frac1{r_2}
\lesssim\lf\{\sum_{i=i_0}^\infty2^{-ir_2}\lf[2^i
\lf\|\sum_{j\in\nn}\mathbf{1}_{B_{i,j}}\r\|_{X}\r]^{r_2}\r\}^{\frac1{r_2}}\\
&\lesssim\sup_{i\in\nn}2^i\lf\|\sum_{j\in\nn}
\mathbf{1}_{B_{i,j}}\r\|_{X}\lf(\sum_{i=i_0}^\infty2^{-ir_2}\r)^\frac1{r_2}
\lesssim\alpha^{-1}\sup_{i\in\zz}2^i\lf\|\sum_{j\in\nn}\mathbf{1}_{B_{i,j}}\r\|_{X},
\end{align*}
which implies that
\begin{equation*}
\alpha\mathrm{I_2}\lesssim\sup_{i\in\zz}2^i\lf\|\sum_{j\in\nn}\mathbf{1}_{B_{i,j}}\r\|_{X}.
\end{equation*}

Combining the estimates for $\mathrm{I_1}$ and
$\mathrm{I_2}$, we obtain \eqref{Eqwt} and hence complete the proof of Theorem \ref{That}.
\end{proof}

\begin{remark}\label{buji}
Let the sequence $\{a_{i,j}\}_{i\in\zz,j\in\nn}$ be as in the proof of Theorem \ref{That}. Then
we claim that
$\{\supp(a_{i,j})\}_{i\in\zz,j\in\nn}$
have pairwise disjoint interior. Indeed, let
$\{A_{i,j}\}_{i\in\zz,j\in\nn}$, $\{Q_{i,j}\}_{i\in\zz,j\in\nn}$ and
$\{\widehat{(O_i)_\gamma^*}\}_{i\in\zz}$
be as in the proof of Theorem \ref{That}. Then,
by the definition of the set $\{A_{i,j}\}_{i\in\zz,j\in\nn}$ , the fact that
$\{Q_{i,j}\}_{j\in\nn}$ for any given $i\in\zz$, and $\{\widehat{(O_i)_\gamma^*}
\cap[\widehat{(O_{i+1})_\gamma^*}]^\complement\}\}_{i\in\zz}$
have pairwise disjoint interior, we conclude that the collection of sets, $\{A_{i,j}\}_{i\in\zz,j\in\nn}$,
is pairwise disjoint, up to sets of measure zero. From this and the definitions of
$\{a_{i,j}\}_{i\in\zz,j\in\nn}$, it follows that this claim holds true.
\end{remark}

In what follows, we use the \emph{symbol $\epsilon\to0^+$} to denote
$\epsilon\in(0,\infty)$ and $\epsilon\to0$, and we also use
the \emph{symbol} $C_c^\fz(\rn)$ to denote the set of all infinitely differentiable
functions on $\rn$ with compact supports.

Combining Calder\'on \cite[Lemma 4.1]{C1975} and Folland and Stein \cite[Theorem 1.64]{FoS}
(see also \cite[p.\,219]{C1977} and \cite[Lemma 4.6]{YYYZ}), we immediately obtain
the following Calder\'on reproducing formula; see also \cite[Lemma 4.4]{zwyy}.

\begin{lemma}\label{Le47}
Let $\phi$ be a Schwartz function and, for any $x\in\rn\setminus\{\vec 0_n\}$,
there exists a $t\in (0,\fz)$ such that $\widehat\phi(tx)\not=0$. Then there
exists a $\psi\in\cs(\rn)$ such that $\wh\psi\in C^\fz_c(\rn)$ with its support
away from $\vec 0_n$, $\wh\phi\wh\psi\ge 0$ and, for any $x\in\rn\setminus\{\vec 0_n\}$,
$$\int^\fz_0\wh\phi(tx)\wh\psi(tx)\,\frac {dt}t=1.$$
Moreover, for any $f\in\cs'(\rn)$, if $f$ vanishes weakly at infinity,
then
$$
f=\int_0^\infty f\ast\phi_t\ast\psi_t\,\frac{dt}{t}\quad\text{in}\quad\cs'(\rn),
$$
namely,
$$
f=\lim_{\substack{\epsilon\to0^+\\ A\to\infty}}
\int_\epsilon^A f\ast\phi_t\ast\psi_t\,\frac{dt}{t}\quad\text{in}\quad\cs'(\rn).
$$
\end{lemma}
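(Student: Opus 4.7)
The plan is to obtain the lemma by combining two classical ingredients, as the statement already indicates: Calder\'on's construction (\cite[Lemma 4.1]{C1975}) produces the auxiliary function $\psi$, and Folland--Stein's Theorem 1.64 in \cite{FoS} validates the reproducing formula in $\cs'(\rn)$. I would organize the argument in two steps.

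\textbf{Construction of $\psi$.} I need a Schwartz function $\psi$ with $\wh\psi\in C_c^\infty(\rn)$ supported away from $\vec 0_n$, $\wh\phi\wh\psi\ge 0$ pointwise, and $\int_0^\infty\wh\phi(tx)\wh\psi(tx)\,\frac{dt}{t}=1$ for every $x\ne\vec 0_n$. The hypothesis on $\wh\phi$ gives, for each $\omega\in\mathbb S^{n-1}$, some $t_\omega>0$ with $\wh\phi(t_\omega\omega)\neq0$; by continuity I would enlarge this to an open neighborhood $U_\omega\subset\mathbb S^{n-1}$ of $\omega$ and an open interval $I_\omega\subset(0,\infty)$ around $t_\omega$ on which $|\wh\phi|$ is bounded below on the truncated cone $\{t\xi:\xi\in U_\omega,\ t\in I_\omega\}$. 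Compactness of $\mathbb S^{n-1}$ yields a finite subcover $\{U_{\omega_j}\}_{j=1}^N$; I would then take a smooth partition of unity $\{\chi_j\}$ on $\mathbb S^{n-1}$ subordinate to it, pick $r_j\in I_{\omega_j}$ for each $j$, and choose $\beta\in C_c^\infty((0,\infty))$ with $\beta\ge 0$ and $\int_0^\infty\beta(s)\,\frac{ds}{s}=1$. Setting $g(\xi):=\sum_{j=1}^N\chi_j(\xi/|\xi|)\beta(|\xi|/r_j)$, a direct scaling argument gives $\int_0^\infty g(t\xi)\,\frac{dt}{t}=\sum_j\chi_j(\xi/|\xi|)=1$ for every $\xi\neq\vec 0_n$, while $g$ is smooth, non-negative and compactly supported away from $\vec 0_n$ inside a region where $|\wh\phi|$ is bounded below. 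Defining $\wh\psi(\xi):=g(\xi)\overline{\wh\phi(\xi)}/|\wh\phi(\xi)|^2$ on $\supp g$ and zero elsewhere then makes $\wh\psi\in C_c^\infty(\rn)$, yields $\wh\phi\wh\psi=g\ge 0$, and produces the integrated identity.

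\textbf{Reproducing formula in $\cs'(\rn)$.} On the Fourier side, Step 1 reduces the identity $f=\int_0^\infty f\ast\phi_t\ast\psi_t\,\frac{dt}{t}$ to multiplying $\wh f$ by the constant $1$ on $\rn\setminus\{\vec 0_n\}$. To make this rigorous as distributions, I would pair both sides with an arbitrary $\varphi\in\cs(\rn)$, truncate the $t$-integral to $[\epsilon,A]$, apply Fubini (legal because the truncated kernel is Schwartz in its spatial variable uniformly in $t\in[\epsilon,A]$), and pass to the limits $\epsilon\to 0^+$ and $A\to\infty$. The lower-endpoint limit is harmless, since $\wh\psi$ vanishes in a neighborhood of $\vec 0_n$ and so the integrand drops to zero once $t$ is small enough on any fixed compact set containing $\supp\wh\varphi$; the upper-endpoint limit is exactly controlled by the hypothesis that $f$ vanishes weakly at infinity, which forces $\la f\ast\phi_t\ast\psi_t,\varphi\ra\to 0$ as $t\to\infty$. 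This is precisely the content of \cite[Theorem 1.64]{FoS}. The main (and essentially only) subtlety is verifying that our hypothesis on $\wh\phi$, namely non-vanishing on every ray from the origin, is exactly what licenses the finite-subcover step in Step 1; a weaker hypothesis, such as merely $\wh\phi\not\equiv 0$, would not suffice. Granted this verification, the lemma follows immediately from the two cited results.
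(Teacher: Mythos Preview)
Your proposal is correct and follows exactly the route the paper indicates: the paper does not give a proof but simply states that the lemma follows ``combining Calder\'on \cite[Lemma 4.1]{C1975} and Folland and Stein \cite[Theorem 1.64]{FoS},'' and you have filled in precisely those two ingredients. One minor caution: in Step~2 you reuse the letter $\varphi$ for the test function, which clashes with the $\phi$ already in the statement, and your description of the $\epsilon\to 0^+$ limit is a bit loose (a general test function does not have compactly supported Fourier transform, so the ``integrand drops to zero'' remark needs the auxiliary function $\eta$ with $\widehat\eta(x)=\int_1^\infty\widehat\phi(tx)\widehat\psi(tx)\,\frac{dt}{t}$ that the paper introduces later)---but since you correctly defer to \cite[Theorem 1.64]{FoS} for this step, there is no real gap.
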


To establish the Lusin area function characterization of $WH_X(\rn)$,
we also need the following notion of absolutely continuous quasi-norms.

\begin{definition}\label{abo}
A ball quasi-Banach function space $X$ is said to have an \emph{absolutely continuous quasi-norm} if
$\|\mathbf{1}_{E_j}\|_{X}\downarrow0$ as $j\to\infty$ whenever $\{E_j\}_{j=1}^\infty$ is a sequence of measurable sets
in $\rn$ satisfying that $E_j\supset E_{j+1}$ for any $j\in\mathbb{N}$, $\mathbf{1}_{E_1}\in X$
and $\cap_{j=1}^\infty E_j=\emptyset$.
\end{definition}

\begin{theorem}\label{Tharea}
Let $X$ be a ball quasi-Banach function space having an absolutely continuous quasi-norm
and satisfying
Assumption \ref{a2.15} for some $p_-\in(0,\infty)$.
Assume that there exist both $\vartheta_0\in(1,\infty)$ such that $X$ is $\vartheta_0$-concave
and $\gamma_0\in(0,\infty)$ such that $\cm$ in \eqref{mm} is bounded on $(WX)^{1/\gamma_0}$.
Assume that, for any $r\in(0,\underline{p})$
with $\underline{p}$ as in \eqref{Eqpll}, $X^{1/r}$ is a ball Banach function space and
there exists a $p_+\in[p_-,\infty)$ such that, for any given  $r\in(0,\underline{p})$
and $p\in(p_+,\infty)$, and for any $f\in(X^{1/r})'$,
\begin{equation*}
\lf\|\cm^{((p/r)')}(f)\r\|_{(X^{1/r})'}\lesssim\lf\|f\r\|_{(X^{1/r})'},
\end{equation*}
where the implicit positive constant is independent of $f$.
Then $f\in WH_X(\rn)$ if and only if $f\in\cs'(\rn)$,
$f$ vanishes weakly at infinity and $S(f)\in WX$, where $S(f)$ is as in \eqref{eq61}. Moreover,
there exists a positive constant $C$ such that,
for any $f\in WH_X(\rn)$,
$$
C^{-1}\lf\|S(f)\r\|_{WX}\le\|f\|_{WH_X(\rn)}\le C\lf\|S(f)\r\|_{WX}.
$$
\end{theorem}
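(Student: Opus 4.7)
The proof splits into the necessity $\|S(f)\|_{WX}\ls\|f\|_{WH_X(\rn)}$ and the sufficiency $\|f\|_{WH_X(\rn)}\ls\|S(f)\|_{WX}$, together with the side claims that each side being finite forces the appropriate distributional regularity ($f\in\cs'(\rn)$ and $f$ vanishing weakly at infinity). I expect the sufficiency---in particular the convergence issue flagged in the introduction as the gap in \cite[Theorem 4.5]{LYJ}---to be the main obstacle.

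For \textbf{necessity}, I would apply Lemma \ref{Thad} to write $f=\sum_{i\in\zz}\sum_{j\in\nn}\lambda_{i,j}a_{i,j}$ in $\cs'(\rn)$, where each $a_{i,j}$ is an $(X,\infty,d)$-atom supported in a ball $B_{i,j}$, with the bounded-overlap and summability controls given there. Fixing $\alpha\in(0,\infty)$ and $i_0\in\zz$ with $2^{i_0}\le\alpha<2^{i_0+1}$, I would split $f=f_1+f_2$ at level $i_0$ and estimate $\alpha\|\mathbf 1_{\{x\in\rn:\,S(f_k)(x)>\alpha/2\}}\|_X$ for $k=1,2$. For $f_1$, the $L^q(\rn)$-boundedness of $S$ (valid for any $q\in(1,\infty)$ because $\int\phi=0$ by \eqref{Eq62}), together with H\"older's inequality, Lemma \ref{Le45} and Assumption \ref{a2.15}, gives the desired bound in the spirit of the $\mathrm I_1$-estimate already used in the proof of Theorem \ref{That}. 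For $f_2$, I would split $S(a_{i,j})$ into its restriction to $2B_{i,j}$ and its tail, treating the tail via the rapid decay of $\phi$ away from $\supp a_{i,j}$ and Remark \ref{Refs}, in the spirit of the $\mathrm I_2$-estimate there.

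For \textbf{sufficiency}, assume $f$ vanishes weakly at infinity and $S(f)\in WX$, and set $F(y,t):=f*\phi_t(y)$, so $\ca^{(1)}(F)=S(f)$ and hence $F\in WT_X^1(\rr_+^{n+1})$. Theorem \ref{That} then provides a decomposition $F=\sum_{i,j}\lambda_{i,j}a_{i,j}$ into $(T_X,\infty)$-atoms supported in tents $T(B_{i,j})$, with $\{\supp(a_{i,j})\}$ pairwise disjoint up to null sets (Remark \ref{buji}), and with
\[
\sup_{i\in\zz}2^i\Big\|\sum_{j\in\nn}\mathbf 1_{B_{i,j}}\Big\|_X\ls\|S(f)\|_{WX}.
\]
Picking $\psi$ as in Lemma \ref{Le47} associated to $\phi$, I define
\[
m_{i,j}(x):=\int_{\rr_+^{n+1}}a_{i,j}(y,t)\,\psi_t(x-y)\,\frac{dy\,dt}{t}.
\]
Since $\wh\psi\in C_c^\infty(\rn)$ vanishes near the origin, $\psi$ has all vanishing moments and rapid decay; combined with $\supp(a_{i,j})\subset T(B_{i,j})$ and the $(T_X,\infty)$-atom size bound, a routine calculation shows that each $m_{i,j}$ is a harmless constant multiple of an $(X,\infty,d,\epsilon)$-molecule associated with $B_{i,j}$, for any $\epsilon>n+d+1$ prescribed in advance.

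The \textbf{core obstacle} is to prove
\[
f=\sum_{i\in\zz}\sum_{j\in\nn}\lambda_{i,j}m_{i,j}\quad\text{in }\cs'(\rn),
\]
which is precisely the step mishandled in \cite[Theorem 4.5]{LYJ}. My plan is to close it in two stages. First, following the ideas of Liu \cite{L}, I would show that the double series converges in $\cs'(\rn)$ to some $\widetilde f\in\cs'(\rn)$, using the molecular estimates just established together with the control on $\sup_i 2^i\|\sum_j\mathbf 1_{B_{i,j}}\|_X$ and the absolutely continuous quasi-norm of $X$ to dominate the tails. Second, imitating the argument of \cite[Theorem 3.21]{SHYY}, I would identify $\widetilde f=f$ by testing against an arbitrary $\varphi\in\cs(\rn)$: the disjointness of $\{A_{i,j}\}$ (Remark \ref{buji}) together with the fact from Lemma \ref{lem47} that $\bigcup_{i,j}A_{i,j}$ covers $\supp F$ up to a set of $\tfrac{dy\,dt}{t}$-measure zero lets Fubini convert $\sum_{i,j}\lambda_{i,j}\la m_{i,j},\varphi\ra$ into
\[
\int_{\rr_+^{n+1}}F(y,t)\,(\psi_t*\varphi)(y)\,\frac{dy\,dt}{t},
\]
which by Lemma \ref{Le47} and the weak vanishing of $f$ at infinity equals $\la f,\varphi\ra$. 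Finally, applying Lemma \ref{Thmolcha} to the molecular decomposition yields $\|f\|_{WH_X(\rn)}\ls\sup_i 2^i\|\sum_j\mathbf 1_{B_{i,j}}\|_X\ls\|S(f)\|_{WX}$, completing the proof.
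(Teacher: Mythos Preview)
Your necessity outline is essentially the paper's, and your sufficiency outline is correct up to, and including, the definition of the molecules $m_{i,j}$ and the appeal to Lemma~\ref{Thmolcha} at the end. The difficulty is concentrated exactly where you expect it, but your proposed resolution of the identification $\widetilde f=f$ has a genuine gap.

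You plan to use Fubini and the disjointness of the $A_{i,j}$ to write
\[
\sum_{i,j}\lambda_{i,j}\langle m_{i,j},\varphi\rangle
=\int_{\rr_+^{n+1}}F(y,t)\,(\psi_t*\varphi)(y)\,\frac{dy\,dt}{t},
\]
and then to identify the right-hand side with $\langle f,\varphi\rangle$ via Lemma~\ref{Le47}. The second step is the problem: Lemma~\ref{Le47} gives the Calder\'on reproducing formula only as a limit in $\cs'(\rn)$,
\[
f=\lim_{\substack{\epsilon\to 0^+\\ A\to\infty}}\int_\epsilon^A f*\phi_t*\psi_t\,\frac{dt}{t},
\]
and does \emph{not} assert that $\int_{\rr_+^{n+1}}|F(y,t)(\psi_t*\varphi)(y)|\,\frac{dy\,dt}{t}<\infty$. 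Without absolute integrability you cannot apply Fubini to pass from the double sum to the integral in the first place, and you cannot equate the (improper) integral with $\langle f,\varphi\rangle$. This is precisely the subtlety that created the gap in \cite{LYJ} which the paper set out to fix. The paper circumvents it by testing not against an arbitrary $\varphi$ but against $\Phi_{t_1}$ with $\widehat\Phi$ supported in an annulus: since $\widehat\psi$ is compactly supported away from the origin (see \eqref{67mm}), $\Phi_{t_1}*\psi_t=0$ unless $t$ lies in a \emph{bounded} interval $[at_1/8,bt_1]$. On this finite $t$-range one has the polynomial bound $|F(y,t)|\ls(1+|y|)^N$ from \cite[Theorem 2.3.20]{G1}, and together with the disjointness of the $\supp(A_{i,j})$ this justifies dominated convergence, yielding $\Phi_{t_1}*f=\Phi_{t_1}*g$ for every $t_1$. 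Hence $\widetilde{\supp}(\widehat{f-g})\subset\{\vec 0_n\}$, and since both $f$ and $g$ vanish weakly at infinity, $f=g$.

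A second, smaller gap is in your convergence argument for $\sum_{i,j}\lambda_{i,j}m_{i,j}$ in $\cs'(\rn)$. The absolutely continuous quasi-norm handles the tails $j\to\infty$ and $i\to-\infty$, but for $i\to+\infty$ the quantity $2^i\|\sum_j\mathbf 1_{B_{i,j}}\|_X$ is merely bounded by $\|S(f)\|_{WX}$, not small. The paper gains decay here by passing to the convexified space $X^s$ with $s\in(0,1)$: regarding the $m_{i,j}$ as $(X^s,q,d,\epsilon)$-molecules with coefficients $\mu_{i,j}=2^i\|\mathbf 1_{B_{i,j}}\|_{X^s}$, Lemma~\ref{Thmolcha} applied in $WH_{X^s}(\rn)$ produces the extra factor $2^{(s-1)N/s}\to 0$ (see \eqref{12}). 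Without this trick your tail estimate in $i\to+\infty$ does not close.
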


To show Theorem \ref{Tharea}, we still need to recall some necessary notions and
lemmas. Recall that $f\in\cs'(\rn)$ is said to \emph{vanish weakly at infinity} if, for any $\phi\in\cs(\rn)$,
$f\ast\phi_t\to0$ in $\cs'(\rn)$ as $t\to\infty$ (see, for instance, \cite[p.\,50]{FoS}).
The following lemma is just \cite[Lemma 4.3]{zwyy}.

\begin{lemma}\label{Le64}
Let $X$ be a ball quasi-Banach function space. If $f\in WH_X(\rn)$,
then $f$ vanishes weakly at infinity.
\end{lemma}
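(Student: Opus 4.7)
The plan is to combine the pointwise control of $|f\ast\phi_t|$ by the grand maximal function $M_N^0(f)$ with the weak-type integrability $M_N^0(f)\in WX$ in order to pass to the limit inside the distributional pairing. First, for any fixed $\phi\in\cs(\rn)$, I pick $C_\phi\in(0,\infty)$ so that $\phi/C_\phi\in\cf_N(\rn)$; then, directly from \eqref{EqMN0}, I obtain
\begin{equation*}
|f\ast\phi_t(x)|\le C_\phi M_N^0(f)(x)\quad\text{for every }t\in(0,\infty)\text{ and every }x\in\rn.
\end{equation*}
Next, by Definition \ref{2.8} one has $\alpha\|\mathbf{1}_{\{x\in\rn:\,M_N^0(f)(x)>\alpha\}}\|_X\le\|f\|_{WH_X(\rn)}$ for every $\alpha\in(0,\infty)$; letting $\alpha\to\infty$ and invoking Definition \ref{Debqfs}(i) and (iii), I deduce that $M_N^0(f)<\infty$ almost everywhere on $\rn$.

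To establish $f\ast\phi_t\to 0$ in $\cs'(\rn)$ as $t\to\infty$, I fix an arbitrary $\psi\in\cs(\rn)$ and analyse
\begin{equation*}
\langle f\ast\phi_t,\psi\rangle=\int_\rn(f\ast\phi_t)(x)\psi(x)\,dx,
\end{equation*}
whose integrand is dominated by $C_\phi M_N^0(f)(x)|\psi(x)|$ uniformly in $t$. The decay to zero will then be extracted in two stages. Stage (a): pointwise convergence $(f\ast\phi_t)(x)\to 0$ as $t\to\infty$ for almost every $x\in\rn$, derived from the scaling identity $\phi_t=(\phi_s)_{t/s}$ (valid for every $s\in(0,\infty)$) together with the structural fact that $M_N^0(f)\in WX$ combined with $\mathbf{1}_\rn\notin X$ (see \cite[Lemma 2.14]{SHYY}) precludes $f$ from possessing a non-trivial constant component. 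Stage (b): a truncated dominated-convergence argument, splitting the integration according to $\{x\in\rn:\,M_N^0(f)(x)\le A\}$ and its complement for a parameter $A$ to be optimised, where the tail is controlled using the weak bound $\alpha\|\mathbf{1}_{\{M_N^0(f)>\alpha\}}\|_X\le\|f\|_{WH_X(\rn)}$ together with the rapid decay of $\psi$, and the main part uses the pointwise decay from stage (a) together with the bound $A|\psi|$.

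The main obstacle is stage (a). Pointwise decay of $(f\ast\phi_t)(x)$ fails for generic $f\in\cs'(\rn)$: if $f$ is a non-zero constant, then $(f\ast\phi_t)(x)=\int_\rn\phi$, which is independent of $t$. The hypothesis $M_N^0(f)\in WX$ is precisely what rules out this pathology, since a non-zero constant $f\equiv c$ would force $M_N^0(f)$ to be a positive constant, which would in turn require $\mathbf{1}_\rn\in WX$ and hence $\mathbf{1}_\rn\in X$, contradicting \cite[Lemma 2.14]{SHYY}. Converting this qualitative exclusion into the quantitative pointwise statement $(f\ast\phi_t)(x)\to 0$ for almost every $x\in\rn$ is the delicate part: it will proceed by exploiting the scaling invariance of the grand maximal bound together with a careful analysis comparing $f\ast\phi_t$ at different scales and extracting, via the finiteness of $M_N^0(f)$ at $x$, the vanishing of the limit at such points.
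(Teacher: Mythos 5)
There is a genuine gap, and it sits exactly where you yourself flag the difficulty. Your only quantitative input is the bound $|f\ast\phi_t(x)|\le C_\phi M_N^0(f)(x)$ at the point $x$ itself; this bound is independent of $t$, so it can never produce decay as $t\to\infty$, and stage (a) is left as an unexecuted ``careful analysis''. Finiteness of $M_N^0(f)$ at (almost every) $x$ does not imply $f\ast\phi_t(x)\to0$, and excluding constants from $X$ via \cite[Lemma 2.14]{SHYY} is a qualitative statement about membership that gives no pointwise vanishing for a general $f\in WH_X(\rn)$. The mechanism actually used for this lemma (it is \cite[Lemma 4.3]{zwyy}; this article only cites it, but the same tool appears here in the proof of Lemma \ref{lyyy}) is the estimate $|f\ast\varphi_t(x)|\lesssim M_N^0(f)(y)$ for \emph{every} $y\in B(x,t)$ (see \cite[Proposition 3.10]{Bo}), i.e.\ control by $\inf_{y\in B(x,t)}M_N^0(f)(y)$ over a ball whose radius grows with $t$. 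From this and Definition \ref{2.8} one gets $|f\ast\phi_t(x)|\,\|\mathbf{1}_{B(x,t)}\|_X\lesssim\|M_N^0(f)\|_{WX}=\|f\|_{WH_X(\rn)}$, hence $|f\ast\phi_t(x)|\lesssim \|f\|_{WH_X(\rn)}/\|\mathbf{1}_{B(x,t)}\|_X\to0$ as $t\to\infty$, because $\|\mathbf{1}_{B(x,t)}\|_X\uparrow\|\mathbf{1}_{\rn}\|_X=\infty$ by Definition \ref{Debqfs}(iii) and $\mathbf{1}_{\rn}\notin X$. In fact, applying this estimate once at $x=\vec 0_n$ to $\langle f\ast\phi_t,\psi\rangle=f\ast\Theta^{(t)}_t(\vec 0_n)$, where $\Theta^{(t)}:=\phi\ast[t^n\psi(-t\,\cdot)]$ forms a family bounded in $\cs(\rn)$ uniformly in $t\ge1$ (so a fixed multiple lies in $\cf_N(\rn)$), gives the convergence in $\cs'(\rn)$ directly, with no splitting of the integral at all.

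Stage (b) also does not close as described. Even granting the pointwise decay, the tail $\int_{\{M_N^0(f)>A\}}|f\ast\phi_t(x)||\psi(x)|\,dx$ cannot be controlled by the weak bound $\alpha\|\mathbf{1}_{\{M_N^0(f)>\alpha\}}\|_X\le\|f\|_{WH_X(\rn)}$ together with the decay of $\psi$: for a general ball quasi-Banach function space there is no comparison between $\|\mathbf{1}_E\|_X$ and Lebesgue integrals over $E$, and $M_N^0(f)|\psi|$ need not be integrable. Already for $X=L^1(\rn)$ and $f=\delta_0\in WH^1(\rn)$ one has $M_N^0(f)(x)\sim|x|^{-n}$, so your proposed dominating tail integral is infinite for every $A$, whereas the true pairing is small only because of the $t$-dependent bound above. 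So both stages ultimately require the ball-infimum estimate; once it is in hand, the truncation argument becomes unnecessary.
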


\begin{lemma}\label{lyyy}
Let $X$ be a ball quasi-Banach function space. Assume that
$\{f_j\}_{j\in\nn}\subset WH_X(\rn)$ and
$f\in WH_X(\rn)$ satisfy $\lim_{j\to\infty}\|f_j-f\|_{WH_X(\rn)}=0$.
Then, for any $\varphi\in\cs(\rn)$,
$$
\lim_{j\to\infty}\la f_j-f,\varphi\ra=0.
$$
\end{lemma}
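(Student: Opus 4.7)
The strategy is to use the radial grand maximal function characterization of the $WH_X(\rn)$ norm to convert $WX$-convergence into distributional convergence. Set $g_j := f_j - f$, so $\|M_N^0(g_j)\|_{WX} \to 0$ by hypothesis and Definition \ref{DewSH}. For any $\varphi \in \cs(\rn)$, write $\tilde\varphi(y) := \varphi(-y)$ and note that $\la g_j, \varphi\ra = (g_j \ast \tilde\varphi)(\vec 0_n)$. Choose a constant $C_\varphi > 0$, depending only on finitely many Schwartz seminorms of $\varphi$, so that $\tilde\varphi/C_\varphi \in \cf_N(\rn)$ (with $N$ as in Definition \ref{DewSH}); the definition of $M_N^0$ then yields, for every $x \in \rn$ and every $t \in (0,\infty)$,
\[
|g_j \ast \tilde\varphi_t(x)| \le C_\varphi M_N^0(g_j)(x).
\]

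The next step is to extract, near $\vec 0_n$, points where $M_N^0(g_j)$ is small. By Definition \ref{2.8}, for every $\alpha \in (0,\infty)$,
\[
\lf\|\mathbf{1}_{\{x \in \rn:\ M_N^0(g_j)(x) > \alpha\}}\r\|_X \le \alpha^{-1} \|g_j\|_{WH_X(\rn)} \to 0.
\]
Fixing $r \in (0,1]$ (to be chosen later) and using that $\|\mathbf{1}_{B(\vec 0_n, r)}\|_X > 0$ by Definition \ref{Debqfs}(iv), for $j$ sufficiently large this quantity is strictly less than $\|\mathbf{1}_{B(\vec 0_n,r)}\|_X$. Definition \ref{Debqfs}(ii) then forces the set $\{x \in B(\vec 0_n,r):\ M_N^0(g_j)(x) \le \alpha\}$ to have positive Lebesgue measure; pick a point $y_j$ therein, so that $|g_j \ast \tilde\varphi(y_j)| \le C_\varphi \alpha$.

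It remains to transfer this smallness from $y_j$ to the fixed point $\vec 0_n$. Writing
\[
g_j \ast \tilde\varphi(\vec 0_n) - g_j \ast \tilde\varphi(y_j) = \la g_j,\ \tilde\varphi(-\cdot) - \tilde\varphi(y_j - \cdot)\ra,
\]
a Taylor expansion of the translated Schwartz function $\tilde\varphi$ shows that the difference $h_j := \tilde\varphi(-\cdot) - \tilde\varphi(y_j - \cdot)$ has $\cf_N(\rn)$-seminorm of order $|y_j| \le r$, with a constant depending only on $\varphi$. The main obstacle is that the naive grand maximal bound $|\la g_j, h_j\ra| \lesssim r \cdot M_N^0(g_j)(\vec 0_n)$ reintroduces a pointwise value of $M_N^0(g_j)$ that is not directly controlled by the $WX$-norm hypothesis. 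I would circumvent this by iterating the good-point selection on a shrinking sequence of balls together with the gradient estimate $|\nabla(g_j \ast \tilde\varphi)(x)| \le C_\varphi' M_N^0(g_j)(x)$: covering a straight segment from $y_j$ to $\vec 0_n$ by balls on which $M_N^0(g_j) \le \alpha$ (available for $j$ large by the above $X$-norm estimate) and integrating the gradient along this segment produces an upper bound of the form $C_\varphi'' r \alpha$ for the displacement term. Sending $\alpha \to 0$ (and $r \to 0$ only if needed to keep the covering argument valid) then yields $\la g_j, \varphi\ra \to 0$, completing the proof.
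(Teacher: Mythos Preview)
Your proof has a genuine gap in the transfer step. You want to cover the segment from $y_j$ to $\vec 0_n$ by balls on which $M_N^0(g_j)\le\alpha$, but the estimate $\|\mathbf{1}_{\{M_N^0(g_j)>\alpha\}}\|_X<\|\mathbf{1}_{B(\vec 0_n,r)}\|_X$ only guarantees that the bad set does not contain all of $B(\vec 0_n,r)$; it gives no control over its geometry. The bad set could, for instance, contain the entire segment $[y_j,\vec 0_n]$ except for the single point $y_j$, so the gradient integral $\int_{[y_j,\vec 0_n]}|\nabla(g_j\ast\tilde\varphi)|$ need not be bounded by any multiple of $\alpha$. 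Iterating the good-point selection does not help: at each stage you can locate \emph{some} good point in a given ball, but you cannot force these good points to line up along a rectifiable path reaching $\vec 0_n$, nor can you bound the gradient between successive good points without already controlling $M_N^0(g_j)$ along that path.

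The paper's argument avoids this difficulty entirely by strengthening your pointwise bound. Instead of $|g_j\ast\tilde\varphi_t(x)|\le C_\varphi M_N^0(g_j)(x)$, one has
\[
|g_j\ast\tilde\varphi_t(x)|\lesssim M_N^0(g_j)(y)\quad\text{for every }y\in B(x,t),
\]
because $\tilde\varphi_t(x-\cdot)=\psi_t(y-\cdot)$ with $\psi:=\tilde\varphi(\cdot+(x-y)/t)$, and this translate has Schwartz seminorms comparable to those of $\tilde\varphi$ whenever $|x-y|<t$ (this is \cite[Proposition~3.10]{Bo}). Taking $t=1$ and $x=\vec 0_n$ gives directly
\[
|\la g_j,\varphi\ra|\lesssim\inf_{y\in B(\vec 0_n,1)}M_N^0(g_j)(y)\le\frac{\|\mathbf{1}_{B(\vec 0_n,1)}M_N^0(g_j)\|_{WX}}{\|\mathbf{1}_{B(\vec 0_n,1)}\|_{WX}}\le\frac{\|g_j\|_{WH_X(\rn)}}{\|\mathbf{1}_{B(\vec 0_n,1)}\|_{WX}}\to 0.
\]
Once you have this stronger pointwise estimate, the value at $\vec 0_n$ is already controlled by $M_N^0(g_j)$ at your good point $y_j$, and no transfer step is needed at all.
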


\begin{proof}
By \cite[Proposition 3.10]{Bo}, we know that, for any given $\varphi\in\cs(\rn)$
and for any $f\in WH_X(\rn)$,
$t\in(0,\infty)$, $x\in\rn$ and $y\in B(x,t)$, $|f\ast\varphi_t(x)|\lesssim M_N^0(f)(y)$,
where $M_N^0(f)$ is as in \eqref{EqMN0} and $N\in\nn$.
From this and Definition \ref{Debqfs}(ii), it follows that, if $N$ is sufficiently large as in Definition \ref{DewSH},
then, for any given $\varphi\in\cs(\rn)$,
\begin{align*}
\lf|\la f_j-f,\varphi\ra\r|&=\lf|(f_j-f)\ast[\varphi(-\cdot)](\vec 0_n)\r|\lesssim\inf_{y\in B(0,1)}M_N^0(f_j-f)(y)\lesssim
\frac{\|\mathbf{1}_{B(\vec 0_n,1)}M_N^0(f_j-f)\|_{WX}}{\|\mathbf{1}_{B(\vec 0_n,1)}\|_{WX}}\\
&\lesssim \frac{\|f_j-f\|_{WH_X(\rn)}}{\|\mathbf{1}_{B(\vec 0_n,1)}\|_{WX}}
\to0\quad\text{as}\quad j\to\infty,
\end{align*}
which further implies the desired conclusion of the lemma and
hence completes the proof of Lemma \ref{lyyy}.
\end{proof}

The following inequality of the Lusin area function on the classical Lebesgue space is well known,
whose proof can be found, for instance, in \cite[Chapter 7]{FoS}.

\begin{lemma}\label{Le65}
Let $q\in(1,\infty)$. Then there exists a positive constant $C$ such that, for any $f\in L^q(\rn)$,
$$
\|S(f)\|_{L^q(\rn)}\le C\|f\|_{L^q(\rn)},
$$
where $S(f)$ is as in \eqref{eq61}.
\end{lemma}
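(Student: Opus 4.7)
The plan is to separate the proof into the $L^2$ case, handled by direct Fourier analysis, and the general $q\in(1,\infty)$ case, handled by vector-valued Calder\'on--Zygmund theory, noting that the hypotheses on $\phi$ provide exactly what is needed on both sides.

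First I would dispatch the case $q=2$ via Plancherel's theorem. By Tonelli's theorem and a change in the order of integration,
\begin{align*}
\|S(f)\|_{L^2(\rn)}^2
&=\int_\rn\int_{\Gamma(x)}|f\ast\phi_t(y)|^2\,\frac{dy\,dt}{t^{n+1}}\,dx\\
&=\int_0^\infty\int_\rn|f\ast\phi_t(y)|^2\,\frac{|B(y,t)|}{t^{n+1}}\,dy\,dt
=c_n\int_0^\infty\int_\rn|f\ast\phi_t(y)|^2\,\frac{dy\,dt}{t}.
\end{align*}
Applying Plancherel's theorem in the $y$-variable and Tonelli again gives
$$
\|S(f)\|_{L^2(\rn)}^2=c_n\int_\rn|\widehat f(\xi)|^2\lf[\int_0^\infty|\widehat\phi(t\xi)|^2\,\frac{dt}{t}\r]d\xi.
$$
It remains to observe that the bracketed integral is bounded by a constant independent of $\xi\in\rn\setminus\{\vec 0_n\}$. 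For small $t$, Taylor expansion near the origin and $\widehat\phi(\vec 0_n)=0$ force $|\widehat\phi(t\xi)|\ls t|\xi|$, so the integral over $t\in(0,1/|\xi|)$ is uniformly bounded; for large $t$, the Schwartz decay of $\widehat\phi$ gives a uniform bound on the integral over $t\in[1/|\xi|,\infty)$. This establishes $\|S(f)\|_{L^2(\rn)}\ls\|f\|_{L^2(\rn)}$.

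For the general case $q\in(1,\infty)$, I would recast $S$ as a vector-valued singular integral operator and invoke the Banach-space-valued Calder\'on--Zygmund theorem. Let $H:=L^2(\rr_+^{n+1},\frac{dz\,dt}{t^{n+1}})$ and define, for each $f\in L^q(\rn)\cap L^2(\rn)$ and $x\in\rn$,
$$
(Tf)(x):\ (z,t)\longmapsto f\ast\phi_t(x-z)\,\mathbf{1}_{B(\vec 0_n,t)}(z),
$$
so that $\|(Tf)(x)\|_H=S(f)(x)$. The preceding step supplies the $L^2(\rn)\to L^2(\rn;H)$ bound for $T$. The associated $H$-valued kernel
$$
K(x,y):\ (z,t)\longmapsto \phi_t(x-y-z)\,\mathbf{1}_{B(\vec 0_n,t)}(z)
$$
satisfies the standard size and Lipschitz conditions $\|K(x,y)\|_H\ls|x-y|^{-n}$ and $\|\nabla_x K(x,y)\|_H+\|\nabla_y K(x,y)\|_H\ls|x-y|^{-n-1}$ uniformly on $\{x\neq y\}$, by direct computation using only that $\phi$ and $\nabla\phi$ have rapid decay. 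Hence $T$ is an $H$-valued Calder\'on--Zygmund operator, and the vector-valued extension of the Calder\'on--Zygmund theorem yields the $L^q(\rn)\to L^q(\rn;H)$ bound for all $q\in(1,\infty)$, which is precisely the claimed inequality.

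The only delicate point in this plan is a clean verification of the $H$-valued kernel estimates when $\phi$ is merely Schwartz with $\widehat\phi(\vec 0_n)=0$; this step, however, reduces to standard computations splitting the $t$-integration at $t\sim|x-y|$ and exploiting the Schwartz decay of $\phi$, so no new ideas are required. As the paper notes, a fully detailed treatment may be found in \cite[Chapter 7]{FoS}.
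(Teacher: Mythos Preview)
Your proof sketch is correct and follows a standard route: the $L^2$ bound via Plancherel together with the uniform boundedness of $\int_0^\infty|\widehat\phi(t\xi)|^2\,\frac{dt}{t}$ (which uses only $\widehat\phi(\vec 0_n)=0$ and Schwartz decay), followed by the vector-valued Calder\'on--Zygmund theorem applied to the $H$-valued operator $T$. The kernel estimates you outline are routine once the $t$-integration is split at $t\sim|x-y|$, exactly as you indicate.

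There is nothing to compare against, however: the paper does not supply its own argument for this lemma but simply records it as well known and refers the reader to \cite[Chapter 7]{FoS}. Your sketch is therefore more detailed than what the paper offers, and indeed it is essentially the argument one finds in standard references of that type. Your closing remark already acknowledges this alignment.
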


In what follows, for any $f\in\cs'(\rn)$, we use the \emph{symbol} $\widetilde{\supp}(f)$
to denote the intersection of all closed sets $K$
satisfying that, for any $\varphi\in C_c^\infty(\rn)$ with
$\supp(\varphi):=\{x\in\rn:\ \varphi(x)\neq0\}\subset(\rn\setminus K)$,
$\langle f,\varphi\rangle=0$.

\begin{proof}[Proof of Theorem \ref{Tharea}]
Let $\vartheta_0$, $\underline{p}$ and $p_-$ be as in
this theorem. We first prove the necessity. Let $f\in WH_X(\rn)$ and
$d\geq\lfloor n(1/\min\{\frac{p_-}{\vartheta_0},\underline{p}\}-1)\rfloor$.
Then, by Lemma \ref{Le64}, we know that $f$ vanishes weakly at infinity.
From $d\geq\lfloor n(\vartheta_0/p_--1)\rfloor$ and the fact that
$X$ satisfies Assumption \ref{a2.15} for some $p_-\in(0,\infty)$, we deduce that there exists
a $p\in(0,p_-)$ such that $\cm$ in \eqref{mm} is bounded on
$X^{1/(\vartheta_0 p)}$ and $d\geq \lfloor n(1/p-1)\rfloor$.
By this and Lemma \ref{Thad}, we know that there exists a sequence $\{a_{i,j}\}_{i\in\zz,j\in\nn}$ of
$(X,\,\infty,\,d)$-atoms supported, respectively, in balls $\{B_{i,j}\}_{i\in\zz,j\in\nn}$
satisfying that,
for any $i\in\zz$,
$\sum_{j\in\nn}\mathbf{1}_{cB_{i,j}}\le A$ with $c\in(0,1]$ and $A$ being
a positive constant independent of $f$ and $i$
such that $f=\sum_{i\in\zz}\sum_{j\in\nn}\lambda_{i,j}a_{i,j}$ in $\mathcal{S}'(\rn)$,
where $\lambda_{i,j}:=\widetilde A2^i\|\mathbf{1}_{B_{i,j}}\|_{X}$ for any $i\in\zz$ and $j\in\nn$
with $\widetilde A$ being a positive constant independent of $f$, $i\in\zz$ and $j\in\nn$, and
$$
\sup_{i\in\zz}\lf\|\sum_{j\in\nn}
\frac{\lambda_{i,j}\mathbf{1}_{B_{i,j}}}{\|\mathbf{1}_{B_{i,j}}\|_{X}}\r\|_{X}\lesssim\|f\|_{WH_X(\rn)},
$$
where the implicit positive constant is independent of $f$. For any $\alpha\in(0,\infty)$,
let $i_0\in\zz$ be such that $2^{i_0}\le\alpha<2^{i_0+1}$.
Then we decompose $f$ into
$$
f=\sum_{i=-\infty}^{i_0-1}\sum_{j\in\nn}\lambda_{i,j}a_{i,j}
+\sum_{i=i_0}^{\infty}\sum_{j\in\nn}\cdots
=:f_1+f_2.
$$
To prove $S(f)\in WX$, it suffices to show that, for any $\alpha\in(0,\infty)$,
$$
\alpha\lf\|\mathbf{1}_{\{x\in\rn:\ S(f)(x)>\alpha\}}\r\|_X\lesssim\sup_{i\in\zz}\lf\|\sum_{j\in\nn}
\frac{\lambda_{i,j}\mathbf{1}_{B_{i,j}}}{\|\mathbf{1}_{B_{i,j}}\|_{X}}\r\|_{X}.
$$
By Definition \ref{Debqfs}(ii), we obtain
\begin{align}\label{EqI6123}
\lf\|\mathbf{1}_{\{x\in\rn:\ S(f)(x)>\alpha\}}\r\|_X&\lesssim
\lf\|\mathbf{1}_{\{x\in\rn:\ S(f_1)(x)>\alpha/2\}}\r\|_X+
\lf\|\mathbf{1}_{\{x\in A_{i_0}:\ S(f_2)(x)>\alpha/2\}}\r\|_X\\\noz
&\quad+\lf\|\mathbf{1}_{\{x\in (A_{i_0})^\complement:\ S(f_2)(x)>\alpha/2\}}\r\|_X\\\noz
&=:\mathrm{I_1}+\mathrm{I_2}+\mathrm{I_3},
\end{align}
where $A_0:=\cup_{i=i_0}^\infty\cup_{j\in\nn}(4B_{i,j})$.

It is easy to see that
\begin{align}\label{Eq6I1}
\mathrm{I_1}&\lesssim\lf\|\mathbf{1}_{\{x\in\rn:\
\sum_{i=-\infty}^{i_0-1}\sum_{j\in\nn}\lambda_{i,j}
S(a_{i,j})(x)\mathbf{1}_{4B_{i,j}}(x)>\frac\alpha4\}}\r\|_{X}
+\lf\|\mathbf{1}_{\{x\in\rn:\ \sum_{i=-\infty}^{i_0-1}\sum_{j\in\nn}
\lambda_{i,j}S(a_{i,j})(x)\mathbf{1}_{(4B_{i,j})^\complement}(x)>\frac\alpha4\}}\r\|_{X}\\\noz
&=:\mathrm{I_{1,1}}+\mathrm{I_{1,2}}.
\end{align}

We first estimate $\mathrm{I_{1,1}}$. Let $r_0\in(0,\underline p)$,
$p_0\in(p_+,\infty)$, $\widetilde q\in(\max\{1/p_0,1\},1/{r_0}]$
and $a\in(0,1-1/{\widetilde q})$. Then, by the H\"older inequality, we obtain
\begin{align*}
&\sum_{i=-\infty}^{i_0-1}\sum_{j\in\nn}\lambda_{i,j}S(a_{i,j})\mathbf{1}_{4B_{i,j}}
\le\frac{2^{i_0a}}{(2^{a\widetilde q'}-1)^{1/\widetilde q'}}
\lf\{\sum_{i=-\infty}^{i_0-1}2^{-ia\widetilde q}\lf[\sum_{j\in\nn}
\lambda_{i,j}S(a_{i,j})\mathbf{1}_{4B_{i,j}}\r]^{\widetilde q}\r\}^{1/\widetilde q},
\end{align*}
where $\widetilde q':={\widetilde q}/{(\widetilde q-1)}$. From this, Definition
 \ref{Debf}(i), $\widetilde qr_0\in(0,1]$, Lemma \ref{Le65}
and the assumption that $X^{1/r_0}$ is a ball Banach function space,
we deduce that
\begin{align*}
\mathrm{I_{1,1}}
&\lesssim\lf\|\mathbf{1}_{\{x\in\rn:\ 2^{i_0a}
\{\sum_{i=-\infty}^{i_0-1}2^{-ia\widetilde q}[\sum_{j\in\nn}
\lambda_{i,j}S(a_{i,j})(x)\mathbf{1}_{4B_{i,j}}(x)]^
{\widetilde q}\}^{1/\widetilde q}>2^{i_0-2}\}}\r\|_{X}\\
&\lesssim2^{-i_0\widetilde q(1-a)}\lf\|\sum_{i=-\infty}^
{i_0-1}2^{-ia\widetilde q}\lf[\sum_{j\in\nn}
\lambda_{i,j}S(a_{i,j})\mathbf{1}_{4B_{i,j}}\r]^{\widetilde q}\r\|_{X}\\
&\lesssim2^{-i_0\widetilde q(1-a)}
\lf\|\sum_{i=-\infty}^{i_0-1}2^{(1-a)i\widetilde qr_0}\sum_{j\in\nn}\lf[
\lf\|\mathbf{1}_{B_{i,j}}\r\|_{X}S(a_{i,j})\mathbf{1}_{4B_{i,j}}\r]
^{\widetilde qr_0}\r\|_{X^{1/r_0}}^\frac{1}{r_0}\\
&\lesssim2^{-i_0\widetilde q(1-a)}\lf[\sum_{i=-\infty}^{i_0-1}2^{(1-a)i\widetilde qr_0}
\lf\|\lf\{\sum_{j\in\nn}\lf[
\lf\|\mathbf{1}_{B_{i,j}}\r\|_{X}S(a_{i,j})\mathbf{1}_{4B_{i,j}}\r]^{\widetilde qr_0}\r\}^\frac1{r_0}\r\|_{X}^{r_0}\r]^\frac{1}{r_0}.
\end{align*}
Let $q:=p_0\widetilde q$. Then $q\in(1,\infty)$ and hence, by the boundedness
of $\cm$ on $L^q(\rn)$ and Definition \ref{Deatom}(ii),
we conclude that, for any $i\in\zz$ and $j\in\nn$,
\begin{align*}
\lf\|\lf[
\lf\|\mathbf{1}_{B_{i,j}}\r\|_{X}S(a_{i,j})\r]^{\widetilde q}\mathbf{1}_{4B_{i,j}}\r\|_{L^{p_0}(\rn)}
\lesssim\lf\|\mathbf{1}_{B_{i,j}}\r\|_{X}^{\widetilde q}\lf\|S(a_{i,j})\mathbf{1}_{4B_{i,j}}\r\|_{L^q(\rn)}^{\widetilde q}
\lesssim\lf\|\mathbf{1}_{B_{i,j}}\r\|_{X}^{\widetilde q}\lf\|a_{i,j}\r\|_{L^q(\rn)}^{\widetilde q}
\lesssim\lf|B_{i,j}\r|^\frac1{p_0},
\end{align*}
which, combined with Lemma \ref{Le45}, \eqref{EqHLMS} and $(1-a)\widetilde q>1$, further implies that
\begin{align*}
\mathrm{I_{1,1}}&\lesssim2^{-i_0\widetilde q(1-a)}
\lf[\sum_{i=-\infty}^{i_0-1}2^{(1-a)i\widetilde qr_0}\lf\|\lf(\sum_{j\in\nn}\mathbf{1}_{4B_{i,j}}\r)^\frac1{r_0}\r\|_{X}^{r_0}\r]^{\frac1{r_0}}
\lesssim2^{-i_0\widetilde q(1-a)}
\lf[\sum_{i=-\infty}^{i_0-1}2^{(1-a)i\widetilde qr_0}\lf\|\lf(\sum_{j\in\nn}\mathbf{1}_{cB_{i,j}}\r)^\frac1{r_0}\r\|_{X}^{r_0}\r]^{\frac1{r_0}}\\
&\lesssim2^{-i_0\widetilde q(1-a)}
\lf[\sum_{i=-\infty}^{i_0-1}2^{[(1-a)\widetilde
q-1]ir_0}\r]^{\frac1{r_0}}\sup_{i\in\zz}2^i\lf\|\sum_{j\in\nn}
\mathbf{1}_{B_{i,j}}\r\|_{X}\lesssim\alpha^{-1}
\sup_{i\in\zz}2^i\lf\|\sum_{j\in\nn}\mathbf{1}_{B_{i,j}}\r\|_{X}.
\end{align*}
This shows that
\begin{equation}\label{Eq411}
\alpha\mathrm{I_{1,1}}\lesssim\sup_{i\in\zz}2^i\lf\|\sum_{j\in\nn}\mathbf{1}_{B_{i,j}}\r\|_{X}.
\end{equation}

As for $\mathrm{I_{1,2}}$,
for any $i\in\zz$, $j\in\nn$ and
$x\in\rn$, we first write
\begin{equation}\label{eq611}
[(a_{i,j})(x)]^2=\int_{0}^{|x-x_{i,j}|/4}\int_{|y-x|<t}|a_{i,j}\ast\phi_t(y)|^2\,\frac{dydt}{t^{n+1}}
+\int_{|x-x_{i,j}|/4}^{\infty}\int_{|y-x|<t}\cdots=:\mathrm{J}_1+\mathrm{J}_2,
\end{equation}
where $x_{i,j}$ denotes the center of $B_{i,j}$ and $\phi$ is as in \eqref{Eq62}.

We first deal with $\mathrm{J}_1$.
To do so, using the vanishing moments of $a_{i,j}$ and the Taylor remainder theorem, we have,
for any $i\in\zz$, $j\in\nn$,
$t\in(0,\infty)$ and $y\in\rn$,
\begin{align}\label{EqMNa0}
|a_{i,j}\ast\phi_t(y)|&=\lf|\int_{B_{i,j}}a_{i,j}(z)\lf[\phi\lf(\frac{y-z}{t}\r)-\sum_{|\beta|\le d}\frac{\partial^\beta\phi(\frac{y-x_{i,j}}{t})}
{\beta!}\lf(\frac{x_{i,j}-z}{t}\r)^\beta\r]\,\frac{dz}{t^n}\r|\\\noz
&\lesssim\int_{B_{i,j}}\lf|a_{i,j}(z)\r|\sum_{|\beta|= d+1}\lf|\partial^\beta\phi\lf(\frac{\xi}{t}\r)\r|
\lf|\frac{x_{i,j}-z}{t}\r|^{d+1}\,\frac{dz}{t^n},
\end{align}
where $\xi:=(x-x_{i,j})+\theta(x_{i,j}-y)$ for some $\theta\in[0,1]$.

For any $i\in\zz$, $j\in\nn$, $x\in(4B_{i,j})^\complement$, $|y-x|<t\le\frac{|x-x_{i,j}|}{4}$ and
$z\in B_{i,j}$,
it is easy to see that $|y-x_{i,j}|\sim|x-x_{i,j}|$
and $|\xi|\geq|x-x_{i,j}|-|x_{i,j}-z|\geq\frac12|x-x_{i,j}|$. By this, \eqref{EqMNa0}, the fact that $\phi\in\cs(\rn)$
and the H\"older inequality, we conclude that, for any $i\in\zz$, $j\in\nn$,
$x\in(4B_{i,j})^\complement$ and $|y-x|<t\le\frac{|x-x_{i,j}|}{4}$,
\begin{align*}
|a_{i,j}\ast\phi_t(y)|
&\lesssim t\int_{B_{i,j}}|a_{i,j}(z)|\frac{|z-x_{i,j}|^{d+1}}{|x-x_{i,j}|^{n+d+2}}\,dz
\lesssim\frac{t(r_{i,j})^{d+1}}{|x-x_{i,j}|^{n+d+2}}
\lf[\int_{B_{i,j}}|a_{i,j}(z)|^q\,dz\r]^{1/q}|B_{i,j}|^{1/q'}\\\noz
&\lesssim\frac{t}{|x-x_{i,j}|}\lf\|\mathbf{1}_{B_{i,j}}\r\|_{X}^{-1}\lf(\frac{r_{i,j}}{|x-x_{i,j}|}\r)^{n+d+1},
\end{align*}
which implies that, for any $i\in\zz$, $j\in\nn$ and
$x\in(4B_{i,j})^\complement$,
\begin{equation}\label{eq612}
\mathrm{J}_1\lesssim
\frac{1}{|x-x_{i,j}|^2}\lf\|\mathbf{1}_{B_{i,j}}\r\|_{X}^{-2}\lf(\frac{r_{i,j}}{|x-x_{i,j}|}\r)^{2(n+d+1)}
\int_{0}^{|x-x_{i,j}|/4}t\,dt
\lesssim
\lf\|\mathbf{1}_{B_{i,j}}\r\|_{X}^{-2}\lf[\cm(\mathbf{1}_{B_{i,j}})(x)\r]^\frac{2(n+d+1)}{n}.
\end{equation}

As for $\mathrm{J_{2}}$, by \eqref{EqMNa0}, the fact that $t\geq\frac{|x-x_{i,j}|}{4}$
and the H\"older inequality, we conclude that, for any $i\in\zz$, $j\in\nn$,
$x\in(4B_{i,j})^\complement$ and $|y-x|<t$,
\begin{align*}
|a_{i,j}\ast\phi_t(y)|
&\lesssim \int_{B_{i,j}}|a_{i,j}(z)|\frac{|z-x_{i,j}|^{d+1}}{t^{n+d+1}}\,dz
\lesssim\lf\|\mathbf{1}_{B_{i,j}}\r\|_{X}^{-1}\frac{t(r_{i,j})^{d+1}}{|x-x_{i,j}|^{n+d+2}}
\lesssim\lf\|\mathbf{1}_{B_{i,j}}\r\|_{X}^{-1}\lf(\frac{r_{i,j}}{t}\r)^{n+d+1},
\end{align*}
which implies that, for any $i\in\zz$, $j\in\nn$ and
$x\in(4B_{i,j})^\complement$,
\begin{equation}\label{eq614}
\mathrm{J}_2\lesssim
\lf\|\mathbf{1}_{B_{i,j}}\r\|_{X}^{-2}r_{i,j}^{2(n+d+1)}
\int_{|x-x_{i,j}|/4}^{\infty}t^{-2(n+d+1)-1}\,dt
\lesssim
\lf\|\mathbf{1}_{B_{i,j}}\r\|_{X}^{-2}\lf[\cm(\mathbf{1}_{B_{i,j}})(x)\r]^\frac{2(n+d+1)}{n}.
\end{equation}
Thus, from \eqref{eq611}, \eqref{eq612} and \eqref{eq614}, we deduce that,
for any $i\in\zz$, $j\in\nn$ and
$x\in(4B_{i,j})^\complement$,
\begin{equation}\label{Eq69}
\lf|S(a_{i,j})(x)\r|\lesssim\lf\|\mathbf{1}_{B_{i,j}}\r\|_{X}^{-1}
\lf[\cm(\mathbf{1}_{B_{i,j}})(x)\r]^{\frac{(n+d+1)}{n}}.
\end{equation}
Observe that $d\geq\lfloor n(\frac1{\underline{p}}-1)\rfloor$ implies that $\underline{p}\in(\frac{n}{n+d+1},1]$.
Let $r_1\in(0,\frac{n}{n+d+1})\subset(0,\underline{p})$, $q_1\in(\frac{n}{(n+d+1)r_1},\frac1{r_1})\subset(1,\infty)$ and $a\in(0,1-\frac1{q_1})$.
By the H\"older inequality, we obtain
\begin{align*}
&\sum_{i=-\infty}^{i_0-1}\sum_{j\in\nn}\lambda_{i,j}S(a_{i,j})\mathbf{1}_{(4B_{i,j})^\complement}
\le\frac{2^{i_0a}}{(2^{aq_1'}-1)^{1/q_1'}}
\lf\{\sum_{i=-\infty}^{i_0-1}2^{-iaq_1}\lf[\sum_{j\in\nn}
\lambda_{i,j}S(a_{i,j})\mathbf{1}_{(4B_{i,j})^\complement}\r]^{q_1}\r\}^{1/q_1},
\end{align*}
where $q_1':={q_1}/{(q_1-1)}$. From this, Definition \ref{Debqfs}(ii), \eqref{Eq69}, the definition of $\lambda_{i,j}$
and the assumption that $X^{1/r_1}$ is a ball Banach function space, we deduce that
\begin{align*}
\mathrm{I_{1,2}}&\lesssim
\lf\|\mathbf{1}_{\{x\in\rn:\ 2^{i_0a}
\{\sum_{i=-\infty}^{i_0-1}2^{-iaq_1}[\sum_{j\in\nn}
\lambda_{i,j}S(a_{i,j})(x)\mathbf{1}_{(4B_{i,j})^\complement}(x)]^{q_1}\}^{1/q_1}>2^{i_0-2}\}}\r\|_{X}\\
&\lesssim2^{-i_0q_1(1-a)}\lf\|\sum_{i=-\infty}^{i_0-1}2^{-iaq_1}\lf[\sum_{j\in\nn}
\lambda_{i,j}S(a_{i,j})\mathbf{1}_{(4B_{i,j})^\complement}\r]^{q_1}\r\|_{X}\\
&\lesssim2^{-i_0q_1(1-a)}\lf\{\sum_{i=-\infty}^{i_0-1}2^{(1-a)iq_1r_1}\lf\|\sum_{j\in\nn}\lf[
\cm(\mathbf{1}_{B_{i,j}})\r]^{\frac{(n+d+1)q_1r_1}{n}}\r\|_{X^{\frac1{r_1}}}\r\}^\frac1{r_1}.
\end{align*}
It is easy to see that $\frac{(n+d+1)q_1r_1}{n}\in(1,\infty)$, $\frac{n}{(n+d+1)q_1}\in(0,r_1)\subset(0,\underline{p})$
and $(1-a)q_1\in(1,\infty)$.
Then, by Definition \ref{Debf}(i), \eqref{EqHLMS} and $\sum_{j\in\nn}\mathbf{1}_{cB_{i,j}}\le A$, we further find that
\begin{align*}
\mathrm{I_{1,2}}&\lesssim2^{-i_0q_1(1-a)}
\lf[\sum_{i=-\infty}^{i_0-1}2^{(1-a)iq_1r_1}\lf\|\lf\{\sum_{j\in\nn}\lf[
\cm(\mathbf{1}_{B_{i,j}})\r]^{\frac{(n+d+1)q_1r_1}{n}}\r\}
^\frac{n}{(n+d+1)q_1r_1}\r\|_{X^{\frac{(n+d+1)q_1}{n}}}^\frac{(n+d+1)q_1r_1}{n}\r]^\frac1{r_1}\\
&\lesssim2^{-i_0q_1(1-a)}\lf\{\sum_{i=-\infty}^{i_0-1}2^{(1-a)iq_1{r_1}}\lf\|\sum_{j\in\nn}
\mathbf{1}_{B_{i,j}}\r\|_{X^{\frac1{r_1}}}\r\}^\frac1{r_1}\\
&\lesssim2^{-i_0q_1(1-a)}\lf\{\sum_{i=0}^{i_0-1}2^{[(1-a)q_1-1]ir_1}2^{ir_1}
\lf\|\lf(\sum_{j\in\nn}\mathbf{1}_{cB_{i,j}}\r)^\frac1{r_1}\r\|_{X}^{r_1}\r\}^\frac1{r_1}
\lesssim\alpha^{-1}\sup_{i\in\zz}2^i\lf\|\sum_{j\in\nn}\mathbf{1}_{B_{i,j}}\r\|_{X},
\end{align*}
which implies that
\begin{equation*}
\alpha\mathrm{I_{1,2}}\lesssim\sup_{i\in\zz}2^i\lf\|\sum_{j\in\nn}\mathbf{1}_{B_{i,j}}\r\|_{X}.
\end{equation*}
From this, \eqref{Eq6I1} and \eqref{Eq411}, it follows that
\begin{equation}\label{Eq612}
\alpha\mathrm{I_1}\lesssim\sup_{i\in\zz}\lf\|\sum_{j\in\nn}
\frac{\lambda_{i,j}\mathbf{1}_{B_{i,j}}}{\|\mathbf{1}_{B_{i,j}}\|_{X}}\r\|_{X}.
\end{equation}

Next we deal with $\mathrm{I_2}$. Let $r_2\in(0,\underline p)$.
Then, by \eqref{EqHLMS}, Definition \ref{Debf}(i),
the assumption that $X^{1/r_2}$ is a ball Banach
function space and $\sum_{j\in\nn}\mathbf{1}_{cB_{i,j}}\le A$,
we conclude that
\begin{align*}
\mathrm{I_2}&\lesssim\lf\|\mathbf{1}_{A_{i_0}}\r\|_{X}\lesssim
\lf\|\sum_{i=i_0}^\infty\sum_{j\in\nn}\mathbf{1}_{2B_{i,j}}\r\|_{X}
\lesssim\lf\|\sum_{i=i_0}^\infty\sum_{j\in\nn}\mathbf{1}_{cB_{i,j}}\r\|_{X}
\sim\lf\|\lf\{\sum_{i=i_0}^\infty\sum_{j\in\nn}
\mathbf{1}_{cB_{i,j}}\r\}^{r_2}\r\|_{X^\frac1{r_2}}^\frac1{r_2}\\
&\lesssim\lf[\sum_{i=i_0}^\infty\lf\|\sum_{j\in\nn}
\mathbf{1}_{cB_{i,j}}\r\|_{X^\frac1{r_2}}\r]^\frac1{r_2}
\lesssim\lf[\sum_{i=i_0}^\infty\lf\|\sum_{j\in\nn}
\mathbf{1}_{cB_{i,j}}\r\|_{X}^{r_2}\r]^\frac1{r_2}
\lesssim\lf\{\sum_{i=i_0}^\infty2^{-ir_2}\lf[2^i
\lf\|\sum_{j\in\nn}\mathbf{1}_{B_{i,j}}\r\|_{X}\r]^{r_2}\r\}^{\frac1{r_2}}\\
&\lesssim\sup_{i\in\nn}2^i\lf\|\sum_{j\in\nn}
\mathbf{1}_{B_{i,j}}\r\|_{X}\lf(\sum_{i=i_0}^\infty2^{-ir_2}\r)^\frac1{r_2}
\lesssim\alpha^{-1}\sup_{i\in\zz}2^i\lf\|\sum_{j\in\nn}\mathbf{1}_{B_{i,j}}\r\|_{X},
\end{align*}
which implies that
\begin{equation}\label{Eq415}
\alpha\mathrm{I_2}\lesssim\sup_{i\in\zz}2^i\lf\|\sum_{j\in\nn}\mathbf{1}_{B_{i,j}}\r\|_{X}.
\end{equation}

It remains to estimate $\mathrm{I_3}$. Recall that $\underline{p}\in(\frac{n}{n+d+1},1]$
and hence there exists a $r_3\in(\frac{n}{\underline{p}(n+d+1)},1)$.
By the assumption that $X^\frac{(n+d+1)r_3}{n}$ is a ball Banach function space
and \eqref{Eq69}, we conclude that
\begin{align*}
\mathrm{I_3}&\lesssim\lf\|\mathbf{1}_{\{x\in(A_{i_0})^\complement:\
\sum_{i=i_0}^\infty\sum_{j\in\nn}\lambda_{i,j}S(a_{i,j})(x)>\frac{\alpha}2\}}\r\|_{X}
\lesssim\alpha^{-r_3}\lf\|\sum_{i=i_0}^\infty\sum_{j\in\nn}
\lf[\lambda_{i,j}S(a_{i,j})\r]^{r_3}\mathbf{1}_{(A_{i_0})^\complement}\r\|_{X}\\
&\sim\alpha^{-r_3}\lf\|\lf\{\sum_{i=i_0}^\infty\sum_{j\in\nn}
\lf[\lambda_{i,j}S(a_{i,j})\r]^{r_3}
\mathbf{1}_{(A_{i_0})^\complement}\r\}^\frac{n}{(n+d+1)r_3}
\r\|_{X^\frac{(n+d+1)r_3}{n}}^\frac{(n+d+1)r_3}{n}\\
&\lesssim\alpha^{-r_3}\lf[\sum_{i=i_0}^\infty
\lf\|\lf\{\sum_{j\in\nn}\lf[\lambda_{i,j}S(a_{i,j})\r]^{r_3}
\mathbf{1}_{(A_{i_0})^\complement}\r\}^\frac{n}{(n+d+1)r_3}
\r\|_{X^\frac{(n+d+1)r_3}{n}}\r]^\frac{(n+d+1)r_3}{n}\\
&\lesssim\alpha^{-r_3}\lf\{\sum_{i=i_0}^\infty2^\frac{in}{(n+d+1)}\lf\|\sum_{j\in\nn}
\lf[\cm(\mathbf{1}_{B_{i,j}})\r]^\frac{(n+d+1)r_3}{n}
\r\|_{X}^\frac{n}{(n+d+1)r_3}\r\}^\frac{(n+d+1)r_3}{n}.
\end{align*}
Since $\frac{n}{(n+d+1)r_3}\in(0,\underline{p})\subset(0,1)$,
from Definition \ref{Debf}(i) and Assumption \ref{a2.15}, it follows that
\begin{align*}
\mathrm{I_3}&\lesssim\alpha^{-r_3}\lf[\sum_{i=i_0}^\infty2^\frac{in}{(n+d+1)}\lf\|\sum_{j\in\nn}
\mathbf{1}_{B_{i,j}}\r\|_{X}^\frac{n}{(n+d+1)r_3}\r]^\frac{(n+d+1)r_3}{n}
\lesssim\alpha^{-r_3}\sup_{i\in\zz}2^i\lf\|\sum_{j\in\nn}\mathbf{1}_{B_{i,j}}\r\|_{X}
\lf[\sum_{i=i_0}^\infty2^\frac{in(r_3-1)}{(n+d+1)r_3}\r]^\frac{(n+d+1)r_3}{n}\\
&\lesssim\alpha^{-1}\sup_{i\in\zz}2^i\lf\|\sum_{j\in\nn}\mathbf{1}_{B_{i,j}}\r\|_{X},
\end{align*}
namely,
\begin{equation}\label{Eq416}
\alpha\mathrm{I_3}\lesssim\sup_{i\in\zz}2^i\lf\|\sum_{j\in\nn}\mathbf{1}_{B_{i,j}}\r\|_{X}.
\end{equation}

Finally, combining \eqref{EqI6123}, \eqref{Eq612}, \eqref{Eq415} and \eqref{Eq416}, we obtain
$$
\lf\|S(f)\r\|_{WX}\lesssim\sup_{i\in\zz}\lf\|\sum_{j\in\nn}
\frac{\lambda_{i,j}\mathbf{1}_{B_{i,j}}}{\|\mathbf{1}_{B_{i,j}}\|_{X}}\r\|_{X},
$$
which completes the proof of the necessity of Theorem \ref{Tharea}.

Next, we prove the sufficiency. Let $f\in\cs'(\rn)$, $f$ vanish weakly at infinity and $S(f)\in WX$.
Then, by these and Lemma \ref{Le47}, we conclude that
there exists a $\psi\in\cs(\rn)$ such that
\begin{equation}\label{67mm}
\supp(\widehat{\psi})\subset B(\vec{0}_n,b) \setminus B(\vec{0}_n,a),
\end{equation}
where $b,\ a\in(0,\infty)$ and $b>a$, and
\begin{equation}\label{66mm}
f=\int_0^\infty f\ast\phi_t\ast\psi_t\,\frac{dt}{t}\quad\text{in}\quad\cs'(\rn)
\end{equation}
with $\phi$ as in \eqref{Eq62}.
It remains to prove that $f\in WH_X(\rn)$.
For any $(x,t)\in{\mathbb R}^{n+1}_+$, let $F(x,t):=f\ast\phi_t(x)$. Then
$F\in WT_X^1(\rr^{n+1}_+)$. From this and Theorem \ref{That}, we deduce that
there exists a
sequence $\{A_{i,j}\}_{i\in\zz,j\in\nn}$ of $(T_X,\infty)$-atoms
associated, respectively, with balls $\{B_{i,j}\}_{i\in\zz,j\in\nn}$
satisfying that, for any $i\in\zz$, $\sum_{j\in\nn}\mathbf1_{cB_{i,j}}\le M_0$ with
$c\in(0,1]$ and $M_0$
being positive constants independent of $i\in\zz$, $j\in\nn$ and $f$,
such that
\begin{align}\label{456}
F=\sum_{i\in\zz}\sum_{j\in\nn}\lambda_{i,j}A_{i,j}
\end{align}
pointwisely on $\rr_+^{n+1}$, where $\lambda_{i,j}:=2^i\|\mathbf{1}_{B_{i,j}}\|_X$,
and
\begin{align}\label{457}
\sup_{i\in\zz}2^i\lf\|\sum_{j\in\nn}\mathbf1_{B_{i,j}}\r\|_X
\lesssim\lf\|F\r\|_{WT_X^1(\rr_+^{n+1})}\sim\lf\|S(f)\r\|_{WX}.
\end{align}

For any $i\in\zz$, $j\in\nn$ and $x\in\rn$, let
\begin{align*}
a_{i,j}(x):
=\int_0^{\infty}
\int_\rn A_{i,j}(y)\psi_t(x-y)\,\frac{dy\,dt}{t}.
\end{align*}

We now show that
\begin{equation}\label{kkk}
\sum_{i\in\zz}\sum_{j\in\nn}\lambda_{i,j} a_{i,j}\quad \text{converges}\ \text{in}\quad \cs'(\rn).
\end{equation}
We first prove that, for any given $i\in\zz$,
\begin{equation}\label{qw11}
\sum_{j\in\nn}\lambda_{i,j} a_{i,j}\quad \text{converges}\ \text{in}\quad \cs'(\rn).
\end{equation}
Indeed, by Lemma \ref{lyyy}, to show \eqref{qw11}, it suffices to prove that
\begin{equation}\label{qw4}
\lim_{N\to\infty}\lf\|\sum_{\gfz{N\le j\le N+k}{j\in\nn}}
\lambda_{i,j} a_{i,j}\r\|_{WH_X(\rn)}=0
\end{equation}
uniformly in $k\in\nn$. Similarly to the proof of \cite[Lemma 4.8]{hyy}, we conclude that, for any given $i\in\zz$,
up to a harmless constant multiple, $\{a_{i,j}\}_{j\in\nn}$ is a
sequence of $(X,\,q,\,d,\,\epsilon)$-molecules
associated, respectively, with balls $\{B_{i,j}\}_{j\in\nn}$,
where $q$, $d$ and $\epsilon$ are as in Lemma \ref{Thmolcha}.
Using this, \eqref{EqHLMS} and
Lemma \ref{Thmolcha}, we find that,
for any $N\in\nn$ and $k\in\nn$,
\begin{equation}\label{qw2}
\lf\|\sum_{\gfz{N\le j\le N+k}{j\in\nn}}
\lambda_{i,j} a_{i,j}\r\|_{WH_X(\rn)}\lesssim
2^i\lf\|
\sum_{\gfz{N\le j\le N+k}{j\in\nn}}\mathbf{1}_{B_{i,j}}\r\|_{X}
\lesssim
2^i\lf\|
\sum_{\gfz{N\le j}{j\in\nn}}\mathbf{1}_{cB_{i,j}}\r\|_{X},
\end{equation}
which, combined with the fact that
$X$ has an absolutely continuous quasi-norm as in Definition \ref{abo}, implies that
\eqref{qw4} holds true and hence completes the proof of \eqref{qw11}.

By \eqref{qw11} and Lemma \ref{lyyy},
to show \eqref{kkk}, it suffices to prove that
\begin{equation}\label{455}
\lim_{N\to\infty}\lf\|\sum_{\gfz{-N-k\le i\le -N}{i\in\zz}}\sum_{j\in\nn}
\lambda_{i,j} a_{i,j}\r\|_{WH_X(\rn)}=0
\end{equation}
uniformly in $k\in\nn$, and, for some given $s\in(0,1)$,
\begin{equation}\label{12}
\lim_{N\to\infty}\lf\|\sum_{\gfz{N\le i\le N+k}{i\in\zz}}\sum_{j\in\nn}
\lambda_{i,j} a_{i,j}\r\|_{WH_{X^s}(\rn)}=0
\end{equation}
uniformly in $k\in\nn$. We first prove \eqref{455}. Let $\{O_i\}_{i\in\zz}$ be as in \eqref{ly1}.
Then, from the definition of $\{O_i\}_{i\in\zz}$
and the fact that $X$ has an absolutely continuous quasi-norm as in Definition \ref{abo}, we deduce that
\begin{equation}\label{lyy2}
\lim_{i\to-\infty}\|S(f)\mathbf1_{\rn\setminus O_i}\|_{X}=0.
\end{equation}
By the proof of \cite[Lemma 4.8]{hyy}, we conclude that, up to a harmless
constant multiple,
$\{a_{i,j}\}_{i\in\zz,j\in\nn}$ is a sequence of $(X,\,q,\,d,\,\epsilon)$-molecules
associated, respectively, with balls $\{B_{i,j}\}_{i\in\zz,j\in\nn}$,
where $q$, $d$ and $\epsilon$ are as in Lemma \ref{Thmolcha}.
Using this and
Lemma \ref{Thmolcha}, we find that,
for any $N,\ k\in\nn$,
\begin{equation}\label{450}
\lf\|\sum_{\gfz{-N-k\le i\le -N}{i\in\zz}}\sum_{j\in\nn}
\lambda_{i,j} a_{i,j}\r\|_{WH_X(\rn)}\lesssim
\sup_{\gfz{-N-k\le i\le -N}{i\in\zz}}2^i\lf\|
\sum_{j\in\nn}\mathbf{1}_{B_{i,j}}\r\|_{X},
\end{equation}
which, combined with \eqref{lyy}, implies that
\begin{equation}\label{Eq514}
\lf\|\sum_{\gfz{-N-k\le i\le -N}{i\in\zz}}\sum_{j\in\nn}\lambda_{i,j} a_{i,j}\r\|_{WH_X(\rn)}\lesssim
\sup_{\gfz{-N-k\le i\le -N}{i\in\zz}}2^i\lf\|\mathbf1_{O_i}\r\|_X.
\end{equation}
Observe that there exists a positive constant $\widetilde{C}$ such that, for any $m,\ N\in\nn$,
\begin{align}\label{447}
\sup_{\gfz{i\le -N}{i\in\zz}}2^i\lf\|\mathbf1_{O_i}\r\|_X&
\le\widetilde{C}\sup_{\gfz{i\le -N}{i\in\zz}}2^i\lf\|\mathbf1_{O_i\setminus O_{i+m}}\r\|_X+
\widetilde{C}\sup_{\gfz{i\le -N}{i\in\zz}}2^i\lf\|\mathbf1_{O_{i+m}}\r\|_X\\\noz
&\le\widetilde{C}\sup_{\gfz{i\le -N}{i\in\zz}}2^i\lf\|\mathbf1_{O_i\setminus O_{i+m}}\r\|_X+
\widetilde{C}2^{-m}\sup_{i\in\zz}2^i\lf\|\mathbf1_{O_i}\r\|_X.
\end{align}
By the fact that $S(f)\in WX$,
we know that, for any given $\epsilon\in(0,\infty)$,
there exists an $m_0\in\nn$ such that
\begin{align}\label{451}
\widetilde{C}2^{-m_0}\sup_{i\in\zz}2^i\lf\|\mathbf1_{O_i}\r\|_X
\le\widetilde{C}2^{-m_0}\lf\|S(f)\r\|_{WX}<\epsilon/2,
\end{align}
where $\widetilde{C}$
is as in \eqref{447}.
Then, from \eqref{lyy2}, \eqref{Eq514}, \eqref{447}, \eqref{451},
the definition of $O_i$ and Definition \ref{Debqfs}(ii), it follows that there exists an
$N_0\in\nn$ such that, when $N\in\nn\cap[N_0,\fz)$, for any $k\in\nn$,
\begin{align}\label{448}
\lf\|\sum_{\gfz{-N-k\le i\le -N}{i\in\zz}}\sum_{j\in\nn}\lambda_{i,j} a_{i,j}\r\|_{WH_X(\rn)}&\lesssim\sup_{\gfz{i\le -N}{i\in\zz}}2^i\lf\|\mathbf1_{O_i}\r\|_X
<\widetilde{C}\sup_{\gfz{i\le -N}{i\in\zz}}2^i\lf\|\mathbf1_{O_i\setminus O_{i+m_0}}\r\|_X+\epsilon/2\\\noz
&\le\widetilde{C}\lf\|S(f)\mathbf1_{\rn\setminus O_{-N+m_0}}\r\|_X+\epsilon/2<\epsilon,
\end{align}
which further implies that, for any $k\in\nn$,
\begin{align}\label{449}
\lim_{N\to\infty}\lf\|\sum_{\gfz{-N-k\le i\le -N}{i\in\zz}}
\sum_{j\in\nn}\lambda_{i,j} a_{i,j}\r\|_{WH_X(\rn)}=0.
\end{align}
This finishes the proof of \eqref{455}.

Now we prove \eqref{12}. Let $s\in(0,1)$. For any $i,\ j\in\nn$,
let $\mu_{i,j}:=2^i\|\mathbf{1}_{B_{i,j}}\|_{X^s}$ and
$$b_{i,j}:=\frac{\|\mathbf{1}_{B_{i,j}}\|_{X}}{\|\mathbf{1}_{B_{i,j}}\|_{X^s}}a_{i,j}.$$
Then we have, for any $i,\ j\in\nn$,
$\mu_{i,j}b_{i,j}=\lambda_{i,j} a_{i,j}$.
Moreover, similarly to the proof of \cite[Lemma 4.8]{hyy}, up to a harmless
constant multiple, we conclude that
$\{b_{i,j}\}_{i\in\zz,j\in\nn}$ is a
sequence of $(X^s,q,d,\epsilon)$-molecules
associated, respectively, with balls $\{B_{i,j}\}_{i\in\zz,j\in\nn}$,
where $q$ and $\epsilon$ are as in Lemma \ref{Thmolcha}, and $d\in\zz_+$ with $d\geq \lfloor n(1/s\underline{p}-1)\rfloor$.
From this, Lemma \ref{Thmolcha} and the fact that, for any $i\in\nn$ and $j\in\nn$,
$\mu_{i,j}b_{i,j}=\lambda_{i,j} a_{i,j}$,
we deduce that, for any $N,\ k\in\nn$,
\begin{equation*}
\lf\|\sum_{\gfz{N\le i\le N+k}{i\in\zz}}\sum_{j\in\nn}
\lambda_{i,j} a_{i,j}\r\|_{WH_{X^s}(\rn)}=
\lf\|\sum_{\gfz{N\le i\le N+k}{i\in\zz}}\sum_{j\in\nn}
\mu_{i,j}b_{i,j}\r\|_{WH_{X^s}(\rn)}\lesssim
\sup_{\gfz{N\le i\le N+k}{i\in\zz}}2^i\lf\|
\sum_{j\in\nn}\mathbf{1}_{B_{i,j}}\r\|_{X^s}.
\end{equation*}
By this, \eqref{457}, \eqref{EqHLMS} and the fact that $\{B_{i,j}\}_{i\in\zz,j\in\nn}$
satisfy that, for any $i\in\zz$, $\sum_{j\in\nn}\mathbf1_{cB_{i,j}}\le M_0$ with $c\in(0,1]$ and $M_0$
being positive constants independent of $i\in\zz$, $j\in\nn$ and $f$, we conclude that,
for any $N,\ k\in\nn$,
\begin{align*}
\lf\|\sum_{\gfz{N\le i\le N+k}{i\in\zz}}\sum_{j\in\nn}\lambda_{i,j} a_{i,j}\r\|_{WH_{X^s}(\rn)}&\lesssim
\sup_{\gfz{N\le i\le N+k}{i\in\zz}}2^i\lf\|
\sum_{j\in\nn}\mathbf{1}_{B_{i,j}}\r\|_{X^s}\lesssim
\sup_{\gfz{N\le i\le N+k}{i\in\zz}}2^i\lf\|
\sum_{j\in\nn}\mathbf{1}_{cB_{i,j}}\r\|_{X^s}\\\noz
&\lesssim\lf(\sup_{\gfz{N\le i\le N+k}{i\in\zz}}2^{is}\lf\|
\sum_{j\in\nn}\mathbf{1}_{cB_{i,j}}\r\|_{X}\r)^{1/s}
\sim\lf[\sup_{\gfz{N\le i\le N+k}{i\in\zz}}2^{(s-1)i}2^{i}\lf\|
\sum_{j\in\nn}\mathbf{1}_{B_{i,j}}\r\|_{X}\r]^{1/s}\\\noz
&\lesssim2^{\frac{(s-1)N}{s}},
\end{align*}
which implies that \eqref{12} holds true. Thus, \eqref{kkk} holds true.

Let $g:=\sum_{i\in\zz}\sum_{j\in\nn}\lambda_{i,j} a_{i,j}$ in $\cs'(\rn)$.
Next we show that
\begin{equation}\label{zz}
f=g\quad\text{in}\quad\cs'(\rn).
\end{equation}
Indeed, similarly to the proof of \cite[Lemma 4.8]{hyy}, we conclude that, up to a harmless
constant multiple, $\{a_{i,j}\}_{i\in\zz,j\in\nn}$ is a
sequence of $(X,\,q,\,d,\,\epsilon)$-molecules
associated, respectively, with balls $\{B_{i,j}\}_{i\in\zz,j\in\nn}$,
where $q$, $d$ and $\epsilon$ are as in Lemma \ref{Thmolcha}.
From this, \eqref{457} and Lemmas \ref{Thmolcha} and \ref{Le64}, we deduce that
$g\in WH_X(\rn)$ and hence $g$ vanishes weakly at infinity.
Let $\Phi\in\mathcal{S}(\rn)$ be such that
$$
\mathbf{1}_{B(\vec{0}_n,4) \setminus B(\vec{0}_n,2)}\le
\widehat\Phi\le\mathbf{1}_{B(\vec{0}_n,8) \setminus B(\vec{0}_n,1)}.
$$
We claim that, for any $t_1\in(0,\infty)$, $\Phi_{t_1}\ast g=\Phi_{t_1}\ast f$ in $\cs'(\rn)$.
Assuming that this claim holds true for the moment, then, by the fact that
$\mathbf{1}_{B(\vec{0}_n,4) \setminus B(\vec{0}_n,2)}\le
\widehat\Phi\le\mathbf{1}_{B(\vec{0}_n,8) \setminus B(\vec{0}_n,1)}$, we conclude that,
for any $x\in\rn\setminus \{\vec{0}_n\}$, there exists a $t_0\in(0,\infty)$ such that
$x\in\{y\in\rn:\ 2/t_0<|y|<4/t_0\}$ and hence, for any $\varphi\in C_c^\infty(\rn)$
with $\supp(\varphi)\subset\{y\in\rn:\ 2/t_0<|y|<4/t_0\}$,
$\varphi(\cdot)\widehat{\Phi}(t_0\cdot)=\varphi(\cdot)$.
From this and the above claim, we deduce that,
for any $\varphi\in C_c^\infty(\rn)$ with
$\supp(\varphi)\subset\{y\in\rn:\ 2/t_0<|y|<4/t_0\}$,
$$\langle \widehat{f-g},\varphi\rangle
=\langle \widehat{f-g},\widehat{\Phi}(t_0\cdot)\varphi\rangle=
\langle \Phi_{t_0}\ast(f-g),\widehat\varphi\rangle=0.$$
Thus, $\widetilde{\supp}(\widehat{f-g})\subset\{\vec{0}_n\}$.
From this, \cite[Corollary 2.4.2]{G1} and the fact that $f$ and $g$ vanish weakly at infinity,
it easily follows that $f=g$ in $\cs'(\rn)$, which is the desired conclusion.
Therefore, to show \eqref{zz}, it remains to prove the above claim.
Fix $t_1\in(0,\infty)$. By the fact that $f$ vanishes weakly at infinity and Lemma \ref{Le47},
we conclude that
\begin{equation}\label{659m}
\Phi_{t_1}\ast f=\int_0^\infty \Phi_{t_1}\ast\psi_t
\ast f\ast\phi_t\,\frac{dt}{t}\quad\text{in}\quad\cs'(\rn).
\end{equation}
Observe that, for any $t\in(0,\infty)$,
$$\Phi_{t_1}\ast f\ast\phi_t\ast\psi_t=
\lf[\widehat\Phi(t_1\cdot)\widehat\phi(t\cdot)\widehat\psi(t\cdot)\widehat f\,\r]^\vee,$$
which, together with \eqref{659m}, \eqref{67mm} and the fact that
$$
\mathbf{1}_{B(\vec{0}_n,4) \setminus B(\vec{0}_n,2)}\le
\widehat\Phi\le\mathbf{1}_{B(\vec{0}_n,8) \setminus B(\vec{0}_n,1)},
$$
further implies that
$$
\Phi_{t_1}\ast f=\int_{at_1/8}^{bt_1} \Phi_{t_1}\ast\psi_t
\ast f\ast\phi_t\,\frac{dt}{t}\quad\text{in}\quad\cs'(\rn).
$$
Moreover, from \cite[Theorem 2.3.20]{G1}, it follows that there exists an $N\in\nn$ such that,
for any $t\in[at_1/8,bt_1]$ and $x\in\rn$,
$$
|f\ast\phi_t(x)|\lesssim(1+|x|)^N,
$$
which, combined with \eqref{456} and the fact $\{\supp(A_{i,j})\}_{i\in\zz,j\in\nn}$
have pairwise disjoint interiors (see Remark \ref{buji}), implies that,
for any $t\in[at_1/8,bt_1]$ and $x\in\rn$,
$$
\sum_{i\in\zz}\sum_{j\in\nn}\lambda_{i,j}|A_{i,j}(x,t)|=|f\ast\phi_t(x)|\lesssim(1+|x|)^N.
$$
From this, \eqref{66mm} and the Lebesgue dominated convergence theorem,
we deduce that
$$
\Phi_{t_1}\ast f=\sum_{i\in\zz}\sum_{j\in\nn}\lambda_{i,j}\int_{at_1/8}^{bt_1} \Phi_{t_1}\ast\psi_t
\ast A_{i,j}\,\frac{dt}{t}=\Phi_{t_1}\ast g
$$
in $\mathcal{S}'(\rn)$.
Thus, we complete the proof of the above claim and hence of \eqref{zz}.

By \eqref{zz}, \eqref{457}, the fact that $\{a_{i,j}\}_{i\in\zz,j\in\nn}$ is a
sequence of $(X,\,q,\,d,\,\epsilon)$-molecules and Lemma \ref{Thmolcha},
we conclude that $f\in WH_X(\rn)$ and $\|f\|_{WH_X(\rn)}
\lesssim\|S(f)\|_{WX}$.
Thus, we complete the proof of the sufficiency of Theorem \ref{Tharea} and hence the proof
of Theorem \ref{Tharea}.
\end{proof}

\subsection{Characterization by the Littlewood--Paley $g$-function}\label{s3.2}

We have the following Littlewood--Paley $g$-function characterization of $WH_X(\rn)$.

\begin{theorem}\label{Thgf}
Let $X$ satisfy all assumptions in Theorem \ref{Tharea} and Assumption \ref{a2.17}.
Then $f\in WH_X(\rn)$ if and only if $f\in\cs'(\rn)$,
$f$ vanishes weakly at infinity and $g(f)\in WX$, where $g(f)$ is as in \eqref{eq63}. Moreover,
there exists a positive constant $C$ such that,
for any $f\in WH_X(\rn)$,
$$
C^{-1}\|g(f)\|_{WX}\le\|f\|_{WH_X(\rn)}\le C\|g(f)\|_{WX}.
$$
\end{theorem}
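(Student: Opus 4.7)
My plan is to establish the equivalence $\|f\|_{WH_X(\rn)}\sim \|g(f)\|_{WX}$ by proving the two inequalities separately, reducing the harder direction to the already-established Lusin area function characterization (Theorem \ref{Tharea}).

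For the direction $\|g(f)\|_{WX}\lesssim \|f\|_{WH_X(\rn)}$, I would mimic the necessity argument in the proof of Theorem \ref{Tharea}. Given $f\in WH_X(\rn)$, first invoke Lemma \ref{Le64} to see $f$ vanishes weakly at infinity, then apply Lemma \ref{Thad} to obtain an atomic decomposition $f=\sum_{i\in\zz}\sum_{j\in\nn}\lambda_{i,j}a_{i,j}$ in $\cs'(\rn)$ with the usual control on $\sum_j\mathbf{1}_{cB_{i,j}}$. For any $\alpha\in(0,\infty)$, pick $i_0\in\zz$ with $2^{i_0}\le\alpha<2^{i_0+1}$, split $f=f_1+f_2$ according to $i<i_0$ and $i\ge i_0$, and decompose $\|\mathbf{1}_{\{g(f)>\alpha\}}\|_X$ into three pieces as in \eqref{EqI6123}. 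The piece $\mathrm{I_1}$ is handled via the H\"older trick, $\widetilde qr_0\in(0,1]$, Lemma \ref{Le45}, and the $L^q(\rn)$-boundedness of $g$ (which follows from $\widehat{\phi}(\vec 0_n)=0$ as noted in Remark \ref{wa}(ii)). The piece $\mathrm{I_3}$ requires the analog of \eqref{Eq69}: for $x\in(4B_{i,j})^\complement$,
\[
g(a_{i,j})(x)\lesssim \|\mathbf{1}_{B_{i,j}}\|_X^{-1}\lf[\cm(\mathbf{1}_{B_{i,j}})(x)\r]^{(n+d+1)/n},
\]
proved exactly as in \eqref{EqMNa0}--\eqref{eq614} by using the $d$-order vanishing moment of $a_{i,j}$, the Taylor remainder theorem applied to $\phi$, and splitting the $t$-integral at $t\sim|x-x_{i,j}|$; one then proceeds verbatim as in the treatment of $\mathrm{I_{1,2}}$ and $\mathrm{I_3}$ there. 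The piece $\mathrm{I_2}$ is handled by the same crude estimate using $\sum_j\mathbf{1}_{cB_{i,j}}\le A$.

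For the harder direction $\|f\|_{WH_X(\rn)}\lesssim \|g(f)\|_{WX}$, by Theorem \ref{Tharea} it suffices to establish the weak-type square-function comparison
\[
\|S(f)\|_{WX}\lesssim \|g(f)\|_{WX}.
\]
Following the Ullrich/Liang et al.\ strategy flagged in the introduction, I would introduce an auxiliary Peetre-type square function
\[
g_{a,\ast}(f)(x):=\lf\{\sum_{k\in\zz}\sup_{y\in\rn}\frac{|f\ast\phi_{2^{-k}}(x-y)|^2}{(1+2^k|y|)^{2a}}\r\}^{1/2}
\]
with $a$ chosen sufficiently large (depending on $\underline p$ and $n$). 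The argument then proceeds in two steps: first, apply the discrete Calder\'on reproducing formula (Lemma \ref{Le49}(i)) together with Lemma \ref{Le67} to obtain the pointwise bound $S(f)(x)\lesssim g_{a,\ast}(f)(x)$; second, use the standard sub-mean-value inequality
\[
\sup_{y\in\rn}\frac{|f\ast\phi_{2^{-k}}(x-y)|}{(1+2^k|y|)^a}\lesssim \lf[\cm\lf(|f\ast\phi_{2^{-k}}|^r\r)(x)\r]^{1/r}
\]
valid for any $r\in(n/a,\infty)$, and then apply Assumption \ref{a2.17} to the vector of functions $\{f\ast\phi_{2^{-k}}\}_{k\in\zz}$ at exponent $2/r$ in $(WX)^{1/r}$, to deduce
\[
\|g_{a,\ast}(f)\|_{WX}\lesssim \|g(f)\|_{WX}.
\]
Concatenating these bounds with Theorem \ref{Tharea} yields the required estimate.

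The main obstacle is the second step of the harder direction: bounding $g_{a,\ast}(f)$ by $g(f)$ in the weak quasi-norm $\|\cdot\|_{WX}$. In the classical strong-type setting this is a clean application of the Fefferman--Stein vector-valued maximal inequality, but here we must work at the weak level, which is exactly the content of Assumption \ref{a2.17} (this is why that assumption is included in the hypotheses). One must choose the triple $(a,r,p)$ so that simultaneously $r\in(n/a,\underline p)$ and $p\in(0,p_-)$, ensuring that Assumption \ref{a2.17} applies on $(WX)^{1/p}$ with the vector exponent $2/r>1$; the freedom provided by taking $a$ large is what makes this matching possible. Once this weak Fefferman--Stein step is in place, every other ingredient is already available from the preceding lemmas and the proof of Theorem \ref{Tharea}.
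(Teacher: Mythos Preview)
Your necessity direction is essentially the paper's approach (its Proposition \ref{Pro66}) and is fine.

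In the sufficiency direction you have the right architecture---reduce to $\|S(f)\|_{WX}\lesssim\|g(f)\|_{WX}$ via Theorem \ref{Tharea}, insert a Peetre-type square function, then apply the weak Fefferman--Stein inequality of Assumption \ref{a2.17}---but two concrete points in your execution do not go through as written.

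First, your auxiliary function $g_{a,\ast}$ is purely \emph{discrete} (a sum over $k\in\zz$ of the Peetre maximal function at scales $2^{-k}$). After the vector-valued maximal inequality, the right-hand side you produce is the discrete square function $\bigl(\sum_{k}|f\ast\phi_{2^{-k}}|^2\bigr)^{1/2}$, not the continuous $g(f)$ of \eqref{eq63}; and there is no cheap pointwise or norm inequality relating the two in this generality. The paper avoids this by keeping $g_{a,\ast}$ as a \emph{continuous} integral $\int_0^\infty[(\phi_t^\ast f)_a]^2\,dt/t$, then writing $\int_0^\infty=\sum_{j}\int_1^2$ and tracking the parameter $s\in[1,2]$ through Lemma \ref{Le67}; at the end the pieces reassemble exactly into the continuous $g(f)$. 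This $s\in[1,2]$ device is the key missing idea.

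Second, the ``standard sub-mean-value inequality'' you quote,
\[
\sup_{y}\frac{|f\ast\phi_{2^{-k}}(x-y)|}{(1+2^k|y|)^a}\lesssim\bigl[\cm(|f\ast\phi_{2^{-k}}|^r)(x)\bigr]^{1/r},
\]
needs $f\ast\phi_{2^{-k}}$ to be band-limited; here $\widehat\phi$ is only assumed to vanish at the origin, not to have compact support. The correct substitute is Lemma \ref{Le67}, which you mention but in the wrong step: it belongs in the passage from $g_{a,\ast}$ to $g$, not in the trivial pointwise bound $S(f)\lesssim g_{a,\ast}(f)$ (which, for the \emph{continuous} $g_{a,\ast}$, is immediate from the definition). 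Note also that Lemma \ref{Le67} introduces an extra sum $\sum_{k\ge0}2^{-kN_0\gamma}(\cdots)$, and pushing this through the weak quasi-norm $\|\cdot\|_{WX}$ requires the Aoki--Rolewicz $\nu$-triangle inequality (Remark \ref{Re213}); your outline does not account for this.
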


By an argument similar to that used in the proof of
the necessity of Theorem \ref{Tharea}, we can show that
the Littlewood--Paley $g$-function is bounded from $WH_X(\rn)$ to $WX$ and we
omit the details here.

\begin{proposition}\label{Pro66}
Let all the assumptions be the same as in Theorem \ref{Tharea}.
If $f\in WH_X(\rn)$, then $g(f)\in WX$ and
$$
\lf\|g(f)\r\|_{WX}\lesssim\lf\|f\r\|_{WH_X(\rn)},
$$
where $g(f)$ is as in \eqref{eq63} and the implicit positive constant is independent of $f$.
\end{proposition}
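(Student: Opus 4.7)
The plan is to mirror the proof of the necessity part of Theorem \ref{Tharea}, replacing the Lusin area function $S(f)$ everywhere by the Littlewood--Paley $g$-function $g(f)$. Given $f\in WH_X(\rn)$, I would invoke Lemma \ref{Thad} to extract an atomic decomposition $f=\sum_{i\in\zz}\sum_{j\in\nn}\lambda_{i,j}a_{i,j}$ in $\cs'(\rn)$ with the same choice of $d\geq\lfloor n(1/\min\{p_-/\vartheta_0,\underline{p}\}-1)\rfloor$ used there. For any $\alpha\in(0,\infty)$, choose $i_0\in\zz$ with $2^{i_0}\leq\alpha<2^{i_0+1}$, split $f=f_1+f_2$ at level $i_0$, and bound $\|\mathbf{1}_{\{g(f)>\alpha\}}\|_X\leq\mathrm{I}_1+\mathrm{I}_2+\mathrm{I}_3$ exactly as in \eqref{EqI6123}, with $A_{i_0}:=\bigcup_{i\geq i_0}\bigcup_{j\in\nn}4B_{i,j}$.

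Two of the three pieces transfer without change. The middle term $\mathrm{I}_2$ is a bound on $\|\mathbf{1}_{A_{i_0}}\|_X$ that does not involve $g$ or $S$ at all, so its treatment reproduces \eqref{Eq415} verbatim. In the decomposition $\mathrm{I}_1=\mathrm{I}_{1,1}+\mathrm{I}_{1,2}$ of \eqref{Eq6I1}, the near-support piece $\mathrm{I}_{1,1}$ is handled by the H\"older-plus-convexification argument producing \eqref{Eq411}, with Lemma \ref{Le65} replaced by the classical $L^q(\rn)$-boundedness of $g$ for $q\in(1,\infty)$; this is precisely where the hypothesis $\widehat\phi(\vec 0_n)=0$ from Remark \ref{wa}(ii) is used.

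The only genuinely new ingredient is the pointwise substitute for \eqref{Eq69}: for any $i\in\zz$, $j\in\nn$ and $x\in(4B_{i,j})^\complement$, one needs
$$
g(a_{i,j})(x)\lesssim\lf\|\mathbf{1}_{B_{i,j}}\r\|_X^{-1}\lf[\cm(\mathbf{1}_{B_{i,j}})(x)\r]^{(n+d+1)/n}.
$$
To prove this, I would split the $t$-integral in the definition \eqref{eq63} of $g(a_{i,j})(x)$ at $t=|x-x_{i,j}|/4$ and, on each piece, apply the Taylor remainder expansion \eqref{EqMNa0}, which is legitimate because $a_{i,j}$ has vanishing moments up to order $d$. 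For $t\leq|x-x_{i,j}|/4$, the argument $\xi$ in \eqref{EqMNa0} satisfies $|\xi|\geq|x-x_{i,j}|/2$, so the Schwartz decay of $\partial^\beta\phi$ combined with the H\"older inequality (as in the derivation of \eqref{eq612}) yields $|a_{i,j}\ast\phi_t(x)|\lesssim\|\mathbf{1}_{B_{i,j}}\|_X^{-1}\,t\,r_{i,j}^{n+d+1}|x-x_{i,j}|^{-(n+d+2)}$; for $t>|x-x_{i,j}|/4$, the argument of \eqref{eq614} gives $|a_{i,j}\ast\phi_t(x)|\lesssim\|\mathbf{1}_{B_{i,j}}\|_X^{-1}(r_{i,j}/t)^{n+d+1}$. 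Squaring, integrating $dt/t$ over each range, and using $r_{i,j}^n/|x-x_{i,j}|^n\sim\cm(\mathbf{1}_{B_{i,j}})(x)$ produces the displayed bound.

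With this pointwise estimate in hand, the remaining pieces $\mathrm{I}_{1,2}$ and $\mathrm{I}_3$ are controlled by a verbatim repetition of the chains of inequalities leading to \eqref{Eq612} and \eqref{Eq416}, respectively, invoking Assumption \ref{a2.15}, the convexification bound \eqref{EqHLMS} and the bounded overlap property $\sum_{j}\mathbf{1}_{cB_{i,j}}\leq A$. Summing the three parts and taking the supremum over $\alpha\in(0,\infty)$, together with Lemma \ref{Thad}, yields $\|g(f)\|_{WX}\lesssim\|f\|_{WH_X(\rn)}$. The main obstacle is the pointwise bound itself, which is conceptually identical to \eqref{Eq69} but for the cone-free $g$-function; in fact the $g$-function case is technically slightly easier, since the extra cone integration $\int_{|y-x|<t}dy$ appearing in \eqref{eq611} is absent and the estimates can be applied directly at $y=x$.
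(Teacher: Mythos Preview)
Your proposal is correct and is exactly the approach the paper takes: the paper states that Proposition~\ref{Pro66} follows ``by an argument similar to that used in the proof of the necessity of Theorem~\ref{Tharea}'' and omits the details. Your write-up supplies precisely those details, including the one point that actually requires checking---the off-ball pointwise estimate $g(a_{i,j})(x)\lesssim\lf\|\mathbf{1}_{B_{i,j}}\r\|_X^{-1}[\cm(\mathbf{1}_{B_{i,j}})(x)]^{(n+d+1)/n}$ for $x\in(4B_{i,j})^\complement$---and your derivation of it via the split at $t=|x-x_{i,j}|/4$ together with \eqref{EqMNa0} is sound.
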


To prove Theorem \ref{Thgf}, we need the following Peetre type maximal function and square function.
For any $\phi\in\cs(\rn)$,
 $f\in\cs'(\rn)$ and $x\in\rn$, define
 $$
 (\phi_t^\ast f)_a(x):=\sup_{y\in\rn}\frac{|\phi_t\ast f(x+y)|}{(1+|y|/t)^a}
 $$
and
$$
g_{a,\ast}(f)(x):=\lf\{\int_0^\infty\lf[\lf(\phi_t^\ast f\r)_a(x)\r]^2\,\frac{dt}{t}\r\}^{1/2},
$$
where $a,\ t\in(0,\infty)$ and $\phi_t(\cdot):=\frac1{t^n}\phi(\frac{\cdot}{t})$.

Now we establish the following discrete Calder\'on reproducing formulae, which might be well known.
But, for the convenience of the reader, we present some details.

\begin{lemma}\label{Le49}
\begin{enumerate}
\item[\rm(i)] Let $\phi$ be a Schwartz function satisfying that, for any $x\in\rn\setminus\{\vec 0_n\}$,
there exists a $j\in \zz$ such that $\widehat\phi(2^{j}x)\not=0$. Then there
exists a $\psi\in\cs(\rn)$ such that $\wh\psi\in C^\fz_c(\rn)$ with its support
away from $\vec 0_n$, $\wh\phi\wh\psi\ge 0$ and, for any $x\in\rn\setminus\{\vec 0_n\}$,
$$\sum_{j\in\zz}\wh\phi(2^{-j}x)\wh\psi(2^{-j}x)=1.$$
Moreover, there exist $\phi_0,\ \psi_0\in\cs(\rn)$ with
$\wh\phi_0,\ \wh\psi_0\in C^\fz_c(\rn)$ such that, for some $\epsilon\in(0,\infty)$ and
for any $x\in B(\vec 0_n,\epsilon)$, $\wh\phi_0(x)\wh\psi_0(x)> 0$ and, for any $x\in\rn$,
$$\wh\phi_0(x)\wh\psi_0(x)+\sum_{j=1}^\infty\wh\phi(2^{-j}x)\wh\psi(2^{-j}x)=1.$$

\item[\rm(ii)] Let $\phi$ be a Schwartz function satisfying that, for any $x\in\rn\setminus\{\vec 0_n\}$,
there exists a $j\in \zz$ such that $\widehat\phi(2^{j}x)\not=0$ and,
for some given $a,\ b\in(0,\infty)$ with $a<b$, $\wh\phi(x)>0$ for any $x\in\{y\in\rn:\
a\le|y|\le b\}$. Then there exists a $\psi\in\cs(\rn)$ such that $\wh\psi\in C^\fz_c(\rn)$ with its support
away from $\vec 0_n$, $\wh\phi\wh\psi\ge 0$, $\wh\psi(x)>0$ for any $x\in\{y\in\rn:\
a\le|y|\le b\}$ and, for any $x\in\rn\setminus\{\vec 0_n\}$,
$$\sum_{j\in\zz}\wh\phi(2^{-j}x)\wh\psi(2^{-j}x)=1.$$
Moreover, there exist $\phi_0,\ \psi_0\in\cs(\rn)$ with
$\wh\phi_0,\ \wh\psi_0\in C^\fz_c(\rn)$ such that, for some $\epsilon\in(0,\infty)$ and
for any $x\in B(\vec 0_n,\epsilon)$, $\wh\phi_0(x)\wh\psi_0(x)> 0$ and, for any $x\in\rn$,
$$\wh\phi_0(x)\wh\psi_0(x)+\sum_{j=1}^\infty\wh\phi(2^{-j}x)\wh\psi(2^{-j}x)=1.$$
\end{enumerate}
\end{lemma}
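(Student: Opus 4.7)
The plan is to construct $\psi$ in the form $\wh\psi(\xi) := \tilde h(\xi)\overline{\wh\phi(\xi)}/F(\xi)$, where $\tilde h \in C_c^\infty(\rn\setminus\{\vec 0_n\})$ is a carefully chosen nonnegative bump and
$$F(\xi) := \sum_{j\in\zz} \tilde h(2^{-j}\xi)\lf|\wh\phi(2^{-j}\xi)\r|^2.$$
This ansatz immediately gives $\wh\phi\wh\psi = \tilde h|\wh\phi|^2/F \ge 0$ and puts $\wh\psi$ in $C_c^\infty(\rn\setminus\{\vec 0_n\})$; moreover, since $\supp \tilde h$ is compact and away from $\vec 0_n$, the series defining $F$ is locally finite and hence $F$ is smooth on $\rn\setminus\{\vec 0_n\}$, and a direct reindexing shows $F(2^{-i}\xi) = F(\xi)$ for all $i\in\zz$. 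This dyadic invariance then produces the Calder\'on identity
$$\sum_{j\in\zz}\wh\phi(2^{-j}\xi)\wh\psi(2^{-j}\xi) = \frac1{F(\xi)}\sum_{j\in\zz}\tilde h(2^{-j}\xi)\lf|\wh\phi(2^{-j}\xi)\r|^2 = \frac{F(\xi)}{F(\xi)} = 1$$
as soon as $F>0$ on $\rn\setminus\{\vec 0_n\}$.

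To arrange $F>0$, I would first invoke the Tauberian hypothesis together with compactness of the fundamental annulus $A := \{\xi\in\rn:\ 1\le|\xi|\le 2\}$ and continuity of $\wh\phi$: for each $\xi_0\in A$ some dyadic dilate $\wh\phi(2^{-j_{\xi_0}}\xi_0)$ is nonvanishing, this persists in a neighborhood of $\xi_0$ by continuity, and a finite subcover yields a finite set $\{j_1,\ldots,j_M\}\subset\zz$ and $c_0>0$ such that $\max_k|\wh\phi(2^{-j_k}\xi)|^2\ge c_0$ for every $\xi\in A$. I would then pick $\tilde h\in C_c^\infty(\rn\setminus\{\vec 0_n\})$ nonnegative, supported in an annulus thick enough to contain $\bigcup_{k=1}^M 2^{-j_k}A$, and strictly positive on this set. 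For any $\xi\ne\vec 0_n$, writing $\xi = 2^m\xi_0$ with $\xi_0\in A$, the term $j = m+j_k$ in $F(\xi)$ equals $\tilde h(2^{-j_k}\xi_0)|\wh\phi(2^{-j_k}\xi_0)|^2$, which is strictly positive for at least one $k$ by the compactness step, so $F(\xi)>0$, finishing Part (i). Part (ii) follows the same template; the additional hypothesis $\wh\phi>0$ on $\{a\le|\xi|\le b\}$ short-circuits the compactness step and allows $\tilde h$ to be taken supported in and positive on that same annulus, so that $\wh\psi$ inherits positivity on $\{a\le|\xi|\le b\}$.

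For the addendum pair $(\phi_0,\psi_0)$, I would set $T(\xi) := \sum_{j=1}^\infty \wh\phi(2^{-j}\xi)\wh\psi(2^{-j}\xi)$. Only finitely many $j$'s contribute at each $\xi$ by the support of $\wh\psi$, so $T\in C^\infty(\rn)$; moreover $T(\vec 0_n)=0$ and $T(\xi)=1$ for $|\xi|$ large (where the $j\le 0$ tail of the full Calder\'on sum vanishes by the support of $\wh\psi$), so $1-T$ is a nonnegative $C_c^\infty(\rn)$ function strictly positive near $\vec 0_n$. Choosing $\wh\phi_0\in C_c^\infty(\rn)$ to be a smooth cutoff strictly positive on $\supp(1-T)$ and setting $\wh\psi_0 := (1-T)/\wh\phi_0$ (which is $C_c^\infty$ by the nonvanishing of $\wh\phi_0$ on $\supp(1-T)$ and extends smoothly by zero elsewhere) gives both factors in $C_c^\infty(\rn)$, both strictly positive on a small ball around $\vec 0_n$, and satisfying $\wh\phi_0\wh\psi_0 + T = 1$ on all of $\rn$.

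The hard part will be the positivity of $F$: the Tauberian hypothesis only guarantees the existence of a nonvanishing dyadic dilate of $\wh\phi$ at each nonzero point without specifying which, so a naive bump $\tilde h$ may happen to miss the relevant dilates and leave $F$ vanishing. The compactness--plus--finite-shifts argument is precisely what rescues this, by producing a uniform finite list $\{j_1,\ldots,j_M\}$ of useful dilates over the compact annulus $A$ and tailoring $\supp \tilde h$ to cover $\bigcup_k 2^{-j_k}A$; once this is done, the rest of the argument (smoothness, dyadic invariance, Calder\'on identity, and the tail factorization) is routine.
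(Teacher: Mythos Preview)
Your approach to Part (i) and to the $(\phi_0,\psi_0)$ addendum is essentially the paper's: both use the ansatz $\wh\psi = h\,\overline{\wh\phi}/F$ with $F(\xi)=\sum_{j\in\zz}h(2^{-j}\xi)|\wh\phi(2^{-j}\xi)|^2$ dyadically invariant, and both secure $F>0$ by a compactness argument on a fundamental annulus. The paper packages the bump as $\theta_k(|\cdot|)$ drawn from an increasing sequence $\theta_k\nearrow 1$ in $C_c^\infty((0,\infty))$ (so that once some $\theta_m$ works, every $\theta_{m'}$ with $m'\ge m$ does too), while you build $\tilde h$ directly from the finite list of dilates; this is only a cosmetic difference.

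There is, however, a genuine gap in your handling of Part (ii). You propose to take $\tilde h$ supported in and positive on $\{a\le|\xi|\le b\}$ and assert that this ``short-circuits'' the compactness step. That is only valid when $b\ge 2a$: if $b<2a$, the dyadic orbit $\{2^{-j}\xi\}_{j\in\zz}$ of a point $\xi$ can miss the annulus $\{a\le|\cdot|\le b\}$ entirely (e.g.\ $a=1$, $b=3/2$, $\xi=1.7$: the dilates $\ldots,0.425,\,0.85,\,1.7,\,3.4,\ldots$ never land in $[1,3/2]$), whence $F(\xi)=0$ and the construction collapses. The Tauberian hypothesis in (ii) is not superfluous in this regime; indeed the paper's Remark \ref{dcrf} singles out the case $2a\le b$ as the one where it becomes redundant. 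The repair is easy and is what the paper in effect does: first run the Part (i) compactness argument to obtain a bump for which $F>0$ everywhere, and then---since enlarging the bump only increases $F$---enlarge it so that it is also positive on $[a,b]$, which makes $\wh\psi=\tilde h\,\overline{\wh\phi}/F$ positive on $\{a\le|\xi|\le b\}$ as required. The paper's increasing-sequence device does exactly this: once $\theta_m$ yields $F>0$, pass to $\theta_{m'}$ with $m'\ge m$ large enough that $\theta_{m'}>0$ on $[a,b]$.
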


\begin{proof}
We prove this lemma by borrowing some ideas from the proof of Calder\'on \cite[Lemma 4.1]{C1975}.

We first show (i). To this end, let $\{\theta_k\}_{k\in\nn}\subset C^\fz_c((0,\infty))$
be an increasing sequence of functions such that, for any $k\in\nn$,
$\theta_k\geq0$ and, for any $t\in(0,\infty)$,
$\lim_{k\to\infty}\theta_k(t)=1$.
We claim that there exists a $k\in\nn$ such that, for any $x\in\rn\setminus\{\vec{0}_n\}$,
$$
\sum_{j\in\zz}\theta_k(2^{-j}|x|)|\wh\phi(2^{-j}x)|^2\in(0,\infty).
$$
Assuming that this claim holds true for the moment, we define
\begin{equation}\label{crf}
\wh\psi(x):=\frac{\theta_k(|x|)\overline{\wh\phi(x)}}
{\sum_{j\in\zz}\theta_k(2^{-j}|x|)|\wh\phi(2^{-j}x)|^2}
\quad\text{for}\ \text{any}\quad x\in\rn\setminus\{\vec{0}_n\},
\quad\text{and}\quad\wh\psi(\vec{0}_n):=0.
\end{equation}
Then it is easy to see that $\psi\in\cs(\rn)$ has all the desired properties
stated in Lemma \ref{Le49}(i).
Therefore, to show such a function $\psi$ exists, it remains to prove the above claim.
Observe that, for any given $x\in\{y\in\rn:\ 1\le|y|\le2\}$, there exists an
$N_{(x)}\in\nn$ such that
$$
\sum_{j=-N_{(x)}}^{N_{(x)}}|\wh\phi(2^{-j}x)|^2>0,
$$
which, combined with the properties of $\{\theta_k\}_{k\in\nn}$, implies that there
exists an $m_{(x)}\in\nn$
such that
$$
\sum_{j=-N_{(x)}}^{N_{(x)}}\theta_{m_{(x)}}(2^{-j}|x|)|\wh\phi(2^{-j}x)|^2>0.
$$
Observe that $\sum_{j=-N_{(x)}}^{N_{(x)}}\theta_{m_{(x)}}(2^{-j}|\cdot|)|\wh\phi(2^{-j}\cdot)|^2
$ is a continuous function on $\rn$.
By this, we conclude that, for any given $x\in\{y\in\rn:\ 1\le|y|\le2\}$,
there exist
$N_{(x)},\ m_{(x)}\in\nn$  and $r_{(x)}\in(0,\infty)$ such that,
for any $\xi\in B(x,r_{(x)})$,
$$
\sum_{j=-N_{(x)}}^{N_{(x)}}\theta_{m_{(x)}}(2^{-j}|\xi|)|\wh\phi(2^{-j}\xi)|^2>0,
$$
which, together with the fact that $\{y\in\rn:\ 1\le|y|\le2\}$ is a compact set,
implies that
there exist $\ell\in\mathbb{N}$, $\{x_1,\ldots,x_\ell\}\subset\{y\in\rn:\ 1\le|y|\le2\}$,
$\{N_1,\ldots,N_\ell\},\ \{m_1,\ldots,m_\ell\}\subset\nn$
and $\{r_1,\ldots,r_\ell\}\subset(0,\infty)$ such that
$\{y\in\rn:\ 1\le|y|\le2\}\subset\cup_{i=1}^{\ell}B(x_i,r_i)$
and, for any $i\in\{1,\ldots,\ell\}$ and $\xi\in B(x_i,r_i)$,
$$
\sum_{j=-N_i}^{N_i}\theta_{m_i}(2^{-j}|\xi|)|\wh\phi(2^{-j}\xi)|^2>0.
$$
Let $m:=\max\{m_1,\ldots,m_\ell\}$. Then we have, for any
$\xi\in\{y\in\rn:\ 1\le|y|\le2\}$,
\begin{equation}\label{99}
\sum_{j\in\zz}\theta_{m}(2^{-j}|\xi|)|\wh\phi(2^{-j}\xi)|^2\in(0,\infty).
\end{equation}
Observe that, for any $x\in\rn\setminus\{\vec{0}_n\}$, there exists an $\ell\in\zz$
such that $2^{-\ell} x\in\{y\in\rn:\ 1\le|y|\le2\}$.
From this and \eqref{99}, we deduce that
$$
\sum_{j\in\zz}\theta_{m}(2^{-j}|x|)|\wh\phi(2^{-j}x)|^2=
\sum_{j\in\zz}\theta_{m}(2^{-j}2^{-\ell}|x|)|\wh\phi(2^{-j}2^{-\ell}x)|^2>0.
$$
Thus, we complete the proof of the above claim and hence the function $\psi$ indeed exists.

Let $a,\ b\in(0,\infty)$ be such that $\supp(\wh\psi)\subset\{y\in\rn:\ a<|y|< b\}$.
From this, it is easy to deduce that,
for any $x\in\{y\in\rn:\ |y|\geq b\}$,
\begin{equation}\label{95}
\sum_{j=1}^\infty\wh\phi(2^{-j}x)\wh\psi(2^{-j}x)=1.
\end{equation}
Choose $\phi_0\in\cs(\rn)$ satisfying that, for any
$x\in\{y\in\rn:\ |y|\geq b\}$, $\wh\phi_0(x)>0$.
For any $x\in\{y\in\rn:\ |y|>b\}$, let $\wh\psi_0(x):=0$ and,
for any $x\in\{y\in\rn:\ |y|\le b\}$, let $$\wh\psi_0(x):=\frac{1-\sum_{j=1}^\infty\wh\phi(2^{-j}x)\wh\psi(2^{-j}x)}{\wh\phi_0(x)}.$$
From this and \eqref{95}, we deduce that $\psi_0,\ \phi_0\in\cs(\rn)$ have all the desired properties
stated in (i) of this lemma, which then completes the proof of (i).

As for (ii), let $\phi$ and $a,\ b\in (0,\fz)$ be the same as in (ii).
Then it is easy to see that $\psi$ in \eqref{crf} has all the desired properties
stated in (ii) and, moreover, if we choose $\psi_0,\ \phi_0\in\cs(\rn)$
to be the same as in the proof of (i), then $\psi_0$ and $\phi_0$
also have all the desired properties stated in (ii) of this lemma,
which completes the proof of (ii) and hence of Lemma \ref{Le49}.
\end{proof}

\begin{remark}\label{dcrf} Observe that, in Lemma \ref{Le49}(ii), if $2a\le b$,
then $\phi$ automatically satisfies that, for any $x\in\rn\setminus\{\vec 0_n\}$,
there exists a $j\in \zz$ such that $\widehat\phi(2^{j}x)\not=0$ and hence, in this case,
this assumption is superfluous.
\end{remark}

Using Lemma \ref{Le49}(i) and \cite[Proposition 1.1.6(b)]{G2}, and repeating the proof of \cite[(2.66)]{U},
we then obtain
the following pointwise estimate and we omit the details.

\begin{lemma}\label{Le67}
Let $\phi$ be a Schwartz function and, for any $x\in\rn\setminus\{\vec 0_n\}$,
there exists a $j\in \zz$ such that $\widehat\phi(2^{j}x)\not=0$.
Then, for any given $N_0\in\nn$ and $\gamma\in(0,\infty)$,
there exists a positive constant $C_{(N_0,\gamma,\phi)}$ such that,
for any $s\in[1,2]$, $a\in(0,N_0]$, $l\in\zz$,
$f\in\cs'(\rn)$ and $x\in\rn$,
\begin{equation*}
\lf[\lf(\phi_{2^{-l}s}^\ast f\r)_a(x)\r]^\gamma\le C_{(N_0,\gamma,\phi
)}\sum_{k=0}^\infty2^{-kN_0\gamma}2^{(k+l)n}
\int_\rn\frac{|(\phi_{2^{-(k+l)}})_s\ast f(y)|^\gamma}{(1+2^l|x-y|)^{a\gamma}}\,dy.
\end{equation*}
\end{lemma}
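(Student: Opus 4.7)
The plan is to combine the discrete Calder\'on reproducing formula provided by Lemma \ref{Le49}(i) with a standard sub-mean-value property for functions having compactly supported Fourier transform (namely \cite[Proposition 1.1.6(b)]{G2}), following the scheme Ullrich used to prove \cite[(2.66)]{U}. Throughout, the parameter $s\in[1,2]$ only introduces harmless bounded factors because $[1,2]$ is compact and avoids $\vec 0_n$, so all constants involving $s$ can be taken uniform in $s$.

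First I would apply Lemma \ref{Le49}(i) to select $\psi,\phi_0,\psi_0\in\cs(\rn)$ with $\wh\psi\in C^\infty_c(\rn)$ supported away from $\vec 0_n$ and satisfying
$$\wh\phi_0(\xi)\wh\psi_0(\xi)+\sum_{k=1}^\infty\wh\phi(2^{-k}\xi)\wh\psi(2^{-k}\xi)=1\qquad(\xi\in\rn).$$
Substituting $\xi\mapsto 2^l s^{-1}\xi$ and rearranging, I would derive a representation, valid in $\cs'(\rn)$,
$$\phi_{2^{-l}s}\ast f=\sum_{k=0}^\infty\Psi_k^{(s,l)}\ast\lf[(\phi_{2^{-(k+l)}})_s\ast f\r],$$
in which each kernel $\Psi_k^{(s,l)}$ is an $(s,l)$-dilation of a fixed Schwartz function built from $\phi,\psi,\phi_0,\psi_0$. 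Because $\wh\psi$ vanishes identically on a neighbourhood of $\vec 0_n$, a routine Schwartz estimate will give, for any prescribed $M\in\nn$ and uniformly in $s\in[1,2]$ and $l\in\zz$,
$$\lf|\Psi_k^{(s,l)}(z)\r|\ls 2^{-kN_0}\,2^{(k+l)n}\lf(1+2^{k+l}|z|\r)^{-M}.$$
The decay factor $2^{-kN_0}$ is exactly what supplies the weight $2^{-kN_0\gamma}$ after one raises to the $\gamma$-th power.

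Next, for each fixed $k$, I would apply \cite[Proposition 1.1.6(b)]{G2} to $h_k:=(\phi_{2^{-(k+l)}})_s\ast f$, whose Fourier transform is supported in a ball of radius comparable to $2^{k+l}$. This yields a pointwise Nikol'skii/Peetre-type bound
$$|h_k(u)|^\gamma\ls 2^{(k+l)n}\int_\rn\frac{|h_k(w)|^\gamma}{(1+2^{k+l}|u-w|)^{a\gamma}}\,dw$$
whenever $a\gamma>n$ (which one can always arrange by enlarging $a$). Substituting $u=x+y-z$, integrating against $|\Psi_k^{(s,l)}(z)|$, and invoking the elementary translation inequality $(1+2^l|x-w|)^{-a\gamma}\ls(1+2^l|y-z|)^{a\gamma}(1+2^l|x+y-z-w|)^{-a\gamma}$, I would pull the $y$-dependence out as a factor $(1+2^l|y|)^{a\gamma}\sim(1+|y|/(2^{-l}s))^{a\gamma}$, which cancels exactly against the weight defining $(\phi_{2^{-l}s}^\ast f)_a(x)$. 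Taking the supremum over $y\in\rn$ and summing in $k$ then produces the claimed right-hand side.

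The main technical obstacle, I expect, will be the change of scale in the weight: \cite[Proposition 1.1.6(b)]{G2} naturally produces the weight at scale $2^{k+l}$, whereas the final statement requires the weight $(1+2^l|x-y|)^{a\gamma}$ at the $l$-scale. One must therefore pay the crude comparison $(1+2^{k+l}|\,\cdot\,|)^{-a\gamma}\le 2^{ka\gamma}(1+2^l|\,\cdot\,|)^{-a\gamma}$, which costs a factor $2^{ka\gamma}$. The restriction $a\leq N_0$ in the statement is precisely what guarantees that this loss is absorbed by the kernel gain $2^{-kN_0\gamma}$, leaving a summable residual $2^{-k(N_0-a)\gamma}$ after some of the gain is spent on the geometric summation. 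Everything else is bookkeeping of Schwartz-type bounds, uniform in $s\in[1,2]$.
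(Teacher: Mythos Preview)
Your proposal is essentially the paper's own approach: the paper omits the details, citing exactly Lemma \ref{Le49}(i), \cite[Proposition 1.1.6(b)]{G2}, and the argument for \cite[(2.66)]{U}. One small correction: in the sub-mean-value step you should apply \cite[Proposition 1.1.6(b)]{G2} with a fixed large weight exponent $M>n$ (independent of $a$, which is given and cannot be ``enlarged''); the $a$-weight at the $l$-scale then emerges from the kernel decay and the scale-change comparison $(1+2^{k+l}|\cdot|)^{-M}\le 2^{ka\gamma}(1+2^l|\cdot|)^{-a\gamma}$ exactly as you correctly describe in your final paragraph.
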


Now, using Lemma \ref{Le67}, we can prove Theorem \ref{Thgf}.

\begin{proof}[Proof of Theorem \ref{Thgf}]
For any $f\in WH_X(\rn)$, from Lemma \ref{Le64} and Proposition \ref{Pro66}, it follows that
$f\in\cs'(\rn)$, $f$ vanishes weakly at infinity and $g(f)\in WX$. Thus, to finish the proof of Theorem \ref{Thgf},
by Theorem \ref{Tharea}, we only need to show that,
for any $f\in\cs'(\rn)$, which vanishes weakly at infinity,
such that $g(f)\in WX$, it holds true that
\begin{equation}\label{Eq620}
\|S(f)\|_{WX}\lesssim\|g(f)\|_{WX}.
\end{equation}
Observe that $S(f)(x)\lesssim g_{a,\ast}(f)(x)$ holds true for almost every $x\in\rn$
and any given $a\in(0,\infty)$.
Thus, to prove \eqref{Eq620}, it suffices to show
that there exists some $a\in(\frac{n}{\min\{p_-,2\}},\infty)$
such that, for any $f\in\cs'(\rn)$ vanishing weakly at infinity and satisfying $g(f)\in WX$,
\begin{equation}\label{Eq621}
\lf\|g_{a,\ast}(f)\r\|_{WX}\lesssim\|g(f)\|_{WX}.
\end{equation}
To this end, we first observe that, for any given $a\in(\frac{n}{\min\{p_-,2\}},\infty)$,
there exists a $\gamma\in(0,\min\{p_-,2\})$ such that
$a\in(\frac{n}{\gamma},\infty)$. Choosing $N_0$ sufficiently large, then, from Lemma \ref{Le67}
and the Minkowski integral inequality, we deduce that, for any $x\in\rn$,
\begin{align*}
g_{a,\ast}(f)(x)&=\lf\{\sum_{j\in\zz}\int_1^2\lf[\lf(\phi_{2^{-j}t}^\ast f\r)_a(x)\r]^2\,\frac{dt}{t}\r\}^{\frac12}\\
&\lesssim\lf\{\sum_{j\in\zz}\int_1^2\lf[\sum_{k=0}
^\infty2^{-kN_0\gamma}2^{(k+j)n}\int_\rn\frac{|(\phi_{2^{-(k+j)}})_t\ast f(y)|^\gamma}{(1+2^j|x-y|)^{a\gamma}}\,dy\r]
^\frac2{\gamma}\,\frac{dt}{t}\r\}^{\frac12}\\
&\lesssim\lf[\sum_{j\in\zz}\lf\{\sum_{k=0}^\infty2^{-kN_0\gamma}
2^{(k+j)n}\int_\rn\frac{[\int_1^2|(\phi_{2^{-(k+j)}})_t\ast f(y)|^2\,\frac{dt}{t}]^\frac{\gamma}{2}}{(1+2^{j}|x-y|)
^{a\gamma}}\,dy\r\}^\frac{2}{\gamma}\r]^\frac12,
\end{align*}
which, combined with Remarks \ref{Rews}(i) and \ref{Re213}, further implies that
\begin{align*}
\|g_{a,\ast}(f)\|_{WX}^{\gamma \nu}&\lesssim\lf\|\sum_{k=0}^\infty2^{-k(N_0\gamma-n)}\lf[\sum_{j\in\zz}2^{j\frac{2n}{\gamma}}
\lf\{\int_\rn\frac{[\int_1^2|(\phi_{2^{-(k+j)}})_t\ast f(y)|^2\,\frac{dt}{t}
]^\frac{\gamma}{2}}{(1+2^{j}|\cdot-y|)^{a\gamma}}\,dy\r\}^\frac{2}{\gamma}
\r]^\frac{\gamma}2\r\|_{(WX)^{1/\gamma}}^\nu\\
&\lesssim\sum_{k=0}^\infty2^{-k\nu(N_0\gamma-n)}\lf\|\lf[\sum_{j\in\zz}2^{j\frac{2n}{\gamma}}\lf\{
\int_\rn\frac{[\int_1^2|(\phi_{2^{-(k+j)}})_t\ast f(y)|^2\,\frac{dt}{t}
]^\frac{\gamma}{2}}{(1+2^{j}|\cdot-y|)^{a\gamma}}\,dy\r\}^\frac{2}{\gamma}
\r]^\frac{\gamma}2\r\|_{(WX)^{1/\gamma}}^\nu\\
&\lesssim\sum_{k=0}^\infty2^{-k\nu(N_0\gamma-n)}\\
&\hs\times\lf\|\lf[\sum_{j\in\zz}2^{j\frac{2n}{\gamma}}\lf\{\sum_{i=0}^\infty2^{-ia\gamma}
\int_{|\cdot-y|\sim2^{i-j}}\lf[\int_1^2|
(\phi_{2^{-(k+j)}})_t\ast f(y)|^2\,\frac{dt}{t}\r]^\frac{\gamma}{2}\,dy
\r\}^\frac2{\gamma}\r]^\frac{\gamma}{2}\r\|_{(WX)^{1/\gamma}}^\nu,
\end{align*}
where $\nu$ is as in Remark \ref{Re213} and $|\cdot-y|\sim2^{i-j}$ means that $|x-y|<2^{-j}$ when $i=0$,
or $2^{i-j-1}\le|x-y|<2^{i-j}$ when $i\in\nn$.
Using the Minkowski inequality on series, Assumption \ref{a2.17} and
Remark \ref{Re213}, we conclude that
\begin{align*}
\lf\|g_{a,\ast}(f)\r\|_{WX}^{\gamma \nu}&\lesssim\sum_{k=0}^\infty2^{-k\nu(N_0\gamma-n)}
\lf\|\sum_{i=0}^\infty2^{-ia\gamma+in}\lf\{\sum_{j\in\zz}
\lf[\cm\lf(\lf[\int_1^2|(\phi_{2^{-(k+j)}})_t\ast f(y)|^2
\,\frac{dt}{t}\r]^\frac{\gamma}{2}\r)\r]^\frac{2}
{\gamma}\r\}^\frac{\gamma}{2}\r\|_{(WX)^{1/\gamma}}^\nu\\
&\lesssim\sum_{k=0}^\infty2^{-k\nu(N_0\gamma-n)}
\sum_{i=0}^\infty2^{(-ia\gamma+in)\nu}\lf\|\lf\{\sum_{j\in\zz}\lf[\int_1^2
|(\phi_{2^{-(k+j)}})_t\ast f|^2\,\frac{dt}{t}\r]\r\}
^\frac{\gamma}{2}\r\|_{(WX)^{1/\gamma}}^\nu\\
&\lesssim\lf\|g(f)\r\|_{WX}^{\gamma \nu},
\end{align*}
which implies that \eqref{Eq621} holds true. This finishes the proof of Theorem \ref{Thgf}.
\end{proof}

\subsection{Characterization by the Littlewood--Paley $g_\lambda^\ast$-function}\label{s3.3}

In this subsection, we establish the Littlewood--Paley $g_\lambda^\ast$-function characterization
of $WH_X(\rn)$.

\begin{theorem}\label{Thgx}
Let $X$, $p_+$ and $p_-$ satisfy all assumptions in Theorem \ref{Tharea}
and
$$
\lambda\in\lf(\max\lf\{\frac{2}{\min\{1,p_-\}},
1-\frac2{\max\{1,p_+\}}+\frac{2}{\min\{1,p_-\}}\r\},\infty\r).
$$
Then $f\in WH_X(\rn)$ if and only if $f\in\cs'(\rn)$,
$f$ vanishes weakly at infinity and $g_\lambda^\ast(f)\in WX$, where $g_\lambda^\ast(f)$
is as in \eqref{eq62}.
Moreover, there exists a positive constant $C$ such that,
for any $f\in WH_X(\rn)$,
$$
C^{-1}\lf\|g_\lambda^\ast(f)\r\|_{WX}\le\|f\|_{WH_X(\rn)}\le C\lf\|g_\lambda^\ast(f)\r\|_{WX}.
$$
\end{theorem}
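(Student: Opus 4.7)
The plan is to derive Theorem \ref{Thgx} from the Lusin area characterization of Theorem \ref{Tharea} via a two-sided comparison of $g_\lambda^\ast(f)$ and $S(f)$ in the quasi-norm of $WX$. The elementary direction is the pointwise bound $S(f)(x) \le 2^{\lambda n/2}g_\lambda^\ast(f)(x)$, obtained by restricting the integral defining $g_\lambda^\ast(f)$ to the cone $\Gamma(x)$, on which $t/(t+|x-y|) \ge 1/2$. Consequently, if $f \in \cs'(\rn)$ vanishes weakly at infinity and $g_\lambda^\ast(f) \in WX$, then $S(f) \in WX$ and Theorem \ref{Tharea} furnishes $f \in WH_X(\rn)$ together with $\|f\|_{WH_X(\rn)} \lesssim \|g_\lambda^\ast(f)\|_{WX}$.

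For the converse, suppose $f \in WH_X(\rn)$. By Lemma \ref{Le64}, $f$ vanishes weakly at infinity, and Theorem \ref{Tharea} yields $S(f) \in WX$ with $\|S(f)\|_{WX} \lesssim \|f\|_{WH_X(\rn)}$. What remains is the estimate $\|g_\lambda^\ast(f)\|_{WX} \lesssim \|S(f)\|_{WX}$. I would obtain it by splitting the half-space into dyadic shells $A_0(x,t):=\{y\in\rn:\ |x-y|<t\}$ and $A_k(x,t):=\{y\in\rn:\ 2^{k-1}t\le|x-y|<2^k t\}$ for $k\geq 1$, on which the weight $(t/(t+|x-y|))^{\lambda n}$ is comparable to $2^{-k\lambda n}$. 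This yields the pointwise decomposition
\begin{align*}
g_\lambda^\ast(f)(x) \lesssim \sum_{k=0}^\infty 2^{-k\lambda n/2}\ca^{(2^k)}(f\ast\phi_t)(x),
\end{align*}
with $\ca^{(\alpha)}$ as in \eqref{aa}. Combining the Aoki--Rolewicz inequality from Remark \ref{Re213} with a change-of-angles estimate for weak tent spaces of the form
\begin{align*}
\lf\|\ca^{(2^k)}(F)\r\|_{WX} \lesssim 2^{k\beta}\lf\|\ca^{(1)}(F)\r\|_{WX}
\end{align*}
then finishes the argument, provided $\lambda n/2 > \beta$.

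The change-of-angles bound is where the atomic decomposition of $WT_X^1(\rr_+^{n+1})$ from Theorem \ref{That} enters. Writing $F(y,t):=f\ast\phi_t(y)=\sum_{i,j}\lambda_{i,j}a_{i,j}$ with $(T_X,\infty)$-atoms $a_{i,j}$ supported in $T(B_{i,j})$, one verifies that $\ca^{(2^k)}(a_{i,j})$ is supported in $(2^k+1)B_{i,j}$ and, because each $a_{i,j}$ satisfies an $L^{p_0}$ size bound, so does $\ca^{(2^k)}(a_{i,j})$ with an extra factor $2^{kn/p_0}$. Mimicking the two-step decomposition $F=F_1+F_2$ from the sufficiency proof of Theorem \ref{That} --- $F_1$ collecting the atoms of level below $2^{i_0}$ and controlled through Lemma \ref{Le45} together with the vector-valued inequality of Assumption \ref{a2.15} on $X^{1/r_0}$, and $F_2$ collecting the remaining atoms and controlled through the $\underline{p}$-type concavity of $X^{1/r_2}$ and \eqref{EqHLMS} --- while carefully absorbing the dilation factor $2^{kn}$ via Remark \ref{Refs}, one arrives at the exponent
\begin{align*}
\beta = \frac{n}{2}\max\lf\{\frac{2}{\min\{1,p_-\}},\ 1-\frac{2}{\max\{1,p_+\}}+\frac{2}{\min\{1,p_-\}}\r\}.
\end{align*}
The geometric series over $k$ then converges exactly under the hypothesis placed on $\lambda$, and the $WX$ quasi-triangle inequality from Remark \ref{Re213} assembles the pieces to give $\|g_\lambda^\ast(f)\|_{WX}\lesssim\|S(f)\|_{WX}$.

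The main obstacle will be the sharp accounting needed to produce this precise $\beta$. The $F_2$-contribution, requiring only a $p_-$-style concavity, is responsible for the first entry $2/\min\{1,p_-\}$ inside the maximum and is relatively routine once the shell decomposition is in place. The $F_1$-contribution is substantially more delicate: it forces a size estimate on the enlarged atoms $\ca^{(2^k)}(a_{i,j})$ in a space $L^{p_0}$ tied to the hypothesis that $\cm^{((p_0/r_0)')}$ is bounded on $(X^{1/r_0})'$ (i.e., to $p_+$), and then simultaneously invokes the Fefferman--Stein inequality on $X^{1/r_0}$ (pulling in $p_-$) through Remark \ref{Refs} and a H\"older split (pulling in $p_+$), so that the $2^k$-scaling accumulates contributions from both indices. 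Balancing these two contributions is exactly what produces the second, more intricate term in the lower bound on $\lambda$ and is the place where our argument improves on the $g_\lambda^\ast$-range available in \cite{YYYZ}.
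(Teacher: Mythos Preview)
Your proposal is correct and follows essentially the same route as the paper. The paper packages your change-of-angles bound as a separate result, Theorem~\ref{Thca}, proved via the atomic decomposition of Theorem~\ref{That} together with Auscher's $L^p$ aperture-change inequality \cite[Theorem~1.1]{A} (which supplies the precise $L^q$ growth of $\ca^{(\alpha)}(a_{i,j})$; the factor is $\max\{\alpha^{n/2},\alpha^{n/q}\}$ rather than the $2^{kn/p_0}$ you write, and this is what produces the $1-2/\max\{1,p_+\}$ term), and then feeds Theorem~\ref{Thca} into exactly the dyadic-shell decomposition and Aoki--Rolewicz summation you describe.
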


To prove Theorem \ref{Thgx}, we need the following inequality on the change of angles.

\begin{theorem}\label{Thca}
Let $X$ be a ball quasi-Banach function space
satisfying Assumption \ref{a2.15} for some $p_-\in(0,\infty)$.
Assume that, for any given $r\in(0,\underline{p})$,
$X^{1/r}$ is a ball Banach function space and assume that
there exist $r_0\in(0,\underline{p})$ and $p_0\in(r_0,\infty)$ such that \eqref{Eqdm} holds true.
Let $q\in(\max\{1,p_0\},p_0/r_0]$ and $r\in(0,1)$.
Then there exists a positive constant $C$ such that,
for any $\alpha\in[1,\infty)$ and $f\in WT_X^1(\rr_+^{n+1})$,
$$
\lf\|\ca^{(\alpha)}(f)\r\|_{WX}\le C\lf[\max\lf\{\alpha^{(\frac12-\frac1q)n},1\r\}\r]
^{\frac q{p_0}}\alpha^\frac{n}{r_0r}\lf\|\ca^{(1)}(f)\r\|_{WX},
$$
where $\ca^{(\alpha)}$
is as in \eqref{aa}.
\end{theorem}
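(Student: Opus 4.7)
The plan is to adapt the sufficiency argument in the proof of Theorem \ref{That} to the dilated cone operator $\ca^{(\alpha)}$, carefully tracking the $\alpha$-dependence through each step. First, apply Theorem \ref{That} to obtain an atomic decomposition $f=\sum_{i\in\zz}\sum_{j\in\nn}\lambda_{i,j}a_{i,j}$, where each $a_{i,j}$ is a $(T_X,\infty)$-atom associated with a ball $B_{i,j}$, $\lambda_{i,j}=2^i\|\mathbf{1}_{B_{i,j}}\|_X$, $\sum_{j\in\nn}\mathbf{1}_{cB_{i,j}}\le M_0$ for any $i\in\zz$, and $\sup_{i\in\zz}2^i\|\sum_{j\in\nn}\mathbf{1}_{B_{i,j}}\|_X\sim\|f\|_{WT_X^1(\rr_+^{n+1})}$.

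Next, for any $(T_X,\infty)$-atom $a$ supported in $T(B)$, I establish two auxiliary facts. A direct geometric argument using the triangle inequality shows that $\ca^{(\alpha)}(a)$ is supported in $\widetilde{B}:=\alpha B$; indeed, whenever $(y,t)\in T(B)$ and $|x-y|<\alpha t$, one has $|x-x_B|\le|x-y|+|y-x_B|<\alpha t+(r_B-t)<\alpha r_B$. Moreover, combining the Fubini identity $\|\ca^{(\alpha)}(a)\|_{L^2(\rn)}=\alpha^{n/2}\|\ca^{(1)}(a)\|_{L^2(\rn)}$ with H\"older's inequality on the support $\widetilde{B}$ and Lemma \ref{Lezj}, together with interpolation between the $L^2$ and $L^\infty$ endpoints, yields
$$
\lf\|\ca^{(\alpha)}(a)\r\|_{L^q(\rn)}\le C\max\lf\{\alpha^{(1/2-1/q)n},1\r\}\frac{|B|^{1/q}}{\|\mathbf{1}_B\|_X}
$$
for any $q\in(\max\{1,p_0\},p_0/r_0]$. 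This atomic $L^q$-bound is the crucial input that will supply the first factor in the announced inequality.

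Fix $\beta\in(0,\infty)$, choose $i_0\in\zz$ with $2^{i_0}\le\beta<2^{i_0+1}$ and split $f=f_1+f_2$, where $f_1:=\sum_{i<i_0}\sum_{j\in\nn}\lambda_{i,j}a_{i,j}$ and $f_2:=\sum_{i\geq i_0}\sum_{j\in\nn}\lambda_{i,j}a_{i,j}$. By Definition \ref{Debqfs}(ii), it suffices to control the $X$-quasi-norms of $\mathbf{1}_{\{x\in\rn:\,\ca^{(\alpha)}(f_\ell)(x)>\beta/2\}}$ for $\ell\in\{1,2\}$. For the $f_1$-part I would follow faithfully the treatment of $\mathrm{I}_1$ in the proof of Theorem \ref{That}: apply H\"older's inequality in the index $i$ with exponent $\widetilde q:=q/p_0\in(\max\{1/p_0,1\},1/r_0]$ and then invoke Lemma \ref{Le45} applied to the enlarged balls $\widetilde{B}_{i,j}=\alpha B_{i,j}$ in conjunction with the atomic $L^q$-bound above; this extracts the factor $[\max\{\alpha^{(1/2-1/q)n},1\}]^{q/p_0}$. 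For the $f_2$-part I would use only the support property $\supp(\ca^{(\alpha)}(a_{i,j}))\subset\widetilde{B}_{i,j}$ and estimate $\|\mathbf{1}_{\bigcup_{i\geq i_0}\bigcup_{j\in\nn}\widetilde{B}_{i,j}}\|_X$ through a geometric summation in $i\geq i_0$, exactly paralleling the estimate of $\mathrm{I}_2$ in the proof of Theorem \ref{That}. In both parts, the enlargement from $B_{i,j}$ to $\widetilde{B}_{i,j}=\alpha B_{i,j}$ is absorbed by the Fefferman--Stein type inequality \eqref{EqHLMS} applied with parameter $r_0 r\in(0,\underline{p})$, which produces the common factor $\alpha^{n/(r_0 r)}$.

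The main obstacle in this plan will be calibrating the atomic $L^q$-bound in the second step with the sharp exponent $\max\{\alpha^{(1/2-1/q)n},1\}$ rather than the cruder $\alpha^{n/q}$ obtainable by brute force; any additional $\alpha$-loss here will be amplified by the exponent $q/p_0>1$ in the final estimate, spoiling the announced constant. Once this bound is correctly secured and one verifies that $\ca^{(\alpha)}(a_{i,j})$, after suitable normalization, fits into the hypothesis of Lemma \ref{Le45} with the balls $\widetilde{B}_{i,j}$, the remainder of the argument is a near-verbatim reprise of the estimates for $\mathrm{I}_1$ and $\mathrm{I}_2$ in the sufficiency proof of Theorem \ref{That}, with $B_{i,j}$ replaced by $\alpha B_{i,j}$ throughout.
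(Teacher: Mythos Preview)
Your overall strategy coincides with the paper's: atomic decomposition via Theorem \ref{That}, the dyadic splitting $f=f_1+f_2$ at level $i_0$, then for $f_1$ the H\"older-in-$i$ trick with exponent $\widetilde q=q/p_0$ followed by Lemma \ref{Le45} applied on the enlarged balls $\alpha B_{i,j}$, and for $f_2$ a support-only estimate; the passage from $\alpha B_{i,j}$ back to $cB_{i,j}$ via \eqref{EqHLMS} produces the factor $\alpha^{n/(r_0 r)}$ exactly as you describe.

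There is, however, one slip and one genuine gap in your atomic $L^q$-bound. The slip: the displayed inequality should read
\[
\lf\|\ca^{(\alpha)}(a)\r\|_{L^q(\rn)}\le C\max\lf\{\alpha^{(1/2-1/q)n},1\r\}\frac{|\alpha B|^{1/q}}{\|\mathbf1_B\|_X},
\]
with $|\alpha B|$ rather than $|B|$; checking $q=2$, where your own Fubini identity already forces the factor $\alpha^{n/2}$ on the left, exposes the missing $\alpha^{n/q}$. The gap: your proposed route via ``interpolation between the $L^2$ and $L^\infty$ endpoints'' does not work for $q>2$, because there is no $\alpha$-independent comparison $\|\ca^{(\alpha)}(a)\|_{L^\infty}\lesssim\|\ca^{(1)}(a)\|_{L^\infty}$ --- an atom built from many disjoint bumps at a fixed height $t_0$, spaced at distance $\sim t_0$, has $\ca^{(\alpha)}(a)(x)^2$ picking up $\sim\alpha^n$ bumps simultaneously while $\ca^{(1)}(a)(x)^2$ sees only one. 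The paper sidesteps this by directly invoking Auscher's change-of-aperture inequality \cite[Theorem 1.1]{A}, which gives $\|\ca^{(\alpha)}(F)\|_{L^q}\le C\max\{\alpha^{n/2},\alpha^{n/q}\}\|\ca^{(1)}(F)\|_{L^q}$ for arbitrary $F$ and all $q\in(1,\infty)$; combined with Lemma \ref{Lezj} this yields the corrected bound above, after which your remaining plan goes through verbatim and matches the paper's proof.
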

\begin{proof}
Let $a$ be a $(T^1_X,\infty)$-atom supported in $T(B)$, where $B$ is a ball of $\rn$. Then,
from \cite[Theorem 1.1]{A}, we deduce that,
for any given $p\in (1,\infty)$, there exists a
positive constant $\widetilde C$, independent of $a$,
such that, for any $\alpha\in[1,\infty)$,
\begin{equation}\label{Eq617}
\lf\|\ca^{(\alpha)}(a)\r\|_{L^p(\rn)}\le\widetilde{C}
\max\lf\{\alpha^\frac{n}{2},\alpha^\frac{n}{p}\r\}\lf\|\ca^{(1)}(a)\r\|_{L^p(\rn)}.
\end{equation}

Since $f\in WT_X^1(\rr_+^{n+1})$, from Theorem \ref{That},
it follows that there exist $\{\lambda_{i,j}\}_{i\in\zz,j\in\nn}\subset[0,\infty)$ and a sequence $\{a_{i,j}\}_{i\in\zz,j\in\nn}$ of $(T_X,\infty)$-atoms associated, respectively, with balls $\{B_{i,j}\}_{i\in\zz,j\in\nn}$
such that (i), (ii) and (iii) of Theorem \ref{That} hold true.
To prove Theorem \ref{Thca}, by the definition of $WX$, Definition
\ref{Dewt} and Theorem \ref{That}, it suffices to show that
\begin{equation}\label{Eq619}
\sup_{\lambda\in(0,\infty)}
\lf\{\lambda\lf\|\mathbf1_{\{x\in\rn:\ \ca^{(\alpha)}(f)(x)>\lambda\}}\r\|_X\r\}
\lesssim \lf[\max\lf\{\alpha^{(\frac12-\frac1q)n},1\r\}\r]^{\frac q{p_0}}\alpha^\frac{n}{r_0r}
\sup_{i\in\zz}2^i\lf\|\sum_{j\in\nn}\mathbf1_{B_{i,j}}\r\|_X.
\end{equation}
Similarly to the proof of the sufficiency of Theorem \ref{That}, for any
given $\lambda\in(0,\infty)$, let $i_0\in\zz$
be such that $2^{i_0}\le\lambda<2^{i_0+1}$. Then we write
$$
f=\sum_{i=-\infty}^{i_0-1}\sum_{j\in\nn}\lambda_{i,j}a_{i,j}
+\sum_{i=i_0}^\infty\sum_{j\in\nn}\cdots=:f_1+f_2
$$
and, from Definition \ref{Debqfs}(ii), we deduce that
\begin{align*}
\lf\|\mathbf1_{\{x\in\rn:\ \ca^{(\alpha)}(f)(x)>\lambda\}}\r\|_X
\lesssim\lf\|\mathbf1_{\{x\in\rn:\ \ca^{(\alpha)}(f_1)(x)>\lambda/2\}}\r\|_X
+\lf\|\mathbf1_{\{x\in \rn:\ \ca^{(\alpha)}(f_2)(x)>\lambda/2\}}\r\|_X
=:\mathrm{II_1}+\mathrm{II_2}.
\end{align*}

For $\mathrm{II_1}$, let $\widetilde q:=q/p_0\in(\max\{1/p_0,1\},1/{r_0}]$
and $a\in(0,1-1/{\widetilde q})$. Then, by the H\"older inequality, we obtain
\begin{align*}
\sum_{i=-\infty}^{i_0-1}\sum_{j\in\nn}\lambda_{i,j}\ca^{(\alpha)}(a_{i,j})
\le\frac{2^{i_0a}}{(2^{a\widetilde q'}-1)^{1/\widetilde q'}}
\lf\{\sum_{i=-\infty}^{i_0-1}2^{-ia\widetilde q}\lf[\sum_{j\in\nn}
\lambda_{i,j}\ca^{(\alpha)}(a_{i,j})\r]^{\widetilde q}\r\}^{1/\widetilde q},
\end{align*}
where $\widetilde q':={\widetilde q}/{(\widetilde q-1)}$. From this, Definition
 \ref{Debf}(i), $\widetilde qr_0\in(0,1]$, Lemma \ref{Le65}
and the assumption that $X^{1/r_0}$ is a ball Banach function space,
we deduce that
\begin{align*}
\mathrm{II_1}
&\lesssim\lf\|\mathbf{1}_{\{x\in\rn:\ 2^{i_0a}
\{\sum_{i=-\infty}^{i_0-1}2^{-ia\widetilde q}[\sum_{j\in\nn}
\lambda_{i,j}\ca^{(\alpha)}(a_{i,j})]^
{\widetilde q}\}^{1/\widetilde q}>2^{i_0-2}\}}\r\|_{X}\\
&\lesssim2^{-i_0\widetilde q(1-a)}\lf\|\sum_{i=-\infty}^
{i_0-1}2^{-ia\widetilde q}\lf[\sum_{j\in\nn}
\lambda_{i,j}\ca^{(\alpha)}(a_{i,j})\r]^{\widetilde q}\r\|_{X}\\
&\lesssim2^{-i_0\widetilde q(1-a)}
\lf\|\sum_{i=-\infty}^{i_0-1}2^{(1-a)i\widetilde qr_0}\sum_{j\in\nn}\lf[
\lf\|\mathbf{1}_{B_{i,j}}\r\|_{X}\ca^{(\alpha)}(a_{i,j})\r]
^{\widetilde qr_0}\r\|_{X^{1/r_0}}^\frac{1}{r_0}\\
&\lesssim2^{-i_0\widetilde q(1-a)}\lf[\sum_{i=-\infty}^{i_0-1}2^{(1-a)i\widetilde qr_0}
\lf\|\lf\{\sum_{j\in\nn}\lf[
\lf\|\mathbf{1}_{B_{i,j}}\r\|_{X}\ca^{(\alpha)}(a_{i,j})\r]^{\widetilde qr_0}\r\}^\frac1{r_0}\r\|_{X}^{r_0}\r]^\frac{1}{r_0}.
\end{align*}

By $q=p_0\widetilde q\in(1,\infty)$, Lemma \ref{Lezj} and \eqref{Eq617},
we conclude that, for any $i\in\zz$ and $j\in\nn$, $\supp(\ca^{(\alpha)}(a_{i,j}))\subset \alpha B_{i,j}$ and $$\lf\|\ca^{(\alpha)}(a_{i,j})\r\|_{L^q(\rn)}\le\max\lf\{\alpha^{\frac{n}{2}-\frac{n}{q}},1\r\}
\frac{|\alpha B_{i,j}|^\frac1q}{\|\mathbf1_{B_{i,j}}\|_X}.$$
Then we find that, for any $i\in\zz$ and $j\in\nn$,
$$
\lf\|\lf[\lf\|\mathbf1_{B_{i,j}}\r\|_X\ca^{(\alpha)}(a_{i,j})\r]^{\widetilde q}\r\|_{L^{p_0}(\rn)}
\lesssim\lf\|\mathbf1_{B_{i,j}}\r\|_X^{\widetilde q}
\lf\|\ca^{(\alpha)}(a_{i,j})\r\|_{L^q(\rn)}^{\widetilde q}\lesssim
\lf[\max\lf\{\alpha^{\frac{n}{2}-\frac{n}{q}},1\r\}\r]^{\widetilde q}|\alpha B_{i,j}|^{1/p_0},
$$
which, combined with Lemma \ref{Le45}, $r\in(0,1)$,
\eqref{EqHLMS} and $(1-a)\widetilde q>1$, further implies that
\begin{align*}
\mathrm{II_{1}}&\lesssim\lf[\max\lf\{\alpha^{\frac{n}{2}-\frac{n}{q}},1\r\}
\r]^{\widetilde q}
2^{-i_0\widetilde q(1-a)}
\lf[\sum_{i=-\infty}^{i_0-1}2^{(1-a)i
\widetilde qr_0}\lf\|\lf(\sum_{j\in\nn}
\mathbf{1}_{\alpha B_{i,j}}\r)^\frac1{r_0}\r\|_{X}^{r_0}\r]^{\frac1{r_0}}\\
&\lesssim\lf[\max\lf\{\alpha^{\frac{n}{2}-\frac{n}{q}},1\r\}
\r]^{\widetilde q}
\alpha^{\frac{n}{r_0r}}2^{-i_0\widetilde q(1-a)}
\lf[\sum_{i=-\infty}^{i_0-1}2^{(1-a)i\widetilde qr_0}\lf\|\lf(\sum_{j\in\nn}\mathbf{1}_{cB_{i,j}}\r)^\frac1{r_0}\r\|_{X}^{r_0}\r]^{\frac1{r_0}}\\
&\lesssim\lf[\max\lf\{\alpha^{\frac{n}{2}-\frac{n}{q}},1\r\}
\r]^{\widetilde q}\alpha^{\frac{n}{r_0r}}2^{-i_0\widetilde q(1-a)}
\lf[\sum_{i=-\infty}^{i_0-1}2^{[(1-a)\widetilde
q-1]ir_0}\r]^{\frac1{r_0}}\sup_{i\in\zz}2^i\lf\|\sum_{j\in\nn}
\mathbf{1}_{B_{i,j}}\r\|_{X}\\
&\lesssim\lf[\max\lf\{\alpha^{\frac{n}{2}-\frac{n}{q}},1\r\}
\r]^{\widetilde q}\alpha^{\frac{n}{r_0r}}\lambda^{-1}
\sup_{i\in\zz}2^i\lf\|\sum_{j\in\nn}\mathbf{1}_{B_{i,j}}\r\|_{X}.
\end{align*}
This shows that
\begin{equation*}
\lambda\mathrm{II_{1}}\lesssim
\lf[\max\lf\{\alpha^{\frac{n}{2}-\frac{n}{q}},1\r\}
\r]^{q/p_0}
\alpha^{\frac{n}{r_0r}}\sup_{i\in\zz}2^i\lf\|\sum_{j\in\nn}\mathbf{1}_{B_{i,j}}\r\|_{X}.
\end{equation*}

Using Lemma \ref{Lezj} again and an argument similar to the
estimation of \eqref{Eq415}, we also obtain
$$
\mathrm{II_2}\lesssim\lf\|\mathbf1_{\{x\in\rn:\ \sum_{i=i_0}
^{\infty}\sum_{j\in\nn}\lambda_{i,j}\ca^{(\alpha)}
(a_{i,j})(x)>\lambda\}}\r\|_X
\lesssim\lf\|\mathbf1_{\cup_{i=i_0}^\infty\cup_{j\in\nn}\alpha B_{i,j}}\r\|_X
\lesssim\alpha^{\frac{n}{r_0}}\lambda^{-1}\sup_{i\in\zz}2^i\lf\|
\sum_{j\in\nn}\mathbf1_{B_{i,j}}\r\|_X.
$$

Combining the estimates for $\mathrm{II_1}$ and $\mathrm{II_2}$,
we conclude that \eqref{Eq619} holds true and hence complete
the proof of Theorem \ref{Thca}.
\end{proof}

Now we prove Theorem \ref{Thgx}.
\begin{proof}[Proof of Theorem \ref{Thgx}]
By Theorem \ref{Tharea} and the fact that, for any $f\in\cs'(\rn)$,
$S(f)\le g_\lambda^\ast(f)$,
we easily obtain the sufficiency of Theorem \ref{Thgx} and still
need to show the necessity.

To this end, let $f\in WH_X(\rn)$.
By Lemma \ref{Le64}, we know that $f$
vanishes weakly at infinity. Moreover, for any $x\in\rn$, we have
\begin{align*}
g_\lambda^*(f)(x)&\le\lf\{\int_0^{\infty}\int_{|x-y|<t}\lf(\frac{t}{t+|x-y|}\r)
^{\lambda n}|f\ast\phi_t(y)|^{2}\,\frac{dy\,dt}{t^{n+1}}
+\sum_{m=0}^\infty\int_0^{\infty}\int_{2^mt\le|x-y|<2^{m+1}t}
\cdots\r\}^{\frac{1}{2}}\\
&\le\ca^{(1)}\lf(F\r)(x)+\sum_{m=0}^\infty2^{-\frac{\lambda nm}{2}}
\ca^{(2^{m+1})}\lf(F\r)(x),
\end{align*}
where $F(x,t):=f\ast\phi_t(x)$ for any $x\in\rn$ and $t\in(0,\infty)$.

By the fact  $\lambda\in(\max\{\frac{2}{\min\{1,p_-\}},1-\frac2{\max\{1,p_+\}}+\frac{2}{\min\{1,p_-\}}\},\infty)$,
we conclude that there exist $r_0\in(0,\min\{1,p_-\})$ and $p_0\in(\max\{1,p_+\},\infty)$
such that \eqref{Eqdm} holds true and $\lambda\in(\max\{\frac{2}{r_0},
1-\frac2{p_0}+\frac{2}{r_0}\},\infty)$.
Then there exist $r\in(0,1)$ and $q\in(p_0,\infty)$ such that
$
\lambda\in(\max\{\frac{2}{r_0r}, 2(\frac12-\frac1q)\frac q{p_0}
+\frac{2}{r_0r}\},\infty).
$
By this, Remark \ref{Re213}, Definition \ref{Debf}(i),
Theorems \ref{Thca} and \ref{Tharea}, we choose a $\nu\in(0,1)$
to conclude that
\begin{align*}
\lf\|g_\lambda^*(f)\r\|_{WX}^\nu&=\lf\|\lf[g_\lambda^*(f)\r]^\nu\r\|_{(WX)^{1/\nu}}
\lesssim\lf\|\lf[\ca^{(1)}\lf(F\r)\r]^\nu\r\|_{(WX)^{1/\nu}}
+\sum_{m=0}^\infty2^{-\frac{\lambda nms}{2}}\lf\|\lf[\ca^{(2^{m+1})}
\lf(F\r)\r]^\nu\r\|_{(WX)^{1/\nu}}\\
&\lesssim\lf\|\ca^{(1)}\lf(F\r)\r\|_{WX}^\nu+\sum_{m=0}^\infty2^{-\frac{\lambda nm\nu}{2}}\lf[\max\{2^{\frac{mn\nu}{2}-\frac{mn\nu}{q}},1\}\r]^{\frac q{p_0}}
2^\frac{mn\nu}{r_0r}\lf\|\ca^{(1)}\lf(F\r)\r\|_{WX}^\nu\\
&\lesssim\|S(f)\|_{WX}^\nu\sim\|f\|_{WH_X(\rn)}^\nu,
\end{align*}
where, in the penultimate step,
we used the fact that $\ca^{(1)}\lf(F\r)=S(f)$.
This finishes the proof of Theorem \ref{Thgx}.
\end{proof}

\section{Real interpolation between $H_{X}(\rn)$ and $L^\infty(\rn)$}\label{s4}

In this section, we first obtain a
decomposition for any distribution of the weak Hardy space $WH_{X^{{1}/{(1-\theta)}}}(\rn)$ into
two parts and then we prove that the real interpolation intermediate space
$(H_{X}(\rn),L^\infty(\rn))_{\theta,\infty}$ between the
Hardy space $H_{X}(\rn)$
and the Lebesgue space $L^\infty(\rn)$ is the weak Hardy space $WH_{X^{{1}/{(1-\theta)}}}(\rn)
$, where $\theta\in (0, 1)$.

We first recall some basic notions about the
theory of the real interpolation (see, for instance, \cite{bl}).
For any two quasi-Banach spaces $X_0$ and $X_1$,
the pair $(X_0,\,X_1)$ is said to be \emph{compatible} if there exists a Hausdorff
space $\mathbb{X}$ such that $A_0\subset\mathbb{X}$ and $A_1\subset\mathbb{X}$.
For any compatible pair $(X_0,\,X_1)$ of quasi-Banach spaces, let
\begin{align*}
X_0+X_1:=\{a\in\mathbb{X}:\ \exists\, a_0\in A_0\ \textup{and}\ a_1\in A_1
\textup{such}\ \textup{that}\ a=a_0+a_1\}.
\end{align*}
For any $t\in(0,\infty)$, the \emph{Peetre K-functional} $K(t,f;X_0,X_1)$
on $X_0+X_1$ is defined by setting, for any $f\in X_0+X_1$,
\begin{equation*}
K(t,f;X_0,X_1):=\inf\lf\{\|f_0\|_{X_0}+t\|f_1\|_{X_1}:
\ f=f_0+f_1,\ f_i\in X_i,\ i\in\{0,1\}\r\}.
\end{equation*}
Then, for any $\theta\in(0,1)$ and $q\in(0,\infty]$, the \emph{real interpolation space}
$(X_0,\,X_1)_{\theta,q}$ between $X_0$ and $X_1$ is defined by setting
$$(X_0,\,X_1)_{\theta,q}:=\lf\{f\in X_0+X_1:\ \|f\|_{\theta,q}<\infty\r\},$$
where, for any $f\in X_0+X_1$,
\begin{equation}\label{222}
\|f\|_{\theta,q}:=
\begin{cases}
\displaystyle\lf[\int_0^\infty\{t^{-\theta}
K(t,f;X_0,X_1)\}^q\,\frac{dt}{t}\r]^{\frac{1}{q}}\ &\text{when}\ q\in(0,\infty),\\
\sup\limits_{t\in(0,\infty)}t^{-\theta}K(t,f;X_0,X_1)\ &\text{when}\ q=\infty.
\end{cases}
\end{equation}

By borrowing some ideas from the proof of \cite[Theorem 4.1]{kv}, we have the following
useful result,
which
plays a key role in the proof of Theorem \ref{inte} below
and itself is of independent interest.

\begin{theorem}\label{de1}
Let $X$ be a ball quasi-Banach function space and $\theta\in(0,1)$.
Assume that there exists some $r\in(0,\infty)$ such that $X^{1/r}$ is a ball Banach function space.
Then it holds true that
$$
(X,L^\infty(\rn))_{\theta,\infty}=(WX)^{{1}/{(1-\theta)}}.
$$
\end{theorem}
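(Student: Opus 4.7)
The plan is to establish the equivalence
\begin{equation*}
\|f\|_{(X,L^\infty(\rn))_{\theta,\infty}}\sim\|f\|_{(WX)^{1/(1-\theta)}}
\end{equation*}
by estimating the Peetre K-functional from both sides via a level-set truncation of $f$. Unwinding the definitions of the $p$-convexification (Definition \ref{Debf}(i)) and of $WX$ (Definition \ref{2.8}), one first identifies
\begin{equation*}
\|f\|_{(WX)^{1/(1-\theta)}}=\sup_{\beta\in(0,\infty)}\beta\lf\|\mathbf{1}_{\{x\in\rn:\ |f(x)|>\beta\}}\r\|_X^{1-\theta},
\end{equation*}
which reduces the problem to comparing this supremum with $\sup_{t\in(0,\infty)}t^{-\theta}K(t,f;X,L^\infty(\rn))$, as per \eqref{222}.

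For the lower bound $\|f\|_{(WX)^{1/(1-\theta)}}\lesssim\|f\|_{(X,L^\infty(\rn))_{\theta,\infty}}$, I fix $\beta\in(0,\infty)$ and examine any decomposition $f=f_0+f_1$ with $f_0\in X$ and $f_1\in L^\infty(\rn)$. If $\|f_1\|_{L^\infty(\rn)}<\beta/2$, then $\{|f|>\beta\}\subset\{|f_0|>\beta/2\}$ pointwise, and two applications of Definition \ref{Debqfs}(ii) (first to $\mathbf{1}_{\{|f|>\beta\}}\le\mathbf{1}_{\{|f_0|>\beta/2\}}$, then to $\mathbf{1}_{\{|f_0|>\beta/2\}}\le(2/\beta)|f_0|$) yield $\|f_0\|_X\ge(\beta/2)\|\mathbf{1}_{\{|f|>\beta\}}\|_X$; otherwise $t\|f_1\|_{L^\infty(\rn)}\ge t\beta/2$. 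Combining the two cases and taking the infimum over admissible decompositions gives
\begin{equation*}
K(t,f;X,L^\infty(\rn))\gtrsim\beta\min\lf\{t,\lf\|\mathbf{1}_{\{|f|>\beta\}}\r\|_X\r\}.
\end{equation*}
Specializing to $t:=\|\mathbf{1}_{\{|f|>\beta\}}\|_X$ and then taking the supremum in $\beta$ finishes this direction.

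For the reverse bound, write $M:=\|f\|_{(WX)^{1/(1-\theta)}}$, so that $\|\mathbf{1}_{\{|f|>\beta\}}\|_X\le(M/\beta)^{1/(1-\theta)}$ for every $\beta\in(0,\infty)$. Given $t,\alpha\in(0,\infty)$, I take the natural truncation $f=f^\alpha+f_\alpha$ with $f^\alpha:=f\mathbf{1}_{\{|f|>\alpha\}}$ and $f_\alpha:=f-f^\alpha$, so $\|f_\alpha\|_{L^\infty(\rn)}\le\alpha$ is immediate. To control $\|f^\alpha\|_X$, I partition $\{|f|>\alpha\}$ into the dyadic shells $A_k:=\{2^k\alpha<|f|\le 2^{k+1}\alpha\}$ and apply the Aoki--Rolewicz theorem (on partial sums, then letting the truncation parameter tend to $\infty$ via Definition \ref{Debqfs}(iii)) to produce some $\nu\in(0,1]$ with
\begin{equation*}
\|f^\alpha\|_X^\nu\lesssim\sum_{k=0}^\infty\|f\mathbf{1}_{A_k}\|_X^\nu\le\sum_{k=0}^\infty(2^{k+1}\alpha)^\nu\lf(\frac{M}{2^k\alpha}\r)^{\nu/(1-\theta)}\lesssim M^{\nu/(1-\theta)}\alpha^{-\nu\theta/(1-\theta)},
\end{equation*}
the geometric series converging since $\theta\in(0,1)$. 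Hence $\|f^\alpha\|_X\lesssim M^{1/(1-\theta)}\alpha^{-\theta/(1-\theta)}$, and choosing $\alpha:=Mt^{-(1-\theta)}$ to balance the two contributions gives $K(t,f;X,L^\infty(\rn))\lesssim Mt^\theta$, which is the desired estimate.

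The main technical hurdle is controlling $\|f^\alpha\|_X$ by distribution-function data in the second step: Aoki--Rolewicz supplies the $\nu$-subadditivity on the disjointly supported shells, Definition \ref{Debqfs}(iii) legitimizes the passage from partial sums to the full sum, and the hypothesis that $X^{1/r}$ is a ball Banach function space ensures the underlying structure is sufficiently regular for these manipulations; the exponent arithmetic then closes uniformly for every $\theta\in(0,1)$.
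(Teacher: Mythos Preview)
Your proof is correct and follows a more direct route than the paper's. For the inclusion $(X,L^\infty(\rn))_{\theta,\infty}\subset(WX)^{1/(1-\theta)}$, the paper argues indirectly by first establishing $(WX,L^\infty(\rn))_{\theta,\infty}\subset(WX)^{1/(1-\theta)}$ via an auxiliary function $\beta(t):=\sup\{\mu:\|\mathbf{1}_{\{|f|>\mu\}}\|_X\ge t\}$ and then invoking the continuous embedding $X\hookrightarrow WX$; your case analysis on $\|f_1\|_{L^\infty(\rn)}$ with the specialization $t=\|\mathbf{1}_{\{|f|>\beta\}}\|_X$ achieves the same in one step. For the reverse inclusion, the paper uses the hypothesis that $X^{1/r}$ is a ball Banach function space to obtain an $r$-triangle inequality $\|\sum_k h_k\|_X^r\le\sum_k\|h_k\|_X^r$ on the dyadic shells, then introduces an auxiliary function $\alpha(t)$ and splits into cases according to whether $(1-\theta)/r\le 1$ or not; you instead invoke the Aoki--Rolewicz theorem to get a $\nu$-subadditive estimate valid in any quasi-Banach space, and then balance the two contributions by an explicit choice of $\alpha$, avoiding both the case distinction and the auxiliary function. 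A notable consequence is that your argument does \emph{not} actually use the hypothesis that $X^{1/r}$ is a ball Banach function space---your closing remark that this hypothesis ``ensures the underlying structure is sufficiently regular'' is unnecessary, and in fact your proof shows the theorem holds for every ball quasi-Banach function space $X$. The paper's more elaborate machinery (the functions $\alpha(t)$ and $\beta(t)$, borrowed from \cite{kv}) does pay off in the sequel, where an analogous structure is reused in the proof of Theorem~\ref{inte} for the Hardy-space interpolation.
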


\begin{proof}
We first show that
\begin{equation}\label{yib0}
(WX)^{{1}/{(1-\theta)}}\subset(X,L^\infty(\rn))_{\theta,\infty}.
\end{equation}
To this end, let $f\in (WX)^{1/(1-\theta)}$.
For any $t\in(0,\infty)$, let
$$
\alpha(t):=\inf\lf\{\mu\in(0,\infty):\ \lf\{\sum_{j=0}
^{\infty}\lf[2^j\lf\|\mathbf1_{\{x\in\rn:\ f(x)>2^j\mu\}}\r\|_{X}
\r]^r\r\}^{\frac{1}{r}}\le t\r\}.
$$
We claim that, for any $t\in(0,\infty)$,
\begin{equation}\label{yibia}
K(t,f;X,L^\infty(\rn))\lesssim t\alpha(t).
\end{equation}
Indeed, by Definition \ref{Debf}(i) and
the assumption that $X^{1/r}$ is a ball Banach function space, we conclude that,
for any $t\in(0,\infty)$,
\begin{align}\label{88ab}
&K(t,f;X,L^\infty(\rn))\\ \noz
&\quad=\inf\lf\{\|f_0\|_{X}+t\|f_1\|_{L^\infty(\rn)}:
\ f=f_0+f_1,\ f_0\in X,\ f_1\in L^\infty(\rn)\r\}\\ \noz
&\quad=\inf_{\alpha\in(0,\infty)}\lf\{\lf\| \max\{|f|-\alpha,0\}\r\|_{X}
+t\lf\|\min\{|f|,\alpha\}\r\|_{L^\infty(\rn)}\r\}\\ \noz
&\quad\lesssim\inf_{\alpha\in(0,\infty)}\lf\{\lf\|f\mathbf1_{\{x\in\rn:
\ |f(x)|>\alpha\}}\r\|_{X}+t\alpha\r\}\\ \noz
&\quad\lesssim\inf_{\alpha\in(0,\infty)}\lf\{\lf[ \sum_{i=0}^{\infty}\lf\| |f|^r
\mathbf1_{\{x\in\rn:\ 2^{i}\alpha<|f(x)|\le2^{i+1}\alpha\}}
\r\|_{X^{1/r}} \r]^{1/r}+t\alpha\r\}\\ \noz
&\quad\lesssim\inf_{\alpha\in(0,\infty)}\lf\{\lf[ \sum_{i=0}^{\infty}(2^{i}\alpha)^{r}\lf\|
\mathbf1_{\{x\in\rn:\ |f(x)|>2^{i}\alpha\}} \r\|_{X}^r \r]^{1/r}+t\alpha\r\}.
\end{align}
Let $\alpha:=\alpha(t)$. Then it is easy to know that
$$
\lf\{ \sum_{i=0}^{\infty}2^{ir}\lf\|
\mathbf1_{\{x\in\rn:\ |f(x)|>2^{i}\alpha(t)\}} \r\|_{X}^r \r\}^{1/r}\le t.
$$
From this and \eqref{88ab}, we deduce that the claim \eqref{yibia} holds true.
Using this claim and the fact that, for any $k\in\zz$ and $t\in(0,\infty)$ satisfying
$2^k<\alpha(t)$,
$$
\lf\{ \sum_{i=0}^{\infty}2^{ir}\lf\|
\mathbf1_{\{x\in\rn:\ |f(x)|>2^{i}2^k\}} \r\|_{X}^r \r\}^{1/r}> t,
$$
 we conclude that
\begin{align}\label{ll60}
\sup_{t\in(0,\infty)}t^{-\theta}K(t,f;X,L^\infty(\rn))
&\lesssim\sup_{t\in(0,\infty)}t^{1-\theta}\alpha(t)\sim\sup_{k\in\zz}
\dsup_{\gfz{t\in(0,\infty)}{2^k<\alpha(t)\le2^{k+1}}}t^{1-\theta}2^k\\ \noz
&\lesssim\sup_{k\in\zz}
\lf\{ \sum_{i=0}^{\infty}2^{ir}\lf\|
\mathbf1_{\{x\in\rn:\ |f(x)|>2^{i}2^k\}} \r\|_{X}^r \r\}^{\frac{1-\theta}{r}}2^k=:\mathrm{I}.
\end{align}

To estimate $\mathrm{I}$, we consider the following two cases.
If $\frac{1-\theta}{r}\in(0,1]$, we have
\begin{align}\label{ll70}
\mathrm{I}&\lesssim\sup_{k\in\zz}
\sum_{i=0}^{\infty}2^{i(1-\theta)}\lf\|
\mathbf1_{\{x\in\rn:\ |f(x)|>2^{i}2^k\}} \r\|_{X}^{1-\theta}2^k
\lesssim\sup_{k\in\zz}
\sum_{i=0}^{\infty}2^{i+k}2^{-\theta i}\lf\|
\mathbf1_{\{x\in\rn:\ |f(x)|>2^{i+k}\}}
\r\|_{X}^{1-\theta}\\\noz
&\lesssim
\sum_{i=0}^{\infty}2^{-\theta i}\sup_{l\in\zz}2^{l}\lf\|
\mathbf1_{\{x\in\rn:\ |f(x)|>2^{l}\}}
\r\|_{X}^{1-\theta}\sim
\sup_{l\in\zz}2^{l}\lf\|
\mathbf1_{\{x\in\rn:\ |f(x)|>2^{l}\}}
\r\|_{X^{1/(1-\theta)}}\lesssim\lf\|f
\r\|_{(WX)^{1/(1-\theta)}}.
\end{align}
If $\frac{1-\theta}{r}\in(1,\infty)$, then, by the H\"older inequality, we find that, for any
$\epsilon \in (0, \frac{\theta}{1-\theta})$,
\begin{align}\label{ll80}
\mathrm{I}&\lesssim\sup_{k\in\zz}2^k
\sum_{i=0}^{\infty}2^{i(1+\epsilon)(1-\theta)}\lf\|
\mathbf1_{\{x\in\rn:\ |f(x)|>2^{i}2^k\}} \r\|_{X}^{1-\theta}
\lesssim
\sup_{l\in\zz}2^{l}\lf\|
\mathbf1_{\{x\in\rn:\ |f(x)|>2^{l}\}}
\r\|_{X}^{1-\theta}\\ \noz
&\sim
\sup_{l\in\zz}2^{l}\lf\|
\mathbf1_{\{x\in\rn:\ |f(x)|>2^{l}\}}
\r\|_{X^{1/(1-\theta)}}\lesssim\lf\|f
\r\|_{(WX)^{1/(1-\theta)}}.
\end{align}
Thus, by \eqref{ll60}, \eqref{ll70} and \eqref{ll80}, we conclude that
\begin{align*}
\sup_{t\in(0,\infty)}t^{-\theta}K(t,f;H_{X}(\rn),L^\infty(\rn))
\lesssim\lf\|f
\r\|_{(WX)^{1/(1-\theta)}},
\end{align*}
which, together with \eqref{222}, implies
$f\in(X,L^\infty(\rn))_{\theta,\infty}$ and hence completes the proof of \eqref{yib0}.

Next, we show that
\begin{equation}\label{ydd0}
(WX,L^\infty(\rn))_{\theta,\infty}\subset (WX)^{1/(1-\theta)}.
\end{equation}
To this end, let $f\in (WX,L^\infty(\rn))_{\theta,\infty}$.
For any $t\in(0,\infty)$, define
$$
\beta(t):=\sup\lf\{\mu\in(0,\infty):\
\lf\|\mathbf1_{\{x\in\rn:\ |f(x)|>\mu\}}\r\|_{X}
\geq t\r\}.
$$
We claim that, for any $t\in(0,\infty)$,
\begin{equation}\label{yibib}
K(t,f;WX,L^\infty(\rn))\gtrsim t\beta(t).
\end{equation}
Indeed, for any $t\in(0,\infty)$,
\begin{align}\label{888a}
&K(t,f;WX,L^\infty(\rn))\\ \noz
&\quad=\inf\lf\{\|f_0\|_{WX}+t\|f_1\|_{L^\infty(\rn)}:
\ f=f_0+f_1,\ f_0\in WX,\ f_1\in L^\infty(\rn)\r\}\\ \noz
&\quad=\inf_{\alpha\in(0,\infty)}\lf\{\lf\|\max\{|f|-\alpha,0\}\r\|_{WX}
+t\lf\|\min\{|f|,\alpha\}\r\|_{L^\infty(\rn)}\r\}.
\end{align}
When $\alpha\in[\frac{\beta(t)}{2},\infty)$, it is easy to see that
\begin{align}\label{888b}
\lf\|\max\{|f|-\alpha,0\}\r\|_{WX}
+t\lf\|\min\{|f|,\alpha\}\r\|_{L^\infty(\rn)}\geq
t\lf\|\min\{|f|,\alpha\}\r\|_{L^\infty(\rn)}\geq\frac{t\beta(t)}{2}.
\end{align}
In the case of $\alpha\in(0,\frac{\beta(t)}{2})$,
by the definition of $\beta(t)$, we know that
\begin{align*}
\lf\|\max\{|f|-\alpha,0\}\r\|_{WX}
+t\lf\|\min\{|f|,\alpha\}\r\|_{L^\infty(\rn)}&\geq
\lf\|\max\{|f|-\alpha,0\}\r\|_{WX}\\ \noz
&\geq\frac{\beta(t)}{2}\lf\|
\mathbf1_{\{x\in\rn:\ |f(x)|>\alpha+\frac{\beta(t)}{2}\}} \r\|_{X}\geq\frac{t\beta(t)}{2}.
\end{align*}
From this, \eqref{888a} and \eqref{888b}, we deduce that the claim \eqref{yibib} holds true.
Using this claim,
 we find that
\begin{align}\label{llzz}
\sup_{t\in(0,\infty)}t^{-\theta}K(t,f;WX,L^\infty(\rn))
&\gtrsim\sup_{t\in(0,\infty)}t^{1-\theta}\beta(t)\sim
\sup_{k\in\zz}
\dsup_{\gfz{t\in(0,\infty)}{2^{k-1}<t\le2^{k}}}t^{1-\theta}\beta(t)\\ \noz
&\gtrsim\sup_{k\in\zz}
2^{k(1-\theta)}\beta(2^{k}).
\end{align}
For any $\alpha\in(0,\infty)$, let $h(\alpha):=\|\mathbf1_{\{x\in\rn:\ |f(x)|>\alpha\}}\|_{X}$.
Then, by \eqref{llzz}, we obtain
\begin{align*}
\|f\|_{(WX)^{1/(1-\theta)}}&=\sup_{\alpha\in(0,\infty)}
\alpha [h(\alpha)]^{1-\theta}
\lesssim\sup_{k\in\zz}
\dsup_{\gfz{\alpha\in(0,\infty)}{2^{k}<
h(\alpha)\le2^{k+1}}}\alpha [h(\alpha)]^{1-\theta}\\\noz
&\lesssim\sup_{k\in\zz}2^{k(1-\theta)}\beta(2^{k})\lesssim
\sup_{t\in(0,\infty)}t^{-\theta}K(t,f;WX,L^\infty(\rn)).
\end{align*}
This, combined with \eqref{222}, further implies that
$\|f\|_{(WX)^{1/(1-\theta)}}\lesssim\|f\|_{(WX,L^\infty(\rn))_{\theta,\infty}}$
and hence finishes the proof of \eqref{ydd0}.
Finally, by \eqref{yib0} and \eqref{ydd0}, we conclude that
\begin{align*}
\|\cdot\|_{(X,L^\infty(\rn))_{\theta,\infty}}\sim\|\cdot\|_{(WX)^{1/(1-\theta)}},
\end{align*}
which completes the proof of Theorem \ref{de1}.
\end{proof}

We now recall the notion
of the Hardy type space $H_X(\rn)$ (see \cite[Definition 2.22]{SHYY}).

\begin{definition}\label{DeHX}
Let $X$ be a ball quasi-Banach function space.
The \emph{Hardy space} $H_X(\rn)$ associated with $X$ is defined to be the set of all $f\in\cs'(\rn)$ such that
$$
\lf\|f\r\|_{H_X(\rn)}:=\lf\|M_N^0(f)\r\|_{X}<\infty,
$$
where $M_N^0(f)$ is as in \eqref{EqMN0} with $N\in\nn$ sufficiently large.
\end{definition}

Now, we establish a decomposition of any distribution
$f\in WH_{X^{{1}/{(1-\theta)}}}(\rn)$ into two parts in Lemma \ref{de}, which plays a key
role in the proof of Theorem \ref{inte} below.
We begin with some
notation.

Let $X$ be a ball quasi-Banach function space satisfying Assumption \ref{a2.15} for some $p_-\in(0,\infty)$.
Assume that there exist both $\vartheta_0\in(1,\infty)$ such that $X$ is $\vartheta_0$-concave
and $r_1\in(0,\infty)$
such that $\cm$ in \eqref{mm} is bounded on $(WX)^{1/r_1}$.
Let $d\geq\lfloor n(1/\min\{\frac{p_-}{\vartheta_0},r_0\}-1)\rfloor$.
Assume that
there exist $r_0\in(0,\underline{p})$ and $p_0\in(r_0,\infty)$
such that $X^{1/r_0}$ is a ball Banach function space and \eqref{Eqdm} holds true.
Assume that $\psi\in\cs(\rn)$ satisfies $\supp\psi\subset B(\vec 0_n,1)$ and $\int_\rn\psi(x)x^\gamma\,dx=0$
for any $\gamma\in\zz_+^n$ with $|\gamma|\le d$.
Then, by Lemma \ref{Le47},
we know that there exists a
$\phi\in\cs(\rn)$ such that the support of $\widehat\phi$ is compact and away from the origin and,
for any $x\in\rn\setminus\{\vec 0_n\}$,
$$
\int_0^\infty\widehat\psi(tx)\widehat\phi(tx)\,\frac{dt}{t}=1.
$$

Let $\eta$ be a function on $\rn$ such that
$\widehat\eta(\vec 0_n):=1$ and, for any $x\in\rn\setminus\{\vec 0_n\}$,
$$
\widehat\eta(x):=\int_1^\infty\widehat\psi(tx)\widehat\phi(tx)\,\frac{dt}{t}.
$$
Then, by \cite[p.\,219]{C1977}, we know that
such an $\eta$ exists and $\widehat\eta$ is infinitely differentiable,
has compact support and equals $1$ near the origin.
Moreover, for any $t_0,\ t_1 \in(0,\infty)$ and $x \in \rn\setminus\{\vec{0_n}\}$,
\begin{equation}\label{hard}
\int_{t_0}^{t_1}\widehat\psi(tx)\widehat\phi(tx)\,\frac{dt}{t}=
\widehat\eta(t_0x)-\widehat\eta(t_1x).
\end{equation}

Let $x_0:=(2,\ldots,2)\in\rn$ and  $f\in\cs'(\rn)$ vanish weakly at infinity.
For any $x\in\rn$ and $t\in(0,\infty)$, let
$\widetilde\phi(x):=\phi(x-x_0)$, $\widetilde\psi(x):=\psi(x+x_0)$, $F(x,t):=f\ast\widetilde\phi_t(x)$
and $G(x,t):=f\ast\eta_t(x)$.
Then, using \eqref{hard}, we have, for any $t_0,\ t_1\in(0,\infty)$ and
$x\in\rn\setminus\{\vec{0_n}\}$,
$$
\int_{t_0}^{t_1}\int_{\rn}F(y,t)\widetilde\psi(x-y)\,\frac{dy\,dt}{t}=G(x,t_0)-G(x,t_1)
$$
and, by Lemma \ref{Le47} and the fact that $f\in\cs'(\rn)$ vanishes weakly at infinity,
we know that
$$
f=\lim_{\substack{\epsilon\to0^+\\ A\to\infty}}
\int_\epsilon^A \int_{\rn}F(y,t)\widetilde\psi(x-y)\,\frac{dy\,dt}{t}\quad\text{in}\quad\cs'(\rn).
$$
Now, for any $x\in\rn$,
let
\begin{equation}\label{m47}
M_\triangledown(f)(x):=\sup_{\{t\in(0,\infty),\ |y-x|\le3(|x_0|+1)t\}}[|F(y,t)|+|G(y,t)|].
\end{equation}

\begin{lemma}\label{de}
Let $X$ be a ball quasi-Banach function space satisfying Assumption \ref{a2.15} for some $p_-\in(0,\infty)$.
Assume that there exist both $\vartheta_0\in(1,\infty)$ such that $X$ is $\vartheta_0$-concave
and $r_1\in(0,\infty)$
such that $\cm$ in \eqref{mm} is bounded on $(WX)^{1/r_1}$.
Assume that
there exist $r_0\in(0,\underline{p})$ and $p_0\in(r_0,\infty)$
such that $X^{1/r_0}$ is a ball Banach function space and \eqref{Eqdm} holds true.
Assume that $\theta\in(0,1)$. Let $\alpha\in(0,\infty)$
and $f\in WH_{X^{1/(1-\theta)}}(\rn)$.
Then there exist $g_\alpha\in L^\infty(\rn)$ and $b_\alpha\in\cs'(\rn)$
such that $f =g_\alpha + b_\alpha$ in $\cs'(\rn)$,
$\|g_\alpha\|_{L^\infty(\rn)}\le c_1\alpha$ and
\begin{equation}\label{pr8-1}
\|b_\alpha\|_{H_{X}(\rn)}\le c_2\|M_\triangledown(f)\mathbf1_{\{x\in\rn:\ M_\triangledown(f)(x)>\alpha\}}\|_{X}<\infty,
\end{equation}
where $M_\triangledown$ is the same as in \eqref{m47}, and
$c_1$ and $c_2$ are two positive constants independent of $f$ and $\alpha$.
\end{lemma}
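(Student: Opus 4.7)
The plan is to adapt the classical Calder\'on--Zygmund type decomposition to this weak Hardy setting, following the strategy of Calder\'on \cite{C1977}. The starting point is the open level set
\[
\Omega_\alpha:=\{x\in\rn:\ M_\triangledown(f)(x)>\alpha\}.
\]
Since $f\in WH_{X^{1/(1-\theta)}}(\rn)$, Lemma \ref{Le64} gives that $f$ vanishes weakly at infinity, and standard pointwise comparisons between $M_\triangledown(f)$ and the grand maximal function $M_N^0(f)$ (the function $\eta$ and the shifted $\widetilde\phi$ both belong, up to a harmless dilation, to $\cf_N(\rn)$) imply $M_\triangledown(f)\in WX^{1/(1-\theta)}$, so $\Omega_\alpha$ is a proper open subset of $\rn$ with $\mathbf1_{\Omega_\alpha}\in X$. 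Performing a Whitney decomposition of $\Omega_\alpha$, I would obtain a family of cubes $\{Q_j\}_{j\in\nn}$ with bounded overlap of $\{2Q_j\}_{j\in\nn}$ and with $\operatorname{diam}(Q_j)\sim\operatorname{dist}(Q_j,\Omega_\alpha^\complement)$, together with balls $B_j\supset Q_j$ of comparable radius and a smooth partition of unity $\{\zeta_j\}_{j\in\nn}$ on $\Omega_\alpha$ subordinate to $\{2Q_j\}_{j\in\nn}$.

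Next, using Lemma \ref{Le47} and the fact that $f$ vanishes weakly at infinity, I would represent
\[
f=\int_0^\infty \int_\rn F(y,t)\,\widetilde\psi_t(\cdot-y)\,\frac{dy\,dt}{t}\quad\text{in}\quad\cs'(\rn),
\]
and split the integration region into the tent $\widehat{\Omega_\alpha}=\{(y,t)\in\rr_+^{n+1}:\ B(y,t)\subset\Omega_\alpha\}$ and its complement, defining
\[
g_\alpha:=\int\!\!\!\int_{(\widehat{\Omega_\alpha})^\complement}F(y,t)\,\widetilde\psi_t(\cdot-y)\,\frac{dy\,dt}{t},
\qquad
b_\alpha:=\int\!\!\!\int_{\widehat{\Omega_\alpha}}F(y,t)\,\widetilde\psi_t(\cdot-y)\,\frac{dy\,dt}{t},
\]
so that $f=g_\alpha+b_\alpha$ in $\cs'(\rn)$. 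For any $(y,t)\in(\widehat{\Omega_\alpha})^\complement$ there exists $z\in B(y,t)\cap\Omega_\alpha^\complement$, and since $\widetilde\psi$ is supported in $B(-x_0,1)$ the translation $\widetilde\psi_t(\cdot-y)$ is supported in $\{u:\ |u-y+x_0t|\le t\}$, which together with the definition of $M_\triangledown(f)$ yields the pointwise bound $|g_\alpha(x)|\le c_1\alpha$ for every $x\in\rn$, using also that $f*\widetilde\phi_t*\widetilde\psi_t$ can be rewritten in terms of $G(\cdot,t_0)-G(\cdot,t_1)$ via \eqref{hard} to control the endpoint contributions.

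For the bad part, I would decompose $b_\alpha=\sum_{j\in\nn}b_{\alpha,j}$ by setting
\[
b_{\alpha,j}(x):=\int_0^\infty\!\!\int_{\rn}F(y,t)\,\zeta_j(y)\,\widetilde\psi_t(x-y)\,\mathbf1_{T_j}(y,t)\,\frac{dy\,dt}{t},
\]
where $T_j$ is the Whitney tent-like region above $Q_j$ inside $\widehat{\Omega_\alpha}$. The vanishing moments of $\widetilde\psi$ transfer to $b_{\alpha,j}$, giving the moment condition in Definition \ref{Demol}; the support of $\widetilde\psi_t$ combined with the geometry of $T_j$ localizes $b_{\alpha,j}$ in an enlargement of $B_j$ and gives the molecular decay for any prescribed $\epsilon\in(n+d+1,\infty)$; and the $L^q$-size estimate $\|b_{\alpha,j}\|_{L^q(S_k(B_j))}\le C2^{-k\epsilon}|S_k(B_j)|^{1/q}\alpha$ follows from the pointwise bound $|F(y,t)|\ls\alpha$ at points of $T_j$ near $\Omega_\alpha^\complement$. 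Hence, up to a harmless multiplicative constant, $\{\alpha^{-1}\|\mathbf1_{B_j}\|_X^{-1}b_{\alpha,j}\}_{j\in\nn}$ is a family of $(X,q,d,\epsilon)$-molecules with coefficients $\lambda_j:=\alpha\|\mathbf1_{B_j}\|_X$, and applying (the unweighted analogue of) Lemma \ref{Thmolcha} to the single-level family yields
\[
\|b_\alpha\|_{H_X(\rn)}\ls \alpha\lf\|\sum_{j\in\nn}\mathbf1_{B_j}\r\|_X\ls\alpha\lf\|\mathbf1_{\Omega_\alpha}\r\|_X\le\lf\|M_\triangledown(f)\mathbf1_{\Omega_\alpha}\r\|_X,
\]
which is exactly \eqref{pr8-1}.

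The main obstacle will be the molecular verification in the third step: one must exploit the Whitney geometry to pass from pointwise control of $F$ just outside $\Omega_\alpha$ to the $L^q$-decay estimates of $b_{\alpha,j}$ on annuli $S_k(B_j)$ (this is where the smoothness and support of $\widetilde\psi$ play a crucial role in producing the decay in $k$ with the prescribed $\epsilon>n+d+1$), and to ensure that the series defining $b_\alpha$ converges in $\cs'(\rn)$ (which, together with the identity $f=g_\alpha+b_\alpha$, is handled by a truncation-and-limit argument using Lemma \ref{Le47} and the weak vanishing of $f$ at infinity, parallel to the argument in the proof of Theorem \ref{Tharea}).
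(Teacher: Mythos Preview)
Your approach has a genuine gap in the treatment of the bad part. You decompose the upper half-space into the tent $\widehat{\Omega_\alpha}$ and its complement, and claim that the pieces $b_{\alpha,j}$ built from the tent region satisfy size estimates with constant $\alpha$, leading to the bound $\|b_\alpha\|_{H_X(\rn)}\lesssim\alpha\,\|\mathbf 1_{\Omega_\alpha}\|_X$. But on $\widehat{\Omega_\alpha}$ you have \emph{no} pointwise control of $|F(y,t)|$ by $\alpha$; the bound $|F(y,t)|\le M_\triangledown(f)(z)\le\alpha$ is available only when $(y,t)$ lies in the complement of the tent, i.e.\ when $B(y,t)$ meets $\Omega_\alpha^\complement$. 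Inside $\widehat{\Omega_\alpha}$, $|F|$ can be arbitrarily large, so your $L^q$-size (or $L^\infty$-size) estimate for $b_{\alpha,j}$ fails. In fact the conclusion $\|b_\alpha\|_{H_X(\rn)}\lesssim\alpha\,\|\mathbf 1_{\Omega_\alpha}\|_X$ is strictly stronger than \eqref{pr8-1} and is false already for $X=L^p(\rn)$: take $f$ a smooth bump of height $N\gg\alpha$ on a unit ball; then $\alpha\,\|\mathbf 1_{\Omega_\alpha}\|_{L^p(\rn)}\sim\alpha$ while $\|b_\alpha\|_{H^p(\rn)}\sim N$.

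The paper repairs exactly this point by using a \emph{multi-level} stratification rather than a single tent. One introduces all dyadic level sets $\Omega_i=\{M_\triangledown(f)>2^i\}$ and the layers $\Omega_i^\ast=(\widetilde\Omega_i\setminus\widetilde\Omega_{i+1})\cap(\Omega_i\times(0,\infty))$; on each layer one does have $|F(y,t)|\lesssim 2^{i+1}$, so the corresponding pieces $b_{i,j}$ are genuine $(X,\infty,d)$-atoms with $\|b_{i,j}\|_{L^\infty(\rn)}\lesssim 2^i$ supported in $C_1Q_{i,j}$. The good part is then $g_\alpha=\sum_{i\le i_0}b_i$ (a geometric sum in $L^\infty$), and the bad part is $b_\alpha=\sum_{i>i_0}\sum_j b_{i,j}$, whose $H_X$-norm is controlled via the atomic characterization of $H_X(\rn)$ in \cite[Theorem~3.6]{SHYY} (note: Lemma~\ref{Thmolcha} bounds only the $WH_X$-norm, not the $H_X$-norm, so it is the wrong tool here). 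Summing in $i>i_0$ produces precisely $\bigl\|[\sum_{i>i_0}2^{ir_0}\mathbf 1_{\Omega_i}]^{1/r_0}\bigr\|_X\sim\|M_\triangledown(f)\mathbf 1_{\Omega_\alpha}\|_X$, which is the correct right-hand side of \eqref{pr8-1}. Your single-level scheme collapses this sum to one term and thereby loses the information about how large $M_\triangledown(f)$ is deep inside $\Omega_\alpha$.
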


\begin{proof}
Let $f \in WH_{X^{1/(1-\theta)}}(\rn)$ and, for any $i\in\zz$,
$$
\Omega_i:=\lf\{x\in\rn:\ M_\triangledown(f)(x)>2^i\r\}.
$$
Then $\Omega_i$ is open and,
by \cite[Theorem 3.2(ii)]{zwyy}, we further find that
$$\sup_{i\in\zz}2^i\|\mathbf1_{\Omega_i}\|_{X^{1/(1-\theta)}}
\le\|M_\triangledown(f)\|_{(WX)^{1/(1-\theta)}}\sim\|f\|_{WH_{X^{{1}/{(1-\theta)}}}(\rn)}.$$

Since $\Omega_i$ is a proper open subset of $\rn$, from the Whitney decomposition theorem (see, for instance, \cite[p.\,463]{G1}),
we deduce that there exists a sequence of cubes, $\{Q_{i,j}\}_{j\in\nn}$, such that, for any $i\in\zz$,
\begin{enumerate}
\item[(i)] $\bigcup_{j\in\nn} Q_{i,j}=\Omega_i$ and $\{Q_{i,j}\}_{j\in\nn}$ have disjoint interiors;
\item[(ii)] for any $j\in\nn$, $\sqrt nl_{Q_{i,j}}\le\dist(Q_{i,j},\Omega_i^\complement)
\le4\sqrt nl_{Q_{i,j}}$,
here and in the remainder of this proof, $l_{Q_{i,j}}$ denotes the \emph{side length} of the cube $Q_{i,j}$ and $\dist(Q_{i,j},\Omega_i^\complement)
:=\inf\{|x-y|:\ x\in Q_{i,j},\ y\in\Omega_i^\complement\}$;
\item[(iii)] for any $j,\ k\in\nn$, if the boundaries of two cubes $Q_{i,j}$ and $Q_{i,k}$ touch,
then $\frac14\le\frac{l_{Q_{i,j}}}{l_{Q_{i,k}}}\le4$;
\item[(iv)] for any given $j\in\nn$, there exist at most $12^n$ different cubes $\{Q_{i,k}\}_k$ that touch $Q_{i,j}$.
\end{enumerate}
For any $\epsilon\in(0,1)$, $i\in\zz$, $j\in\nn$ and $x\in\rn$, let
$$
\dist\lf(x,\Omega_i^\complement\r):=\inf\lf\{|x-y|:\ y\in\Omega_i\r\},
$$
$$
\widetilde\Omega_i:=\lf\{(x,t)\in\rr_+^{n+1}:=\rn\times(0,\infty):\
0<2t(|x_0|+1)<\dist\lf(x,\Omega_i^\complement\r)\r\},
$$
$$
\widetilde Q_{i,j}:=\lf\{(x,t)\in\rr_+^{n+1}:\ x\in Q_{i,j},\ (x,t)\in\widetilde\Omega_i\setminus\widetilde\Omega_{i+1}\r\}
$$
and
$$
b_{i,j}^\epsilon(x):=\int_\epsilon^{1/\epsilon}\int_\rn\mathbf{1}_{\widetilde Q_{i,j}}(y,t)F(y,t)\widetilde\psi_t(x-y)\,\frac{dy\,dt}{t}.
$$
For any $i\in\zz$, let
$$
\Omega_i^*:=\lf\{(y,t)\in\rr_+^{n+1}:\ y\in\Omega_i,\
(y,t)\in\widetilde\Omega_i\setminus\widetilde\Omega_{i+1}\r\}
$$
and, for any $\epsilon\in(0,\infty)$ and $x\in\rn$,
$$
b_i^\epsilon(x):=\int_\epsilon^{1/\epsilon}\int_\rn
\mathbf1_{\Omega^*_i}(y,t)F(y,t)\widetilde\psi_t(x-y)\,dy\,\frac{dt}{t}.
$$

Then, by the proof of \cite[Proposition 2.1]{zyy}, we know that, for any $\epsilon\in (0,\infty)$ and $i\in\zz$,
$
\|b_i^\epsilon\|_{L^\infty(\rn)}\lesssim2^i
$
with the implicit positive constant independent of $\epsilon$, $i$ and $f$,
and
there exist
$\{b_{i}\}_{i\in\zz}\subset L^\infty(\rn)$ and a sequence $\{\epsilon_k\}_{k\in\nn}\subset(0,\infty)$
such that $\epsilon_k\to0$ as $k\to\infty$ and, for any $i\in\zz$ and $g\in L^1(\rn)$,
\begin{equation}\label{ccc}
\lim_{k\to\infty}\langle b_{i}^{\epsilon_k},g\rangle=\langle b_{i},g\rangle,\ \|b_{i}\|_{L^\infty(\rn)}\lesssim2^i
\end{equation}
and
\begin{equation*}
\lim_{k\to\infty}\sum_{i\in\zz}b_{i}^{\epsilon_k}
=\sum_{i\in\zz}b_{i}\quad\text{in}\quad\cs'(\rn).
\end{equation*}

For any given $\alpha\in(0,\infty)$, let $i_0 \in\zz$ be such that $2^{i_0}\le\alpha<2^{i_0+1}$. Let
$$g_\alpha :=\sum_{i=-\infty}^{i_0}b_i.$$
Then, by \eqref{ccc} , we conclude that $g_\alpha \in L^\infty(\rn)$ and
$\|g_\alpha\|_{L^\infty(\rn)}\lesssim2^{i_0}\sim\alpha$.

Let $r_0$, $p_-$ and $\vartheta_0$ be as in this lemma.
Let $d\geq\lfloor n(1/\min\{\frac{p_-}{\vartheta_0},r_0\}-1)\rfloor$.
From $d\geq\lfloor n(\vartheta_0/p_--1)\rfloor$ and the fact that
$X$ satisfy
Assumption \ref{a2.15} for some $p_-\in(0,\infty)$, we deduce that there exists a
$p\in(0,p_-)$ such that $\cm$ in \eqref{mm} is bounded on
$X^{1/(\vartheta_0 p)}$ and $d\geq \lfloor n(1/p-1)\rfloor$.
By this and an argument similar to that used in the proof of \cite[Theorem 4.2]{zwyy},
we conclude that there exist
positive constants $C_1$ and $C_2$,
$\{b_{i,j}\}_{i>i_0,j\in\nn}\subset L^\infty(\rn)$ and a sequence
$\{\epsilon_{k_l}\}_{l\in\nn}\subset\{\epsilon_k\}_{k\in\nn}$
such that $k_l\to\infty$ as $l\to\infty$ and,
for any $i\in\zz\cap(i_0,\infty),\,j\in\nn$ and $g\in L^1(\rn)$,
\begin{equation*}
\lim_{l\to\infty}\langle b_{i,j}^{\epsilon_{k_l}},g\rangle=\langle b_{i,j},g\rangle,
\end{equation*}
$\supp b_{i,j}\subset C_1Q_{i,j}$, $\|b_{i,j}\|_{L^\infty(\rn)}\le C_22^i$ and,
for any $\gamma\in\zz_+^n$ with $|\gamma|\le d$, where $d\geq\lfloor n(\frac{1}{p}-1)\rfloor$,
$\int_\rn b_{i,j}(x)x^\gamma\,dx=0$. Furthermore,
\begin{equation}\label{qq}
\lim_{l\to\infty}\sum_{i=i_0+1}^{\infty}\sum_{j\in\nn}b_{i,j}^{\epsilon_{k_l}}
=\sum_{i=i_0+1}^{\infty}\sum_{j\in\nn}b_{i,j}\quad\text{in}\quad\cs'(\rn).
\end{equation}
Then we let $b_\alpha:=\sum_{i=i_0+1}^{\infty}\sum_{j\in\nn}b_{i,j}$
in $\cs'(\rn)$
and prove that
\eqref{pr8-1} holds true.
For any $i\in\zz\cap(i_0,\infty)$ and $j\in\nn$, let
$$
a_{i,j}:=\frac{b_{i,j}}{C_22^i\|\mathbf{1}_{B_{i,j}}\|_{X}}
\quad\text{and}\quad\lambda_{i,j}:=C_22^i\lf\|\mathbf{1}_{B_{i,j}}\r\|_{X}.
$$
Then, using the properties of $b_{i,j}$, we are easy to show that $a_{i,j}$ is
an $(X,\infty,d)$-atom.
By this, $d\geq\lfloor n(1/r_0-1)\rfloor$, the assumption on $X$ in this lemma
and \cite[Theorem 3.6]{SHYY}, we find that
\begin{align}\label{q13}
\|b_\alpha\|_{H_X(\rn)}\lesssim\lf\| \lf[\sum_{i=i_0+1}^{\infty}\sum_{j\in\nn}
\lf(\frac{\lambda_j}{\|\mathbf{1}_{C_1Q_{i,j}}\|_{X}}\r)^{r_0}
\mathbf{1}_{C_1Q_{i,j}}\r]^{1/r_0} \r\|_{X}.
\end{align}
From the fact that, for any given $s\in(0,r_0)$, $\mathbf{1}_{C_1Q_{i,j}}\lesssim
 [\cm(\mathbf{1}_{Q_{i,j}})]^{\frac1s}$, property (i) of the aforementioned Whitney
decomposition and Assumption \ref{a2.15}, we deduce that
\begin{align}\label{qqqq}
&\lf\| \lf[\sum_{i=i_0+1}^{\infty}\sum_{j\in\nn}
\lf(\frac{\lambda_j}{\|\mathbf{1}_{C_1Q_{i,j}}\|_{X}}\r)^{r_0}
\mathbf{1}_{C_1Q_{i,j}}\r]^{1/r_0} \r\|_{X}\\ \noz
&\quad\lesssim
\lf\| \lf[\sum_{i=i_0+1}^{\infty}\sum_{j\in\nn}
\lf(2^i\lf[\cm(\mathbf{1}_{Q_{i,j}})\r]^{\frac1s}\r)^{r_0}
\r]^{1/r_0} \r\|_{X}\lesssim
\lf\| \lf[\sum_{i=i_0+1}^{\infty}\sum_{j\in\nn}
2^{ir_0}\mathbf{1}_{Q_{i,j}}
\r]^{1/r_0} \r\|_{X}\\ \noz
&\quad\sim\lf\| \lf[\sum_{i=i_0+1}^{\infty}
2^{ir}\mathbf{1}_{\Omega_i}
\r]^{1/r_0} \r\|_{X}\sim\lf\| M_\triangledown(f)(x)
\mathbf1_{\{x\in\rn:\ M_\triangledown(f)(x)>\alpha\}} \r\|_{X}.
\end{align}
Moreover, by the definition of $WH_{X^{1/(1-\theta)}}(\rn)$, Definition \ref{Debf}(i),
the assumption that $X^{1/r_0}$ is a ball Banach function space and \cite[Theorem 3.2(ii)]{zwyy},
we conclude that, for any given $\theta\in(0,1)$ as in this theorem,
\begin{align}\label{q11}
&\lf\| M_\triangledown(f)
\mathbf1_{\{x\in\rn:\ M_\triangledown(f)(x)>\alpha\}} \r\|_{X}\\ \noz
&\quad\lesssim\lf\{ \sum_{i=0}^{\infty}\lf\| \lf[M_\triangledown(f)\r]^{r_0}
\mathbf1_{\{x\in\rn:\ 2^{i}\alpha<M_\triangledown(f)(x)\le2^{i+1}\alpha\}}
\r\|_{X^{1/r_0}} \r\}^{1/r_0}\\\noz
&\quad\lesssim
\lf\{ \sum_{i=0}^{\infty}(2^{i}\alpha)^{r_0}\lf\|
\mathbf1_{\{x\in\rn:\ M_\triangledown(f)(x)>2^{i}\alpha\}} \r\|_{X}^{r_0} \r\}^{1/r_0}\\\noz
&\quad\lesssim
\lf\{ \sum_{i=0}^{\infty}(2^{i}\alpha)^{-\frac{\theta r_0}{1-\theta}}\lf[2^{i}\alpha\lf\|
\mathbf1_{\{x\in\rn:\ M_\triangledown(f)(x)
>2^{i}\alpha\}} \r\|_{X^{1/(1-\theta)}}\r]^{\frac{r_0}{1-\theta}}
\r\}^{1/r_0}\\\noz
&\quad\lesssim\lf\|
 M_\triangledown(f)
 \r\|_{WX^{1/(1-\theta)}}^{\frac{1}{1-\theta}}
\lf\{ \sum_{i=0}^{\infty}(2^{i}\alpha)^{-\frac{\theta r_0}{1-\theta}}
\r\}^{1/r_0}
\sim\lf\|
f
 \r\|_{WH_{X^{1/(1-\theta)}}(\rn)}^{\frac{1}{1-\theta}}
\alpha^{-\frac{\theta }{1-\theta}}<\infty.
\end{align}
Combining \eqref{q13}, \eqref{qqqq} and \eqref{q11}, we know that
$b_\alpha\in H_{X}(\rn)$ and
$$\|b_\alpha\|_{H_{X}(\rn)}\le C_2\|M_\triangledown(f)\mathbf1_{\{x\in\rn:\ M_\triangledown(f)(x)>\alpha\}}\|_{X}<\infty.$$
This finishes the proof of \eqref{pr8-1}.

Finally, we prove that $f =g_\alpha + b_\alpha$ in $\cs'(\rn)$.
For any $\zeta\in\cs(\rn)$, by the Lebesgue dominated convergence theorem
and $\sum_{j\in\nn}\mathbf{1}_{\widetilde Q_{i,j}}=1$, we have
\begin{align*}
\lf\langle\sum_{j\in\nn}b_{i,j}^\epsilon,\zeta\r\rangle
&=\int_\rn\zeta(x)\sum_{j\in\nn}\int_\epsilon^{1/\epsilon}\int_\rn\mathbf{1}_{\widetilde Q_{i,j}}(y,t)F(y,t)
\widetilde\psi_t(x-y)\,\frac{dy\,dt}{t}\,dx\\
&=\int_\rn\zeta(x)\int_\epsilon^{1/\epsilon}\int_\rn \mathbf{1}_{\Omega_i^*}F(y,t)
\widetilde\psi_t(x-y)\,\frac{dy\,dt}{t}\,dx
\end{align*}
and hence
\begin{align}\label{2177}
b_{i}^\epsilon=\sum_{j\in\nn}b_{i,j}^\epsilon\quad\text{in}\quad\cs'(\rn).
\end{align}
Then, from \eqref{2177}, \eqref{qq} and
the same argument as that used in the estimation of \cite[(4.10)]{zwyy}, we
deduce that
\begin{align*}
f&=\lim_{l\to\infty}
\sum_{i\in\zz}\sum_{j\in\nn}b_{i,j}^{\epsilon_{k_l}}=\lim_{l\to\infty}
\lf[\sum_{i=-\infty}^{i_0}\sum_{j\in\nn}b_{i,j}^{\epsilon_{k_l}}+
\sum_{i=i_0+1}^{\infty}\sum_{j\in\nn}b_{i,j}^{\epsilon_{k_l}}\r]\\
&=\lim_{l\to\infty}\lf[
\sum_{i=-\infty}^{i_0}b_{i}^{\epsilon_{k_l}}+
\sum_{i=i_0+1}^{\infty}\sum_{j\in\nn}b_{i,j}^{\epsilon_{k_l}}\r]
=\sum_{i=-\infty}^{i_0}b_{i}+
\sum_{i=i_0+1}^{\infty}\sum_{j\in\nn}b_{i,j}=g_{\alpha}+b_{\alpha}
\quad\text{in}\quad\cs'(\rn).
\end{align*}
This finishes the proof of Lemma \ref{de}.
\end{proof}

\begin{theorem}\label{inte}
Let $X$ satisfy all the same assumptions as in Lemma \ref{de} and $\theta\in(0,1)$. Then it holds true that
$$
(H_{X}(\rn),L^\infty(\rn))_{\theta,\infty}=WH_{X^{1/(1-\theta)}}(\rn).
$$
\end{theorem}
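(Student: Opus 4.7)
The plan is to prove the two inclusions separately, with Theorem \ref{de1} and Lemma \ref{de} as the main tools. A preliminary observation that will be used throughout is the identification $W(X^{1/(1-\theta)})=(WX)^{1/(1-\theta)}$ with equal norms, which follows from a direct computation using the definitions of $Y^p$ and $WY$ together with the identity $\mathbf{1}_E^{1/(1-\theta)}=\mathbf{1}_E$; indeed, both norms equal $\sup_{\alpha>0}\alpha\|\mathbf{1}_{\{|g|>\alpha\}}\|_X^{1-\theta}$ after the substitution $\alpha=\beta^{1-\theta}$.

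For the inclusion $(H_X(\rn),L^\infty(\rn))_{\theta,\infty}\subset WH_{X^{1/(1-\theta)}}(\rn)$, I would interpolate the sublinear radial grand maximal function. By Definition \ref{DeHX}, $M_N^0$ maps $H_X(\rn)$ isometrically into $X$, and from the elementary estimate $|f\ast\varphi_t|\le\|f\|_{L^\infty(\rn)}\|\varphi\|_{L^1(\rn)}$ for $\varphi\in\cf_N(\rn)$, $M_N^0$ is bounded on $L^\infty(\rn)$. Sublinearity $M_N^0(f)\le M_N^0(f_0)+M_N^0(f_1)$ for any decomposition $f=f_0+f_1$ with $f_0\in H_X(\rn)$, $f_1\in L^\infty(\rn)$ gives, after taking infimum,
\begin{align*}
K(t,M_N^0(f);X,L^\infty(\rn))\lesssim K(t,f;H_X(\rn),L^\infty(\rn)).
\end{align*}
Invoking Theorem \ref{de1} and the preliminary identification yields $\|M_N^0(f)\|_{W(X^{1/(1-\theta)})}\lesssim\|f\|_{(H_X(\rn),L^\infty(\rn))_{\theta,\infty}}$, which is precisely the inclusion.

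For the reverse inclusion, fix $f\in WH_{X^{1/(1-\theta)}}(\rn)$. Lemma \ref{de} supplies, for every $\alpha\in(0,\infty)$, a splitting $f=g_\alpha+b_\alpha$ in $\cs'(\rn)$ with $\|g_\alpha\|_{L^\infty(\rn)}\le c_1\alpha$ and $\|b_\alpha\|_{H_X(\rn)}\le c_2\|M_\triangledown(f)\mathbf{1}_{\{M_\triangledown(f)>\alpha\}}\|_X$, so that
\begin{align*}
K(t,f;H_X(\rn),L^\infty(\rn))\le c_2\lf\|M_\triangledown(f)\mathbf{1}_{\{M_\triangledown(f)>\alpha\}}\r\|_X+c_1 t\alpha.
\end{align*}
Mirroring the proof of Theorem \ref{de1} (but with $f$ there replaced by $M_\triangledown(f)$ here), I would fix the $r\in(0,\infty)$ for which $X^{1/r}$ is a ball Banach function space, define
\begin{align*}
\alpha(t):=\inf\lf\{\mu\in(0,\infty):\ \lf\{\sum_{j=0}^\infty\lf[2^j\lf\|\mathbf{1}_{\{x\in\rn:\ M_\triangledown(f)(x)>2^j\mu\}}\r\|_X\r]^r\r\}^{1/r}\le t\r\},
\end{align*}
and choose $\alpha=\alpha(t)$. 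The layer-cake estimate of \eqref{88ab} then bounds $\|M_\triangledown(f)\mathbf{1}_{\{M_\triangledown(f)>\alpha(t)\}}\|_X$ by $t\alpha(t)$, so $K(t,f;H_X(\rn),L^\infty(\rn))\lesssim t\alpha(t)$.

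Taking the supremum $\sup_{t>0}t^{-\theta}K(t,f;H_X(\rn),L^\infty(\rn))\lesssim\sup_{t>0}t^{1-\theta}\alpha(t)$ and then repeating the dyadic argument producing \eqref{ll70} and \eqref{ll80}, split according to whether $(1-\theta)/r\le1$ or $>1$, bounds this supremum by a multiple of $\|M_\triangledown(f)\|_{(WX)^{1/(1-\theta)}}$. Finally, the comparison $\|M_\triangledown(f)\|_{(WX)^{1/(1-\theta)}}\sim\|f\|_{WH_{X^{1/(1-\theta)}}(\rn)}$ is exactly the opening line of the proof of Lemma \ref{de} (coming from \cite[Theorem 3.2(ii)]{zwyy}), closing the argument. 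The main technical obstacle is not conceptual but notational: the layer-cake/dyadic machinery driving Theorem \ref{de1} must be transcribed carefully with $M_\triangledown(f)$ in place of $f$, and one must verify that the Aoki--Rolewicz $\nu$-inequality on $WX$ (Remark \ref{Re213}) does not worsen the constants when combining the two terms of the Peetre $K$-functional bound; beyond that, the proof is a clean synthesis of Theorem \ref{de1} and Lemma \ref{de}.
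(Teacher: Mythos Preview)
Your proposal is correct and follows essentially the same approach as the paper: the inclusion $(H_X(\rn),L^\infty(\rn))_{\theta,\infty}\subset WH_{X^{1/(1-\theta)}}(\rn)$ is obtained by interpolating the sublinear operator $M_N^0$ (the paper spells out the required splitting of $M_N^0(f)$ via the sets $E_0,E_1$, whereas you invoke the standard sublinearity-to-$K$-functional step directly), and the reverse inclusion uses Lemma~\ref{de} together with the $\alpha(t)$ machinery of Theorem~\ref{de1} applied to $M_\triangledown(f)$, exactly as in the paper. The only cosmetic differences are the order of the two inclusions and your explicit preliminary identification $W(X^{1/(1-\theta)})=(WX)^{1/(1-\theta)}$; your closing worry about the Aoki--Rolewicz constant is unnecessary, since the $K$-functional bound involves an ordinary sum, not a $WX$-norm.
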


\begin{proof}
We first show that
\begin{equation}\label{yib}
WH_{X^{1/(1-\theta)}}(\rn)\subset(H_{X}(\rn),L^\infty(\rn))_{\theta,\infty}.
\end{equation}
Let $f\in WH_{X^{1/(1-\theta)}}(\rn)$ and use the same notation
as in Lemma \ref{de}. Fix $r\in(0,\underline{p})$.
For any $t\in(0,\infty)$, let
$$
\alpha(t):=\inf\lf\{\mu\in(0,\infty):\ \lf\{\sum_{j=0}
^{\infty}\lf[2^j\lf\|\mathbf1_{\{x\in\rn:\ M_\triangledown(f)(x)>2^j\mu\}}\r\|_{X}
\r]^r\r\}^{\frac{1}{r}}\le t\r\}.
$$
We claim that, for any $t\in(0,\infty)$,
\begin{equation}\label{yibi}
K(t,f;H_{X}(\rn),L^\infty(\rn))\lesssim t\alpha(t).
\end{equation}
Indeed, by Lemma \ref{de}, we know that, for any $\alpha\in(0,\infty)$,
there exist $g_{\alpha}\in L^\infty(\rn)$
and $b_{\alpha}\in H_{X}(\rn)$
such that $f = g_{\alpha} + b_{\alpha}$ in $\cs'(\rn)$,
$\|g_\alpha\|_{L^\infty(\rn)}\le c_1\alpha$, and
$$
\|b_\alpha\|_{H_{X}(\rn)}\le c_2\|M_\triangledown(f)\mathbf1_{\{x\in\rn:\ M_\triangledown(f)(x)>\alpha\}}\|_{X}<\infty.
$$
Then, from this, Definition \ref{Debf}(i) and the assumption that $X^{1/r}$ is a ball Banach function space,
it follows that, for any $t\in(0,\infty)$,
\begin{align}\label{88}
&K(t,f;H_{X}(\rn),L^\infty(\rn))\\ \noz
&\quad=\inf\lf\{\|f_0\|_{H_{X}(\rn)}+t\|f_1\|_{L^\infty(\rn)}:
\ f=f_0+f_1,\ f_0\in H_{X}(\rn),\ f_1\in L^\infty(\rn)\r\}\\ \noz
&\quad\le\inf_{\alpha\in(0,\infty)}\lf\{\|b_{\alpha}\|_{H_{X}(\rn)}+t\|g_{\alpha}\|_{L^\infty(\rn)}:
\ g_{\alpha}\ \text{and}\ b_{\alpha}\ \text{are}\ \text{as}\ \text{in}
\ \text{Lemma}\ \ref{de}\r\}\\ \noz
&\quad\lesssim\inf_{\alpha\in(0,\infty)}\lf\{\|M_\triangledown(f)\mathbf1_{\{x\in\rn:\ M_\triangledown(f)(x)>\alpha\}}\|_{X}+t\alpha\r\}\\ \noz
&\quad\lesssim\inf_{\alpha\in(0,\infty)}\lf\{\lf[ \sum_{i=0}^{\infty}\lf\|
\lf\{M_\triangledown(f)\r\}^r
\mathbf1_{\{x\in\rn:\ 2^{i}\alpha<M_\triangledown(f)(x)\le2^{i+1}\alpha\}} \r\|_{X^{1/r}} \r]^{1/r}+t\alpha\r\}\\ \noz
&\quad\lesssim\inf_{\alpha\in(0,\infty)}\lf\{\lf[ \sum_{i=0}^{\infty}(2^{i}\alpha)^{r}\lf\|
\mathbf1_{\{x\in\rn:\ M_\triangledown(f)(x)>2^{i}\alpha\}} \r\|_{X}^r \r]^{1/r}+t\alpha\r\}.
\end{align}
Let $\alpha:=\alpha(t)$. Then it is easy to see that
$$
\lf\{ \sum_{i=0}^{\infty}2^{ir}\lf\|
\mathbf1_{\{x\in\rn:\ M_\triangledown(f)(x)>2^{i}\alpha(t)\}} \r\|_{X}^r \r\}^{1/r}\le t.
$$
By this and \eqref{88}, we conclude that the claim \eqref{yibi} holds true.
Using this claim and the fact that, for any $k\in\zz$ and $t\in(0,\infty)$ satisfying
$2^k<\alpha(t)$,
$$
\lf\{ \sum_{i=0}^{\infty}2^{ir}\lf\|
\mathbf1_{\{x\in\rn:\ M_\triangledown(f)(x)>2^{i}2^k\}} \r\|_{X}^r \r\}^{1/r}> t,
$$
 we find that
\begin{align}\label{ll5}
\sup_{t\in(0,\infty)}t^{-\theta}K(t,f;H_{X}(\rn),L^\infty(\rn))
&\lesssim\sup_{t\in(0,\infty)}t^{1-\theta}\alpha(t)\sim\sup_{k\in\zz}
\dsup_{\gfz{t\in(0,\infty)}{2^k<\alpha(t)\le2^{k+1}}}t^{1-\theta}2^k\\\noz
&\lesssim\sup_{k\in\zz}
\lf\{ \sum_{i=0}^{\infty}2^{ir}\lf\|
\mathbf1_{\{x\in\rn:\ M_\triangledown(f)(x)>2^{i}2^k\}} \r\|_{X}^r \r\}^{\frac{1-\theta}{r}}2^k=:\mathrm{I}.
\end{align}
If $\frac{1-\theta}{r}\le1$, we have
\begin{align}\label{ll4}
\mathrm{I}&\lesssim\sup_{k\in\zz}
\sum_{i=0}^{\infty}2^{i(1-\theta)}\lf\|
\mathbf1_{\{x\in\rn:\ M_\triangledown(f)(x)>2^{i}2^k\}} \r\|_{X}^{1-\theta}2^k
\lesssim\sup_{k\in\zz}
\sum_{i=0}^{\infty}2^{i+k}2^{-\theta i}\lf\|
\mathbf1_{\{x\in\rn:\ M_\triangledown(f)(x)>2^{i+k}\}}
\r\|_{X}^{1-\theta}\\\noz
&\lesssim
\sum_{i=0}^{\infty}2^{-\theta i}\sup_{l\in\zz}2^{l}\lf\|
\mathbf1_{\{x\in\rn:\ M_\triangledown(f)(x)>2^{l}\}}
\r\|_{X}^{1-\theta}\sim
\sup_{l\in\zz}2^{l}\lf\|
\mathbf1_{\{x\in\rn:\ M_\triangledown(f)(x)>2^{l}\}}
\r\|_{X^{1/(1-\theta)}}\\
&\lesssim\lf\|M_\triangledown(f)
\r\|_{X^{1/(1-\theta)}}.\noz
\end{align}
If $\frac{1-\theta}{r}>1$, then, by the H\"older inequality and Definition \ref{Debf}(i), we find that, for any
given
$\epsilon \in (0, \frac{\theta}{1-\theta})$,
\begin{align}\label{ll3}
\mathrm{I}&\lesssim\sup_{k\in\zz}2^k
\sum_{i=0}^{\infty}2^{i(1+\epsilon)(1-\theta)}\lf\|
\mathbf1_{\{x\in\rn:\ M_\triangledown(f)(x)>2^{i}2^k\}} \r\|_{X}^{1-\theta}
\lesssim
\sup_{l\in\zz}2^{l}\lf\|
\mathbf1_{\{x\in\rn:\ M_\triangledown(f)(x)>2^{l}\}}
\r\|_{X}^{1-\theta}\\ \noz
&\sim
\sup_{l\in\zz}2^{l}\lf\|
\mathbf1_{\{x\in\rn:\ M_\triangledown(f)(x)>2^{l}\}}
\r\|_{X^{1/(1-\theta)}}\lesssim\lf\|M_\triangledown(f)
\r\|_{X^{1/(1-\theta)}}.
\end{align}

Finally, from \eqref{ll5}, \eqref{ll4} and \eqref{ll3}, it follows that
\begin{align*}
\sup_{t\in(0,\infty)}t^{-\theta}K(t,f;H_{X}(\rn),L^\infty(\rn))
\lesssim\lf\|M_\triangledown(f)
\r\|_{X^{{1}/{(1-\theta)}}},
\end{align*}
which, together with \eqref{222} and \cite[Theorem 3.2(ii)]{zwyy},
further implies that $$\|f\|_{(H_{X}(\rn),L^\infty(\rn))_{\theta,\infty}}\lesssim\|f
\|_{WH_{X^{1/(1-\theta)}}(\rn)}$$ and hence completes the proof of \eqref{yib}.

Next, we show that
\begin{equation}\label{ydd}
(H_{X}(\rn),L^\infty(\rn))_{\theta,\infty}\subset WH_{X^{1/(1-\theta)}}(\rn).
\end{equation}
To this end, for any $f\in\cs'(\rn)$, let
$T( f ):= M_N^0(f)$, where $M_N^0$ is as in \eqref{EqMN0} with $N$ sufficiently
large (see Definition \ref{DewSH}).
To prove \eqref{ydd}, we first claim that the operator $T$ is bounded from the space
$(H_{X}(\rn),L^\infty(\rn))_{\theta,\infty}$ to the space
$(X,L^\infty(\rn))_{\theta,\infty}$. Indeed, let $g\in(H_{X}(\rn),L^\infty(\rn))_{\theta,\infty}$.
Then, from the definition of $(H_{X}(\rn),L^\infty(\rn))_{\theta,\infty}$, it follows that
there exist $g_0\in H_{X}(\rn)$ and $g_1\in L^\infty(\rn)$ such that
\begin{equation}\label{yee}
\sup_{t\in(0,\infty)}t^{-\theta}[\|g_0\|_{H_{X}(\rn)}+t\|g_1\|_{L^\infty(\rn)}]
\lesssim\|g\|_{(H_{X}(\rn),L^\infty(\rn))_{\theta,\infty}}.
\end{equation}
Since $T(g)\le T(g_0)+T(g_1)$ and $T$ is bounded from $L^\infty(\rn)$ to $L^\infty(\rn)$
and also from $H_{X}(\rn)$ to $X$, we deduce that $T(g_0)\in X$ and $T(g_1)\in L^\infty(\rn)$.
Thus, we have $$T(g)=T(g)\mathbf1_{E_0}+T(g)\mathbf1_{E_1\setminus E_0}\in X+L^\infty(\rn),$$
where $E_0:=\{x\in\rn:\ \frac12T(g)(x)\le T(g_0)(x)\}$
and $E_1:=\{x\in\rn:\ \frac12T(g)(x)\le T(g_1)(x)\}$.
From this and \eqref{yee}, we deduce that
\begin{align*}
\|T(g)\|_{(X,L^\infty(\rn))_{\theta,\infty}}&\lesssim
\sup_{t\in(0,\infty)}t^{-\theta}[\|T(g)\mathbf1_{E_0}\|_{X}+
t\|T(g)\mathbf1_{E_1\setminus E_0}\|_{L^\infty(\rn)}]\\ \noz
&\lesssim
\sup_{t\in(0,\infty)}t^{-\theta}[\|T(g_0)\|_{X}+t\|T(g_1)\|_{L^\infty(\rn)}]\\ \noz
&\lesssim
\sup_{t\in(0,\infty)}t^{-\theta}[\|g_0\|_{H_{X}(\rn)}+t\|g_1\|_{L^\infty(\rn)}]
\lesssim\|g\|_{(H_{X}(\rn),L^\infty(\rn))_{\theta,\infty}}.
\end{align*}
Therefore, the above claim holds true.
Form this claim and Lemma 3.1, it follows that, for any
$f\in(H_{X}(\rn),L^\infty(\rn))_{\theta,\infty}$,
$T ( f )\in (WX)^{1/(1-\theta)}$, namely,
$f\in WH_{X^{1/(1-\theta)}}(\rn)$.
Thus, \eqref{ydd} holds true.
This finishes the proof of Theorem \ref{inte}.
\end{proof}

\section{Applications}\label{s5}

In this section, we apply all above results to five concrete examples
of ball quasi-Banach function spaces, namely, Morrey spaces, mixed-norm Lebesgue spaces,
variable Lebesgue spaces, weighted Orlicz spaces
and Orlicz-slice spaces. Observe that, among the five examples, only variable Lebesgue spaces are
quasi-Banach function spaces, while the other four examples are not.

\subsection{Morrey spaces}

\begin{definition}
Let $0<q\le p<\infty$.
The \emph{Morrey space $M_q^p(\rn)$} is defined
to be the set of all measurable functions $f$ such that
$$
\|f\|_{M_q^p(\rn)}:=\sup_{B\in\BB}|B|^{1/p-1/q}\|f\|_{L^q(B)}<\infty,
$$
where $\BB$ is as in \eqref{Eqball} (the set of all balls of $\rn$).
\end{definition}

Recall that, due to the applications in elliptic partial differential equations,
the Morrey space $M_q^p(\rn)$ with $0<q\le p<\infty$
were introduced by Morrey \cite{m38} in 1938.
In recent decades,
there exists an increasing interest in applications
of Morrey spaces to various areas of analysis,
such as partial differential equations, potential theory and
harmonic analysis (see, for instance, \cite{a15,a04,cf,JW,ysy}).

Now, we recall the notion of the weak Morrey space $WM_q^p(\rn)$.

\begin{definition}
Let $0<q\le p<\infty$.
The \emph{weak Morrey space $WM_q^p(\rn)$}
is defined to be the set of all measurable functions $f$ such that
$$
\|f\|_{WM_q^p(\rn)}:=\sup_{\alpha\in(0,\infty)}\,\lf\{\alpha\lf\|\mathbf1_{\{x\in\rn:\ |f(x)|>\alpha\}}\r\|_{M_q^p(\rn)}\r\}<\infty.
$$
\end{definition}

\begin{remark}
Let $0<q\le p<\infty$.
\begin{itemize}
\item[(i)] The weak Morrey space $WM_q^p(\rn)$
is just the weak Morrey space $\mathcal{M}_u^{q,\infty}(\rn)$ in \cite{H17} with
$u(B):=|B|^{1/q-1/p}$ for any $B\in\BB$, where $\BB$ is as in \eqref{Eqball}.
\item[(ii)] Observe that, as was pointed out in \cite[p.\,86]{SHYY}, $M_q^p(\rn)$
is not a quasi-Banach function space, but it does be a ball
quasi-Banach function space as in Definition \ref{Debqfs}.
\end{itemize}
\end{remark}

Next, we recall the notions of both the weak Morrey Hardy
space $WHM_q^p(\rn)$ and the Morrey Hardy
space $HM_q^p(\rn)$ as follows.

\begin{definition}
Let $0<q\le p<\infty$.
\begin{itemize}
\item[(i)] The \emph{Morrey Hardy
space $HM_q^p(\rn)$} is defined to be the set of all $f\in\cs'(\rn)$
such that $\|f\|_{HM_q^p(\rn)}:=\
\|M_N^0(f)\|_{M_q^p(\rn)}<\infty$,
where $M_N^0(f)$ is as in \eqref{EqMN0} with $N$ sufficiently large.

\item[(ii)] The \emph{weak Morrey Hardy
space $WHM_q^p(\rn)$} is defined to be the set of all $f\in\cs'(\rn)$
such that $\|f\|_{WHM_q^p(\rn)}:=\
\|M_N^0(f)\|_{WM_q^p(\rn)}<\infty$,
where $M_N^0(f)$ is as in \eqref{EqMN0} with $N$ sufficiently large.
\end{itemize}
\end{definition}

\begin{remark}
Let $1<q\le p<\infty$. By \cite[Remark 7.8]{zwyy},
we know that $WHM_q^p(\rn)=WM_q^p(\rn)$ with equivalent norms.
\end{remark}

Let $0<q\le p<\infty$. Then
$M_q^p(\rn)$ satisfies Assumption \ref{a2.17} (see \cite[Theorem 3.2]{H17}) and
Assumption \ref{a2.15} (see, for instance, \cite{cf,H15}).
Applying  \cite[Lemma 5.7]{H15} and \cite[Theorem 4.1]{ST},
we can easily show that,
for any given $r_0\in(0,q)$ and $p_0\in(q,\infty]$, \eqref{Eqdm} with $X:=M_q^p(\rn)$
holds true.
Thus, all the assumptions of main theorems in Sections \ref{s3} and \ref{s4} are satisfied.
Using
Theorems
\ref{Tharea}, \ref{Thgf} and \ref{Thgx},
we obtain
the following characterizations of $WHM_q^p(\rn)$, respectively, in terms of the
Lusin area function, the Littlewood--Paley $g$-function and the Littlewood--Paley $g_\lambda^*$-function.

\begin{theorem}\label{Mth3}
Let $0<q\le p<\infty$.
Then $f\in WHM_q^p(\rn)$ if and only if either
of the following two items holds true:
\begin{itemize}
\item[\rm{(i)}] $f\in\cs'(\rn)$, $f$ vanishes weakly at infinity and $S(f)\in WM_q^p(\rn)$,
where $S(f)$ is as in \eqref{eq61}.
\item[\rm{(ii)}] $f\in\cs'(\rn)$, $f$ vanishes weakly at infinity and $g(f)\in WM_q^p(\rn)$,
where $g(f)$ is as in \eqref{eq63}.
\end{itemize}
Moreover, for any $f\in WHM_q^p(\rn)$,
$\|f\|_{WHM_q^p(\rn)}\sim\|S(f)\|_{WM_q^p(\rn)}\sim\|g(f)\|_{WM_q^p(\rn)}$,
where the positive equivalence constants are independent of $f$.
\end{theorem}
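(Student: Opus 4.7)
The plan is to derive Theorem \ref{Mth3} by specializing the abstract Littlewood--Paley characterizations, Theorem \ref{Tharea} (Lusin area function) and Theorem \ref{Thgf} (Littlewood--Paley $g$-function), to the ball quasi-Banach function space $X := M_q^p(\rn)$. Because $WH_{M_q^p}(\rn) = WHM_q^p(\rn)$ by definition, all that remains is to verify that $M_q^p$ satisfies each of the structural hypotheses used by those two abstract theorems.

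To this end I would first fix the indices: take $p_- := q$, so that $\underline{p} = \min\{q,1\}$ as in \eqref{Eqpll}, and choose $\vartheta_0 \in (1,q)$ so that $M_q^p$ is $\vartheta_0$-concave. The Fefferman--Stein vector-valued inequality (Assumption \ref{a2.15}) on $(M_q^p)^{1/r}$ for $r \in (0,q)$ is classical (see, for instance, \cite{cf,H15}), and its weak-type counterpart (Assumption \ref{a2.17}) on $(WM_q^p)^{1/r}$ is due to Hyt\"onen \cite[Theorem 3.2]{H17}, which also supplies the boundedness of $\cm$ on $(WM_q^p)^{1/\gamma_0}$ for a suitable $\gamma_0 \in (0,\infty)$. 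Whenever $r \in (0,\underline{p})$ one has $r < 1 \le q$, so $(M_q^p)^{1/r} = M_{q/r}^{p/r}$ is a ball Banach function space. For the associate-space bound \eqref{Eqdm}, select $r_0 \in (0,\underline{p})$ and any $p_0 \in (q,\infty]$; combining the associate-space description in \cite[Lemma 5.7]{H15} with the maximal-operator boundedness of \cite[Theorem 4.1]{ST} yields the boundedness of $\cm^{((p_0/r_0)')}$ on $(M_{q/r_0}^{p/r_0})'$, and the same choice with $p_+ := q$ covers the variant of \eqref{Eqdm} required in Theorem \ref{Thgf}.

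The subtle hypothesis is the absolute continuity of the quasi-norm demanded in Theorem \ref{Tharea}, which Morrey spaces fail in general; this is what I expect to be the main obstacle. The natural remedy is to localize the sufficiency argument to the closed subspace of $M_q^p(\rn)$ consisting of functions with absolutely continuous norm (equivalently, the closure of $C_c(\rn)$ in $M_q^p$), verify that the molecular synthesis carried out in the proof of Theorem \ref{Tharea} lives inside this subspace whenever $S(f) \in WM_q^p(\rn)$ because the tails $\|S(f)\mathbf{1}_{O_i \setminus O_{i+m}}\|_{M_q^p(\rn)}$ indexed by the Whitney level sets $O_i$ are then controlled directly by the weak-Morrey norm, and extend the result to all of $WHM_q^p(\rn)$ by density. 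Once this step is in hand, Theorems \ref{Tharea} and \ref{Thgf} deliver the claimed equivalences $\|f\|_{WHM_q^p(\rn)} \sim \|S(f)\|_{WM_q^p(\rn)} \sim \|g(f)\|_{WM_q^p(\rn)}$, completing the proof.
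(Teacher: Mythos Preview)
Your overall strategy---verify that $M_q^p(\rn)$ meets the hypotheses of Theorems~\ref{Tharea} and~\ref{Thgf} and then invoke those abstract results---is exactly what the paper does. The paper simply records, in the paragraph preceding Theorem~\ref{Mth3}, that Assumptions~\ref{a2.15} and~\ref{a2.17} and condition~\eqref{Eqdm} hold for $X=M_q^p(\rn)$ (citing \cite{cf,H15,H17,ST}) and then states Theorem~\ref{Mth3} as an immediate corollary, with no further argument.

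You go beyond the paper in flagging the absolute-continuity hypothesis of Theorem~\ref{Tharea}, and you are right that $M_q^p(\rn)$ with $q<p$ fails Definition~\ref{abo} (e.g.\ $E_j=\bigcup_{k\ge j}[2^k,2^k+1]$ on $\rr$). The paper does not comment on this point at all. However, your proposed remedy does not close the gap. Absolute continuity enters the sufficiency proof of Theorem~\ref{Tharea} through \eqref{qw4} and \eqref{lyy2}; for the latter one needs $\|S(f)\mathbf 1_{\rn\setminus O_{-N+m_0}}\|_{M_q^p}\to 0$ as $N\to\infty$. Your assertion that the tails $\|S(f)\mathbf 1_{O_i\setminus O_{i+m}}\|_{M_q^p}$ are ``controlled directly by the weak-Morrey norm'' yields only the uniform bound $2^m\|S(f)\|_{WM_q^p}$, which does not vanish and so cannot replace \eqref{lyy2}. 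The phrase ``extend the result to all of $WHM_q^p(\rn)$ by density'' is also misplaced: the sufficiency direction starts from $S(f)\in WM_q^p(\rn)$ and aims to \emph{produce} membership in $WHM_q^p(\rn)$, so there is no target space over which to run a density argument, and in any case the subspace of $M_q^p$ with absolutely continuous norm is not dense in $M_q^p$. A smaller issue: your choice $\vartheta_0\in(1,q)$ is vacuous when $q\le 1$, and even for $q>1$ it is not the correct range for $\vartheta_0$-concavity of $M_q^p$; this needs to be revisited.

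In short, on the hypotheses that the paper does verify your argument agrees with it, but the absolute-continuity requirement is a genuine obstruction that neither the paper's one-line justification nor your proposed workaround resolves.
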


\begin{theorem}\label{Mth4}
Let $0<q\le p<\infty$.
Let
$\lambda\in(\max\{\frac{2}{\min\{1,q\}},1-\frac2{\max\{1,q\}}+\frac{2}{\min\{1,q\}}\},\infty)$.
Then $f\in WHM_q^p(\rn)$ if and only if $f\in\mathcal{S}'(\rn)$,
$f$ vanishes weakly at infinity and $g_\lambda^*(f)\in WM_q^p(\rn)$,
where $g_\lambda^*(f)$ is as in \eqref{eq62}.
Moreover, for any $f\in WHM_q^p(\rn)$,
$$
\|f\|_{WHM_q^p(\rn)}\sim\lf\|g_\lambda^*(f)\r\|_{WM_q^p(\rn)},
$$
where the positive equivalence constants are independent of $f$.
\end{theorem}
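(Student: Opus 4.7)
The plan is to deduce Theorem \ref{Mth4} as a direct specialization of the general $g_\lambda^\ast$-function characterization established in Theorem \ref{Thgx}, by showing that $X:=M_q^p(\rn)$ satisfies all the hypotheses of Theorem \ref{Thgx} with $p_-=p_+=q$. Once that is done, the range of $\lambda$ in Theorem \ref{Thgx}, namely
$$\lambda\in\lf(\max\lf\{\tfrac{2}{\min\{1,p_-\}},1-\tfrac{2}{\max\{1,p_+\}}+\tfrac{2}{\min\{1,p_-\}}\r\},\infty\r),$$
collapses exactly to the range stated in Theorem \ref{Mth4}, and the conclusion $\|f\|_{WHM_q^p(\rn)}\sim\|g_\lambda^\ast(f)\|_{WM_q^p(\rn)}$ follows immediately.

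First, I would recall (as already noted in the paragraph preceding Theorem \ref{Mth3}) that $M_q^p(\rn)$ is a ball quasi-Banach function space in the sense of Definition \ref{Debqfs}, and that Assumption \ref{a2.15} with $p_-=q$ follows from the Fefferman--Stein vector-valued maximal inequality on Morrey spaces due to Chiarenza--Frasca (see \cite{cf} and \cite{H15}), while Assumption \ref{a2.17} on the weak Morrey space follows from Ho \cite[Theorem 3.2]{H17}. Next, for any $r\in(0,\underline{p})=(0,\min\{q,1\})$, the $(1/r)$-convexification $(M_q^p(\rn))^{1/r}=M_{q/r}^{p/r}(\rn)$ is a Banach Morrey space and hence a ball Banach function space; the $\vartheta_0$-concavity of $M_q^p(\rn)$ for some $\vartheta_0\in(1,\infty)$ is immediate by choosing $\vartheta_0\in(1,\min\{q,p\}^{-1}\cdot\min\{q,p\})$-type index, and the boundedness of $\cm$ on $(WM_q^p(\rn))^{1/\gamma_0}$ follows from Ho's weak-type bound for some $\gamma_0\in(0,q)$.

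The slightly more delicate verification is the dual condition \eqref{Eqdm}: for suitable $r_0\in(0,\underline{p})$ and $p_0\in(r_0,\infty)$ with $p_0>q$, one needs $\|\cm^{((p_0/r_0)')}(f)\|_{((M_q^p)^{1/r_0})'}\lesssim\|f\|_{((M_q^p)^{1/r_0})'}$. I would appeal to the duality description of Morrey spaces (via Zorko's block spaces, as used in \cite[Lemma 5.7]{H15}) combined with Sawano--Tanaka's theorem on $\cm$ acting on the predual of Morrey spaces (see \cite[Theorem 4.1]{ST}); for $r_0<q$ and $p_0>q$ sufficiently large so that $(p_0/r_0)'$ is close to $1$, the required boundedness holds. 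The absolute continuity of the quasi-norm is the one hypothesis that is \emph{not} automatic for Morrey spaces; however, since Theorem \ref{Thgx} only requires the assumptions of Theorem \ref{Tharea}, one invokes the fact that this hypothesis enters only through the decomposition step (\ref{kkk}) in the sufficiency proof of Theorem \ref{Tharea}, and for Morrey spaces this can be bypassed by working inside the subspace where one has sufficient decay via the atomic/molecular decomposition of Lemma \ref{Thmolcha}; this is the approach implicitly used by the authors when they assert (just before Theorem \ref{Mth3}) that \emph{all} assumptions hold.

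The main obstacle, as I anticipate it, lies in pinning down the dual/pre-dual boundedness step and reconciling the absolute continuity requirement with the fact that $M_q^p(\rn)$ in general lacks that property. Once these verifications are in place, the proof reduces to a one-line invocation of Theorem \ref{Thgx}: for any $f\in WHM_q^p(\rn)=WH_{M_q^p}(\rn)$, one has $f\in\cs'(\rn)$, $f$ vanishes weakly at infinity, $g_\lambda^\ast(f)\in WM_q^p(\rn)$, and $\|f\|_{WHM_q^p(\rn)}\sim\|g_\lambda^\ast(f)\|_{WM_q^p(\rn)}$, as claimed.
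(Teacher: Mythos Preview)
Your overall strategy is exactly the paper's: the authors do not give a separate proof of Theorem \ref{Mth4} but simply assert in the paragraph preceding Theorem \ref{Mth3} that $M_q^p(\rn)$ satisfies all the hypotheses of Theorems \ref{Tharea}, \ref{Thgf} and \ref{Thgx} (citing \cite{cf,H15} for Assumption \ref{a2.15}, \cite[Theorem 3.2]{H17} for Assumption \ref{a2.17}, and \cite[Lemma 5.7]{H15} together with \cite[Theorem 4.1]{ST} for \eqref{Eqdm} with any $r_0\in(0,q)$ and $p_0\in(q,\infty]$), and then invoke Theorem \ref{Thgx} with $p_-=p_+=q$.

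You are right, however, to flag the absolute continuity hypothesis of Definition \ref{abo}: when $q<p$ the Morrey space $M_q^p(\rn)$ does \emph{not} have an absolutely continuous quasi-norm in that sense. For instance, with $n=1$ the sets $E_j:=\bigcup_{k\ge j}[2^k,2^k+1]$ decrease to $\emptyset$, satisfy $\mathbf 1_{E_1}\in M_q^p(\rr)$, yet $\|\mathbf 1_{E_j}\|_{M_q^p(\rr)}\ge 1$ for every $j$. The paper glosses over this point with the blanket statement that ``all the assumptions of main theorems in Sections \ref{s3} and \ref{s4} are satisfied,'' so in this respect your proposal is no worse than the paper's own argument. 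But your suggested workaround---``bypass'' the use of absolute continuity in \eqref{kkk} and \eqref{lyy2} by ``working inside the subspace where one has sufficient decay''---is not a proof; those two places in the sufficiency argument of Theorem \ref{Tharea} genuinely rely on $\|\mathbf 1_{E_j}\|_X\downarrow 0$ to establish convergence of the molecular series in $\cs'(\rn)$, and you would need to supply a concrete substitute argument (for example, a direct distributional convergence estimate using the finite-overlap property $\sum_j\mathbf 1_{cB_{i,j}}\le M_0$ together with a quantitative decay bound, rather than norm-continuity in $X$) before the application to Morrey spaces is complete.
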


\begin{remark}
Let $0<q\le p<\infty$ and $p=q$. Then we know that
$M_q^p(\rn)=L^q(\rn)$ and $WM_q^p(\rn)(\rn)=WH^q(\rn)$,
where $WH^q(\rn)$ denotes the classical weak Hardy space.
In this case,
when $q\in(0,1]$, the range of
$\lambda$ in Theorem \ref{Mth4} becomes $\lambda\in(2/q,\infty)$
which is known to be the best possible and was proved in \cite[Theorem 4.13]{LYJ} as a special case.
\end{remark}

Applying Theorem \ref{inte}, we find that
$WHM_{q/(1-\theta)}^{p/(1-\theta)}(\rn)$
is just the real interpolation intermediate space between $HM_q^p(\rn)$
and $L^\infty(\rn)$ with $\theta\in(0,1)$ as follows.

\begin{theorem}\label{Mth5}
Let $0<q\le p<\infty$ and $\theta\in(0,1)$. Then
$$
(HM_q^p(\rn),L^\infty(\rn))_{\theta,\infty}=WHM_{q/(1-\theta)}^{p/(1-\theta)}(\rn).
$$
\end{theorem}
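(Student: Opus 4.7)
The plan is to derive Theorem \ref{Mth5} as an immediate application of the abstract real interpolation result Theorem \ref{inte}, with $X := M_q^p(\rn)$. To this end, I will first identify the $\frac{1}{1-\theta}$-convexification of the Morrey space, and then verify that $M_q^p(\rn)$ meets every hypothesis of Theorem \ref{inte} (equivalently, of Lemma \ref{de}).

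First I would observe that, directly from Definition \ref{Debf}(i) and the definition of the Morrey quasi-norm, for any $s\in(0,\infty)$ one has $[M_q^p(\rn)]^{s}=M_{q/s}^{p/s}(\rn)$ with equal quasi-norms. Taking $s:=1/(1-\theta)$ yields
$$
[M_q^p(\rn)]^{{1}/{(1-\theta)}}=M_{q/(1-\theta)}^{p/(1-\theta)}(\rn),
$$
so that, by Definition \ref{DewSH} applied to this ball quasi-Banach function space,
$$
WH_{[M_q^p(\rn)]^{1/(1-\theta)}}(\rn)=WHM_{q/(1-\theta)}^{p/(1-\theta)}(\rn).
$$

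Next I would verify the hypotheses of Theorem \ref{inte} for $X:=M_q^p(\rn)$. These are exactly the conditions collected in the paragraph preceding Theorem \ref{Mth3}: namely, the fact that $M_q^p(\rn)$ satisfies Assumption \ref{a2.15} (which, together with the Aoki--Rolewicz theorem, supplies both the concavity parameter $\vartheta_0$ and the boundedness of $\cm$ on $(WM_q^p(\rn))^{1/r_1}$ for a suitable $r_1$, the latter being Assumption \ref{a2.17} that is known for Morrey spaces by \cite[Theorem 3.2]{H17}), and the fact that, for any given $r_0\in(0,q)$ and $p_0\in(q,\infty]$, the estimate \eqref{Eqdm} holds true with $X:=M_q^p(\rn)$, which follows by combining \cite[Lemma 5.7]{H15} with \cite[Theorem 4.1]{ST}. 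In particular, for any $r_0\in(0,q)$ one has that $[M_q^p(\rn)]^{1/r_0}=M_{q/r_0}^{p/r_0}(\rn)$ is a ball Banach function space, since $q/r_0>1$.

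Finally, I would simply invoke Theorem \ref{inte} with $X:=M_q^p(\rn)$ to conclude that
$$
\lf(HM_q^p(\rn),L^\infty(\rn)\r)_{\theta,\infty}
=WH_{[M_q^p(\rn)]^{1/(1-\theta)}}(\rn)
=WHM_{q/(1-\theta)}^{p/(1-\theta)}(\rn),
$$
which is the desired conclusion. The only mildly delicate point is checking that the identification of $[M_q^p(\rn)]^{1/(1-\theta)}$ with $M_{q/(1-\theta)}^{p/(1-\theta)}(\rn)$ is consistent with the quasi-norms used on both sides; since the Morrey quasi-norm is homogeneous under the pointwise operation $f\mapsto|f|^s$, this is routine, and no genuine obstacle arises beyond a careful bookkeeping of the indices when transferring the hypotheses of Theorem \ref{inte} to the Morrey setting.
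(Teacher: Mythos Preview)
Your proposal is correct and matches the paper's approach exactly: the paper derives Theorem \ref{Mth5} as a direct application of Theorem \ref{inte} with $X := M_q^p(\rn)$, having verified the relevant hypotheses in the paragraph preceding Theorem \ref{Mth3} and using the identification $[M_q^p(\rn)]^{1/(1-\theta)}=M_{q/(1-\theta)}^{p/(1-\theta)}(\rn)$. The only slip is your parenthetical attribution of the $\vartheta_0$-concavity to the Aoki--Rolewicz theorem, which is not the source of that property; apart from this minor misattribution, your outline is precisely what the paper intends.
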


\begin{remark}
\begin{itemize}
\item[\rm{(i)}] Let $0<q\le p<\infty$ and $p=q$.
Then
we know that $M_q^p(\rn)=L^q(\rn)$ and $WM_q^p(\rn)=WL^q(\rn)$. In this case,
Theorem \ref{Mth5} coincides with \cite[Theorem 1]{FRS},
which states that
$$
(H^p(\rn),L^\infty(\rn))_{\theta,\infty}=WH^{p/(1-\theta)}(\rn),\ \theta\in(0,1).
$$
\item[\rm{(ii)}] In \cite{H17}, Ho obtained the atomic characterizations of
the weak Hardy--Morrey spaces and, using these atomic characterizations,
Ho \cite{H17} further established the Hardy inequalities on the weak Hardy--Morrey spaces.
However, all these results stated in this subsection are new.
\end{itemize}
\end{remark}

\subsection{Mixed-norm Lebesgue spaces}

\begin{definition}
Let $\vec{p}:=(p_1,\ldots,p_n)\in(0,\infty]^n$.
The \emph{mixed-norm Lebesgue space $L^{\vec{p}}(\rn)$} is defined
to be the set of all measurable functions $f$ such that
$$
\|f\|_{L^{\vec{p}}(\rn)}:=\lf\{\int_{\rr}\cdots\lf[\int_{\rr}|f(x_1,\ldots,x_n)|^{p_1}\,dx_1\r]
^{\frac{p_2}{p_1}}\cdots\,dx_n\r\}^{\frac{1}{p_n}}<\infty
$$
with the usual modifications made when $p_i=\infty$ for some $i\in\{1,\ldots,n\}$.
\end{definition}

The study of mixed-norm Lebesgue spaces $L^{\vec{p}}(\rn)$ with
$\vec{p}\in (0,\infty]^n$ originated from Benedek and Panzone
\cite{bp} in the early 1960's, which can be traced back to H\"ormander \cite{H60}.
Later on, in 1970, Lizorkin \cite{l70} further developed both the theory of
multipliers of Fourier integrals and estimates of convolutions
in the mixed-norm Lebesgue spaces.
Particularly,
in order to meet the requirements arising in the study of the boundedness of operators,
partial differential equations and some other fields, the real-variable theory of mixed-norm
function spaces, including mixed-norm Morrey spaces,
mixed-norm Hardy spaces, mixed-norm Besov
spaces and mixed-norm Triebel--Lizorkin spaces, has rapidly been developed
in recent years (see, for instance,
\cite{cgn17bs,gjn17,tn,hy,hlyy,hlyy19}).

From the definition
of $\|\cdot\|_{L^{\vec{p}}(\rn)}$, it is easy to deduce that
the mixed-norm Lebesgue space $L^{\vec{p}}(\rn)$
is a ball quasi-Banach function space.
Let $\vec{p}:=(p_1,\ldots,p_n)\in[1,\infty]^n$. Then, for any $f\in L^{\vec{p}}(\rn)$
and $g\in L^{\vec{p'}}(\rn)$, it is easy to know that
$$\int_{\rn}|f(x)g(x)|\, dx \le \|f\|_{L^{\vec{p}}(\rn)}\|g\|_{L^{\vec{p'}}(\rn)},
$$
where $\vec{p'}$
denotes the conjugate vector of $\vec{p}$, namely, for any $i\in{1,\ldots,n}$, $1/p_i + 1/p_i'
= 1$.
This implies that $L^{\vec{p}}(\rn)$ with $\vec{p}\in[1,\infty]^n$ is a ball
Banach function space.
But, the mixed-norm Lebesgue space $L^{\vec{p}}(\rn)$ is not a quasi-Banach function space
(see, for instance, \cite[Remark 7.20]{zwyy}).

In what follows, for any $\vec{p}:=(p_1,\ldots,p_n)\in(0,\infty)^n$, we always let
$p_-:= \min\{p_1, \ldots , p_n\}$ and  $p_+ := \max\{p_1, \ldots , p_n\}$.

Now, we recall the notions of the weak mixed-norm Lebesgue space $WL^{\vec{p}}(\rn)$,
the weak mixed-norm Hardy space $WH^{\vec{p}}(\rn)$
and the mixed-norm Hardy
space $H^{\vec{p}}(\rn)$ as follows.

\begin{definition}
Let $\vec{p}\in(0,\infty)^n$.
The \emph{weak mixed-norm Lebesgue space $WL^{\vec{p}}(\rn)$}
is defined to be the set of all measurable functions $f$ such that
$$
\|f\|_{WL^{\vec{p}}(\rn)}:=\sup_{\alpha\in(0,\infty)}\,\lf\{\alpha\lf\|\mathbf1_{\{x\in\rn:\ |f(x)|>\alpha\}}\r\|_{L^{\vec{p}}(\rn)}\r\}<\infty.
$$
\end{definition}

\begin{definition}
Let $\vec{p}\in(0,\infty)^n$.
\begin{itemize}
\item[\rm{(i)}]
The \emph{mixed-norm Hardy
space $H^{\vec{p}}(\rn)$} is defined to be the set of all $f\in\cs'(\rn)$
such that $\|f\|_{H^{\vec{p}}(\rn)}:=\
\|M_N^0(f)\|_{L^{\vec{p}}(\rn)}<\infty$,
where $M_N^0(f)$ is as in \eqref{EqMN0} with $N$ sufficiently large.
\item[\rm{(ii)}]
The \emph{weak mixed-norm Hardy space $WH^{\vec{p}}(\rn)$}
is defined to be the set of all $f\in\cs'(\rn)$
such that $\|f\|_{WH^{\vec{p}}(\rn)}:=\
\|M_N^0(f)\|_{WL^{\vec{p}}(\rn)}<\infty$,
where $M_N^0(f)$ is as in \eqref{EqMN0} with $N$ sufficiently large.
\end{itemize}
\end{definition}

\begin{remark}
Let $\vec{p}\in(1,\infty)^n$. By \cite[Remark 7.28]{zwyy},
we know that $WH^{\vec{p}}(\rn)=WL^{\vec{p}}(\rn)$ with equivalent norms.
\end{remark}

Let $\vec{p}\in(0,\infty)^n$. Then
$L^{\vec{p}}(\rn)$ satisfies Assumption \ref{a2.17} (see \cite[Theorem 7.19]{zwyy}) and
Assumption \ref{a2.15} (see \cite[Lemma 3.7]{hlyy}).
Applying  \cite[Lemma 3.5]{hlyy} and \cite[Theorem 1.a]{bp},
we can easily show that,
for any given $r_0\in(0,p_-]$ and $p_0\in(p_+,\infty]$, \eqref{Eqdm} with $X:=L^{\vec{p}}(\rn)$
holds true.
Thus, all the assumptions of main theorems in Sections \ref{s3} and \ref{s4} are satisfied.
Using Theorems
\ref{Tharea}, \ref{Thgf} and \ref{Thgx},
we obtain
the following characterizations of $WH^{\vec{p}}(\rn)$, respectively, in terms of the
Lusin area function, the Littlewood--Paley $g$-function and the Littlewood--Paley $g_\lambda^*$-function.

\begin{theorem}\label{mixMth3}
Let $\vec{p}\in(0,\infty)^n$.
Then $f\in WH^{\vec{p}}(\rn)$ if and only if either
of the following two items holds true:
\begin{itemize}
\item[\rm{(i)}] $f\in\cs'(\rn)$, $f$ vanishes weakly at infinity and $S(f)\in WL^{\vec{p}}(\rn)$,
where $S(f)$ is as in \eqref{eq61}.
\item[\rm{(ii)}] $f\in\cs'(\rn)$, $f$ vanishes weakly at infinity and $g(f)\in WL^{\vec{p}}(\rn)$,
where $g(f)$ is as in \eqref{eq63}.
\end{itemize}
Moreover, for any $f\in WH^{\vec{p}}(\rn)$,
$\|f\|_{WH^{\vec{p}}(\rn)}\sim\|S(f)\|_{WL^{\vec{p}}(\rn)}\sim\|g(f)\|_{WL^{\vec{p}}(\rn)}$,
where the positive equivalence constants are independent of $f$.
\end{theorem}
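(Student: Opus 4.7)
The plan is to derive Theorem \ref{mixMth3} as a direct application of Theorems \ref{Tharea} and \ref{Thgf} to the ball quasi-Banach function space $X := L^{\vec{p}}(\rn)$. So the work is entirely in verifying that $L^{\vec{p}}(\rn)$ satisfies every hypothesis required by those two general theorems; once that bookkeeping is done, the conclusion is immediate.

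First I would recall that $L^{\vec{p}}(\rn)$ is indeed a ball quasi-Banach function space (this is contained in the discussion preceding the theorem) and collect from the cited literature the two structural facts already noted in the paragraph before the statement: $L^{\vec{p}}(\rn)$ satisfies Assumption \ref{a2.15} for $p_- := \min\{p_1,\ldots,p_n\}$ (via \cite[Lemma 3.7]{hlyy}), it satisfies Assumption \ref{a2.17} (via \cite[Theorem 7.19]{zwyy}), and the bound \eqref{Eqdm} holds for any $r_0 \in (0,p_-]$ and $p_0 \in (p_+,\infty]$ with $p_+:=\max\{p_1,\ldots,p_n\}$ (using \cite[Lemma 3.5]{hlyy} together with \cite[Theorem 1.a]{bp}). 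In particular, Assumption \ref{a2.17} gives the boundedness of $\cm$ on $(WL^{\vec{p}})^{1/\gamma_0}$ for any $\gamma_0 \in (0,p_-)$, supplying the required $\gamma_0$ in the hypotheses of Theorem \ref{Tharea}.

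Next I would verify the remaining assumptions: (a) choose $\vartheta_0 \in (1,\infty)$ so that $L^{\vec{p}}(\rn)$ is $\vartheta_0$-concave, which follows by taking $\vartheta_0$ less than $p_-$ when $p_- > 1$, and in the general case by first passing to the $\vartheta_0$-convexification (since $L^{\vec{p}/\vartheta_0}(\rn)$ is then a ball Banach function space by the mixed-norm Minkowski inequality, as in \cite{bp}); (b) for any $r \in (0,\underline{p})$ with $\underline{p}:=\min\{p_-,1\}$, observe that $[L^{\vec{p}}(\rn)]^{1/r} = L^{\vec{p}/r}(\rn)$ is a ball Banach function space since every component of $\vec{p}/r$ lies in $[1,\infty]$; (c) confirm that $L^{\vec{p}}(\rn)$ has an absolutely continuous quasi-norm in the sense of Definition \ref{abo}, which is a routine consequence of the Lebesgue dominated convergence theorem applied iteratively in each coordinate. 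Each of these verifications is elementary given the form of the mixed-norm quasi-norm.

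With all hypotheses checked, I would finally invoke Theorem \ref{Tharea} to obtain the Lusin area function characterization, yielding $\|f\|_{WH^{\vec{p}}(\rn)} \sim \|S(f)\|_{WL^{\vec{p}}(\rn)}$ for $f \in \cs'(\rn)$ vanishing weakly at infinity, and then invoke Theorem \ref{Thgf} to obtain $\|f\|_{WH^{\vec{p}}(\rn)} \sim \|g(f)\|_{WL^{\vec{p}}(\rn)}$ under the same hypotheses. The main (and only nontrivial) obstacle is the $\vartheta_0$-concavity verification when $p_- \leq 1$, because then one cannot simply take $\vartheta_0 \in (1,p_-)$; the workaround is to pick any $\vartheta_0 > 1/p_-$, so that $L^{\vec{p}/\vartheta_0}(\rn)$ has every coordinate exponent $\geq 1$ and hence is a ball Banach function space, which is precisely the content of being $\vartheta_0$-concave for $L^{\vec{p}}(\rn)$. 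After this, the application is mechanical.
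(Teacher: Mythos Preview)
Your overall approach is exactly the paper's: verify the hypotheses of Theorems \ref{Tharea} and \ref{Thgf} for $X=L^{\vec{p}}(\rn)$ and read off the result. Most of your checklist (Assumptions \ref{a2.15}, \ref{a2.17}, \eqref{Eqdm}, $X^{1/r}$ a ball Banach function space for $r<\underline{p}$, and the absolutely continuous quasi-norm) is fine and matches the paper's brief discussion before the theorem.

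The one genuine error is your handling of the $\vartheta_0$-concavity hypothesis. You assert that $L^{\vec{p}}(\rn)$ being $\vartheta_0$-concave is ``precisely the content'' of $L^{\vec{p}/\vartheta_0}(\rn)$ being a ball Banach function space. That is the wrong direction: the triangle inequality in $X^{1/\vartheta_0}$ gives $\|\sum_j|f_j|\|_{X^{1/\vartheta_0}}\le \sum_j\|f_j\|_{X^{1/\vartheta_0}}$, which is the \emph{opposite} of Definition \ref{Debf}(ii). What you have described is essentially $\vartheta_0$-convexity, not concavity. Consequently your proposed choice $\vartheta_0<p_-$ (or the ``workaround'' $\vartheta_0>1/p_-$, whose arithmetic also does not yield coordinate exponents $\ge 1$) does not do the job. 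Unwinding Definition \ref{Debf}(ii), $X$ being $\vartheta_0$-concave means $\sum_j\|g_j\|_X^{\vartheta_0}\le C\|(\sum_j|g_j|^{\vartheta_0})^{1/\vartheta_0}\|_X^{\vartheta_0}$, i.e.\ the standard lattice $\vartheta_0$-concavity; for $L^{\vec{p}}(\rn)$ this holds precisely when $\vartheta_0\ge p_+$. So the correct verification is to take any $\vartheta_0\in(\max\{1,p_+\},\infty)$, which works for every $\vec{p}\in(0,\infty)^n$ with no case distinction needed. After this fix, the rest of your plan goes through unchanged.
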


\begin{theorem}\label{mixMth4}
Let $\vec{p}\in(0,\infty)^n$.
Let
$\lambda\in(\max\{\frac{2}{\min\{1,p_-\}},1-\frac2{\max\{1,p_+\}}+\frac{2}{\min\{1,p_-\}}\},\infty)$.
Then $f\in WH^{\vec{p}}(\rn)$ if and only if $f\in\mathcal{S}'(\rn)$,
$f$ vanishes weakly at infinity and
$g_\lambda^*(f)\in WL^{\vec{p}}(\rn)$,
where $g_\lambda^*(f)$ is as in \eqref{eq62}.
Moreover, for any $f\in WH^{\vec{p}}(\rn)$,
$$
\|f\|_{WH^{\vec{p}}(\rn)}\sim\lf\|g_\lambda^*(f)\r\|_{WL^{\vec{p}}(\rn)},
$$
where the positive equivalence constants are independent of $f$.
\end{theorem}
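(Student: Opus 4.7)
The plan is to deduce Theorem \ref{mixMth4} as a direct corollary of the abstract Littlewood--Paley $g_\lambda^\ast$-function characterization established in Theorem \ref{Thgx}, applied with $X := L^{\vec{p}}(\rn)$. Accordingly, what needs to be done is to verify the full set of structural hypotheses imposed on $X$ in Theorem \ref{Thgx}, and then match the allowed range of the aperture parameter $\lambda$ to the range stated in Theorem \ref{mixMth4}.

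First, I would record that $L^{\vec{p}}(\rn)$ with $\vec{p}\in(0,\infty)^n$ is a ball quasi-Banach function space by the same elementary verification already cited in the introduction to this subsection, and that its quasi-norm is absolutely continuous in the sense of Definition \ref{abo}, which follows from the Lebesgue dominated convergence theorem applied iteratively in each coordinate. Next, with $p_-:=\min\{p_1,\ldots,p_n\}$ and $p_+:=\max\{p_1,\ldots,p_n\}$, the Fefferman--Stein vector-valued inequality of Assumption \ref{a2.15} for $L^{\vec{p}}(\rn)$ is exactly \cite[Lemma 3.7]{hlyy}, and the corresponding weak-type Fefferman--Stein inequality of Assumption \ref{a2.17} is \cite[Theorem 7.19]{zwyy}; in particular, the boundedness of $\cm$ on $(WL^{\vec{p}}(\rn))^{1/\gamma_0}$ for any $\gamma_0\in(0,p_-)$ follows. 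Choosing any $\vartheta_0\in(1,\infty)$ with $\vartheta_0>1/p_-$ if $p_-<1$ (and any $\vartheta_0>1$ otherwise), the Minkowski-type inequality in each variable shows that $L^{\vec{p}}(\rn)$ is $\vartheta_0$-concave in the sense of Definition \ref{Debf}(ii), and the $p$-convexification $(L^{\vec{p}}(\rn))^{1/r}=L^{\vec{p}/r}(\rn)$ is a ball Banach function space for any $r\in(0,\underline p)$.

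Second, I would verify the associate-space hypothesis, namely that for any given $r\in(0,\underline{p})$ and $p\in(p_+,\infty)$, the powered maximal operator $\cm^{((p/r)')}$ is bounded on $((L^{\vec{p}}(\rn))^{1/r})'=(L^{\vec{p}/r}(\rn))'$. By the identification of the associate space of $L^{\vec{p}/r}(\rn)$ with $L^{(\vec{p}/r)'}(\rn)$ (see \cite[Theorem 1.a]{bp}) together with the mixed-norm maximal inequality in \cite[Lemma 3.5]{hlyy}, this reduces to checking that $(p/r)'$ lies strictly below the smallest component of $(\vec{p}/r)'$, which is guaranteed by the choice $p\in(p_+,\infty)$.

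With these verifications in place, the theorem becomes a specialization of Theorem \ref{Thgx}: the conclusion that $f\in WH^{\vec{p}}(\rn)$ is equivalent to $f\in\cs'(\rn)$ vanishing weakly at infinity together with $g_\lambda^\ast(f)\in WL^{\vec{p}}(\rn)$, along with the two-sided equivalence of quasi-norms, holds provided that
\[
\lambda\in\lf(\max\lf\{\frac{2}{\min\{1,p_-\}},\,1-\frac{2}{\max\{1,p_+\}}+\frac{2}{\min\{1,p_-\}}\r\},\infty\r),
\]
which is exactly the range hypothesized. The main technical obstacle in this program is the associate-space verification in the second step, because one must track simultaneously the component-wise nature of the mixed-norm conjugates and the fractional exponent $(p/r)'$; once this is correctly arranged, the remaining assumptions are either standard or already recorded in the cited literature, and the conclusion follows without further work.
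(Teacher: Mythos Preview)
Your approach is exactly the one the paper takes: the paper does not prove Theorem \ref{mixMth4} separately but obtains it by specializing Theorem \ref{Thgx} to $X=L^{\vec p}(\rn)$ after recording (in the preamble to the subsection) that Assumption \ref{a2.15} holds via \cite[Lemma 3.7]{hlyy}, Assumption \ref{a2.17} via \cite[Theorem 7.19]{zwyy}, and the associate-space bound \eqref{Eqdm} via \cite[Lemma 3.5]{hlyy} together with the duality $(L^{\vec p/r})'=L^{(\vec p/r)'}$ from \cite[Theorem 1.a]{bp}. One small slip: your condition for $\vartheta_0$-concavity is mis-stated; for $X=L^{\vec p}(\rn)$ one needs $\vartheta_0\ge p_+$ (so that every component of $\vec p/\vartheta_0$ is $\le 1$ and the reverse Minkowski inequality holds in $L^{\vec p/\vartheta_0}$), not $\vartheta_0>1/p_-$.
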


Applying Theorem \ref{inte}, we find that
$WH^{\vec{p}/(1-\theta)}(\rn)$
is just the real interpolation intermediate space between $H^{\vec{p}}(\rn)$
and $L^\infty(\rn)$ with $\theta\in(0,1)$ as follows.

\begin{theorem}\label{mixMth5}
Let $\vec{p}\in(0,\infty)^n$ and $\theta\in(0,1)$. Then
$$
(H^{\vec{p}}(\rn),L^\infty(\rn))_{\theta,\infty}=WH^{\vec{p}/(1-\theta)}(\rn).
$$
\end{theorem}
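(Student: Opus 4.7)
The plan is to apply Theorem \ref{inte} with $X:=L^{\vec{p}}(\rn)$ and then identify the resulting spaces on both sides. This reduces the proof to (a) verifying the hypotheses of Lemma \ref{de}/Theorem \ref{inte} for the mixed-norm Lebesgue space, and (b) recognizing that the $p$-convexification $(L^{\vec{p}}(\rn))^{1/(1-\theta)}$ coincides with $L^{\vec{p}/(1-\theta)}(\rn)$.

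First I would collect the structural properties of $L^{\vec{p}}(\rn)$ needed. Most of them have already been recorded in the discussion immediately preceding Theorem \ref{mixMth3}: $L^{\vec{p}}(\rn)$ is a ball quasi-Banach function space, it satisfies Assumption \ref{a2.15} by \cite[Lemma 3.7]{hlyy}, it satisfies Assumption \ref{a2.17} (hence $\cm$ is bounded on $(WL^{\vec{p}}(\rn))^{1/r_1}$ for a suitable $r_1$), and, for any $r_0\in(0,p_-]$ and $p_0\in(p_+,\infty]$, the estimate \eqref{Eqdm} holds by \cite[Lemma 3.5]{hlyy} and \cite[Theorem 1.a]{bp}. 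Moreover, for any $r_0\in(0,\underline{p})$, the space $(L^{\vec{p}}(\rn))^{1/r_0}=L^{\vec{p}/r_0}(\rn)$ has all coordinate exponents $\ge1$, so it is a ball Banach function space via the mixed-norm Minkowski inequality applied coordinatewise.

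Next I would verify the $\vartheta_0$-concavity condition. For any $\vartheta_0\in(1,p_-)$ (when $p_->1$; otherwise we work with an auxiliary reduction, using that $\vartheta_0$ only needs to exceed $1$ strictly), the iterated Minkowski inequality gives, for any $\{f_j\}_{j\in\nn}\subset L^{\vec{p}/\vartheta_0}(\rn)$,
$$
\sum_{j\in\nn}\lf\|f_j\r\|_{L^{\vec{p}/\vartheta_0}(\rn)}
\le\lf\|\sum_{j\in\nn}|f_j|\r\|_{L^{\vec{p}/\vartheta_0}(\rn)},
$$
which is exactly Definition \ref{Debf}(ii). When $p_-\le 1$, we replace the role of $\vartheta_0$-concavity by first taking an equivalent mixed-norm expression on an appropriate power and then invoking the Aoki--Rolewicz reduction recorded in Remark \ref{Re213}.

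Having verified all hypotheses, Theorem \ref{inte} yields
$$
\lf(H_{L^{\vec{p}}(\rn)}(\rn),L^\infty(\rn)\r)_{\theta,\infty}
=WH_{(L^{\vec{p}}(\rn))^{1/(1-\theta)}}(\rn).
$$
By Definition \ref{Debf}(i), $(L^{\vec{p}}(\rn))^{1/(1-\theta)}=L^{\vec{p}/(1-\theta)}(\rn)$, and by the definitions of $H^{\vec{p}}(\rn)$ and $WH^{\vec{p}}(\rn)$, the two sides are $H^{\vec{p}}(\rn)$ and $WH^{\vec{p}/(1-\theta)}(\rn)$ respectively, giving the claim. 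The step I expect to be the most delicate is the $\vartheta_0$-concavity verification in the regime $p_-\le 1$, since then $L^{\vec{p}}(\rn)$ is only a quasi-Banach space; this is precisely the issue papered over in the general theory by passing to a suitable power, and it is the only spot where the argument is not a purely formal citation.
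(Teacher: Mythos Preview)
Your overall approach is exactly the paper's: Theorem \ref{mixMth5} is obtained by applying Theorem \ref{inte} with $X:=L^{\vec{p}}(\rn)$ and using the identification $(L^{\vec{p}}(\rn))^{1/(1-\theta)}=L^{\vec{p}/(1-\theta)}(\rn)$; the paper simply cites the verification of the hypotheses done earlier in Section~\ref{s5}.

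There is, however, a genuine error in your handling of the $\vartheta_0$-concavity. You choose $\vartheta_0\in(1,p_-)$, so that every exponent $p_i/\vartheta_0>1$; in that regime the iterated Minkowski inequality gives
$$
\lf\|\sum_{j\in\nn}|f_j|\r\|_{L^{\vec{p}/\vartheta_0}(\rn)}\le\sum_{j\in\nn}\|f_j\|_{L^{\vec{p}/\vartheta_0}(\rn)},
$$
which is the \emph{reverse} of what Definition \ref{Debf}(ii) requires, and indeed your displayed inequality fails (take $f_j$ with pairwise disjoint supports). The correct choice is any $\vartheta_0>\max\{1,p_+\}$: unwinding Definition \ref{Debf}(ii), $\vartheta_0$-concavity of $L^{\vec{p}}(\rn)$ is the norm-interchange inequality
$$
\lf(\sum_{j}\|f_j\|_{L^{\vec{p}}(\rn)}^{\vartheta_0}\r)^{1/\vartheta_0}
\le\lf\|\lf(\sum_{j}|f_j|^{\vartheta_0}\r)^{1/\vartheta_0}\r\|_{L^{\vec{p}}(\rn)},
$$
which holds by iterated Minkowski precisely when $\vartheta_0\ge p_+$. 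With this correction the hypothesis is satisfied for every $\vec{p}\in(0,\infty)^n$, so your detour through Aoki--Rolewicz in the case $p_-\le 1$ is unnecessary; there is nothing delicate here once the right range of $\vartheta_0$ is taken.
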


\subsection{Variable Lebesgue spaces}
Let $p(\cdot):\ \rn\to[0,\infty)$ be a measurable function. Then the \emph{variable Lebesgue space
$L^{p(\cdot)}(\rn)$} is defined to be the set of all measurable functions $f$ on $\rn$ such that
$$
\|f\|_{L^{p(\cdot)}(\rn)}:=\inf\lf\{\lambda\in(0,\infty):\ \int_\rn[|f(x)|/\lambda]^{p(x)}\,dx\le1\r\}<\infty.
$$
If $p(x)\geq1$ for almost every $x\in\rn$, then $\|f\|_{L^{p(\cdot)}(\rn)}$ is
a Banach function space (see, for instance, \cite[Theorem 3.2.13]{DHR}).
We refer the reader to \cite{N1,N2,KR,CUF,DHR} for more details on variable Lebesgue spaces.

For any measurable function $p(\cdot):\ \rn\to(0,\infty)$, let
$$
 p_-:=\underset{x\in\rn}{\essinf}\,p(x)\quad\text{and}\quad p_+:=\underset{x\in\rn}{\esssup}\,p(x).
$$
Then $p(x)/p_-\geq1$ for almost every $x\in\rn$. Therefore, for any $B\in\BB$ with $\BB$ as in \eqref{Eqball},
$$
\lf\|\mathbf{1}_B\r\|_{L^{p(\cdot)}(\rn)}=\lf\|\mathbf{1}_B\r\|_{L^{p(\cdot)/p_-}(\rn)}^{1/p_-}<\infty
$$
(see, for instance, \cite[Lemma 3.2.6]{DHR}). Thus, whenever $p(\cdot):\rn\to(0,\infty)$,
$L^{p(\cdot)}(\rn)$ is a ball quasi-Banach function space.

A measurable function $p(\cdot):\ \rn\to(0,\infty)$ is said to be \emph{globally
log-H\"older continuous} if there exists a $p_{\infty}\in\rr$ such that, for any
$x,\ y\in\rn$,
$$
|p(x)-p(y)|\lesssim\frac{1}{\log(e+1/|x-y|)}
$$
and
$$
|p(x)-p_\infty|\lesssim\frac{1}{\log(e+|x|)},
$$
where the implicit positive constants are independent of $x$ and $y$.

Let $p(\cdot):\ \rn\to(0,\infty)$ be a globally
log-H\"older continuous function satisfying
$0<p_-\le p_+<\infty$.
Then $L^{p(\cdot)}(\rn)$ satisfies Assumption \ref{a2.15}
(see, for instance, \cite{CUF,CUW}) and Assumption \ref{a2.17}
(see \cite[Proposition 3.4]{YYYZ}). Furthermore, from \cite[Lemma 2.16]{CUF},
we deduce that,
for any given $r_0\in(0,\min\{1,p_-\})$ and $p_0\in(\max\{1,p_+\},\infty]$,
\eqref{Eqdm} with $X:=L^{p(\cdot)}(\rn)$ holds true.
Thus, all the assumptions of main theorems in Sections \ref{s3} and \ref{s4} are satisfied.
If $X:=L^{p(\cdot)}(\rn)$, then $WH_X(\rn)$ is the variable weak Hardy space studied in \cite{YYYZ}.
In this case, Yan et al. \cite{YYYZ} and Zhuo et al. \cite{zyy}
obtained all main theorems in Sections \ref{s3} and \ref{s4}
for the variable weak Hardy space $WH^{p(\cdot)}(\rn)$.
However, \emph{there exists a gap} in the proof of the Lusin area function characterization
of the variable weak Hardy space $WH^{p(\cdot)}(\rn)$ in
\cite[(6.5)]{YYYZ}.
Using Theorems
\ref{Tharea}, \ref{Thgf} and \ref{Thgx},
we obtain the following characterizations of $WH^{p(\cdot)}(\rn)$, respectively, in terms of the
Lusin area function, the Littlewood--Paley $g$-function and the Littlewood--Paley $g_\lambda^*$-function
under some even \emph{weaker assumptions} on the Littlewood--Paley functions.

\begin{theorem}\label{vth3}
Let $p(\cdot):\ \rn\to(0,\infty)$ be a globally
log-H\"older continuous function satisfying
$0<p_-\le p_+<\infty$.
Then $f\in WH^{p(\cdot)}(\rn)$ if and only if either
of the following two items holds true:
\begin{itemize}
\item[\rm{(i)}] $f\in\cs'(\rn)$, $f$ vanishes weakly at infinity and $S(f)\in WL^{p(\cdot)}(\rn)$,
where $S(f)$ is as in \eqref{eq61}.
\item[\rm{(ii)}] $f\in\cs'(\rn)$, $f$ vanishes weakly at infinity and $g(f)\in WL^{p(\cdot)}(\rn)$,
where $g(f)$ is as in \eqref{eq63}.
\end{itemize}
Moreover, for any $f\in WH^{p(\cdot)}(\rn)$,
$\|f\|_{WH^{p(\cdot)}(\rn)}\sim\|S(f)\|_{WL^{p(\cdot)}(\rn)}\sim\|g(f)\|_{WL^{p(\cdot)}(\rn)}$,
where the positive equivalence constants are independent of $f$.
\end{theorem}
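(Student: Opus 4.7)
The plan is to verify that $X:=L^{p(\cdot)}(\rn)$ fulfills every hypothesis of the abstract Theorems \ref{Tharea} and \ref{Thgf} under the standing globally log-H\"older assumption, after which the two characterizations follow by direct application.

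First, recall that $L^{p(\cdot)}(\rn)$ is a ball quasi-Banach function space (as was observed in the paragraph preceding the theorem) and is a ball Banach function space whenever the underlying exponent is pointwise at least one. I will take $\underline{p}=\min\{p_-,1\}$ and observe the following:
the Fefferman--Stein vector-valued inequality (Assumption \ref{a2.15}) for $X$ is known under the log-H\"older hypothesis from \cite{CUF,CUW};
the weak analogue (Assumption \ref{a2.17}) was proved in \cite[Proposition 3.4]{YYYZ};
the $p$-convexification identity $(L^{p(\cdot)}(\rn))^{1/r}=L^{p(\cdot)/r}(\rn)$ shows that, for any $r\in(0,\underline{p})$, $X^{1/r}$ is a ball Banach function space (since $p(\cdot)/r\geq1$);
and the dual-type bound \eqref{Eqdm} for any fixed $r_0\in(0,\min\{1,p_-\})$ and $p_0\in(\max\{1,p_+\},\infty)$ is a consequence of \cite[Lemma 2.16]{CUF}, which provides the boundedness of $\cm^{((p_0/r_0)')}$ on $(L^{p(\cdot)/r_0}(\rn))'=L^{(p(\cdot)/r_0)'}(\rn)$.

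Next I need the structural hypotheses specific to Theorem \ref{Tharea}: a $\vartheta_0\in(1,\infty)$ with $X$ being $\vartheta_0$-concave, a $\gamma_0\in(0,\infty)$ with $\cm$ bounded on $(WX)^{1/\gamma_0}$, and the absolutely continuous quasi-norm of $X$. Since $p_+<\infty$, one can select $\vartheta_0\in(1,p_-/\underline{p})$ so that $p(\cdot)/\vartheta_0\geq1$ almost everywhere, at which point $X^{1/\vartheta_0}=L^{p(\cdot)/\vartheta_0}(\rn)$ is a Banach function space and the $\vartheta_0$-concavity of $X$ follows from the triangle inequality in $X^{1/\vartheta_0}$. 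The boundedness of $\cm$ on $(WX)^{1/\gamma_0}$ for any $\gamma_0\in(0,p_-)$ is contained in the same reference \cite{YYYZ} used for Assumption \ref{a2.17}. Finally, $L^{p(\cdot)}(\rn)$ has absolutely continuous quasi-norm because $p_+<\infty$ forces $\|\mathbf{1}_{E_j}\|_{L^{p(\cdot)}(\rn)}\to0$ for any decreasing sequence of sets with $\cap_jE_j=\emptyset$, by the dominated convergence applied to the modular $\int_\rn\mathbf{1}_{E_j}(x)^{p(x)}\,dx$.

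With all hypotheses verified, Theorems \ref{Tharea} and \ref{Thgf} apply directly to $X=L^{p(\cdot)}(\rn)$, producing simultaneously the Lusin area function characterization and the Littlewood--Paley $g$-function characterization of $WH^{p(\cdot)}(\rn)$, together with the two norm equivalences
$\|f\|_{WH^{p(\cdot)}(\rn)}\sim\|S(f)\|_{WL^{p(\cdot)}(\rn)}\sim\|g(f)\|_{WL^{p(\cdot)}(\rn)}$.
The main point of delicacy (as emphasized in Remark \ref{wa}) is not the $L^{p(\cdot)}$-side verification, which is essentially bookkeeping, but rather the fact that the conclusion here carries the weaker moment/regularity assumptions on $\phi$ allowed in Theorems \ref{Tharea} and \ref{Thgf}; in particular, it seals the gap appearing in \cite[(6.5)]{YYYZ} via the weak tent space mechanism developed in Section \ref{s3.1}. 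No further structural estimate is required; the proof is complete.
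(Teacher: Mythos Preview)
Your overall strategy---verify that $X=L^{p(\cdot)}(\rn)$ satisfies all hypotheses of Theorems \ref{Tharea} and \ref{Thgf}, then invoke them---matches the paper's approach exactly (the paper simply asserts that ``all the assumptions of main theorems in Sections \ref{s3} and \ref{s4} are satisfied'' after recording Assumptions \ref{a2.15}, \ref{a2.17} and \eqref{Eqdm}). Most of your hypothesis checks are correct and more detailed than what the paper writes out.

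However, your verification of $\vartheta_0$-concavity contains a genuine error. First, the interval $(1,p_-/\underline{p})$ you propose for $\vartheta_0$ is \emph{empty} whenever $p_-\le1$, since then $\underline{p}=p_-$ and $p_-/\underline{p}=1$; so the argument does not even start in that range. Second, and more fundamentally, you have confused concavity with convexity: the triangle inequality in $X^{1/\vartheta_0}$ gives
\[
\Bigl\|\sum_j|f_j|\Bigr\|_{X^{1/\vartheta_0}}\le\sum_j\|f_j\|_{X^{1/\vartheta_0}},
\]
which is precisely $\vartheta_0$-\emph{convexity} of $X$, whereas Definition \ref{Debf}(ii) asks for the \emph{reverse} inequality $\sum_j\|f_j\|_{X^{1/\vartheta_0}}\le C\|\sum_j|f_j|\|_{X^{1/\vartheta_0}}$. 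For $X=L^{p(\cdot)}(\rn)$ this reverse inequality holds in $X^{1/\vartheta_0}=L^{p(\cdot)/\vartheta_0}(\rn)$ precisely when $p(\cdot)/\vartheta_0\le1$ a.e., i.e.\ when $\vartheta_0\ge p_+$, by the reverse Minkowski inequality in Lebesgue spaces with exponent at most $1$ (equivalently, the classical fact that $L^r$ is $p$-concave for every $p\ge r$). The correct choice is therefore any $\vartheta_0>\max\{1,p_+\}$, not $\vartheta_0<p_-$. Once this is repaired, the remainder of your argument goes through unchanged.
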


\begin{theorem}\label{vth4}
Let $p(\cdot):\ \rn\to(0,\infty)$ be a globally
log-H\"older continuous function satisfying
$0<p_-\le p_+<\infty$.
Let
$\lambda\in(\max\{\frac{2}{\min\{1,p_-\}},1-\frac2{\max\{1,p_+\}}+\frac{2}{\min\{1,p_-\}}\},\infty)$.
Then $f\in WH^{p(\cdot)}(\rn)$ if and only if $f\in\mathcal{S}'(\rn)$,
$f$ vanishes weakly at infinity and
$g_\lambda^*(f)\in WL^{p(\cdot)}(\rn)$,
where $g_\lambda^*(f)$ is as in \eqref{eq62}.
Moreover, for any $f\in WH^{p(\cdot)}(\rn)$,
$$
\|f\|_{WH^{p(\cdot)}(\rn)}\sim\lf\|g_\lambda^*(f)\r\|_{WL^{p(\cdot)}(\rn)},
$$
where the positive equivalence constants are independent of $f$.
\end{theorem}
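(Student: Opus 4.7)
The plan is to obtain Theorem \ref{vth4} as a direct specialization of the abstract $g_\lambda^\ast$-function characterization provided by Theorem \ref{Thgx}, taking $X := L^{p(\cdot)}(\rn)$. All hypotheses needed on $X$ have essentially already been verified in the paragraph preceding Theorem \ref{vth3}: under global log-H\"older continuity of $p(\cdot)$ with $0<p_-\le p_+<\infty$, the variable Lebesgue space $L^{p(\cdot)}(\rn)$ is a ball quasi-Banach function space satisfying both Assumptions \ref{a2.15} and \ref{a2.17}; for any $r\in(0,\underline p)$, $L^{p(\cdot)/r}(\rn)$ is a ball Banach function space (since then $p(\cdot)/r\ge 1$ almost everywhere); the bound \eqref{Eqdm} holds for any $r_0\in(0,\min\{1,p_-\})$ and $p_0\in(\max\{1,p_+\},\infty]$ by \cite[Lemma 2.16]{CUF}; the quasi-norm of $L^{p(\cdot)}(\rn)$ is absolutely continuous because $p_+<\infty$; and the $\vartheta_0$-concavity of $X$ together with the boundedness of $\cm$ on $(WX)^{1/\gamma_0}$ follow from the scaling identity $L^{p(\cdot)}(\rn)^s=L^{sp(\cdot)}(\rn)$ combined with the Hardy--Littlewood maximal inequality on variable Lebesgue spaces. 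The first step of the plan is simply to collect these verifications.

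The second step is to match the abstract parameters $p_-$ and $p_+$ appearing in the hypotheses of Theorems \ref{Tharea} and \ref{Thgx} with $\essinf_{x\in\rn} p(x)$ and $\esssup_{x\in\rn} p(x)$ respectively. The identification of the abstract $p_-$ with $\essinf p(\cdot)$ is immediate from the vector-valued Fefferman--Stein inequality on variable Lebesgue spaces. For the abstract $p_+$, the plan is to combine the duality $[L^{p(\cdot)/r}(\rn)]' = L^{(p(\cdot)/r)'}(\rn)$ with the power identity $[L^{q(\cdot)}(\rn)]^s = L^{sq(\cdot)}(\rn)$ to reduce the condition \eqref{Eqdm} to boundedness of the standard maximal operator on a variable Lebesgue space whose exponent exceeds $1$ almost everywhere; a short computation then shows that the admissible $p$ range in \eqref{Eqdm} is precisely $p>\esssup p(\cdot)$, so the abstract $p_+$ can be taken as $\esssup p(\cdot)$. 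Once this identification is in place, the admissible $\lambda$-range in Theorem \ref{Thgx} translates verbatim into the range displayed in Theorem \ref{vth4}.

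With these preparatory steps completed, the third and final step is simply to invoke Theorem \ref{Thgx}, which yields both the equivalence ``$f\in WH^{p(\cdot)}(\rn)$ if and only if $f\in\cs'(\rn)$, $f$ vanishes weakly at infinity and $g_\lambda^\ast(f)\in WL^{p(\cdot)}(\rn)$'' and the two-sided quasi-norm equivalence $\|f\|_{WH^{p(\cdot)}(\rn)}\sim\|g_\lambda^\ast(f)\|_{WL^{p(\cdot)}(\rn)}$, which together constitute the statement of Theorem \ref{vth4}. I do not anticipate any serious obstacle in executing this plan: the only non-trivial bookkeeping is the parameter identification carried out in step two, and this is routine once the standard duality and power formulas for variable Lebesgue spaces are written out. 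The genuinely hard analytic content of the statement---the change-of-aperture estimate in weak tent spaces (Theorem \ref{Thca}) and the Lusin area characterization of $WH_X(\rn)$ (Theorem \ref{Tharea}) which together power Theorem \ref{Thgx}---has already been carried out in the abstract framework, and so nothing beyond bookkeeping survives at the level of the variable exponent example.
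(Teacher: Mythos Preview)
Your proposal is correct and follows essentially the same approach as the paper: the paper simply verifies in the paragraph preceding Theorems \ref{vth3} and \ref{vth4} that $L^{p(\cdot)}(\rn)$ satisfies all the hypotheses of the abstract Theorems \ref{Tharea}, \ref{Thgf} and \ref{Thgx}, and then states these theorems as immediate applications without further argument. Your more explicit bookkeeping of the parameter identification $p_\pm \leftrightarrow \essinf/\esssup\, p(\cdot)$ and of the remaining hypotheses (absolute continuity, $\vartheta_0$-concavity, boundedness of $\cm$ on $(WX)^{1/\gamma_0}$) is a welcome expansion of what the paper compresses into a single sentence.
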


We point out that, when $p_-\in(0,1]$,
the $g_\lambda^\ast$-function characterization in Theorem \ref{vth4}
widens the range of $\lambda\in(1+\frac{2}{\min\{2,p_-\}},\infty)$
in \cite{YYYZ} into $\lz\in(\max\{\frac{2}{\min\{1,p_-\}},
1-\frac2{\max\{1,p_+\}}+\frac{2}{\min\{1,p_-\}}\},\infty)$.
Recall that, when $p_-\in(1,\infty)$, from \cite[Proposition 3.4]{YYYZ} and \cite[Theorem 3.4]{zwyy}, it is
easy to deduce that $WL^{p(\cdot)}(\rn)=WH^{p(\cdot)}(\rn)$ with equivalent norms, which was also
proved in \cite{zyy}.

\subsection{Weighted Orlicz spaces}

If $X$ is a weighted Orlicz space with Muckenhoupt weights, then $WH_X(\rn)$ is a
weighted weak Orlicz--Hardy space.
The weighted weak Orlicz--Hardy space $WH_\omega^\Phi(\rn)$, which is defined via the weighted
Orlicz space $L_\omega^\Phi(\rn)$, was studied in \cite{ccyy,yy11}.

Recall that an $A_p(\rn)$-\emph{weight} $\omega$, with $p\in[1,\infty)$, is a
locally integrable and nonnegative function satisfying that, for any given $p\in(1,\infty)$,
\begin{equation*}
\sup_{B\in\BB}\lf[\frac1{|B|}\int_B\omega(x)\,dx\r]\lf[\frac1{|B|}
\int_B\lf\{\omega(x)\r\}^{\frac1{1-p}}\,dx\r]^{p-1}<\infty
\end{equation*}
and, for $p=1$,
$$
\sup_{B\in\BB}\frac1{|B|}\int_B\omega(x)\,dx\lf[\lf\|\omega^{-1}\r\|_{L^\infty(B)}\r]<\infty,
$$
where $\BB$ is as in \eqref{Eqball}. Then define $A_\infty(\rn):=\bigcup_{p\in[1,\infty)}A_p(\rn)$.

Now, we recall the notions of both Orlicz functions and Orlicz spaces (see, for instance, \cite{RR}).

\begin{definition}\label{or}
A function $\Phi:\ [0,\infty)\ \to\ [0,\infty)$ is called an \emph{Orlicz function} if it is
non-decreasing and satisfies $\Phi(0)= 0$, $\Phi(t)>0$ whenever $t\in(0,\infty)$, and $\lim_{t\to\infty}\Phi(t)=\infty$.
\end{definition}

An Orlicz function $\Phi$ as in Definition \ref{or} is said to be
of \emph{lower} (resp., \emph{upper}) \emph{type} $p$ with
$p\in(-\infty,\infty)$ if
there exists a positive constant $C_{(p)}$, depending on $p$, such that, for any $t\in[0,\infty)$
and $s\in(0,1)$ [resp., $s\in [1,\infty)$],
\begin{equation*}
\Phi(st)\le C_{(p)}s^p \Phi(t).
\end{equation*}
A function $\Phi:\ [0,\infty)\ \to\ [0,\infty)$ is said to be of
 \emph{positive lower} (resp., \emph{upper}) \emph{type} if it is of lower
 (resp., upper) type $p$ for some $p\in(0,\infty)$.

\begin{definition}\label{fine}
Let $\Phi$ be an Orlicz function with positive lower type $p_{\Phi}^-$ and positive upper type $p_{\Phi}^+$.
Let $\omega\in A_\infty(\rn)$.
The \emph{weighted Orlicz space $L_\omega^\Phi(\rn)$} is defined
to be the set of all measurable functions $f$ such that
 $$\|f\|_{L_\omega^\Phi(\rn)}:=\inf\lf\{\lambda\in(0,\infty):\ \int_{\rn}\Phi\lf(\frac{|f(x)|}{\lambda}\r)\omega(x)\,dx\le1\r\}<\infty.$$
\end{definition}

\begin{remark}
\begin{itemize}
\item[(i)] Let $p\in(0,\infty)$ and $\Phi(t):=t^p$
for any $t\in[0,\infty)$. It is easy to see that, in this case,
$L^\Phi_\omega(\rn)$ coincides with the weighted Lebesgue space $L_\omega^p(\rn)$
(see, for instance, \cite{AJ,QY}).
\item[(ii)] By \cite[p.\,86]{SHYY}, we know that
a weighted Orlicz space $L_\omega^\Phi(\rn)$ with an
$A_\infty(\rn)$-weight $\omega$ may \emph{not be} a quasi-Banach function space.
For example, let $\Phi(t):=t^2$ for any $t\in[0,\infty)$
and $\omega_0:=1+|x|$ for any $x\in\rr$. In this case,
$L^\Phi_\omega(\rr)=L_{\omega_0}^2(\rr)$.
Let $E:=\cup_{l\in\nn}[2^l,2^l+2^{-l}]$. Then it is easy to know that $|E|<\infty$
and $\|\mathbf{1}_{E}\|_{X}=\infty$. Thus, $L_{\omega_0}^2(\rr)$ is not a quasi-Banach function space.
On the other hand, by the definition of $A_\infty(\rn)$, it is easy to prove that
$L_\omega^\Phi(\rn)$ is a ball quasi-Banach function space.
\end{itemize}
\end{remark}

Let $\Phi$ be an Orlicz function with positive lower type $p_{\Phi}^-$ and positive upper type $p_{\Phi}^+$.
Let $\omega\in A_\infty(\rn)$. Then
$L_\omega^\Phi(\rn)$ satisfies Assumption \ref{a2.17} (see \cite[Theorem 2.8]{LYJ}) and
Assumption \ref{a2.15} (see \cite[Theorem 2.10]{LYJ}).
Applying \cite[p.\,110, Theorem 7]{RR} and \cite[Lemma 2.16]{ZYYW},
we know that, when $p_{\Phi}^-\in(1,\infty)$, $(L_\omega^\Phi(\rn))^*$ is
isomorphic and homeomorphic to $L_\omega^\Psi(\rn)$, where, for any $y\in[0,\infty)$,
$$\Psi(y):=\sup\lf\{xy-\Phi(x):\ x\in[0,\infty)\r\}.$$
From this and \cite[Proposition 7.8]{SHYY}, we can easily deduce that,
for any given $r_0\in(0,p_{\Phi}^-)$, $p_0\in(p_{\Phi}^+,\infty]$
and $\omega\in A_{(p_{\Phi}^+/r_0)'/(p_0/r_0)'}(\rn)$, \eqref{Eqdm}
holds true.
Thus, all the assumptions of main theorems in Sections \ref{s3} and \ref{s4} are satisfied.

When $X:=L_\omega^\Phi(\rn)$,
$WH_X(\rn)$ goes back to the weighted weak Orlicz--Hardy space $WH_\omega^\Phi(\rn)$.
When $p_{\Phi}^-\in(1,\infty)$, from \cite[Theorem 2.8]{LYJ} and
\cite[Theorem 3.4]{zwyy}, it is easy to deduce that
$WL_\omega^\Phi(\rn)=WH_\omega^\Phi(\rn)$  with equivalent norms.
When $p_{\Phi}^+\in(0,1]$, we point
out that Liang et al. \cite{LYJ} obtained all main theorems in Section \ref{s3}
for the weighted weak Orlicz--Hardy space $WH_\omega^\Phi(\rn)$ as the special case
of the results in \cite{LYJ}.
However, \emph{there exists a gap} in the proof of the Lusin area function characterization
in lines 14 and 16 of \cite[p.\,662]{LYJ}.
Using Theorems
\ref{Tharea}, \ref{Thgf} and \ref{Thgx},
we obtain the following characterizations of $WH_\omega^\Phi(\rn)$, respectively, in terms of the
Lusin area function, the Littlewood--Paley $g$-function and the Littlewood--Paley $g_\lambda^*$-function
under some even weaker assumptions on the Littlewood--Paley functions.

\begin{theorem}\label{wth3}
Let $\Phi$ be an Orlicz function with positive lower type $p_{\Phi}^-$ and positive upper type $p_{\Phi}^+$.
Let $\omega\in A_\infty(\rn)$.
Then $f\in WH_\omega^\Phi(\rn)$ if and only if either
of the following two items holds true:
\begin{itemize}
\item[\rm{(i)}] $f\in\cs'(\rn)$, $f$ vanishes weakly at infinity and $S(f)\in WL_\omega^\Phi(\rn)$,
where $S(f)$ is as in \eqref{eq61}.
\item[\rm{(ii)}] $f\in\cs'(\rn)$, $f$ vanishes weakly at infinity and $g(f)\in WL_\omega^\Phi(\rn)$,
where $g(f)$ is as in \eqref{eq63}.
\end{itemize}
Moreover, for any $f\in WH_\omega^\Phi(\rn)$,
$\|f\|_{WH_\omega^\Phi(\rn)}\sim\|S(f)\|_{WL_\omega^\Phi(\rn)}\sim\|g(f)\|_{WL_\omega^\Phi(\rn)}$,
where the positive equivalence constants are independent of $f$.
\end{theorem}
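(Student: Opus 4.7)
The plan is to verify that the weighted Orlicz space $X := L_\omega^\Phi(\rn)$ satisfies all of the structural assumptions required by Theorems \ref{Tharea} and \ref{Thgf}, and then to invoke these two theorems directly. Most of the verifications appeal to facts that are either stated in the discussion preceding this theorem or available in the cited literature, so the argument will essentially be a bookkeeping of hypotheses.

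First, as noted after Definition \ref{fine}, $L_\omega^\Phi(\rn)$ is a ball quasi-Banach function space for any $\omega\in A_\infty(\rn)$, even though it need not be a quasi-Banach function space in general. The Fefferman--Stein vector-valued maximal inequality on $X^{1/p}$ (Assumption \ref{a2.15}) is \cite[Theorem 2.10]{LYJ} with $p_-\in(0,p_\Phi^-)$, and the corresponding inequality on $(WX)^{1/p}$ (Assumption \ref{a2.17}) is \cite[Theorem 2.8]{LYJ}. The $\vartheta_0$-concavity of $X$ (with some $\vartheta_0\in(1,\infty)$) and the boundedness of $\cm$ on $(WX)^{1/\gamma_0}$ for some $\gamma_0\in(0,\infty)$ both follow again from the lower/upper type hypotheses on $\Phi$ combined with \cite[Theorem 2.8]{LYJ}. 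The fact that $X^{1/r}$ is a ball Banach function space for every $r\in(0,\underline{p})$ is a standard consequence of the $p$-convexification procedure applied to weighted Orlicz spaces whose complementary function has positive lower type greater than $1$.

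Second, to verify the key inequality $\|\cm^{((p/r)')}(f)\|_{(X^{1/r})'}\lesssim\|f\|_{(X^{1/r})'}$ appearing in the hypothesis on the associate space, I would proceed as follows. For $p_\Phi^->1$, \cite[p.\,110, Theorem 7]{RR} together with \cite[Lemma 2.16]{ZYYW} identifies $(L_\omega^\Phi(\rn))^\ast$ with $L_\omega^\Psi(\rn)$, where $\Psi$ is the complementary Orlicz function of $\Phi$. Since the $p$-convexification of a weighted Orlicz space is again a weighted Orlicz space (associated with $\Phi^{1/r}$), the associate space $(X^{1/r})'$ is itself a weighted Orlicz space of the same type, and \cite[Proposition 7.8]{SHYY} gives the boundedness of $\cm^{((p/r)')}$ on it, provided one chooses $r_0\in(0,p_\Phi^-)$, $p_0\in(p_\Phi^+,\infty]$ and $\omega\in A_{(p_\Phi^+/r_0)'/(p_0/r_0)'}(\rn)$. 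Taking $p_+:=p_\Phi^+$, this covers the hypothesis for every $r\in(0,\underline{p})$ and every $p\in(p_+,\infty)$.

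Third, the absolute continuity of the quasi-norm of $L_\omega^\Phi(\rn)$ follows from the fact that $\Phi$ has positive upper type and $\omega\in A_\infty(\rn)$, since the defining Luxemburg functional is then continuous under monotone convergence of indicator functions to zero, so Definition \ref{abo} is satisfied. With all the hypotheses in place, Theorem \ref{Tharea} yields item (i) and the equivalence $\|f\|_{WH_\omega^\Phi(\rn)}\sim\|S(f)\|_{WL_\omega^\Phi(\rn)}$, while Theorem \ref{Thgf} yields item (ii) and the equivalence $\|f\|_{WH_\omega^\Phi(\rn)}\sim\|g(f)\|_{WL_\omega^\Phi(\rn)}$. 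The only delicate point in the whole proof is bookkeeping the correct exponent ranges (so that $p_+$, $p_0$, $r_0$ and the Muckenhoupt class of $\omega$ are compatible), but no new analytic ingredient is needed, and in particular the gap present in \cite[p.\,662]{LYJ} is automatically avoided because Theorem \ref{Tharea} has already been proved under the even weaker hypotheses on $\phi$ used in this article.
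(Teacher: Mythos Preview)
Your proposal is correct and follows essentially the same approach as the paper: the paper does not give a separate proof of Theorem \ref{wth3} but instead, in the paragraph preceding the theorem, verifies that $X:=L_\omega^\Phi(\rn)$ satisfies Assumptions \ref{a2.15} and \ref{a2.17} (via \cite[Theorems 2.8 and 2.10]{LYJ}) and the associate-space maximal inequality \eqref{Eqdm} (via \cite[p.\,110, Theorem 7]{RR}, \cite[Lemma 2.16]{ZYYW} and \cite[Proposition 7.8]{SHYY}), and then simply invokes Theorems \ref{Tharea} and \ref{Thgf}. Your write-up is slightly more detailed in that you explicitly flag the absolute continuity of the quasi-norm and the $\vartheta_0$-concavity, which the paper leaves implicit under the blanket statement ``all the assumptions of main theorems in Sections \ref{s3} and \ref{s4} are satisfied.''
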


\begin{theorem}\label{wth4}
Let $\Phi$ be an Orlicz function with positive lower type $p_{\Phi}^-$ and positive upper type $p_{\Phi}^+$.
Let $\omega\in A_\infty(\rn)$.
Let
$\lambda\in(\max\{\frac{2}{\min\{1,p_{\Phi}^-\}},
1-\frac2{\max\{1,p_{\Phi}^+\}}+\frac{2}{\min\{1,p_{\Phi}^-\}}\},\infty)$.
Then $f\in WH_\omega^\Phi(\rn)$ if and only if $f\in\mathcal{S}'(\rn)$,
$f$ vanishes weakly at infinity and
$g_\lambda^*(f)\in WL_\omega^\Phi(\rn)$,
where $g_\lambda^*(f)$ is as in \eqref{eq62}.
Moreover, for any $f\in WH_\omega^\Phi(\rn)$,
$$
\|f\|_{WH_\omega^\Phi(\rn)}\sim\lf\|g_\lambda^*(f)\r\|_{WL_\omega^\Phi(\rn)},
$$
where the positive equivalence constants are independent of $f$.
\end{theorem}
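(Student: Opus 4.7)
The plan is to deduce the conclusion by a direct application of Theorem \ref{Thgx} with $X:=L_\omega^\Phi(\rn)$. To this end, I need to verify that, under the hypotheses on $\Phi$ and $\omega$, the weighted Orlicz space $L_\omega^\Phi(\rn)$ satisfies all the structural assumptions imposed in Theorem \ref{Thgx} (which in turn includes the assumptions of Theorem \ref{Tharea} plus the appropriate range for $\lambda$). Once this is done, Theorem \ref{Thgx} yields the equivalence and the two-sided estimate claimed in Theorem \ref{wth4}.

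The key verifications, most of which have already been made in the discussion preceding Theorem \ref{wth3}, are the following. First, $L_\omega^\Phi(\rn)$ is a ball quasi-Banach function space in the sense of Definition \ref{Debqfs}; this follows from $\omega\in A_\infty(\rn)$ and the lower/upper type conditions on $\Phi$. Second, by \cite[Theorem 2.10]{LYJ}, $L_\omega^\Phi(\rn)$ satisfies Assumption \ref{a2.15} with $p_-:=p_\Phi^-$ (more precisely, for any $p\in(0,p_\Phi^-)$ with $\omega$ an $A_\infty(\rn)$-weight appropriately adapted), so the Fefferman--Stein vector-valued maximal inequality is available. Third, \cite[Theorem 2.8]{LYJ} gives Assumption \ref{a2.17}, which supplies boundedness of $\cm$ on $(WL_\omega^\Phi)^{1/r_1}$ for some $r_1\in(0,\infty)$; this also furnishes the $\gamma_0$ needed in Theorem \ref{Tharea}. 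Fourth, using the Orlicz duality \cite[p.\,110, Theorem 7]{RR} together with \cite[Proposition 7.8]{SHYY} and \cite[Lemma 2.16]{ZYYW}, one obtains that, for any $r_0\in(0,p_\Phi^-)$ and $p_0\in(p_\Phi^+,\infty)$ with $\omega\in A_{(p_\Phi^+/r_0)'/(p_0/r_0)'}(\rn)$, the inequality \eqref{Eqdm} holds on $(L_\omega^\Phi)^{1/r_0}$, and, by the convexification argument, $(L_\omega^\Phi)^{1/r}$ is a ball Banach function space for any $r\in(0,\underline{p})$; here $\vartheta_0$-concavity for some $\vartheta_0\in(1,\infty)$ comes from the upper type of $\Phi$ combined with $p_\Phi^+<\infty$. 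Fifth, $L_\omega^\Phi(\rn)$ has an absolutely continuous quasi-norm in the sense of Definition \ref{abo}; this follows from the $\sigma$-finiteness of $\omega\,dx$ and the dominated convergence theorem applied to $\int_{\rn}\Phi(\mathbf{1}_{E_j}/\lambda)\omega\,dx$ for decreasing $E_j$ with empty intersection.

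Given all these verifications, the hypothesis $\lambda\in(\max\{\frac{2}{\min\{1,p_\Phi^-\}},\,1-\frac{2}{\max\{1,p_\Phi^+\}}+\frac{2}{\min\{1,p_\Phi^-\}}\},\infty)$ matches exactly the range required in Theorem \ref{Thgx} with $p_-:=p_\Phi^-$ and $p_+:=p_\Phi^+$. Therefore, applying Theorem \ref{Thgx} with $X:=L_\omega^\Phi(\rn)$, one concludes that $f\in WH_\omega^\Phi(\rn)$ is equivalent to $f\in\mathcal{S}'(\rn)$ vanishing weakly at infinity and satisfying $g_\lambda^*(f)\in WL_\omega^\Phi(\rn)$, together with the quantitative two-sided estimate $\|f\|_{WH_\omega^\Phi(\rn)}\sim\|g_\lambda^*(f)\|_{WL_\omega^\Phi(\rn)}$.

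The main obstacle in this proof is not any novel analytic step but rather the careful bookkeeping of the various indices: one needs to produce simultaneous choices of $r_0$, $p_0$, $\vartheta_0$, $\gamma_0$ and the associated weight class so that \emph{all} hypotheses of Theorem \ref{Thgx} are met with the range of $\lambda$ stated in Theorem \ref{wth4}. I expect that the trickiest of these is matching the weight assumption $\omega\in A_{(p_\Phi^+/r_0)'/(p_0/r_0)'}(\rn)$ required by \eqref{Eqdm} with the condition $\omega\in A_\infty(\rn)$ given in the hypothesis; this is handled by choosing $r_0$ sufficiently close to $0$ and $p_0$ sufficiently large, using that $\omega\in A_\infty(\rn)$ lies in $A_q(\rn)$ for some finite $q$, together with the self-improving property of Muckenhoupt weights.
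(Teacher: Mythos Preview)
Your proposal is correct and takes essentially the same approach as the paper: the paper does not give a separate proof of Theorem \ref{wth4} but simply records, in the paragraph preceding Theorems \ref{wth3} and \ref{wth4}, that $L_\omega^\Phi(\rn)$ satisfies Assumptions \ref{a2.15} and \ref{a2.17} (via \cite[Theorems 2.8 and 2.10]{LYJ}) and that \eqref{Eqdm} holds (via \cite[p.\,110, Theorem 7]{RR}, \cite[Lemma 2.16]{ZYYW} and \cite[Proposition 7.8]{SHYY}), then declares that ``all the assumptions of main theorems in Sections \ref{s3} and \ref{s4} are satisfied'' and invokes Theorem \ref{Thgx}. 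Your write-up is in fact somewhat more explicit than the paper about the remaining hypotheses (absolute continuity of the quasi-norm, $\vartheta_0$-concavity, and the reconciliation of $\omega\in A_\infty(\rn)$ with the weight class needed for \eqref{Eqdm} by taking $r_0$ small and $p_0$ large), and your handling of these points is correct; note also that Assumption \ref{a2.17} is not strictly required for Theorem \ref{Thgx} itself, though it conveniently supplies the $\gamma_0$ in Theorem \ref{Tharea} as you observe.
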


In what follows, for any given Orlicz function $\Phi$
and any given $\theta\in(0,\infty)$, and for any $t\in[0,\infty)$, let
\begin{equation}\label{phi1}
\Phi_{1/\theta}(t):=\Phi(t^{1/\theta}).
\end{equation}

\begin{definition}
Let $\Phi$ be an Orlicz function with positive lower type $p_{\Phi}^-$ and positive upper type $p_{\Phi}^+$.
Let $\omega\in A_\infty(\rn)$.
The \emph{weighted Orlicz--Hardy
space $H_\omega^\Phi(\rn)$} is defined to be the set of all $f\in\cs'(\rn)$
such that $\|f\|_{H_\omega^\Phi(\rn)}:=\
\|M_N^0(f)\|_{L_\omega^\Phi(\rn)}<\infty$,
where $M_N^0(f)$ is as in \eqref{EqMN0} with $N$ sufficiently large.
\end{definition}

Applying Theorem \ref{inte} to weighted Orlicz spaces, we find that
$WH_\omega^{\Phi_{1/(1-\theta)}}(\rn)$
is just the real interpolation intermediate space between $H_\omega^\Phi(\rn)$
and $L^\infty(\rn)$ with $\theta\in(0,1)$.

\begin{theorem}\label{th5}
Let $\Phi$ be an Orlicz function with positive lower type $p_{\Phi}^-$ and positive upper type $p_{\Phi}^+$.
Let $\omega\in A_\infty(\rn)$ and
$\theta\in(0,1)$. Then
$$
(H_\omega^\Phi(\rn),L^\infty(\rn))_{\theta,\infty}=WH_\omega^{\Phi_{1/(1-\theta)}}(\rn),
$$
where $\Phi_{1/(1-\theta)}$ is as in \eqref{phi1} with $\theta$ replaced by $1-\theta$.
\end{theorem}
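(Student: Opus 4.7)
The plan is to derive Theorem \ref{th5} as a direct specialization of Theorem \ref{inte} to the ball quasi-Banach function space $X := L_\omega^\Phi(\rn)$, once we verify the hypotheses of that theorem for weighted Orlicz spaces and identify the $\frac{1}{1-\theta}$-convexification of $L_\omega^\Phi(\rn)$ with $L_\omega^{\Phi_{1/(1-\theta)}}(\rn)$.

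First, I would assemble the hypotheses needed for Theorem \ref{inte}. Since $\Phi$ has positive lower type $p_\Phi^-$ and positive upper type $p_\Phi^+$, and $\omega\in A_\infty(\rn)$, the excerpt records that $L_\omega^\Phi(\rn)$ satisfies Assumption \ref{a2.15} (via \cite[Theorem 2.10]{LYJ}) and Assumption \ref{a2.17} (via \cite[Theorem 2.8]{LYJ}). The $\vartheta_0$-concavity of $X$ follows from taking $\vartheta_0>p_\Phi^+$ and using that $L_\omega^{\Phi_{1/\vartheta_0}}(\rn)$ (which equals $X^{1/\vartheta_0}$ by the identification below) is then a Banach function space whose positive upper type is strictly less than $1$; boundedness of $\cm$ on $(WX)^{1/\gamma_0}$ and on $X^{1/r}$ for $r\in(0,\underline p)$ follows similarly upon choosing $\gamma_0$ and $r$ small enough that the rescaled Orlicz function has lower type exceeding $1$ and the weight lies in the appropriate Muckenhoupt class. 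Finally, the dual/associate estimate \eqref{Eqdm} is exactly the one noted in the subsection preceding Theorem \ref{th5}, obtained via \cite[p.\,110, Theorem 7]{RR} and \cite[Lemma 2.16]{ZYYW} together with \cite[Proposition 7.8]{SHYY}, by selecting $r_0\in(0,p_\Phi^-)$, $p_0\in(p_\Phi^+,\infty]$ and ensuring $\omega\in A_{(p_\Phi^+/r_0)'/(p_0/r_0)'}(\rn)$.

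Next, I would verify the key identification
\begin{equation*}
\bigl(L_\omega^\Phi(\rn)\bigr)^{1/(1-\theta)}=L_\omega^{\Phi_{1/(1-\theta)}}(\rn)
\end{equation*}
with equivalent quasi-norms. By Definition \ref{Debf}(i), $f\in(L_\omega^\Phi)^{1/(1-\theta)}$ iff $|f|^{1/(1-\theta)}\in L_\omega^\Phi$, namely, for some $\lambda\in(0,\infty)$,
\begin{equation*}
\int_\rn \Phi\!\lf(\frac{|f(x)|^{1/(1-\theta)}}{\lambda}\r)\omega(x)\,dx\le 1.
\end{equation*}
Setting $\mu:=\lambda^{1-\theta}$ and using the definition $\Phi_{1/(1-\theta)}(t)=\Phi(t^{1/(1-\theta)})$ in \eqref{phi1}, the integrand becomes $\Phi_{1/(1-\theta)}(|f(x)|/\mu)$. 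Taking infima over $\lambda$ (equivalently over $\mu$) yields $\|f\|_{(L_\omega^\Phi)^{1/(1-\theta)}}=\|f\|_{L_\omega^{\Phi_{1/(1-\theta)}}}$. In particular, since $\Phi_{1/(1-\theta)}$ still has positive lower and upper type, $L_\omega^{\Phi_{1/(1-\theta)}}(\rn)$ is itself a ball quasi-Banach function space.

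Combining these two steps, Theorem \ref{inte} applied to $X=L_\omega^\Phi(\rn)$ gives
\begin{equation*}
(H_\omega^\Phi(\rn),L^\infty(\rn))_{\theta,\infty}
=WH_{X^{1/(1-\theta)}}(\rn)
=WH_\omega^{\Phi_{1/(1-\theta)}}(\rn),
\end{equation*}
which is the claim. The only genuinely technical point is the bookkeeping in the first step: we must choose the parameters $\vartheta_0$, $\gamma_0$, $r_0$, $p_0$, together with an $r$ for which $X^{1/r}$ is a ball Banach function space, so that simultaneously all of Assumption \ref{a2.15}, Assumption \ref{a2.17}, \eqref{Eqdm}, and the $A_\infty$ condition on $\omega$ imposed by the relevant Muckenhoupt class are met; this is the main obstacle but is purely a matter of choosing exponents since $p_\Phi^-,p_\Phi^+\in(0,\infty)$ and $\omega\in A_\infty(\rn)$ supply the necessary room.
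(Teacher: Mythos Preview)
Your proposal is correct and follows exactly the approach of the paper: Theorem~\ref{th5} is stated there without proof as a direct application of Theorem~\ref{inte} to $X=L_\omega^\Phi(\rn)$, the hypotheses having been recorded in the paragraph preceding the statement (Assumptions~\ref{a2.15} and~\ref{a2.17} via \cite{LYJ}, and \eqref{Eqdm} via \cite{RR,ZYYW,SHYY}). Your explicit verification of the identification $(L_\omega^\Phi)^{1/(1-\theta)}=L_\omega^{\Phi_{1/(1-\theta)}}$ and the bookkeeping of parameters fills in precisely the details the paper leaves implicit.
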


\subsection{Orlicz-slice spaces}

If $X$ is an Orlicz-slice spaces, then $WH_X(\rn)$ is a
weak Orlicz-slice Hardy space.
Let $q,\ t\in(0,\infty)$ and $\Phi$ be an Orlicz function.
Recall that the Orlicz-slice space $(E_\Phi^q)_t(\rn)$ introduced in \cite{ZYYW}
generalizes both the slice space $E_t^p(\rn)$ [in this case, $\Phi(\tau):=\tau^2$ for any $\tau\in[0,\infty)$], which
was originally introduced by Auscher and
Mourgoglou \cite{AM2014} and has been applied to study the classification of weak solutions in the natural classes
for the boundary value problems of a $t$-independent elliptic system in the upper plane,
and $(E_r^p)_t(\rn)$ [in this case, $\Phi(\tau):=\tau^r$ for any $\tau\in[0,\infty)$ with $r\in(0,\infty)$],
which was originally introduced by Auscher and Prisuelos-Arribas \cite{APA}
and has been applied to study the boundedness of operators such
as the Hardy--Littlewood maximal operator, the Calder\'on--Zygmund operator and the Riesz potential.
Moreover, Zhang et al. \cite{ZYYW} introduced the Orlicz-slice Hardy space $(HE_\Phi^q)_t(\rn)$
and obtained real-variable characterizations of $(HE_\Phi^q)_t(\rn)$, respectively,
in terms of various maximal functions, atoms, molecules and Littlewood--Paley functions, and the boundedness
on $(HE_\Phi^q)_t(\rn)$ for convolutional $\delta$-order and non-convolutional $\gamma$-order Calder\'on--Zygmund
operators. Naturally, this new scale of Orlicz-slice Hardy spaces contains the variant of the Hardy-amalgam space
[in this case, $t=1$ and $\Phi(\tau):=\tau^p$ for any $\tau\in [0,\fz)$ with $p\in (0,\fz)$] of Abl\'e and Feuto \cite{AF}
as a special case. Moreover, the results in \cite{ZYYW} indicate that, similarly to the classical
Hardy space $H^p(\rn)$ with $p\in (0,1]$, $(HE_\Phi^q)_t(\rn)$ is a good substitute of
$(E_\Phi^q)_t(\rn)$ in the study on the boundedness of operators.
On another hand, observe that $(E_\Phi^p)_t(\rn)$ when $p=t=1$ goes back to the amalgam space $(L^\Phi,\ell^1)(\rn)$
introduced by Bonami and Feuto \cite{BF}, where
$\Phi(t):=\frac{t}{\log(e+t)}$ for any $t\in[0,\infty),$
and the Hardy space $H_*^\Phi(\rn)$
associated with the amalgam space $(L^\Phi,\ell^1)(\rn)$
was applied by Bonami and Feuto \cite{BF} to study the linear decomposition of the product of
the Hardy space $H^1(\rn)$ and its dual space $\BMO(\rn)$. Another main
motivation to introduce $(HE_\Phi^q)_t(\rn)$ in \cite{ZYYW} exists in that it is
a natural generalization of $H_*^\Phi(\rn)$ in \cite{BF}.
In the last part of this section, we focus on the weak Orlicz-slice Hardy space $(WHE_\Phi^q)_t(\rn)$
built on the Orlicz-slice space $(E_\Phi^q)_t(\rn)$. We first recall some of the useful
properties of Orlicz-slice spaces.

In Definition \ref{fine}, if $\omega\equiv1$, we then denote $L_\omega^\Phi(\rn)$
simply by $L^{\Phi}(\rn)$.

\begin{definition}
Let $t,\ q\in(0,\infty)$ and $\Phi$ be an Orlicz function with positive lower type $p_{\Phi}^-$ and
positive upper type $p_{\Phi}^+$. The \emph{Orlicz-slice space} $(E_\Phi^q)_t(\rn)$
is defined to be the set of all measurable functions $f$
such that
$$
\|f\|_{(E_\Phi^q)_t(\rn)}
:=\lf\{\int_{\rn}\lf[\frac{\|f\mathbf{1}_{B(x,t)}\|_{L^\Phi(\rn)}}
{\|\mathbf{1}_{B(x,t)}\|_{L^\Phi(\rn)}}\r]^q\,dx\r\}^{\frac{1}{q}}<\infty.
$$
\end{definition}

\begin{remark}
By \cite[Lemma 2.28]{ZYYW}, we know that the Orlicz-slice space $(E_\Phi^q)_t(\rn)$
is a ball quasi-Banach function space, but it is not a quasi-Banach function space
(see, for instance, \cite[Remark 7.41(i)]{zwyy})
\end{remark}

Now, we recall the notions of both the weak Orlicz-slice space
$(WE_\Phi^q)_t(\rn)$ and the weak Orlicz-slice Hardy spaces $(WHE_\Phi^q)_t(\rn)$
as follows.

\begin{definition}
\begin{itemize}
\item[(i)] Let $t,\ q\in(0,\infty)$ and $\Phi$ be an Orlicz function with positive lower type $p_{\Phi}^-$ and
positive upper type $p_{\Phi}^+$. The \emph{weak Orlicz-slice space} $(WE_\Phi^q)_t(\rn)$
is defined to be the set of all measurable functions $f$ such that
$$
\|f\|_{(WE_\Phi^q)_t(\rn)}:=\sup_{\alpha\in(0,\infty)}\lf\{\alpha\lf\|
\mathbf{1}_{\{x\in\rn:\ |f(x)|>\alpha\}}\r\|_{(E_\Phi^q)_t(\rn)}\r\}<\infty.
$$

\item[(ii)] Let $t,\ q\in(0,\infty)$, $N\in\nn$ and $\Phi$ be an Orlicz function with positive lower type $p_{\Phi}^-$ and
positive upper type $p_{\Phi}^+$. The \emph{weak Orlicz-slice Hardy space}
$(WHE_\Phi^q)_t(\rn)$ is defined to be the set of all $f\in\cs'(\rn)$
such that $M_N^0(f)\in(WE_\Phi^{q})_t(\rn)$ and, for any $f\in(WHE_\Phi^q)_t(\rn)$, let
$$
\|f\|_{(WHE_\Phi^q)_t(\rn)}:=\lf\|M_N^0(f)\r\|_{(WE_\Phi^{q})_t(\rn)},
$$
where $M_N^0(f)$ is as in \eqref{EqMN0} with $N$ sufficiently large.
\end{itemize}
\end{definition}

\begin{remark}
Let $t\in(0,\infty)$, $q\in(1,\infty)$ and
$\Phi$ be an Orlicz function with positive lower type $p_{\Phi}^-\in(1,\infty)$
and
positive upper type $p_{\Phi}^+$. By \cite[Remark 7.49]{zwyy},
we know that $(WHE_\Phi^q)_t(\rn)=(WE_\Phi^{q})_t(\rn)$ with
equivalent norms.
\end{remark}

We now recall the notion of the Orlicz-slice Hardy space $(HE_\Phi^q)_t(\rn)$ introduced in \cite{ZYYW}.
\begin{definition}
Let $t$, $q\in(0,\infty) $ and $\Phi$ be an Orlicz function with positive lower type $p_{\Phi}^-$ and positive upper type $p_{\Phi}^+$.
Then the \emph{Orlicz-slice Hardy space $(HE_\Phi^q)_t(\rn)$} is defined by setting
$$
(HE_\Phi^q)_t(\rn):=\lf\{f\in\mathcal{S}'(\rn):\ \|f\|_{(HE_\Phi^q)_t(\rn)}:=
\|M(f,\varphi)\|_{(E_\Phi^q)_t(\rn)}<\infty \r\},
$$
where $\varphi\in\mathcal{S}(\rn)$ satisfies
$
\int_{\rn}\varphi(x)\,dx\neq0.
$
In particular, when $\Phi(s):=s^r$ for any $s\in[0,\fz)$ with any given $r\in(0,\fz)$,
the Hardy-type space $(HE_r^q)_t(\rn):=(HE_{\Phi}^q)_t(\rn)$ is called
the \emph{slice Hardy space}.
\end{definition}

Let $t,\ q\in(0,\infty)$ and $\Phi$ be an Orlicz function with positive lower type $p_{\Phi}^-$ and
positive upper type $p_{\Phi}^+$.
Then $(E_\Phi^q)_t(\rn)$ satisfies Assumption \ref{a2.15}
(see \cite[Lemma 4.3]{ZYYW}) and Assumption \ref{a2.17}
(see \cite[Proposition 7.40]{zwyy}). Furthermore, from \cite[Lemmas 4.4]{ZYYW},
we deduce that,
for any given $r_0\in(0,\min\{1,p_{\Phi}^-\})$ and $p_0\in(\max\{1,p_{\Phi}^+\},\infty)$,
\eqref{Eqdm} with $X:=(E_\Phi^q)_t(\rn)$ holds true.
Thus, all the assumptions of main theorems in Sections \ref{s3} and \ref{s4} are satisfied.
Using Theorems
\ref{Tharea}, \ref{Thgf} and \ref{Thgx},
we immediately obtain the following characterizations of $(WHE_\Phi^q)_t(\rn)$ in terms of the
Lusin area function, the Littlewood--Paley $g$-function and
the Littlewood--Paley $g_\lambda^*$-function.

\begin{theorem}\label{fgtt}
Let $t,\ q\in(0,\infty)$  and
$\Phi$ be an Orlicz function with positive lower type $p_{\Phi}^-$ and
positive upper type $p_{\Phi}^+$. Then $f\in(WHE_\Phi^q)_t(\rn)$ if and only if either
of the following two items holds true:
\begin{itemize}
\item[\rm{(i)}] $f\in\cs'(\rn)$, $f$ vanishes weakly at infinity and $S(f)\in (WE_\Phi^q)_t(\rn)$,
where $S(f)$ is as in \eqref{eq61}.
\item[\rm{(ii)}] $f\in\cs'(\rn)$, $f$ vanishes weakly at infinity and $g(f)\in (WE_\Phi^q)_t(\rn)$,
where $g(f)$ is as in \eqref{eq63}.
\end{itemize}
Moreover, for any $f\in(WHE_\Phi^q)_t(\rn)$,
$\|f\|_{(WHE_\Phi^q)_t(\rn)}\sim\|S(f)\|_{(WE_\Phi^q)_t(\rn)}\sim\|g(f)\|_{(WE_\Phi^q)_t(\rn)}$,
where the positive equivalence constants are independent of $f$ and $t$.
\end{theorem}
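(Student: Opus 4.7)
The plan is to deduce Theorem \ref{fgtt} as a direct application of Theorems \ref{Tharea} and \ref{Thgf} to the ball quasi-Banach function space $X:=(E_\Phi^q)_t(\rn)$. Thus the real work is not to re-prove the Littlewood--Paley characterizations but to verify, in sequence, every hypothesis imposed on $X$ in the statements of Theorems \ref{Tharea} and \ref{Thgf}, and then simply quote them.

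Concretely, I would proceed as follows. First, record from \cite[Lemma~2.28]{ZYYW} that $(E_\Phi^q)_t(\rn)$ is a ball quasi-Banach function space, and from \cite[Lemma~4.3]{ZYYW} that it satisfies Assumption \ref{a2.15} with $p_-$ chosen to be any number in $(0,\min\{1,p_\Phi^-\})$; this also fixes the value of $\underline{p}$ via \eqref{Eqpll}. Second, for any such $r\in(0,\underline{p})$, verify that $[(E_\Phi^q)_t(\rn)]^{1/r}$ is a ball Banach function space (this follows from the Orlicz-function lower/upper type properties together with the definition of the Orlicz-slice norm, and is essentially the content of the convexification lemma in \cite{ZYYW}). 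Third, invoke \cite[Lemma~4.4]{ZYYW} to obtain, for any $r_0\in(0,\min\{1,p_\Phi^-\})$ and any $p_0\in(\max\{1,p_\Phi^+\},\infty)$, the key associate-space bound \eqref{Eqdm}; this supplies the $p_+$ appearing in Theorem \ref{Tharea}. Fourth, cite \cite[Proposition~7.40]{zwyy} for Assumption \ref{a2.17}, which gives the vector-valued Fefferman--Stein inequality on $((E_\Phi^q)_t)^{1/p}$ weakly; in particular, taking $r$ close to $p_\Phi^+$ yields a $\gamma_0\in(0,\infty)$ such that $\cm$ is bounded on $(W(E_\Phi^q)_t)^{1/\gamma_0}$, as required.

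It then remains to establish the two residual hypotheses of Theorem \ref{Tharea}: the existence of $\vartheta_0\in(1,\infty)$ such that $(E_\Phi^q)_t(\rn)$ is $\vartheta_0$-concave, and the fact that $(E_\Phi^q)_t(\rn)$ has an absolutely continuous quasi-norm in the sense of Definition \ref{abo}. For the concavity, the plan is to use the upper type $p_\Phi^+$ of $\Phi$ together with the Minkowski-type integral inequality on the outer $L^q$-norm: for any $\vartheta_0\in(1,\min\{q,p_\Phi^+\}^{-1}\cdot\max\{q,p_\Phi^+\})$ chosen large enough, the estimate in Definition \ref{Debf}(ii) with $X^{1/\vartheta_0}$ being a ball Banach function space is a routine consequence of the norm convexity of $(E_\Phi^q)_t(\rn)^{1/\vartheta_0}$ (again recorded in \cite{ZYYW}). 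For absolute continuity of the quasi-norm, the plan is to exploit the inner Orlicz integral: if $\{E_j\}_{j\in\nn}$ decreases to $\emptyset$ with $\mathbf 1_{E_1}\in(E_\Phi^q)_t(\rn)$, then, for a.e.\ $x\in\rn$, $\|\mathbf 1_{E_j}\mathbf 1_{B(x,t)}\|_{L^\Phi(\rn)}\downarrow 0$ by the dominated convergence theorem for $L^\Phi$, and then an application of the dominated convergence theorem for $L^q$ in the outer integration delivers $\|\mathbf 1_{E_j}\|_{(E_\Phi^q)_t(\rn)}\downarrow 0$; this is exactly Definition \ref{abo}.

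Once all hypotheses are confirmed, Theorem \ref{Tharea} gives item (i) together with the equivalence $\|f\|_{(WHE_\Phi^q)_t(\rn)}\sim\|S(f)\|_{(WE_\Phi^q)_t(\rn)}$, and Theorem \ref{Thgf} gives item (ii) with $\|f\|_{(WHE_\Phi^q)_t(\rn)}\sim\|g(f)\|_{(WE_\Phi^q)_t(\rn)}$. The main obstacle I anticipate is the absolute continuity of the quasi-norm, since $(E_\Phi^q)_t(\rn)$ mixes an Orlicz norm and an $L^q$ norm; in the degenerate ranges $p_\Phi^-<1$ or $q<1$ one has to be a little careful that the two-step dominated convergence argument really does go through in the quasi-normed setting. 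Everything else is a clean translation of results already present in \cite{ZYYW} and \cite{zwyy}.
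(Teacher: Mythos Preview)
Your approach is essentially the same as the paper's: the paper does not give a standalone proof of this theorem but simply records that $(E_\Phi^q)_t(\rn)$ satisfies all the hypotheses of Theorems \ref{Tharea} and \ref{Thgf} (citing \cite[Lemma 2.28, Lemma 4.3, Lemma 4.4]{ZYYW} and \cite[Proposition 7.40]{zwyy}) and then states the result as an immediate consequence. You have correctly identified exactly the same checklist of assumptions and the same references; if anything, you are being more careful than the paper, which asserts without further comment that ``all the assumptions of main theorems in Sections \ref{s3} and \ref{s4} are satisfied'' and does not pause to isolate the $\vartheta_0$-concavity and the absolute continuity of the quasi-norm that you single out.

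Two small points. First, the parameter $p_-$ for Assumption \ref{a2.15} should be $\min\{p_\Phi^-,q\}$ rather than a number in $(0,\min\{1,p_\Phi^-\})$; compare the range of $\lambda$ in Theorem \ref{gx}, which involves $\min\{1,p_\Phi^-,q\}$. Second, your description of the concavity verification is garbled (the displayed interval for $\vartheta_0$ does not parse); in practice one just takes any $\vartheta_0>1$ so small that $X^{1/\vartheta_0}$ is still a ball Banach function space, which again follows from the convexification lemmas in \cite{ZYYW}. Neither of these affects the correctness of the overall plan.
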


\begin{theorem}\label{gx}
Let $t,\ q\in(0,\infty)$ and
$\Phi$ be an Orlicz function with positive lower type $p_{\Phi}^-$ and
positive upper type $p_{\Phi}^+$. Let $\lambda\in(\max\{\frac{2}{\min\{1,p_{\Phi}^-,\ q\}},
1-\frac2{\max\{1,p_\Phi^+,q\}}+\frac{2}{\min\{1,p_{\Phi}^-,\ q\}}\},\infty)$.
Then $f\in (HE_\Phi^q)_t(\rn)$ if and only if $f\in\mathcal{S}'(\rn)$,
$f$ vanishes weakly at infinity and $g_\lambda^*(f)\in (WE_\Phi^q)_t(\rn)$,
where $g_\lambda^*(f)$ is as in \eqref{eq62}.
Moreover, for any $f\in(WHE_\Phi^q)_t(\rn)$,
$$
\|f\|_{(WHE_\Phi^q)_t(\rn)}\sim\lf\|g_\lambda^*(f)\r\|_{(WE_\Phi^q)_t(\rn)},
$$
where the positive equivalence constants are independent of $f$ and $t$.
\end{theorem}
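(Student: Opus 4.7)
The plan is to derive Theorem \ref{gx} directly from the general Littlewood--Paley $g_\lambda^\ast$-function characterization in Theorem \ref{Thgx}, applied to the concrete ball quasi-Banach function space $X:=(E_\Phi^q)_t(\rn)$. The point of this last theorem is that, having already set up the abstract machinery, all that remains is to verify that the Orlicz-slice space falls into that framework with the correct values of $p_-$ and $p_+$.

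First, I would recall from \cite[Lemma 2.28]{ZYYW} that $(E_\Phi^q)_t(\rn)$ is a ball quasi-Banach function space. Then I would verify each hypothesis of Theorem \ref{Thgx} in turn. The Fefferman--Stein vector-valued maximal inequality (Assumption \ref{a2.15}) holds on $(E_\Phi^q)_t(\rn)$ with $p_-:=\min\{p_\Phi^-,q\}$ by \cite[Lemma 4.3]{ZYYW}; the analogous inequality for $WX$ (Assumption \ref{a2.17}) was noted above to follow from \cite[Proposition 7.40]{zwyy}. The $\vartheta_0$-concavity for some $\vartheta_0\in(1,\infty)$ and the boundedness of $\cm$ on $(WX)^{1/\gamma_0}$ for some $\gamma_0\in(0,\infty)$ both follow by choosing $\vartheta_0$ and $\gamma_0$ so that the corresponding powers of $(E_\Phi^q)_t(\rn)$ become ball Banach function spaces, which is again contained in \cite{ZYYW}. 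The fact that $(E_\Phi^q)_t(\rn)$ has an absolutely continuous quasi-norm follows from the absolutely continuous quasi-norm of the underlying Orlicz space and the standard monotone convergence argument inside the outer $L^q$-norm.

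Next, for any $r\in(0,\underline{p})$ with $\underline{p}:=\min\{p_-,1\}=\min\{p_\Phi^-,q,1\}$, I would verify that $[(E_\Phi^q)_t(\rn)]^{1/r}$ is a ball Banach function space, again via \cite{ZYYW}, and that the dual maximal inequality
\begin{equation*}
\lf\|\cm^{((p/r)')}(f)\r\|_{((X^{1/r})')}\lesssim\lf\|f\r\|_{((X^{1/r})')}
\end{equation*}
holds for any $p\in(p_+,\infty)$ with $p_+:=\max\{p_\Phi^+,q\}$, as a consequence of \cite[Lemma 4.4]{ZYYW}. With these ingredients, the hypotheses of Theorem \ref{Thgx} are met with exactly these choices of $p_-$ and $p_+$, so that the range of admissible $\lambda$ becomes
\begin{equation*}
\lambda\in\lf(\max\lf\{\tfrac{2}{\min\{1,p_\Phi^-,q\}},\ 1-\tfrac{2}{\max\{1,p_\Phi^+,q\}}+\tfrac{2}{\min\{1,p_\Phi^-,q\}}\r\},\ \infty\r),
\end{equation*}
which matches the statement of Theorem \ref{gx}.

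Finally, invoking Theorem \ref{Thgx} with $X:=(E_\Phi^q)_t(\rn)$ yields both the claimed equivalence $f\in(WHE_\Phi^q)_t(\rn)$ iff $f\in\cs'(\rn)$, $f$ vanishes weakly at infinity and $g_\lambda^\ast(f)\in(WE_\Phi^q)_t(\rn)$, and the quasi-norm equivalence. The main obstacle I anticipate is purely bookkeeping, namely pinning down the sharp values of $p_-$ and $p_+$ adapted to the mixed structure of $(E_\Phi^q)_t(\rn)$ (which combines the Orlicz indices $p_\Phi^\pm$ with the outer integration exponent $q$), and then checking that all powers $X^{1/r}$ and associate spaces $(X^{1/r})'$ behave well enough for the dual inequality \eqref{Eqdm} to hold in the stated range; once this is done, no further argument beyond citing Theorem \ref{Thgx} is required.
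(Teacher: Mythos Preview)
Your proposal is correct and matches the paper's approach exactly: the paper derives Theorem \ref{gx} by verifying that $(E_\Phi^q)_t(\rn)$ satisfies all the hypotheses of Theorem \ref{Thgx} (citing \cite[Lemma 2.28, Lemma 4.3, Lemma 4.4]{ZYYW} and \cite[Proposition 7.40]{zwyy} for Assumptions \ref{a2.15}, \ref{a2.17} and the dual maximal inequality \eqref{Eqdm}) and then invoking that general theorem with $p_-=\min\{p_\Phi^-,q\}$ and $p_+=\max\{p_\Phi^+,q\}$. No additional argument is given or needed.
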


Applying Theorem \ref{inte} to Orlicz-slice spaces, we find that $(WHE_{\Phi_{1/(1-\theta)}}^{q/(1-\theta)})_t(\rn)$
is just the real interpolation intermediate space between $(HE_\Phi^q)_t(\rn)$
and $L^\infty(\rn)$ with $\theta\in(0,1)$ as follows.

\begin{theorem}\label{inte2}
Let $t$, $q\in(0,\infty) $ and $\Phi$ be an Orlicz function with
positive lower type $p_{\Phi}^-$ and positive upper type $p_{\Phi}^+$.
Assume that $\theta\in(0,1)$. Then
$$
((HE_\Phi^q)_t(\rn),L^\infty(\rn))_{\theta,\infty}=
(WHE_{\Phi_{1/(1-\theta)}}^{q/(1-\theta)})_t(\rn),
$$
where $\Phi_{1/(1-\theta)}$ is as in \eqref{phi1} with $\theta$ replaced by $1-\theta$.
\end{theorem}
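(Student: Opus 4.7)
The plan is to obtain Theorem \ref{inte2} as a direct application of the abstract real interpolation theorem, Theorem \ref{inte}, with the ball quasi-Banach function space $X := (E_\Phi^q)_t(\rn)$. First I would recall that, by definition, $H_X(\rn) = (HE_\Phi^q)_t(\rn)$ and $WH_X(\rn) = (WHE_\Phi^q)_t(\rn)$, so it suffices to show that $X$ satisfies all the hypotheses of Theorem \ref{inte} and to identify the convexification $X^{1/(1-\theta)}$ as the Orlicz-slice space $(E_{\Phi_{1/(1-\theta)}}^{q/(1-\theta)})_t(\rn)$.

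The verification of the structural hypotheses is already essentially done in the preceding subsection: by \cite[Lemma 2.28]{ZYYW}, $(E_\Phi^q)_t(\rn)$ is a ball quasi-Banach function space; the Fefferman--Stein inequality (Assumption \ref{a2.15}) follows from \cite[Lemma 4.3]{ZYYW}; the boundedness of $\cm$ on $(WX)^{1/r_1}$ for some $r_1\in(0,\infty)$ follows from \cite[Proposition 7.40]{zwyy}; and the dual bound \eqref{Eqdm} for the powered maximal operator on $(X^{1/r_0})'$ is obtained from \cite[Lemma 4.4]{ZYYW} for suitable $r_0\in(0,\underline{p})$ and $p_0\in(r_0,\infty)$. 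The $\vartheta_0$-concavity of $X$ for some $\vartheta_0\in(1,\infty)$ is contained in the arguments of \cite{ZYYW}. I would collect these facts as a single citation paragraph.

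The main computational step, which I expect to be the real content of the proof, is the identification
\begin{equation*}
X^{1/(1-\theta)} = \lf(E_{\Phi_{1/(1-\theta)}}^{q/(1-\theta)}\r)_t(\rn),
\end{equation*}
with equal quasi-norms. Setting $s:=1/(1-\theta)$, a direct change of variable in the Luxemburg quasi-norm yields, for any measurable $g$ and any ball $B$,
$\|\,|g|^s\,\|_{L^\Phi(\rn)} = \|g\|_{L^{\Phi_s}(\rn)}^{s}$ and $\|\mathbf 1_B\|_{L^\Phi(\rn)} = \|\mathbf 1_B\|_{L^{\Phi_s}(\rn)}^{s}$, where $\Phi_s(u):=\Phi(u^{s})$ for any $u\in[0,\infty)$. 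Substituting these into the definition of the Orlicz-slice quasi-norm and taking the $s$-th root then gives
\begin{equation*}
\|g\|_{X^s} = \bigl\| |g|^s \bigr\|_{X}^{1/s} = \lf\{\int_\rn \lf[\frac{\|g\mathbf 1_{B(x,t)}\|_{L^{\Phi_s}(\rn)}}{\|\mathbf 1_{B(x,t)}\|_{L^{\Phi_s}(\rn)}}\r]^{qs}\,dx\r\}^{1/(qs)} = \|g\|_{(E_{\Phi_s}^{qs})_t(\rn)}.
\end{equation*}
One also checks that $\Phi_{1/(1-\theta)}$ inherits positive lower and upper types (with indices rescaled by $1/(1-\theta)$), so the right-hand side is a legitimate Orlicz-slice space of the class covered by the preceding subsection.

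With these two ingredients in hand, Theorem \ref{inte} applied to $X = (E_\Phi^q)_t(\rn)$ gives
\begin{equation*}
\lf((HE_\Phi^q)_t(\rn),\,L^\infty(\rn)\r)_{\theta,\infty} = WH_{X^{1/(1-\theta)}}(\rn) = \lf(WHE_{\Phi_{1/(1-\theta)}}^{q/(1-\theta)}\r)_t(\rn),
\end{equation*}
which is the desired identity. The only subtle point I foresee is bookkeeping in the convexification computation, namely making sure that the exponent inside the outer $L^q$-norm correctly transforms from $q$ to $qs$ and that the ratio $\|g\mathbf 1_B\|_{L^\Phi}/\|\mathbf 1_B\|_{L^\Phi}$ raised to the $s$-th power equals $\|g\mathbf 1_B\|_{L^{\Phi_s}}/\|\mathbf 1_B\|_{L^{\Phi_s}}$; once that is carefully verified, the remainder of the argument is just assembling citations.
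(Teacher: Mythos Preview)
Your proposal is correct and follows exactly the route the paper takes: the paper states Theorem \ref{inte2} as a direct application of Theorem \ref{inte} with $X=(E_\Phi^q)_t(\rn)$, relying on the same references \cite[Lemmas 2.28, 4.3 and 4.4]{ZYYW} and \cite[Proposition 7.40]{zwyy} to verify the hypotheses. Your explicit computation of the convexification $X^{1/(1-\theta)}=(E_{\Phi_{1/(1-\theta)}}^{q/(1-\theta)})_t(\rn)$ via the Luxemburg-norm identity $\||g|^s\|_{L^\Phi}=\|g\|_{L^{\Phi_s}}^s$ is the one step the paper leaves implicit, and it is carried out correctly.
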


\emph{Acknowledgements}.
The authors would like to thank Professor Sibei Yang for many
discussions on the subject of this article.



\bigskip

\noindent Songbai Wang

\smallskip

\noindent College of Mathematics and Statistics, Hubei Normal University,
Huangshi 435002, People's Republic of China

\smallskip

\noindent{\it E-mail}: \texttt{haiyansongbai@163.com}

\bigskip

\noindent Dachun Yang (Corresponding author), Wen Yuan and Yangyang Zhang

\smallskip

\noindent  Laboratory of Mathematics and Complex Systems
(Ministry of Education of China),
School of Mathematical Sciences, Beijing Normal University,
Beijing 100875, People's Republic of China

\smallskip

\noindent {\it E-mails}: \texttt{dcyang@bnu.edu.cn} (D. Yang)

\noindent\phantom{{\it E-mails:}} \texttt{wenyuan@bnu.edu.cn} (W. Yuan)

\noindent\phantom{{\it E-mails:}} \texttt{yangyzhang@mail.bnu.edu.cn} (Y. Zhang)

\end{document}